\DeclarePairedDelimiter\floor{\lfloor}{\rfloor}
\definecolor{citegreen}{rgb}{0,0.8,0}
\definecolor{refred}{rgb}{0.8,0,0}
\newtheorem{theorem}{Theorem}
\newtheorem{lemma}[theorem]{Lemma}
\newtheorem{corollary}[theorem]{Corollary}
\newtheorem{proposition}[theorem]{Proposition}
\theoremstyle{definition}
\newtheorem{remark}[theorem]{Remark}
\newtheorem{definition}[theorem]{Definition}
\newcommand{\R}{\mathbb R}\newcommand{\M}{\mathcal M}\newcommand{\N}{\mathcal N}
\numberwithin{equation}{section} \numberwithin{theorem}{section}
\DeclareMathOperator{\Hess}{Hess}
\DeclareMathOperator{\Div}{div}
\newcommand{\e}{\varepsilon}
\newcommand{\set}[1]{\{ {#1} \}}
\newcommand{\norm}[1]{\| {#1} \|}
\newcommand{\scal}[2]{\langle {#1} , {#2} \rangle}
\title{Higher dimensional Sacks-Uhlenbeck-type functionals and applications}
\author{Gianmichele Di Matteo, Tobias Lamm}
\address{Gianmichele Di Matteo, Scuola Superiore Meridionale, Largo San Marcellino 10, 80138 Napoli, Italy}
\address{Tobias Lamm, Karlsruhe Institute of Technology (KIT), Englerstrasse 2, 76131 Karlsruhe, Germany}
\begin{document}

\begin{abstract}
In this work, we generalize Sacks-Uhlenbeck's existence result for harmonic spheres, constructing for $n \ge 2$, regular, non-trivial, $n$-harmonic $n$-spheres into suitable target manifolds. We obtain an infinite family of new null-homotopic such maps. The proof follows a similar perturbative argument, which in high dimensions leads to a degenerate and double-phase-type Euler-Lagrange system, making the uniform regularity needed to formalize the bubbling harder to achieve. Then, we develop a refined neck-analysis leading to an energy identity along the approximation, assuming a suitable Struwe-type entropy bound along a sequence of critical points. Finally, we combine these results to solve quite general min-max problems for the $n$-energy modulo bubbling.
\end{abstract}

\maketitle

\begin{center}
\textbf{2024 Mathematics Subject Classification:} {35-XX, 49-XX, 53-XX}.\\
\textbf{Keywords}: $n$-harmonic maps, min-max, degenerate elliptic PDEs, double phase functionals. 
\end{center}

\section{Introduction}
In the seminal paper \cite{sac}, Sacks and Uhlenbeck proved the existence of branched minimal $2$-dimensional spheres in a vast class of closed manifolds, found as critical points of the Dirichlet energy of maps from $S^2$ into the target manifold $\N$. In order to deal with the conformal invariance of the Dirichlet energy, they developed an approximation scheme to recover compactness and produce critical points, and then analysed carefully their limiting behaviour. Supposedly, they were the first authors identifying the bubbling phenomenon, which revealed typical of most problems in geometric analysis, and their ideas led to a plethora of developments and analogous results in different contexts. The main purpose of this paper is to extend their analysis to higher dimensional domains, that is we aim to find non-trivial $n$-harmonic $n$-spheres in suitable target manifold.

Given a real number $p \ge 2$, two Riemannian manifolds $(\M^n,g)$ and $(\N^k,h)$ and a map $u:\M \rightarrow \N$, we define its $p$-energy $D_p$ as 
\begin{equation*}
   D_p(u):= D_p(u;\M):=\tfrac{1}{p} \int_M |d u|^p d\mu_g, 
\end{equation*}
where $d\mu_g$ is the volume element of $g$, and $|d u|$ is the norm of $d u$ seen as a section of $T^*\M \otimes u^* T^*\N$. Critical points of the energy $D_p$ are distinguished in different classes: a critical point with respect to outer variations is called a \textit{weakly $p$-harmonic map}; if a map is critical also with respect to inner variations, it is called a \textit{stationary $p$-harmonic map}; finally, a map $u$ is called a \textit{locally minimizing $p$-harmonic map} if for any compact set $K\subset \subset \M$, $D_p(u)\le D_p(v)$ for all maps $v:\M \rightarrow \N$ coinciding with $u$ outside $K$. A \emph{closed Riemannian homogeneous space} $(\N,h)$ consists of a compact quotient of Lie groups $G/H$, where $G$ is connected and $H$ is a closed subgroup of it, endowed with a left invariant metric $h$.

The existence result in \cite{sac} was expressed in terms of the non-contractibility of the universal cover of the target $\N$; we could in principle formulate our result completely analogously, relying on the concept of $n$-covering space, which however would render the result more complex, so we opted for the following statement.
\begin{theorem}[Existence of n-harmonic spheres]\label{th.existence}
For a fixed dimension $n\ge 2$, suppose $(\N,h)$ is a closed Riemannian manifold such that for some $k\in \mathbb{N}$ we have $\pi_{n+k}(\N)\neq 0$. Then the following existence result holds true:
\begin{enumerate}
    \item[(i)] For any homogeneous left-invariant metric $h$ on $\N$, there exists a non-trivial, $C^{1,\alpha}$-regular, $n$-harmonic map $u:(S^n,g_{round}) \rightarrow (\N,h)$;
    \item[(ii)] If $n=3$, for any arbitrary Riemannian metric $h$ on $\N$ there exists a non-trivial, $C^{1,\alpha}$-regular, $3$-harmonic map $u:(S^3,g_{round}) \rightarrow (\N,h)$.
\end{enumerate}
\end{theorem}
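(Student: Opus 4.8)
The plan is to adapt the Sacks--Uhlenbeck scheme, replacing the $\alpha$-energy by a degenerate double-phase perturbation of the $n$-energy. I would fix an exponent $p>n$ and, for $\varepsilon\in(0,1]$, work with
\[
E_\varepsilon(u):=D_n(u)+\varepsilon D_p(u)=\tfrac1n\int_{S^n}|du|^n\,d\mu_g+\tfrac\varepsilon p\int_{S^n}|du|^p\,d\mu_g
\]
on the Banach--Finsler manifold $W^{1,p}(S^n,\N)$: since $p>n=\dim S^n$, this space embeds compactly into $C^0(S^n,\N)$, so $E_\varepsilon$ is $C^1$, coercive, and satisfies the Palais--Smale condition (the $\varepsilon D_p$ term restores the compactness that is lost at the critical exponent $n$). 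Its critical points are weak solutions of a system of the form
\[
-\Div\big((|du|^{n-2}+\varepsilon|du|^{p-2})\,\nabla u\big)\ \perp\ T_u\N ,
\]
which is simultaneously degenerate elliptic (for $n>2$) and of genuine double-phase type.

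For the variational construction I would exploit $\pi_{n+k}(\N)\neq0$ through the adjunction $\pi_k(\mathrm{Map}_*(S^n,\N))\cong\pi_{n+k}(\N)$: this yields a non-trivial class $\sigma\in\pi_k(\mathrm{Map}_*(S^n,\N))$, i.e. a non-contractible $k$-parameter family of based maps $S^n\to\N$ (when $k=0$, just a non-trivial element of $\pi_n(\N)$). Letting $\mathcal{F}_\sigma$ denote the families in $W^{1,p}$ representing $\sigma$, I would set $\beta_\varepsilon:=\inf_{\gamma\in\mathcal{F}_\sigma}\max_\theta E_\varepsilon(\gamma(\theta))$. Testing with a fixed smooth representative gives $\beta_\varepsilon\le C$ uniformly in $\varepsilon\le1$, while $\beta_\varepsilon\ge\beta_0>0$, where $\beta_0$ is the $D_n$-width of $\sigma$ and positivity comes from the energy-gap fact that a map $v\in W^{1,n}(S^n,\N)$ with sufficiently small $n$-energy is null-homotopic. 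Then Palais--Smale together with the deformation lemma on $W^{1,p}(S^n,\N)$ would produce, for each $\varepsilon$, a critical point $u_\varepsilon$ of $E_\varepsilon$ with $E_\varepsilon(u_\varepsilon)=\beta_\varepsilon\in[\epsilon_1,C]$; granting the uniform $C^{1,\alpha}$-regularity discussed below, each $u_\varepsilon$ is $C^{1,\alpha}$.

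To send $\varepsilon\to0$, note $\varepsilon D_p(u_\varepsilon)=\beta_\varepsilon-D_n(u_\varepsilon)\le C$, and apply the Sacks--Uhlenbeck differentiation argument to the non-decreasing function $\varepsilon\mapsto\beta_\varepsilon$ to extract a sequence $\varepsilon_j\downarrow0$ along which $\varepsilon_jD_p(u_{\varepsilon_j})\to0$; hence $D_n(u_{\varepsilon_j})\to\beta_*\in[\epsilon_1,C]$ and $u_{\varepsilon_j}$ solves the $n$-harmonic map system up to an error that vanishes with $\varepsilon_j$. Since $(u_{\varepsilon_j})$ is bounded in $W^{1,n}$, I would run the concentration-compactness/blow-up analysis. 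If no $n$-energy concentrates, $u_{\varepsilon_j}\to u$ strongly in $W^{1,n}$, $u$ is $n$-harmonic with $D_n(u)=\beta_*>0$, hence non-trivial and (by regularity) $C^{1,\alpha}$. If energy concentrates at a point $x_0$, rescale $\tilde u_j(x):=u_{\varepsilon_j}(x_0+\lambda_jx)$ with $\lambda_j\downarrow0$ chosen by the usual point-selection so that a fixed quantum of $n$-energy sits in $B_1$; $D_n$ is scale invariant, whereas $\varepsilon_jD_p(\tilde u_j;B_R)=\varepsilon_j\lambda_j^{\,p-n}D_p(u_{\varepsilon_j};B_{\lambda_jR}(x_0))\to0$ because $p>n$. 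Thus $\tilde u_j$ converges to a non-constant, finite-energy $n$-harmonic map $\R^n\to\N$, and --- using the conformal invariance of $D_n$ and the removable-singularity theorem for $n$-harmonic maps with an isolated singularity of finite energy --- this extends to a non-trivial, $C^{1,\alpha}$-regular $n$-harmonic map $(S^n,g_{round})\to(\N,h)$. Either alternative yields Theorem \ref{th.existence}.

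I expect the main obstacle, and the reason (ii) is restricted to $n=3$, to be precisely the regularity input: one needs $C^{1,\alpha}$-estimates and, above all, an $\varepsilon$-regularity lemma with constants \emph{independent of $\varepsilon$} for the degenerate double-phase system above --- where neither the standard $n$-harmonic-map theory nor the standard double-phase De Giorgi--Nash--Moser theory applies directly --- and this same uniformity is what is needed to carry out the neck analysis and exclude energy loss on the necks during bubbling. For homogeneous targets one can use the conservation laws coming from the Killing fields (equivalently, a good moving frame) to close these estimates in every dimension, which gives (i); for an arbitrary target the same seems within reach only when $n=3$, which gives (ii).
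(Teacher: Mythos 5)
Your outline reproduces the paper's overall strategy: perturbed functionals admitting Palais--Smale theory, a min--max construction driven by $\pi_{k}(\Omega(S^n;\N))\cong\pi_{n+k}(\N)$, a uniform small-energy regularity theorem, and a blow-up plus removable-singularity argument producing either a non-trivial limit map or a non-trivial bubble. The difference in the perturbation (you take $D_n+\varepsilon D_p$ with $p>n$ fixed and $\varepsilon\searrow 0$; the paper takes $\tfrac1p\int(1+(\delta+|\nabla u|^2)^{n/2})^{p/n}$ with $p\searrow n$ and $\delta\searrow 0$) is largely cosmetic at the level of the scheme, and for the existence theorem the monotonicity-trick step you invoke is not even needed: once one has local uniform $C^{1,\alpha}$ bounds, the coefficient $\varepsilon|du|^{p-2}$ vanishes locally uniformly and the limit equation is automatically the $n$-harmonic one (the entropy condition is only required for the energy identity, not for existence).

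The genuine gap is that the entire content of the theorem --- the uniform small-energy $C^{1,\alpha}$ estimate for the approximating critical points, with constants independent of the perturbation parameters --- is postulated rather than proved, and it is exactly there that the dichotomy between (i) and (ii) is decided. For (i) you correctly point to Killing fields and conservation laws (the paper implements this following Toro--Wang, via compensated compactness and $\mathcal{H}^1$--BMO duality), but give no argument. For (ii) you offer no reason why $n=3$ is special: in the paper this comes from recasting the system in non-divergence form as a perturbation of the normalized $n$-Laplacian and invoking the Cordes condition, which for the operator with coefficients $\delta_{ij}\delta^{\alpha\beta}+(n-2)\nabla_i u^\alpha\nabla_j u^\beta/(\delta+|\nabla u|^2)$ holds only for $n\le 3$; this yields uniform $W^{2,q}$ Calderon--Zygmund-type bounds which, combined with a Gagliardo--Nirenberg inequality, absorb the quadratic right-hand side. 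Moreover, your functional $D_n+\varepsilon D_p$ carries no $\delta$-regularization, so its Euler--Lagrange system is degenerate wherever $du=0$ and one cannot a priori pass to the non-divergence form (the solution is not known to be $W^{2,q}$, let alone smooth); the paper needs $\delta>0$, together with a local uniqueness/minimality argument giving a-priori smoothness on small non-uniform scales, precisely to justify that step. Finally, your lower bound $\beta_\varepsilon\ge\beta_0>0$ via ``small $W^{1,n}$-energy implies null-homotopic'' must be upgraded to a continuous retraction of the whole small-energy sublevel set onto the constant maps (or replaced, as in the paper, by the statement that critical points of uniformly small energy are constant); this is a standard fix but should be stated.
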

The exact same argument of part $(ii)$ restricted to dimension $n=2$, allows to recover Theorem $5.7$ in \cite{sac}. Due to the conformal invariance of the $D_n$-energy, the same existence result remains true for all conformally equivalent metrics. Notice that the statement is naturally "graded", leading to the existence of $i$-harmonic $S^i$'s into the same targets for any smaller $2\le i<n$, see Corollary \ref{cor.graded}. Also, a topological assumption is needed, as any harmonic map of a closed manifold into the Euclidean space $\R^\ell$ is trivial.

In case the group $\pi_n(\N)$ is not trivial, we re-obtain the existence results for a generating set of homotopy classes admitting $D_n$-minimizers already shown in \cite{duz1,wei0}, see Theorem \ref{th.generating}.
The most interesting case covered by Theorem \ref{th.existence} is when $\pi_n(\N)=0$, in which $D_n$-minimizers must be trivial, and our theory picks $D_n$-min-maximizers or bubbles, which provide new examples not present in the literature so far.
\begin{theorem}[Infinite null-homotopic $n$-harmonic spheres]\label{th.infinite}
There exist infinitely many explicit dimensions $n$ and Riemannian targets $(\N,h)$, such that there exists a non-trivial, $C^{1,\alpha}$-regular, null-homotopic, $n$-harmonic map $u:(S^n,g_{round})\rightarrow (\N,h)$.
\end{theorem}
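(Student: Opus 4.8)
The plan is to derive Theorem \ref{th.infinite} as a corollary of Theorem \ref{th.existence}$(i)$, the only extra point being to arrange that the $n$-harmonic sphere it produces is forced to be null-homotopic. So I would look for a closed Riemannian homogeneous space $(\N,h)$ and an integer $n\ge 2$ with $\pi_n(\N)=0$ but $\pi_{n+k}(\N)\neq 0$ for some $k\in\mathbb{N}$. The latter is exactly the hypothesis of Theorem \ref{th.existence}$(i)$, so it yields a non-trivial, $C^{1,\alpha}$-regular, $n$-harmonic map $u\colon(S^n,g_{round})\to(\N,h)$; and since $\N$ is path-connected with $\pi_n(\N)=0$, every continuous (in particular every $C^{1,\alpha}$) map $S^n\to\N$ is automatically null-homotopic, so $u$ is. The whole statement thus reduces to producing infinitely many such topological configurations with homogeneous targets.

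For the explicit family I would take $\N=S^m$ with $m>n$, equipped with the round metric. This is a closed Riemannian homogeneous space: $S^m=\SO(m+1)/\SO(m)$, and the round metric is the normal homogeneous metric induced by a bi-invariant metric on $\SO(m+1)$, hence left-invariant in the sense required by Theorem \ref{th.existence}$(i)$. Since $S^m$ is $(m-1)$-connected we have $\pi_n(S^m)=0$ for $1\le n<m$, while $\pi_m(S^m)\cong\mathbb{Z}\neq 0$; taking $k:=m-n\ge 1$, the hypothesis $\pi_{n+k}(\N)\neq 0$ holds. Theorem \ref{th.existence}$(i)$ then provides a non-trivial, $C^{1,\alpha}$-regular, $n$-harmonic map $S^n\to S^m$, which, lying in the trivial class of $\pi_n(S^m)=0$, is null-homotopic. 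Letting $(n,m)$ run over, say, $m=n+1$ with $n\ge 2$ arbitrary (or more generally any pair $2\le n<m$) gives infinitely many explicit dimensions $n$ and Riemannian targets, proving the theorem. One can enlarge this list in the same spirit, e.g.\ $\N=\mathbb{CP}^2$ with $n=3$, using $\pi_3(\mathbb{CP}^2)\cong\pi_3(S^5)=0$ and $\pi_5(\mathbb{CP}^2)\cong\pi_5(S^5)\neq 0$.

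Since the analytic substance is entirely contained in Theorem \ref{th.existence}, the only things requiring care are bookkeeping: verifying that the chosen target metric lies in the admissible class of part $(i)$ (which, for $n\neq 3$, genuinely obliges us to use homogeneous spaces, the round spheres being the cleanest choice), and quoting the classical computation of the relevant homotopy groups $\pi_j(S^m)$. I therefore do not anticipate a substantive obstacle at this stage; the difficulty of the paper sits upstream, in the uniform regularity for the degenerate, double-phase-type Euler--Lagrange system and the neck analysis underlying Theorem \ref{th.existence}. Strengthening the conclusion — for instance, to ensure the examples arise as genuine bubbles rather than min-maximizers, or to pin down their $D_n$-energy level — would require revisiting the min-max construction, but the statement as phrased follows directly from the reduction above.
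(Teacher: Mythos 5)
Your proposal is correct and matches the paper's own route: Theorem \ref{th.infinite} is obtained by feeding Theorem \ref{th.existence} targets in the class $\{\pi_n(\N)=0,\ \pi_{n+k}(\N)\neq 0\}$, and the paper's first explicit family in Section \ref{sec.applications} is precisely $\N=S^{\ell}$ (round, hence homogeneous) with $\ell>n$, together with $\mathbb{P}^\ell(\R)$, $\mathbb{P}^\ell(\mathbb{C})$ and further examples generated by products, connected sums and sphere bundles. Your observation that $\pi_n(\N)=0$ forces every map $S^n\to\N$ to be null-homotopic is exactly the mechanism the paper uses.
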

In dimension $n=3$, after expliciting some examples of targets $\N$ verifying $\pi_3(\N)=0$ but $\pi_{3+k}(\N)\neq 0$ for some $k$, we show that this class of manifolds is stable under several topological operations, see Section \ref{sec.applications}.
For higher dimension of the domain $n \ge 4$, we need to endow the targets with a homogeneous Riemannian metric; even though this class may not be stable under the same topological operations considered in the $n=3$-dimensional case, we can still provide infinitely many examples appealing to some stable homotopy theory. Moreover, notice that most of the homogeneous spaces admit no totally geodesic submanifolds (see \cite{tsu}, for example Theorem $7.2$), so the inherited geometry on the images of maps as in Theorem \ref{th.infinite} may not be so rigid. Their minimality is our of reach, as expected in higher dimensions.
\subsection{Sacks-Uhlenbeck's Approach}
Let us recall briefly the original Sacks-Uhlenbeck approximation before describing our approach. In \cite{sac}, the authors developed a general approach to produce harmonic maps defined on $2$-dimensional surfaces. Since the Dirichlet energy is critical in dimension $2$, they adopt a perturbative argument, which consists in considering a family of approximating functionals, for which one could produce critical points via Morse-Palais-Smale's theory, and later analyze their convergence along the approximation procedure. A way to do this, would be by integrating higher powers of the gradient of the map; this however, would cause a troublesome degeneracy in the Euler-Lagrange system of the functionals, as explicitly remarked by the authors. In order to avoid this issue, they opt for a slightly different approximant family of functionals, all of which lead to uniformly non-degenerate elliptic Euler-Lagrange systems: given a map $u : (\M^2,g) \rightarrow (\N,h) \subset \R^N$, where the target manifold has been isometrically embedded thanks to Nash's theorem \cite{nas}, for $\alpha>1$ they define the following functionals on $W^{1,2 \alpha}(\M;\N)$
\begin{equation}\label{eq.ESU}
E^{SU}_\alpha(u):= \tfrac{1}{2} \int_M (1+|\nabla u|^2)^\alpha.
\end{equation}
Notice that in the limit $\alpha \searrow 1$, this functional differs from the Dirichlet energy only by the constant $\tfrac{1}{2}Vol_g(\M)$. For $\alpha>1$, the Euler-Lagrange system of this functional is
\begin{equation}\label{eq.ELSU}
\footnotesize \Div( (1+|\nabla u|^2)^{\alpha-1} \nabla u)=-(1+|\nabla u|^2)^{\alpha-1} A_u(\nabla u, \nabla u) \iff \Delta u +2(\alpha-1) \frac{\Hess u(\nabla u, \nabla u)}{1+|\nabla u|^2}=-A_u(\nabla u, \nabla u).
\end{equation}
They are allowed to rewrite the equation as done above since the critical points of $E^{SU}_\alpha$ are a-priori smooth (see Proposition $2.3$ in \cite{sac}). In order to prove uniform (in $\alpha$) a-priori bounds on the solutions $u_\alpha$ to these equations, for $\alpha$ close enough to $1$ and under uniform smallness of the Dirichlet energy, the authors absorb the second summand involving the Hessian "into" the Laplace operator, thanks to Calderon-Zygmund's inequality, we refer the reader to Proposition $3.1$ in \cite{sac} for further details. This is the first major building block of their analysis, ensuring the smooth convergence of the critical maps to a harmonic map away from at most finitely many points, where the energy concentrates.

We explicitly notice here that the proof of the Proposition $3.1$ in \cite{sac} works without assuming any structure on the right hand side of \eqref{eq.ELSU} other than a quadratic growth in the gradient of $u$, and therefore their regularity result reveals possible non-uniqueness for solutions to similar systems: as a remarkable example, the well-known system introduced by Frehse in \cite{fre}
\begin{equation}
\begin{cases}
\Delta u^1+\frac{2}{1+|u|^2}(u_1+u_2)|\nabla u|^2=0,\\
\Delta u^2-\frac{2}{1+|u|^2}(u_1-u_2)|\nabla u|^2=0,
\end{cases}
\end{equation}
admits for all $r<e^{-2}$ at least two solutions, $u_1:=(\sin(\log(\log(|x|^{-1}))),\cos(\log(\log(|x|^{-1}))))$ which is singular and $u_2:=(\sin(\log(\log(r^{-1}))),\cos(\log(\log(r^{-1}))))$ which is smooth, under the same Dirichlet boundary condition (posed on $\partial B_r(0)$). Thus, the regularity of solutions to systems with critical growth seems to be linked to their uniqueness.
\subsection{Our approximating functionals}
We are now ready to introduce the functionals at the center of our Sacks-Uhlenbeck type approximation. For any natural number $n \ge 2$, any $\delta \in [0,1]$ we consider exponents $p \in [n,+\infty)$ and two Riemannian manifolds $(\M^n,g)$ and $(\N,h)$, where as before $(\N,h)$ is isometrically embedded into some Euclidean space $\R^N$. For any function $u \in W^{1,p}(\M;\N)$ and any domain $\Omega \subset \M$ we set
\begin{equation}\label{eq.Epdelta}
E_{p,\delta}(u;\Omega):= \tfrac{1}{p} \int_\Omega (1+(\delta+|\nabla u|^2)^{\frac{n}{2}})^{\frac{p}{n}} -(1+\delta^{\frac{n}{2}})^{\frac{p}{n}}  \ \  d \mu_g.
\end{equation}
Notice that we recover the conformal invariant case $E_{n,0}\equiv D_n$. Moreover, for $n=2$, $\delta=0$ and $\alpha:=p/2$, we recover Sacks-Uhlenbeck's functionals from \cite{sac}.
The Euler-Lagrange system of the energy $E_{p,\delta}$ is
\begin{equation}\label{eq.ELEpdelta}
-\Div[(1+(\delta+|\nabla u|^2)^{\frac{n}{2}})^{\frac{p-n}{n}}(\delta+|\nabla u|^2)^{\frac{n-2}{2}} \nabla u]=(1+(\delta+|\nabla u|^2)^{\frac{n}{2}})^{\frac{p-n}{n}}(\delta+|\nabla u|^2)^{\frac{n-2}{2}} A_u(\nabla u, \nabla u).
\end{equation}
A first difference with respect to \cite{sac}, is that the approximant family depends on two parameters. In order to explain the role of $\delta$, notice that for $\delta=0$, even by summing $+1$ in the integrands, we have not altered the degenerating nature of the problem. Some of our results work even in this degenerate elliptic case ($n>2$ and $\delta=0$), whereas others will need the regularization by $\delta>0$. The parameter $p$ plays the same role of $\alpha$ in \cite{sac}, i.e. it justifies the use of Morse-Palais-Smale's theory. The importance of adding $1$ in the integrand relies in the gain of monotonicity of the family $E_{p,\delta}$ with respect to the parameter $p$, which ultimately allows us to use the celebrated Struwe's monotonicity trick \cite{stru3} to solve min-max problems for $D_n$ modulo bubbling, see Theorem \ref{th.minmax}. Let us remark explicitly that the systems are not uniformly elliptic for any value of the parameters since they are of double-phase type: the elliptic ratio $\mathcal{R}$ of the system verifies on any open set $\Omega \subset \M$
\begin{equation*}
\mathcal{R}(\xi;\Omega):= \frac{\sup_\Omega \lambda_{max} \in \text{eigen} \set{ \partial_\xi[ (1+(\delta+|\xi|^2)^{\frac{n}{2}})^{\frac{p-n}{n}}(\delta+|\xi|^2)^{\frac{n-2}{2}} \xi ] }}{\inf_\Omega \lambda_{min}\in \text{eigen} \set{ \partial_\xi[ (1+(\delta+|\xi|^2)^{\frac{n}{2}})^{\frac{p-n}{n}}(\delta+|\xi|^2)^{\frac{n-2}{2}} \xi ] }} \sim \underset{\delta>0}{|\xi|^{p-2}} \text{ or } \underset{\delta=0}{|\xi|^{p-n}} \underset{|\xi|\rightarrow \infty}{\rightarrow} +\infty.
\end{equation*}

For all $p>n$ and $\delta \in [0,1]$, the solutions are clearly $C^{0,\alpha}$ for $\alpha=1-\tfrac{n}{p}$ thanks to Sobolev's embedding, however this regularity clearly degenerates as $p \searrow n$, which motivates the need to develop a regularity theory uniform in the parameters $p$ and $\delta$ in order to extract a local regular convergence of critical points.

The regularity properties of solutions to \eqref{eq.ELEpdelta} resemble partially the ones of $p$-harmonic maps, and the latter have been focus of long and thorough research, which we shortly summarise. In all of the following results, we implicitely assume the smallness of the (possibly rescaled)-energy $D_p$. Firstly, in the case $p=2$, the regularity theory for harmonic maps is rather complete, we refer the reader to the survey \cite{hel2}.
For general $p$, the situation is more complicated and a full regularity theory is still not available. The case of maps minimizing $D_p$ was treated in parallel by Hardt-Lin \cite{har1}, Fuchs \cite{fuc0} and Luckhaus \cite{luc1}, where the respective authors prove their $C^{1,\alpha}$-regularity, see also \cite{duz3} for a more flexible method; this is the optimal regularity to be expected in degenerate elliptic problems (see for example \cite{boj}), although their exponent $\alpha$ may not be the optimal one. Shortly after, Evans \cite{eva1}, Mou-Yang \cite{mou0}, Strzelecky \cite{strz0} and Takeuchi \cite{tak0}, proved the $C^{0,\alpha}$-regularity for sphere-valued stationary $p$-harmonic maps, all of them relying on a compensated compactness argument. A refinement of the same method, led Toro and Wang in \cite{tor} to extend this regularity to maps valued in some homogeneous Riemannian manifold $(\N,h)$. Finally, for what regards arbitrary targets $(\N,h)$, the conjectured regularity under small rescaled energy is still to be proven. Striking results in this direction, although partial, were obtained by Miskiewicz-Petraszczuk-Strzelecki in \cite{mis}, Riviere-Strzelecky in \cite{riv} and Martino-Schikorra in \cite{mar}, the latter two being the sharpest and most recent results.

For what regards double-phase problems, sometimes also called with $(p,q)$-growth, the literature is quite vast and we cite only a few results. To fix ideas, we restrict the attention to equations whose highest order term is in divergence form $-\Div(a(x,\xi))$ satisfying
\begin{equation*}
\partial_\xi a(x,\xi) \xi \cdot \xi \ge a |\xi|^{p}-b,\ \ |\partial_\xi a(x,\xi)| \le c|\xi|^q+d,\ \ \mathcal{R}(\xi;\Omega):= \frac{\sup_\Omega \lambda_{max} \in \text{eigen} \set{ \partial_\xi a(x,\xi) }}{\inf_\Omega \lambda_{min}\in \text{eigen} \set{ \partial_\xi a(x,\xi) }} \sim |\xi|^{q-p}.
\end{equation*}
Problems of this kind originated in the works \cite{ural1} by Ural'tseva-Urdaletova and \cite{zhi1,zhi2,zhi3} by Zhikov, concerning Homogenization and Lavrentiev's phenomenon. A comprehensive variational-analytical treatement of them was carried out by Marcellini in \cite{marc1,marc2,marc3,marc4} (see also Simon's work \cite{sim2}). As highlighted in these papers, a condition ensuring the regularity of solutions, also somewhat necessary (see \cite{esp1}), is the gap $q-p$ to be small enough, with a crucial role played also by the $x$-regularity of the coefficient $a$. This heuristic is also confirmed by several subsequent papers, see for example \cite{def0,def1,def2,esp1,esp2,esp3,lad1}. We refer the reader to the survey \cite{min0}. The first work considering minimizers to double-phase functionals of manifold-targeted maps is the work of De Filippis-Mingione \cite{def0} (apart from \cite{sac} which however avoids degeneracy of the associated Euler-Lagrange system).
\begin{theorem}[Uniform regularity]\label{th.SUregularity}
There exist constants $P_0 \in (n,+\infty)$, $\e_0>0$, $C_0>0$, $\tau_0 \in (0,1)$ and $\alpha_0 \in (0,1)$ all depending only on the data $n,N,(\M,g)$ and $(\N,h)$ satisfying the following statement. Consider a family $(u_{p,\delta})_{p\in (n,P_0),\delta \in (0,1]}$ of solutions to \eqref{eq.ELEpdelta} defined on a ball $B_{R_0}(x_0) \subset \M$, satisfying the uniform energy bound $R_0^{p-n}E_{p,\delta}(u_{p,\delta};B_{R_0}(x_0))\le \e_0^p$ for all $p\in (n,P_0)$ and $\delta \in (0,1]$. Suppose either of the following conditions is met:
\begin{enumerate}
    \item[(a)] any element of the family $(u_{p,\delta})$ is locally minimizing the corresponding energy $E_{p,\delta}$;
    \item[(b)] the target $(\N,h)$ is a homogeneous Riemannian manifold endowed with a left-invariant metric;
    \item[(c)] the dimension of the domain manifold $\M$ is $n=3$.
\end{enumerate}
Then for all $p\in (n,P_0)$ and $\delta \in (0,1]$ we have $u_{p,\delta} \in C^{1,\alpha_0}(B_{\tau_0 R_0}(x_0);\N)$ with uniform bounds on the semi-norms
\begin{equation}
    [\nabla u_{p,\delta}]_{C^{0,\alpha_0}(B_{\tau_0 R_0}(x_0);\N)} \le C_0 E_{p,\delta}(u_{p,\delta};B_{R_0}(x_0))^{\frac{1}{p}}.
\end{equation}
\end{theorem}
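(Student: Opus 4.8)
The plan is to establish the $C^{1,\alpha_0}$ estimate through a two-stage bootstrap: first obtaining a uniform Hölder ($C^{0,\beta}$) bound with $\beta$ independent of $p$ and $\delta$, and then upgrading to a gradient-Hölder estimate via a Caccioppoli/De Giorgi or freezing-the-coefficients argument on the quasilinear system. To set up stage one, I would first localize: after rescaling to work on $B_1(0)$, rewrite the system \eqref{eq.ELEpdelta} as $-\Div(a_{p,\delta}(\nabla u)\nabla u) = a_{p,\delta}(\nabla u) A_u(\nabla u,\nabla u)$ with $a_{p,\delta}(\xi) = (1+(\delta+|\xi|^2)^{n/2})^{(p-n)/n}(\delta+|\xi|^2)^{(n-2)/2}$, and record the structural bounds: the coefficient degenerates/grows like a double-phase density, but the key point is that testing against $u-\bar u$ (localized) gives a Caccioppoli inequality with constants controlled by the hypothesis $R_0^{p-n}E_{p,\delta}\le\e_0^p$, which by the definition of $E_{p,\delta}$ controls $R_0^{p-n}\int|\nabla u|^p$ and $R_0^{-n}\int|\nabla u|^n$ uniformly. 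This uniform control of the (rescaled) $p$- and $n$-energies is what prevents the Hölder exponent from collapsing as $p\searrow n$.

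For stage one, the right hand side has only quadratic-in-gradient growth — precisely the situation of Sacks–Uhlenbeck's Proposition 3.1 — so under case (a) I would run the standard Hardt–Lin/Fuchs/Luckhaus minimizing argument (monotonicity of a suitably normalized energy plus a $\e$-regularity comparison with $p$-harmonic replacements), while under cases (b) and (c) the right-hand side must be handled by compensated compactness: for homogeneous targets one follows Toro–Wang \cite{tor} to rewrite $A_u(\nabla u,\nabla u)$ using the Killing fields so that the critical nonlinearity sits in a Hardy space or is a div-curl product, and then Wente-type estimates give the Hölder continuity; for $n=3$ one can alternatively use that $W^{1,p}\hookrightarrow C^{0,1-3/p}$ already gives continuity for $p>3$, and the content is making the modulus of continuity uniform as $p\searrow 3$, which again comes from the $\e$-regularity iteration on dyadic balls with the uniform energy bound. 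Throughout, the double-phase nature is benign here because we are \emph{not} chasing the sharp small-gap condition of Marcellini-type theory — the exponents $n\le p<P_0$ with $P_0$ close to $n$ keep the gap $p-n$ (resp.\ $p-2$ when $\delta>0$) as small as we like, and the coefficient is $x$-independent, so Lavrentiev-type obstructions do not arise.

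For stage two, once $u$ is known to be uniformly continuous with a small oscillation on some $B_{\tau R_0}$, the gradient can be controlled: with $\delta>0$ the system is (non-uniformly but, on the region where $|\nabla u|$ is bounded, locally uniformly) elliptic of $p$-Laplacian type perturbed by a lower-order term with subcritical growth relative to the now-improved integrability, so the DiBenedetto–Tolksdorf $C^{1,\alpha}$ theory for degenerate systems applies and yields the estimate $[\nabla u_{p,\delta}]_{C^{0,\alpha_0}} \le C_0 E_{p,\delta}(u_{p,\delta};B_{R_0}(x_0))^{1/p}$ after tracking constants; the scaling in the estimate is dictated by the natural scaling of \eqref{eq.ELEpdelta}, namely $u\mapsto u(R\cdot)$ multiplies the rescaled energy correctly, and the power $1/p$ is the one making both sides scale the same way. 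The main obstacle — and the place where the bulk of the work lies — is making \emph{every} constant in stages one and two independent of $p$ and $\delta$ simultaneously: the De Giorgi/Moser iterations and the comparison estimates naturally produce constants blowing up as $p\searrow n$ (through Sobolev constants, through $\e$-regularity thresholds, and through the reverse-Hölder exponents), and the entire technical heart of the proof is to show that the double-phase density $a_{p,\delta}$, together with the uniform energy normalization $R_0^{p-n}E_{p,\delta}\le\e_0^p$, closes the iteration with $p$-independent constants; the three alternative hypotheses (a), (b), (c) are exactly the three regimes in which the critical quadratic nonlinearity on the right can be tamed uniformly.
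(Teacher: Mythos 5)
Your stage one is broadly aligned with the paper for cases (a) and (b): case (a) is indeed handled by Luckhaus' theorem (with the key observation that the blow-up functional is the pure $p$-energy, so Uhlenbeck's regularity gives hypothesis (A2) uniformly), and case (b) follows Toro--Wang's Killing-field rewriting, Hardy--BMO duality and hole-filling exactly as you sketch. However, your treatment of case (c) has a genuine gap. You propose to get uniform continuity for $n=3$ from the Sobolev embedding $W^{1,p}\hookrightarrow C^{0,1-3/p}$ plus ``the $\e$-regularity iteration on dyadic balls''. The Sobolev exponent $1-3/p$ collapses as $p\searrow 3$, so it contributes nothing uniform; and for arbitrary critical points with arbitrary targets there is no such iteration available: testing with $u-[u]$ produces the term $\int a(\nabla u)\,|A_u(\nabla u,\nabla u)|\,|u-[u]|$, which can only be absorbed once one already controls the oscillation of $u$ --- this circularity is precisely the open problem of regularity for $n$-harmonic maps into general targets that the paper discusses. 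The paper's actual mechanism for (c) is entirely different: the qualitative a priori smoothness (Corollary \ref{cor.uniqueness2}) permits rewriting the system in non-divergence form \eqref{eq.ELEpdeltaSU}, whose linearized operator $\mathcal{L}_u$ satisfies Cordes' condition \emph{only for} $n=2,3$; this yields uniform $\mathbb{L}^q$ Hessian estimates (Corollary \ref{cor.cordesbound}), the quadratic right-hand side is then tamed by the Gagliardo--Nirenberg inequality \eqref{eq.gagliardo} and absorbed using smallness of the energy, and Sobolev embedding of $W^{2,q}$ gives a uniform H\"older exponent $\alpha_2>\tfrac12$. Your proposal offers no explanation of why $n=3$ is singled out, and the route you describe would equally ``prove'' the general-dimension case, which is open.

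Two further structural differences worth noting. First, in the paper cases (b) and (c) do not proceed independently to the $C^{1,\alpha_0}$ conclusion: once a uniform $C^{0,\alpha}$ bound is obtained on a uniform scale, the small-range uniqueness result (Proposition \ref{prop.uniqueness1}) shows the solution coincides with the minimizer of its own Dirichlet problem, hence is locally minimizing, and the proof is finished by invoking case (a); your plan omits this reduction. Second, your stage two (DiBenedetto--Tolksdorf on the region where $|\nabla u|$ is bounded) is not what the paper does and glosses over the real difficulty: the critical right-hand side $A_u(\nabla u,\nabla u)$ forces one to first reach a H\"older exponent $\tilde\mu>\tfrac{p}{p+1}$ (which Luckhaus provides but generic De Giorgi iteration does not), and then to run Duzaar--Mingione's excess-decay argument for the nonlinear quantity $V(\nabla u)$ by comparison with the $E_{p,\delta}$-harmonic replacement, using the monotonicity inequality \eqref{eq.elementary2}; moreover ``local uniform ellipticity for $\delta>0$'' would produce constants degenerating as $\delta\searrow 0$, whereas the paper's argument is uniform down to $\delta=0$.
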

Since $p>n$, we can use the monotonicity of the rescaled energies to see that an uniform bound holds even on the full ball $B_{R_0}(x_0)$, see Section \ref{sec.regularity}.
For minimizing maps considered in point $(a)$, we apply Luckhaus' regularity result in \cite{luc1}, after carefully checking its uniformity as $p\searrow n$ and $\delta \searrow 0$, to deduce an initial uniform $C^{0,\alpha}$-regularity for all $\alpha \in (0,1)$; this relies on the  fact that our "blow up" functionals (see \cite{luc1}) are the $D_p$-energies, for which uniform regularity applies (notice that only the bigger phase survives in the blow up!). We can ultimately gain the claimed $C^{1,\alpha_0}$-regularity adapting methods from Duzaar-Mingione in \cite{duz3}. This methods works even for $\delta=0$ all along.
\begin{remark}
It is worth mentioning that the same method would not quickly ensure the regularity result in \cite{def0}, as in their case, the "blow-up" functional is still of double-phase type (for example $\mathcal{F}(u)=\int |\nabla u|^p+a(x) |\nabla u|^q$) and its minimizers enjoy the same regularity theory as those of the original functional.

Moreover, even though our functionals are not autonomous (as the metric on $T\M \otimes u^* T\N$ depends on the points $x$ and $u(x)$), this metric never annihilates, so from the regularity theory viewpoint, our functionals behave like autonomous functionals, like those treated in \cite{esp2}, compare with \cite{fus}.
\end{remark}

In order to treat the case $(b)$, we adopt Toro-Wang's argument from \cite{tor}, hence relying on a compensated compactness argument in combination with Hardy-BMO duality. Also this argument would work for $\delta=0$ as well.

In the last case $(c)$, the proof follows similar lines to Sacks-Uhlenbeck's ones. More precisely, we first show the local uniqueness of the solutions to \eqref{eq.ELEpdelta}, and from this deduce their local minimality. We explicitly remark that this works on a scale $\tau R_0$ such that $\tau \rightarrow 0$ as $p \searrow n$, so the newly gained uniform regularity (deduced from Theorem \ref{th.SUregularity} point $(a)$) cannot be extended to a uniform scale. However, since $\delta>0$, the system \eqref{eq.ELEpdelta} is uniformly non-degenerate with (by what we just said) locally bounded coefficients, hence uniformly elliptic, and standard Schauder's theory yields a-priori smoothness. This smoothness degenerates as $\delta \searrow 0$, but it allows us to recast the system similarly to what done in \cite{sac}
\begin{equation}\label{eq.ELEpdeltaSU}
-(\delta+|\nabla u|^2)^{\frac{2-n}{2}} \Div[(\delta+|\nabla u|^2)^{\frac{n-2}{2}} \nabla u]=(p-n) \tfrac{(\delta+|\nabla u|^2)^{\frac{n-2}{2}}\nabla^2 u (\nabla u,\nabla u)}{1+(\delta+|\nabla u|^2)^{\frac{n}{2}} } +A_u(\nabla u, \nabla u).
\end{equation}
The left-hand-side can be seen as a perturbation of the renormalised $n$-laplacian operator $\Delta_n^N$ (see \cite{kuh}). In dimension $n=3$, this operator enjoys some uniform Calderon-Zygmund-type estimates, and we can carry out the proof similarly to \cite{sac}. The dimensional restriction comes from the so-called Cordes' condition, an algebraic condition expressing closeness of the operator to the Laplace operator, see Definition \ref{def.Cordes}, which ensures the CZ-type estimates claimed above. As far as we know, it is the first time Cordes' condition is used in the context of $p$-harmonic maps; its application to their conjectured regularity, under small rescaled energy and for $p$ in the Cordes' interval (see \eqref{eq.pCordes}), is the focus of a forthcoming paper.

Moreover, we can prove uniform $C^{1,\alpha_0}$-estimates assuming only the smallness of the $3$-energy $D_3$ (Theorem \ref{th.regularityD3}). It is worth considering the latter theorem from a PDE perspective: this is a uniform (higher order) H\"older regularity result for a (essentially) degenerate non-uniformly elliptic variational system (in Uhlenbeck's quasi-diagonal form), with $\mathbb{L}^1$-right-hand-side (the only \textbf{uniform} bound assumed), assuming smallness of a norm \textbf{below the natural energy level}. We adopted the nomenclature from the book \cite{min1}. This may sound surprising. However, one should not forget that we implicitely assumed the solution to be in the energy space (in particular, non-energetic solutions as those constructed in \cite{col1} are excluded), as well as that the two phases are almost coinciding $p \sim 3$ (coherently to the works cited above). In a parallel work \cite{dim}, we prove such a Sacks-Uhlenbeck's type regularity for $p$-harmonic maps into homogeneous targets; notice that a proof for any target $\N$ would prove also the regularity of general-targeted $n$-harmonic maps, which is out of reach.

Remarkably, our attempts to exploit compensation phenomena, together with Iwaniec-Sbordone's stability estimates and Coifman-Rochberg-Weiss's estimate, as done in the recent works \cite{mar,mis} cited above, failed.
\subsection{Limiting analysis}
Consider now a family $(u_{p,\delta})_{p\in (n,P_0), \delta \in (0,1)}$ of critical maps for $E_{p,\delta}$ whose $E_{p,\delta}$ energies remain uniformly bounded as $p\searrow n$ and $\delta \searrow 0$. Exactly as in \cite{sac}, due to the asymptotic conformal invariance of the problem, we cannot prove in full generality the regular convergence of $(u_{p,\delta})$ (not even after extracting a sub-sequence), but we expect some "bubbles" to form at finitely many points in the limit. In our case, these bubbles can be obtained through an iterated blow-up procedure at points where the $E_{p,\delta}$-energies of the $u_{p,\delta}$'s concentrate, see Section \ref{sec.quantization} for details. Analytically, these bubbles can be identified to $C^{1,\alpha}$-regular, $n$-harmonic maps $\omega:S^n \rightarrow \N$, after using Duzaar-Fuchs' singularity removability Theorem from \cite{duz0}, see Section \ref{sec.quantization}. Theorem \ref{th.SUregularity} at hand, we can formalise the formation of these bubbles along the sequence $(u_{p,\delta})$, obtaining a limit base map $u_n$, given by a weak $W^{1,n}$-limit of a sub-sequence $u_{p_k,\delta_k}$, along with finitely many bubbles $\omega^{i,j}$; the convergence of $u_{p_k,\delta_k}$ to $u_n$ is in $C^{1,\alpha}$ on $\M$ away from finitely many points, where one can perform dilations at suitable infinitesimal scales, converging in $C^{1,\alpha}$ to one of the bubbles (one has to suitably iterate this procedure).
 
As it is customary in this kind of problems, one can ask for convergence in energy of $u_{p,\delta}$ to the "bubble tree" $\set{u_n} \cup \set{\omega^{i,j}}$ (sometimes also called quantization of energy or energy identity). Before stating our Theorem, let us summarise some related results in the current literature.
 
We will start with the Sacks-Uhlenbeck approximation, mostly important for our analysis. Chen-Tian \cite{che} proved the quantization of the energy for sequences of minimizers of the energy $E^{SU}_\alpha$ in a given homotopy class. Later Moore \cite{moo} proved the convergence in energy for $E^{SU}_\alpha$-min-max sequences under the additional assumption that the target manifold has finite fundamental group. Jost \cite{jos} removed this additional topological assumption. In \cite{liw0}, Li-Wang proved the energy identity for $E^{SU}_\alpha$-minimizers, each in their own homotopy class. In full generality, sequences of critical maps to $E^{SU}_\alpha$ have been costructed in \cite{liw1} by the same authors, for which the convergence in energy fails. Without assuming any variational characterization of the critical maps in consideration, the energy identity was proven in \cite{liz} by Li-Zhu for spherical targeted critical maps. Finally, in \cite{lam1}, the second-named author proved the quantization of the energy for sequences of critical maps assuming a Struwe-type entropy condition, and this is the result we aim to generalise. The main reason for this choice is that we want to consider general target manifold $\N$ (at least for $n=3$), while at the same time keeping a strong enough generality so to apply our theory to min-max problems for the $n$-energy (see Theorem \ref{th.minmax} below). For energy identities in the context of harmonic maps, see for example \cite{lin1,nab2,par}.

Finally, we mention the energy identites results for sequences of: $n$-harmonic maps in \cite{mou1}, for minimizers of the $n$-energy in homotopy classes \cite{wei0,nak}, for minimizers of conformally invariant energies of $n$-growth in homotopy classes in \cite{duz1}, and for suitable $D_n$-Palais-Smale sequences in \cite{wan1}.
\begin{theorem}[Energy identity]\label{th.quantization}
Let $(\M^n,g)$ and $(\N,h)$ be smooth, closed, Riemannian manifolds. For sequences $p_k\searrow n$, $\delta_k \searrow 0$, let $(u_k):=(u_{p_k,\delta_k}):\M\rightarrow \N$ be a family of critical maps to $E_{p_k,\delta_k}$ with uniformly bounded energy $E_{p_k,\delta_k}(u_k,\M)\le \Lambda_0<+\infty$. Suppose that either of the conditions $(a)$, $(b)$ and $(c)$ in Theorem \ref{th.SUregularity} holds true. Assume further the following Struwe-type entropy condition
\begin{equation}\label{eq.entropy}
\lim_{k\rightarrow n} (p_k-n)\int_M (1+(\delta_k+|\nabla u_k|^2)^{\frac{n}{2}})^{\frac{p_k}{n}}\log(1+(\delta_k+|\nabla u_k|^2)^{\frac{n}{2}}) =0.
\end{equation}
Then, up to subsequences, for any $\alpha_1 \in (0,\alpha_0)$, where $\alpha_0$ is as in Theorem \ref{th.SUregularity}, there exist finitely many points $x^1,...,x^K \in \M$ (possibly none), finitely many non-trivial $n$-harmonic maps $\omega^{i,j} \in C^{1,\alpha_1}(S^n;\N)$ and a $n$-harmonic map $u_n \in C^{1,\alpha_1}(\M^n;\N)$ such that $u_k \rightarrow u_n$ weakly in $W^{1,n}(\M;\N)$ and in $C^{1,\alpha_1}_{loc}(\M \setminus \set{x^1,...,x^K};\N)$. At any such point $x^i$, one can obtain finitely many of the $\omega^{i,j}$'s, for $j=1,...,j_i$, as limits of suitable blow-up procedures, centered at family of points $x^{i,j}_k \in \M$, with $x^{i,j}_k \rightarrow x^i$, and relative to parameters $r^{i,j}_k>0$, with $r^{i,j}_k \rightarrow 0$. These parameters verify
\begin{equation}\label{eq.parameters}
\begin{aligned}
&\max \set{\tfrac{r^{i,j}_k}{r^{i,j'}_k},\tfrac{r^{i,j'}_k}{r^{i,j}_k}, \tfrac{d_g(x^{i,j}_k,x^{i,j'}_k)}{r^{i,j}_k+r^{i,j'}_k} } \rightarrow +\infty, \quad \forall 1\le i \le K, \ \ 1\le j,j' \le j_i, \ \ j \neq j', \\
&\limsup_{k \rightarrow +\infty} (r^{i,j}_k)^{p_k-n}=1, \quad \forall 1\le i \le K, \ \ 1\le j \le j_i.
\end{aligned}
\end{equation}
Finally, the following energy identities hold
\begin{equation}\label{eq.quantization1}
\lim_{k \rightarrow +\infty} E_{p_k,\delta_k}(u_{p_k,\delta_k},\M)=D_n(u_n,\M)+\sum_{i=1}^K \sum_{j=1}^{j_i} D_n(\omega^{i,j},S^n),
\end{equation}
and
\begin{equation}\label{eq.quantization2}
\lim_{k \rightarrow +\infty} D_n(u_{p_k,\delta_k},\M)=D_n(u_n,\M)+\sum_{i=1}^K \sum_{j=1}^{j_i} D_n(\omega^{i,j},S^n).
\end{equation}
\end{theorem}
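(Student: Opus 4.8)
The plan is to build a bubble--tree decomposition in the spirit of \cite{sac}, using the uniform estimate of Theorem~\ref{th.SUregularity} in place of the classical $\e$-regularity, and then to rule out energy loss in the necks by adapting the neck analysis of \cite{lam1} to our degenerate, double--phase--type situation; the Struwe--type hypothesis \eqref{eq.entropy} will be exactly what controls the new error terms produced by $p_k>n$.

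\textbf{Step 1: concentration set and body map.} Fix a threshold $\e_1\in(0,\e_0]$, $\e_0$ as in Theorem~\ref{th.SUregularity}. Since $p_k>n$, the rescaled energies $r\mapsto r^{p_k-n}E_{p_k,\delta_k}(u_k;B_r(x))$ are monotone, so a covering argument shows that the set $\set{x^1,\dots,x^K}$ of points $x$ with $\limsup_k r^{p_k-n}E_{p_k,\delta_k}(u_k;B_r(x))\ge\e_1^n$ for all $r>0$ is finite, $K\le C\Lambda_0\e_1^{-n}$. Away from these points the hypotheses of Theorem~\ref{th.SUregularity} hold on small balls for $k$ large, so we obtain uniform $C^{1,\alpha_0}$-bounds; hence, along a subsequence, $u_k\rightharpoonup u_n$ weakly in $W^{1,n}(\M;\N)$ and $u_k\to u_n$ in $C^{1,\alpha_1}_{loc}(\M\setminus\set{x^1,\dots,x^K};\N)$. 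Passing to the limit in the weak form of \eqref{eq.ELEpdelta} — where $(1+(\delta_k+|\nabla u_k|^2)^{n/2})^{\frac{p_k-n}{n}}\to1$ and $\delta_k\to0$ uniformly on compacta — shows $u_n$ is $n$-harmonic off the $x^i$; its finite $n$-energy together with Duzaar--Fuchs' removability \cite{duz0} then makes it $n$-harmonic and $C^{1,\alpha_1}$ on all of $\M$.

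\textbf{Step 2: bubbling.} Near a fixed $x^i$, the monotone rescaled energy lets one choose, in normal coordinates, $x^{i,1}_k\to x^i$ and $r^{i,1}_k\to0$ so that $E_{p_k,\delta_k}(u_k;B_{r^{i,1}_k}(x^{i,1}_k))=\tfrac12\e_1^{p_k}$ is maximal over $E_{p_k,\delta_k}(u_k;B_{r^{i,1}_k}(x))$ for $x$ near $x^i$; by monotonicity the rescaled energy is then small on every ball of radius $\le r^{i,1}_k$, so the blow--ups $v_k(y):=u_k(x^{i,1}_k+r^{i,1}_k y)$ — which solve the rescaled version of \eqref{eq.ELEpdelta}, with effective regularisation $(r^{i,1}_k)^2\delta_k\to0$ and ``$+1$''-term $(r^{i,1}_k)^n\to0$ — are uniformly $C^{1,\alpha_0}_{loc}(\R^n)$-bounded by Theorem~\ref{th.SUregularity}, and thus $v_k\to\omega^{i,1}$ in $C^{1,\alpha_1}_{loc}(\R^n)$ for some finite--energy map $\omega^{i,1}$ with $\int_{B_1}|\nabla\omega^{i,1}|^n>0$ (the $E_{p_k,\delta_k}$-energy of $v_k$ on $B_1$ being $\tfrac12\e_1^{p_k}$ up to $o(1)$). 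The second line of \eqref{eq.parameters}, i.e.\ $(r^{i,1}_k)^{p_k-n}\to1$, follows from \eqref{eq.entropy}: since $|\nabla u_k|\gtrsim(r^{i,1}_k)^{-1}$ on a fixed fraction of $B_{r^{i,1}_k}(x^{i,1}_k)$, the entropy integrand there is $\gtrsim(r^{i,1}_k)^{n-p_k}|\log r^{i,1}_k|$, and $(p_k-n)$ times this equals $s_k e^{s_k}$ with $s_k:=(p_k-n)|\log r^{i,1}_k|$, so \eqref{eq.entropy} forces $s_k\to0$. With this, the rescaled Euler--Lagrange system passes to the $n$-harmonic map system, so $\omega^{i,1}$ is $n$-harmonic; it extends to $\omega^{i,1}\in C^{1,\alpha_1}(S^n;\N)$ by \cite{duz0}, is non-trivial, and satisfies the energy gap $D_n(\omega^{i,1};S^n)\ge\e_*$ coming from $\e$-regularity. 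If energy still concentrates near $x^i$ after subtracting $\omega^{i,1}$, a Sacks--Uhlenbeck/Brezis--Coron alternative produces a further bubble whose scale and centre are separated from $(r^{i,1}_k,x^{i,1}_k)$ as in the first line of \eqref{eq.parameters}; since each bubble costs at least $\e_*$ of energy, the procedure stops after finitely many steps $j=1,\dots,j_i$.

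\textbf{Step 3: no energy in the necks (the main obstacle).} It remains to prove $\lim_{R\to\infty}\limsup_{k\to\infty}E_{p_k,\delta_k}(u_k;A_k(R))=0$ on the neck annuli $A_k(R)=B_{Rr_k}(x_k)\setminus B_{r_k/R}(x_k)$ interpolating between the body map and a bubble, or between two consecutive bubbles. On such annuli the rescaled energy is small, so the $C^{1,\alpha_0}$-estimate of Theorem~\ref{th.SUregularity} applies and, in particular, the $u_k$ are stationary there and satisfy the inner--variation (Pohozaev) identity for $E_{p_k,\delta_k}$. Written through the stress--energy tensor $T_{ij}=\tfrac1{p_k}F(|\nabla u_k|^2)\delta_{ij}-\tfrac2{p_k}F'(|\nabla u_k|^2)\,\partial_i u_k\cdot\partial_j u_k$ with $F(s)=(1+(\delta_k+s)^{n/2})^{p_k/n}$, the gradient--independent pieces of this identity cancel and one is left, on each sphere $\partial B_\rho$, with the clean radial--tangential balance $\int_{\partial B_\rho}|\nabla u_k|^{n-2}|\nabla_T u_k|^2=(n-1)\int_{\partial B_\rho}|\nabla u_k|^{n-2}|\partial_r u_k|^2$ (where $|\nabla u_k|^2=|\partial_r u_k|^2+|\nabla_T u_k|^2$ on $\partial B_\rho$), up to errors of size $O(\delta_k)$, $O(p_k-n)$, curvature terms negligible at the neck scale, and — via $(1+t)^{\frac{p_k-n}{n}}-1\approx\tfrac{p_k-n}{n}\log(1+t)$ — terms of size the integral in \eqref{eq.entropy}. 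Inserting this balance into the Euler--Lagrange system gives a differential inequality for the annular energy which, through a three--annulus / hole--filling iteration over the dyadic subannuli exactly as in \cite{lam1}, forces exponential decay of the energy across the neck, so that no energy is lost there. The degeneracy ($\delta_k\to0$) and the non--uniform, double--phase ellipticity of \eqref{eq.ELEpdelta} are absorbed because on the necks the rescaled energy lies below the threshold of Theorem~\ref{th.SUregularity}, making all these estimates uniform in $p_k$ and $\delta_k$; carrying this through rigorously is the hardest part of the proof.

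\textbf{Step 4: conclusion.} Splitting $\M$ into the body part (where $u_k\to u_n$ in $C^1_{loc}$, so $E_{p_k,\delta_k}(u_k;\cdot)\to D_n(u_n;\cdot)$), the finitely many rescaled bubble disks (where $v_k\to\omega^{i,j}$ in $C^1_{loc}(\R^n)$, so their rescaled energy converges to $D_n(\omega^{i,j};B_R)$, and $D_n(\omega^{i,j};B_R)\to D_n(\omega^{i,j};S^n)$ as $R\to\infty$ by the removability theorem), and the necks of Step 3 (contributing $0$), additivity of the energy yields \eqref{eq.quantization1}. Finally, writing $t_k:=(\delta_k+|\nabla u_k|^2)^{n/2}$, the elementary bounds $(1+t)^{p/n}-(1+t)\le\tfrac{p-n}{n}(1+t)^{p/n}\log(1+t)$, $(\delta+a)^{n/2}-a^{n/2}\le\tfrac n2\delta(1+a)^{\frac{n-2}{2}}$, together with $\int_\M(1+|\nabla u_k|^2)^{\frac{n-2}{2}}\le C$, give
\[
\big|E_{p_k,\delta_k}(u_k;\M)-D_n(u_k;\M)\big|\le C(p_k-n)\Big(1+\int_\M(1+t_k)^{\frac{p_k}{n}}\log(1+t_k)\Big)+C\delta_k
\]
with $C=C(n,\M,\Lambda_0)$; the right--hand side tends to $0$ by \eqref{eq.entropy}, so \eqref{eq.quantization2} follows from \eqref{eq.quantization1}.
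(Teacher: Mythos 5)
Your Steps 1, 2 and 4 follow the paper's architecture (concentration set via monotonicity of the rescaled energy, maximal concentration function, entropy condition forcing $(r_k^{i,j})^{p_k-n}\to 1$ through a lower volume bound on the set where $|\nabla u_k|\gtrsim r_k^{-1}$ -- though you assert that volume bound rather than prove it; the paper establishes it by a contradiction argument against the normalisation $E_{p_k,\delta_k}(u_k,B_{r_k}(x_k))=\e_0^{p_k}/2$, and it also needs the \emph{a priori} upper bound $\limsup_k r_k^{n-p_k}<\infty$, which you skip). Your Step 4 is actually a cleaner route to \eqref{eq.quantization2} than the paper's: the global comparison $|E_{p_k,\delta_k}-D_n|\le C(p_k-n)(1+\text{entropy})+C\delta_k$ via convexity of $s\mapsto(1+t)^{s/n}$ is correct and uses \eqref{eq.entropy} directly, whereas the paper only controls $D_n$ by $E_{p,\delta}$ on the neck annuli.

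The genuine gap is in Step 3, which is the heart of the theorem. A Pohozaev/stress--energy identity can only yield a \emph{one-sided} comparison between the radial and tangential boundary energy densities (this is the paper's Lemma \ref{lemma.hopf1}); even granting your ``balance'', it tells you nothing unless you have an \emph{independent} bound on the tangential energy $\int_{\text{neck}} r^{-p}|\nabla_S u_k|^p$ over the whole neck. You never supply this. The paper obtains it by a dyadic radial harmonic replacement (Lemma \ref{lemma.hopf2}): on each dyadic subannulus one compares $u_k$ with the radial $E_{p,\delta}$-minimizer having constant boundary data equal to the spherical averages of $u_k$, tests the difference of the two systems with $u_k-h$, exploits the telescoping boundary terms, and uses that $|\nabla_S u|=|\nabla_S(u-h)|$ since $h$ is radial; the output is $\int r^{-p}|\nabla_S u_k|^p\le C\e^{1/p}(1+R_1^{n-p}+R_2^{n-p})$. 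Without an ingredient of this type the neck energy cannot be closed. Moreover, your proposed closure -- ``a three-annulus / hole-filling iteration exactly as in \cite{lam1} forces exponential decay'' -- is precisely what the authors point out \emph{fails} here: the uniform estimates of Theorem \ref{th.SUregularity} are only $C^{1,\alpha_0}$, so there is no uniform $\mathbb{L}^\infty$ control of the Hessian, and the paper deliberately settles for the weaker (non-exponential) bound above, which still suffices because the error term $(p_k-n)\log(R_0/(r_kR))$ and the factors $(r_kR)^{n-p_k}$ are tamed by the radii estimate \eqref{eq.concentrationradiientropy}. Two further points you gloss over: the neck is $B_{R_0}\setminus B_{Rr_k}$, not $B_{Rr_k}\setminus B_{r_k/R}$ as you wrote; and the smallness of $E_{p_k,\delta_k}(u_k,B_{2r}\setminus B_r)$ for \emph{every} $r$ in the neck range (needed to apply Theorem \ref{th.SUregularity} there and to get the pointwise bound $|\nabla u_k|\le C\e_0/|x|$) is not automatic -- it requires the Ding--Tian reduction to a single bubble plus a contradiction argument producing an intermediate bubble, which the paper carries out and you omit.
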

By the results of Duzaar and Kuwert \cite{duz1} (Theorem 2), the above Theorem
implies that we also have a decomposition in terms of homotopy classes.

The proof of this result resembles the $2$-dimensional analogue treated in \cite{lam1} by the second-named author. More precisely, we formalise quantitatively the formation of bubbles with the use of Brezis-Coron maximal concentration function \cite{bre0}. This allows us to pick suitable concentration radii $r_k^{i,j}$ and points $x_k^{i,j}$ such that the balls $B_{r_k^{i,j}}(x_k^{i,j})$ definitively (in $k$) contain a fixed positive amount of energy. Since the original sequence of critical point has uniformly bounded energy, only finitely many bubbles can be produced this way. Given the "regular" convergence of the sequence well inside the concentration balls and far away from them, we only need to prove that the energy along the connecting annuli is asymptotically vanishing to conclude the energy identity. In order to do that, we first show the sharp estimate on the concentration radii expressed in \eqref{eq.parameters}, using our entropy estimate. Then we prove some Hopf-differential type estimate, which heuristically says that energy density is asymptotically aligned along the polar direction of the annulus. Adapting Sacks-Uhlenbeck's dyadic harmonic replacement from \cite{sac}, we also obtain a definite bound on the energy of this polar component. Finally, we combine all these estimates to deduce \eqref{eq.quantization1}. However, on the annuli in consideration, the $D_n$-energy is suitably controlled by the $E_{p,\delta}$-energy, so we deduce also \eqref{eq.quantization2}.

In full generality, the necessity of the entropy condition towards the energy identity has been proven in \cite{liw1}, and one may wonder how restrictive it is to assume it. Conjecturally, one could make use of compensation phenomena to remove the entropy assumption for homogeneous targets as in \cite{liz}. Instead, we will show in Lemma \ref{lemma.minmax} that this assumption is always satisfied along a sequence of min-maximizers of the $E_{p,\delta}$-energy, adapting the celebrated Struwe's monotonicity trick from \cite{stru0} (as remarked before, the energy $E_{p,\delta}$ is monotone in $p$). As a Corollary, we can show that min-max problems for the $D_n$-energy are satisfied up to bubbling:
\begin{theorem}\label{th.minmax}
Suppose $A$ is a compact parameter manifold, with $\partial A=\emptyset$, and let $h_0:\M\times A \rightarrow \mathcal{\N}$ be a continuous map and $\alpha_1<\alpha_0$ from Theorem \ref{th.SUregularity}. Denote by $H$ the class of all maps $C^0-$homotopic to $h_0$ and define the min-max value
\begin{equation}
\beta:=\inf_{h \in H} \max_{t \in A} E_n(h(\cdot,t)).
\end{equation}
Then there exist a $n$-harmonic map $u_n\in C^{1,\alpha_1}(\M;\N)$ and finitely many $n$-harmonic maps $\omega^{i,j} \in C^{1,\alpha_1}(S^n;\mathcal{\N})$ such that
\begin{equation}
\beta=D_n(u_n,\M)+\sum_{i=1}^l \sum_{j=1}^{j_i} D_n(\omega^{i,j},S^n).
\end{equation}
Moreover, the set of maps $\set{u_n,\omega^{i,j}}$ arises as "bubble tree" limit of a sequence of maps $u_{p_k,\delta_k} \in C^{\infty}(\M;\mathcal{\N})$ critical for $E_{p_k,\delta_k}$ as described in Theorem \ref{th.quantization}. These critical maps also verify related min-max problems
\begin{align}
E_{p_k,\delta_k}(u_{p_k,\delta_k})=\beta_{p_k,\delta_k}:=\inf_{h \in H} \max_{t \in A} E_{p_k,\delta_k}(h(\cdot,t));\ \ \beta_{p_k,\delta_k} \longrightarrow \beta.
\end{align}
\end{theorem}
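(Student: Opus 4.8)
The plan is to obtain Theorem \ref{th.minmax} as a combination of three ingredients: the adaptation of Struwe's monotonicity trick \cite{stru0,stru3} to the two-parameter family $E_{p,\delta}$ (this is the content of Lemma \ref{lemma.minmax}), producing min-maximizing critical maps that automatically satisfy the Struwe-type entropy condition \eqref{eq.entropy}; the energy identity of Theorem \ref{th.quantization}, applied to these maps; and an elementary squeezing of the approximate min-max values $\beta_{p,\delta}$. Throughout we assume one of the conditions (b), (c) of Theorem \ref{th.SUregularity} is in force, so that Theorems \ref{th.SUregularity} and \ref{th.quantization} apply to the critical maps we construct. Replacing $h_0$ by a homotopic Lipschitz map (possible in any $C^0$-homotopy class of maps between closed manifolds) and $H$ by its Lipschitz representatives, we may assume $\beta<+\infty$ and that the functionals $E_{p,\delta}$ below are finite and jointly continuous in $(p,\delta,t)$ along the slices $t\mapsto h(\cdot,t)$; since $p>n=\dim\M$ makes them subcritical on $W^{1,p}(\M;\N)$, the Palais--Smale condition holds and the Morse / mountain-pass construction produces critical points at the level $\beta_{p,\delta}$.

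First I would record the monotonicity structure. For fixed $u$ the map $p\mapsto E_{p,\delta}(u)$ is non-decreasing on $[n,P_0)$ (this is why the constant $+1$ was inserted in \eqref{eq.Epdelta}), hence so is $p\mapsto\beta_{p,\delta}$; moreover $E_{n,\delta}(u)=\tfrac1n\int_{\M}\big[(\delta+|\nabla u|^2)^{n/2}-\delta^{n/2}\big]\,d\mu_g\ge\tfrac1n\int_{\M}|\nabla u|^n\,d\mu_g=D_n(u)$ by the superadditivity $(\delta+a)^{n/2}\ge\delta^{n/2}+a^{n/2}$, valid since $n\ge 2$. Hence $E_{p,\delta}(u)\ge D_n(u)=E_n(u)$ for every admissible $u$, so $\beta_{p,\delta}\ge\beta$ for all $p\in(n,P_0)$, $\delta\in(0,1]$. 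Conversely, inserting a near-optimal Lipschitz competitor $h\in H$ with $\max_t E_n(h(\cdot,t))\le\beta+\e$ and using $E_{p,\delta}(h(\cdot,t))\to E_n(h(\cdot,t))$ uniformly in $t\in A$ as $(p,\delta)\to(n,0)$ (dominated convergence, $|\nabla h|$ bounded on $\M\times A$, $A$ compact) gives $\limsup_{(p,\delta)\to(n,0)}\beta_{p,\delta}\le\beta+\e$; letting $\e\to0$ we conclude $\lim_{(p,\delta)\to(n,0)}\beta_{p,\delta}=\beta$. In particular $\beta_{p,\delta}$ is bounded uniformly in $(p,\delta)\in(n,P_0)\times(0,1]$.

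Now fix a sequence $\delta_k\searrow0$. For each $k$ the function $p\mapsto\beta_{p,\delta_k}$ is non-decreasing and bounded on $(n,P_0)$, so $\eta_k:=\int_n^{n+1/k}\partial_p\beta_{p,\delta_k}\,dp=\beta_{n+\frac1k,\delta_k}-\lim_{p\searrow n}\beta_{p,\delta_k}\to0$ by the previous paragraph; since the average of the non-negative function $\partial_p\beta_{\cdot,\delta_k}$ over $(n,n+\tfrac1k)$ is at most $k\eta_k$, there is a differentiability point $p_k\in(n,n+\tfrac1k)$ of $\beta_{\cdot,\delta_k}$ with $(p_k-n)\,\partial_p\beta_{p_k,\delta_k}\le\eta_k\to0$ and $p_k\searrow n$. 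Running the monotonicity-trick mechanism (Lemma \ref{lemma.minmax}) at $p_k$ with $\delta=\delta_k$ frozen — picking near-maximal slices along min-max sequences for parameters slightly below $p_k$, whose $p$-derivative of energy is controlled by $\partial_p\beta_{p_k,\delta_k}+o(1)$, and passing to the Palais--Smale limit — yields a critical map $u_k:=u_{p_k,\delta_k}$ of $E_{p_k,\delta_k}$ with $E_{p_k,\delta_k}(u_k)=\beta_{p_k,\delta_k}$ and $(p_k-n)\,\partial_pE_{p,\delta_k}(u_k)\big|_{p=p_k}\to 0$; by Theorem \ref{th.SUregularity} and, for each fixed $k$, uniformly elliptic Schauder bootstrap (legitimate since $\delta_k>0$), each $u_k$ is smooth. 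A direct computation gives
\[
(p_k-n)\,\partial_pE_{p,\delta_k}(u_k)\big|_{p=p_k}=\frac{p_k-n}{p_kn}\int_{\M}\big(1+(\delta_k+|\nabla u_k|^2)^{\frac n2}\big)^{\frac{p_k}{n}}\log\big(1+(\delta_k+|\nabla u_k|^2)^{\frac n2}\big)\,d\mu_g-\frac{p_k-n}{p_k}\beta_{p_k,\delta_k}+(p_k-n)R_k,
\]
where $|R_k|\le C$ uniformly (the remainder collects the constant $(1+\delta_k^{n/2})^{p_k/n}$-terms, bounded since $\delta_k\in(0,1]$). As the left-hand side tends to $0$, $\tfrac{p_k-n}{p_k}\beta_{p_k,\delta_k}\to0$ and $(p_k-n)R_k\to0$, while the integral is non-negative, we obtain exactly the entropy condition \eqref{eq.entropy}. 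Since also $E_{p_k,\delta_k}(u_k)=\beta_{p_k,\delta_k}$ is uniformly bounded, all hypotheses of Theorem \ref{th.quantization} are met.

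Applying Theorem \ref{th.quantization} to $(u_k)$ produces a base $n$-harmonic map $u_n\in C^{1,\alpha_1}(\M;\N)$, finitely many non-trivial $n$-harmonic $\omega^{i,j}\in C^{1,\alpha_1}(S^n;\N)$ arising as the asserted blow-up limits (with the parameter asymptotics \eqref{eq.parameters}), and the energy identity $\beta_{p_k,\delta_k}=E_{p_k,\delta_k}(u_k)\to D_n(u_n,\M)+\sum_{i,j}D_n(\omega^{i,j},S^n)$; combined with $\beta_{p_k,\delta_k}\to\beta$ from the second paragraph this gives $\beta=D_n(u_n,\M)+\sum_{i,j}D_n(\omega^{i,j},S^n)$ together with the bubble-tree description, and the homotopy-class decomposition then follows from Duzaar--Kuwert \cite{duz1}. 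The main obstacle lies entirely in Lemma \ref{lemma.minmax}: running Struwe's trick for a \emph{degenerating} two-parameter family requires (i) the Palais--Smale condition for $E_{p,\delta}$ on the constrained space $W^{1,p}(\M^n;\N)$ for $p>n$, robust enough to drive the min-max scheme and Morse theory; (ii) that the difference-quotient estimate survive passage to the Palais--Smale limit, so the critical map inherits the derivative bound; and (iii) casting that bound in precisely the form \eqref{eq.entropy} demanded by Theorem \ref{th.quantization}. The remaining steps — the superadditivity inequality, the squeeze $\beta_{p_k,\delta_k}\to\beta$, and feeding the sequence into Theorem \ref{th.quantization} — are routine.
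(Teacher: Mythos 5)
Your proposal is correct and follows essentially the same route as the paper: establish $\beta_{p,\delta}\to\beta$ by monotonicity plus a near-optimal regular competitor and the elementary inequality \eqref{eq.elementary3}, produce min-max critical points satisfying the entropy condition via the Struwe monotonicity trick of Lemma \ref{lemma.minmax}, and conclude with Theorem \ref{th.quantization}. The extra detail you supply on selecting the differentiability points $p_k$ and extracting the entropy bound from $(p_k-n)\,\partial_p E_{p,\delta_k}(u_k)|_{p=p_k}\to 0$ is exactly the content of the paper's Lemma \ref{lemma.minmax}, which you correctly identify as carrying the real weight of the argument.
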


\subsection{Structure of the paper}
In Section \ref{sec.preliminary} we discuss the main preliminary material needed in the following sections. In Section \ref{sec.regularity} we prove the main regularity Theorem \ref{th.SUregularity}. In Section \ref{sec.quantization}, we obtain the energy identity Theorem \ref{th.quantization}. In Section \ref{sec.minmax}, we adapt Struwe's monotonicity trick to deduce Theorem \ref{th.minmax}. Finally, in Section \ref{sec.applications}, we establish Theorem \ref{th.existence} and other existence results.
\subsection{Acknowledgements}
The authors wish to thank Cristoph B\"ohm for interesting discussions leading to a simplified statement of the main existence Theorem \ref{th.existence}. The first named author wishes to thank Antonio Tarsia for introducing him to Cordes' condition in a Master degree course, as well as Carlo Collari and Luca Pol for discussions about topology. The first named author has been partially supported by the PRIN Project 2022AKNSE4 \emph{Variational and Analytical aspects of Geometric PDEs} during the writing of this paper.

\section{Preliminary}\label{sec.preliminary}
\subsection{Notation}
Open balls in the domain manifold $\M$ or $S^n$ will be denoted by $B_r(x)$, where the radius is $r$ and the center is $x$; annuli in the domain manifolds will be denoted by $A(x,r,R):= B_R(x) \setminus \bar{B}_r(x)$; balls in the target will have the upper-script $\N$, will be centered at point $y$, with radius $\rho$, hence $B^{\N}_\rho(y)$. Given $\Omega \subset \M$, the average of a function $u$ on it is denoted by $[u]_\Omega$. We adopt Einstein's convention of summing over repeated indices.

Given an open set $\Omega \subset \R^n$, we introduce: Morrey's space $\mathbb{L}^{p,\lambda}(\Omega)$ to be the subset of $\mathbb{L}^p$ of elements with finite norm
\begin{equation*}
\norm{u}_{\mathbb{L}^{p,\lambda}(\Omega)}^p:= \sup_{B_r(x)\subset \R^n} r^{-\lambda}\int_{B_r(x) \cap \Omega} |u|^p.
\end{equation*}
Campanato's space $\mathcal{L}^{p,\lambda}(\Omega)$ to be the subset of  $\mathbb{L}^p$ of elements with finite semi-norm
\begin{equation*}
[u]_{\mathcal{L}^{p,\lambda}(\Omega)}^p:= \sup_{B_r(x)\subset \R^n} r^{-\lambda}\int_{B_r(x) \cap \Omega} |u-[u]_{B_r(x) \cap \Omega}|^p.
\end{equation*}
In case $\lambda=n$ and $p=1$, we call $\mathcal{L}^{1,n}(\Omega)=:BMO(\Omega)$ the space of bounded mean oscillation.

Hardy's space $\mathcal{H}^1(\R^n)$ is the subset of $\mathbb{L}^1(\R^n)$ of elements such that for some $\Phi \in C^{\infty}_c(\R^n)$ with $\int \Phi=1$, the following semi-norm is finite
\begin{equation*}
[u]_{\mathcal{H}^1}:= \sup_{t \in (0,+\infty)} \int_{\R^n} |u * t^{-n} \Phi \big( \tfrac{\cdot}{t}\big)|(x) \ dx.
\end{equation*}
The values of the not indexed constants appearing in the proofs of our results may vary from line to line.
\subsection{Setting}
We start by recalling the definition of the energies given in \eqref{eq.Epdelta}:
\begin{equation*}
E_{p,\delta}(u;\Omega)= \tfrac{1}{p} \int_\Omega (1+(\delta+|\nabla u|^2)^{\frac{n}{2}})^{\frac{p}{n}}-(1+\delta^{\frac{n}{2}})^{\frac{p}{n}} \ d \mu_g,
\end{equation*}
as well as their Euler-Lagrange systems from \eqref{eq.ELEpdelta}
\begin{equation*}
-\Div[(1+(\delta+|\nabla u|^2)^{\frac{n}{2}})^{\frac{p-n}{n}}(\delta+|\nabla u|^2)^{\frac{n-2}{2}} \nabla u]=(1+(\delta+|\nabla u|^2)^{\frac{n}{2}})^{\frac{p-n}{n}}(\delta+|\nabla u|^2)^{\frac{n-2}{2}} A_u(\nabla u, \nabla u).
\end{equation*}
First of all, we notice the convexity of the integrand defining \eqref{eq.Epdelta} and its monotonicity in the parameter $p$. Moreover, since $p>n$, we will use the monotonicity of the rescaled energies $r^{p-n}D_p(u;B_r(x))$ and $r^{p-n}E_{p,\delta}(u;B_r(x))$ with respect to both $r$ and the domain of integration.

With the aim in mind of applying our uniform regularity theorems even to rescalings of our solutions, we briefly introduce a more general system depending on a bounded parameter $s\in (0,1)$:
\begin{equation*}
-\Div[(s+(\delta+|\nabla u|^2)^{\frac{n}{2}})^{\frac{p-n}{n}}(\delta+|\nabla u|^2)^{\frac{n-2}{2}} \nabla u]=(s+(\delta+|\nabla u|^2)^{\frac{n}{2}})^{\frac{p-n}{n}}(\delta+|\nabla u|^2)^{\frac{n-2}{2}} A_u(\nabla u, \nabla u),
\end{equation*}
Clearly, this system is just the Euler-Lagrange system associated to the energy
\begin{equation*}
E_{p,\delta}^{(s)}(u;\Omega):= \tfrac{1}{p} \int_\Omega (s+(\delta+|\nabla u|^2)^{\frac{n}{2}})^{\frac{p}{n}}-(s+\delta^{\frac{n}{2}})^{\frac{p}{n}}\ d \mu_g.
\end{equation*}
In the regularity theory that we will develop in the next Section \ref{sec.regularity}, we will always rescale the parameter $s=1$, rather than the radius of the balls involved to $1$; we choose to do this in order to highlight the different scales at which our double-phase functional $E_{p,\delta}$ acts, however there would be no major difference in doing the opposite.
\subsection{Cordes' Condition}
As anticipated in the Introduction, solutions of \eqref{eq.ELEpdelta} enjoy some a-priori smoothness, see Corollary \ref{cor.uniqueness2}. This allows us to partially develop the divergence and divide by the non-trivial factor $(1+(\delta+|\nabla u|^2)^{\frac{n}{2}})^{\frac{p-n}{n}}(\delta+|\nabla u|^2)^{\frac{n-2}{2}} $ as in \cite{sac}, to rewrite \eqref{eq.ELEpdelta} as in \eqref{eq.ELEpdeltaSU} which we recall
\begin{equation*}
-(\delta+|\nabla u|^2)^{\frac{2-n}{2}} \Div[(\delta+|\nabla u|^2)^{\frac{n-2}{2}} \nabla u]=(p-n) \tfrac{(\delta+|\nabla u|^2)^{\frac{n-2}{2}}\nabla^2 u (\nabla u,\nabla u)}{1+(\delta+|\nabla u|^2)^{\frac{n}{2}} } +A_u(\nabla u, \nabla u).
\end{equation*}
The non-linear operator on the left-hand-side $\Delta^{N (\delta)}_n u :=(\delta+|\nabla u|^2)^{\frac{2-n}{2}} \Div[(\delta+|\nabla u|^2)^{\frac{n-2}{2}} \nabla u]$, is a perturbation of the classical normalised $n$-Laplacian. We can rewrite it as $(\Delta^{N (\delta)}_n u)^\alpha=: \mathcal{L}_u [u]$, where $\mathcal{L}_u$ is a second order linear operator depending non-linearly on $u$, acting on maps $v$ as follows
\begin{equation}\label{eq.definitionLu}
 (\mathcal{L}_u [v])^\alpha:=(\delta_{i j} \delta^{\alpha \beta}+(n-2) \tfrac{\nabla_i u^\alpha \nabla_j u^\beta}{\delta+|\nabla u|^2}) \nabla^2_{i j} v^\beta =:L^{\alpha \beta}_{i j}(u) \nabla^2_{i j} v^\beta.
\end{equation}
For all $u$, the operator $\mathcal{L}_u$ is second-order, uniformly elliptic, in non-divergence form, with measurable coefficients. Calderon-Zygmund's regularity theory does not hold in general under such hypotheses, but one can still get integrability of the Hessian under Cordes' condition which we recall from \cite{mau}.
\begin{definition}[Cordes' condition]\label{def.Cordes}
An operator $\mathcal{A}$ of the form $(\mathcal{A} v)^\alpha=A^{\alpha \beta}_{i j} \nabla^2_{i j} v^\beta$ acting on maps $v \in W^{2,1}(\R^n;\R^N)$ satisfies the Cordes' condition if there exists a constant $\e\in (0,1]$ such that
\begin{equation}\label{eq.cordes}
\sum_{i,j=1}^{n} \sum_{\alpha,\beta=1}^{N} (A^{\alpha \beta}_{i j})^2 \le \tfrac{1}{nN-1+\e} \Big( \sum_{i=1}^{n} \sum_{\alpha=1}^{N} A^{\alpha \alpha}_{i i}\Big)^2.
\end{equation}
\end{definition}
When considering the normalised $p$-laplacian acting on scalar valued functions, Cordes' condition was firstly identified in \cite{man}; we present here the case of vector valued maps.
\begin{lemma}
If the parameters $n$, $N$ and $p$ satisfy
\begin{equation}\label{eq.pCordes}
\begin{cases}
1 \le p < +\infty \text{ if } nN \le 2,\\
1 \le p < 3+\tfrac{2}{nN-2} \text{ if } n N>2,
\end{cases}
\end{equation}
then for some $\e=\e(p,nN) \in (0,1]$, the operator $\mathcal{A}_{p,u}$ defined as
\begin{equation}
(\mathcal{A}_{p,u} [v])^\alpha:=(\delta_{i j} \delta^{\alpha \beta}+(p-2) \tfrac{\nabla_i u^\alpha \nabla_j u^\beta}{\delta+|\nabla u|^2}) \nabla^2_{i j} v^\beta =:A^{\alpha \beta}_{i j}(u) \nabla^2_{i j} v^\beta
\end{equation}
satisfies for all $u \in W^{1,\infty}$ and $\delta>0$ the Cordes condition with constant $\e$ independent of $u$ and $\delta$.
\end{lemma}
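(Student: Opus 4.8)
The plan is to verify Cordes' condition \eqref{eq.cordes} pointwise, after reducing it to an inequality in a single scalar variable. I would fix $u\in W^{1,\infty}$ and $\delta>0$, work at a point where $\nabla u\neq 0$ (when $\nabla u=0$ the operator is the Laplacian and \eqref{eq.cordes} is trivial), and set
\[
 t:=\frac{|\nabla u|^2}{\delta+|\nabla u|^2}\in[0,1),\qquad a:=(p-2)\,t,
\]
so that $A^{\alpha\beta}_{ij}(u)=\delta_{ij}\delta^{\alpha\beta}+\frac{p-2}{\delta+|\nabla u|^2}\nabla_i u^\alpha\nabla_j u^\beta$. First I would compute the two quantities entering \eqref{eq.cordes}. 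Using $\sum_{i,\alpha}(\nabla_i u^\alpha)^2=|\nabla u|^2$ and $\sum_{i,j,\alpha,\beta}(\nabla_i u^\alpha)^2(\nabla_j u^\beta)^2=|\nabla u|^4$ one gets, by expanding the square,
\[
 \sum_{i,\alpha}A^{\alpha\alpha}_{ii}=nN+a,\qquad
 \sum_{i,j,\alpha,\beta}\big(A^{\alpha\beta}_{ij}\big)^2=nN+2a+a^2=(nN-1)+(1+a)^2.
\]
Hence \eqref{eq.cordes} with constant $\varepsilon$ is equivalent to $(nN-1+\varepsilon)\big[(nN-1)+(1+a)^2\big]\le(nN+a)^2$, i.e.\ to $\varepsilon\le\phi(a)$, where a brief computation yields
\[
 \phi(a):=\frac{(nN+a)^2-(nN-1)\big[(nN-1)+(1+a)^2\big]}{(nN-1)+(1+a)^2}=\frac{f_0(a)}{(nN-1)+(1+a)^2},\qquad f_0(a):=nN+2a+(2-nN)a^2.
\]

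Next I would show that $\inf\{\phi(a):a\in\mathcal I_p\}>0$, where $\mathcal I_p$ denotes the half-open interval swept by $a=(p-2)t$ as $t$ ranges over $[0,1)$, namely the interval between $0$ and $p-2$. The denominator of $\phi$ is bounded and $\ge nN-1>0$ on the bounded set $\mathcal I_p$, so this reduces to bounding $f_0$ away from zero on $\overline{\mathcal I_p}$. For $nN>2$ one factors $f_0(a)=(nN-2)(1+a)\big(\tfrac{nN}{nN-2}-a\big)$, so $f_0>0$ exactly on $\big(-1,\tfrac{nN}{nN-2}\big)$; for $nN\le2$ the coefficient of $a^2$ is $\ge0$ and $f_0>0$ on $(-1,+\infty)$. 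Thus everything comes down to checking that \eqref{eq.pCordes} guarantees $\overline{\mathcal I_p}\subset\big(-1,\tfrac{nN}{nN-2}\big)$: the endpoint $a=0$ always lies there, whereas the endpoint $a=p-2$ does iff $-1<p-2<\tfrac{nN}{nN-2}$, i.e.\ (for $nN>2$) iff $1<p<2+\tfrac{nN}{nN-2}=3+\tfrac{2}{nN-2}$, and without restriction when $nN\le2$. Since for $nN>2$ the parabola $f_0$ opens downward and has no interior minimum, $\inf_{\mathcal I_p}f_0=\min\{f_0(0),f_0(p-2)\}>0$, and I would then set $\varepsilon:=\min\{1,\inf_{\mathcal I_p}\phi\}\in(0,1]$. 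This $\varepsilon$ depends only on $p$ and $nN$, not on $u$ or $\delta$, because the whole argument was funnelled through the single scalar $a=(p-2)t$.

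The genuinely delicate point is the last step: recognising that the worst configuration is $t\to1$ — equivalently $|\nabla u|\to\infty$ or $\delta\searrow0$ — and that the resulting admissible range for $p$ is precisely the Cordes interval \eqref{eq.pCordes}; in particular the estimate is uniform as $\delta\searrow0$ as long as $p$ stays strictly below the threshold. One also has to keep track of the sign of $p-2$: for $1<p<2$ the interval $\mathcal I_p$ lies in $[p-2,0]\subset(-1,0]$ and positivity of $f_0$ there is immediate, while for $p\ge2$ it is the right endpoint $a=p-2$ that produces the constraint $p<3+\tfrac{2}{nN-2}$. Everything else — the two sums above, the factorisation of $f_0$, and the monotonicity considerations — is elementary.
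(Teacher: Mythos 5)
Your proposal is correct and follows the same basic strategy as the paper: compute the two sums in Cordes' condition, observe they depend on $\nabla u$ and $\delta$ only through $t=|\nabla u|^2/(\delta+|\nabla u|^2)$, and reduce to a one-variable inequality. Your organization is cleaner, though: by substituting $a=(p-2)t$ and writing the condition as $\e\le\phi(a)=f_0(a)/\bigl((nN-1)+(1+a)^2\bigr)$ with the factorization $f_0(a)=(nN-2)(1+a)\bigl(\tfrac{nN}{nN-2}-a\bigr)$ for $nN>2$, you read off the admissible range of $p$ directly from the roots, whereas the paper tracks $\e$ through a quadratic-root estimate in $t$ case by case. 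One caveat you should make explicit: your endpoint analysis yields the condition $-1<p-2$, i.e.\ $p>1$ strictly, while the lemma as stated allows $p=1$. This is not a defect of your argument --- for $p=1$ and $nN\ge2$ one has $a\to-1$ as $t\to1$, so $\phi(a)\to0$ and no uniform $\e>0$ exists (the limiting operator is a projection, for which Cordes fails); your computation in fact exposes an algebra slip in the paper's own treatment of the case $p\in(1,2)$, where the claimed equivalence at the end reduces (after simplification) to $\e\le 2-nN<0$. Since the paper only ever applies the lemma with $p$ near $n\ge2$, nothing downstream is affected, but you should state that your proof covers $1<p<3+\tfrac{2}{nN-2}$ rather than $1\le p$. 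A second, trivial point: your remark that the denominator of $\phi$ is $\ge nN-1>0$ fails when $nN=1$, but there $f_0(a)=(1+a)^2$ equals the denominator and $\phi\equiv1$, so that case is immediate.
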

\begin{proof}
Firstly, we compute an upper bound for the left hand side of \eqref{eq.cordes} with the explicit coefficients of $\mathcal{L}_{p,u}$
\begin{align*}
&\sum_{i,j=1}^{n} \sum_{\alpha,\beta=1}^{N} (A^{\alpha \beta}_{i j})^2=\sum_{i,j=1}^{n} \sum_{\alpha,\beta=1}^{N} (\delta_{i j} \delta^{\alpha \beta}+(p-2) \tfrac{\nabla_i u^\alpha \nabla_j u^\beta}{\delta+|\nabla u|^2})^2 =\sum_{i=1}^{n} \sum_{\alpha=1}^{N} (1+(p-2) \tfrac{|\nabla_i u^\alpha|^2}{\delta+|\nabla u|^2})^2\\
&+\sum_{i=1}^{n} \sum_{\alpha \neq \beta=1}^{N} (p-2)^2 (\tfrac{\nabla_i u^\alpha \nabla_i u^\beta}{\delta+|\nabla u|^2})^2+\sum_{i\neq j=1}^{n} \sum_{\alpha=1}^{N} (p-2)^2 (\tfrac{\nabla_i u^\alpha \nabla_j u^\alpha}{\delta+|\nabla u|^2})^2+\sum_{i\neq j=1}^{n} \sum_{\alpha \neq \beta=1}^{N} (p-2)^2 (\tfrac{\nabla_i u^\alpha \nabla_j u^\beta}{\delta+|\nabla u|^2})^2\\
&=nN+2(p-2)\sum_{i=1}^{n} \sum_{\alpha=1}^{N} \tfrac{|\nabla_i u^\alpha|^2}{\delta+|\nabla u|^2}+(p-2)^2 \sum_{i, j=1}^{n} \sum_{\alpha, \beta=1}^{N} (\tfrac{\nabla_i u^\alpha \nabla_j u^\beta}{\delta+|\nabla u|^2})^2= nN+2(p-2)\tfrac{|\nabla u|^2}{\delta+|\nabla u|^2}\\
&+(p-2)^2 \tfrac{|\nabla u|^4}{(\delta+|\nabla u|^2)^2}.
\end{align*}
Here we have used that $|\nabla u \otimes \nabla u|= |\nabla u|^2$. For what regards the right hand side, we take some $\e\in (0,1]$ and compute
\begin{align*}
\tfrac{1}{nN-1+\e} \Big( \sum_{i=1}^{n} \sum_{\alpha=1}^{N} A^{\alpha \alpha}_{i i}\Big)^2=\tfrac{1}{nN-1+\e} \Big( \sum_{i=1}^{n} \sum_{\alpha=1}^{N} 1+(p-2) \tfrac{|\nabla_i u^\alpha|^2}{\delta+|\nabla u|^2}\Big)^2= \tfrac{1}{nN-1+\e} \Big( n N+(p-2)\tfrac{|\nabla u|^2}{\delta+|\nabla u|^2} \Big)^2.
\end{align*}
Therefore, verifying the Cordes condition is equivalent to
\begin{equation}\label{eq.cordes1}
\begin{aligned}
&(nN-1+\e) \Big( nN+2(p-2)\tfrac{|\nabla u|^2(x)}{\delta+|\nabla u|^2(x)}+(p-2)^2 \tfrac{|\nabla u|^4(x)}{(\delta+|\nabla u|^2(x))^2}\Big) \le \Big( n N+(p-2)\tfrac{|\nabla u|^2}{\delta+|\nabla u|^2} \Big)^2 \iff\\
&-nN(1-\e)-(1-\e)2(p-2)\tfrac{|\nabla u|^2(x)}{\delta+|\nabla u|^2(x)}+(nN-2+\e)(p-2)^2 \tfrac{|\nabla u|^4(x)}{(\delta+|\nabla u|^2(x))^2} \le 0.
\end{aligned}
\end{equation}
If we set $t:=\tfrac{|\nabla u|^2(x)}{\delta+|\nabla u|^2(x)} \in [0,1]$, we see that we want to prove that the inequality
\begin{equation}\label{eq.cordes2}
-nN(1-\e)-(1-\e)2(p-2)t+(nN-2+\e)(p-2)^2 t^2 \le 0,
\end{equation}
holds uniformly in $t \in [0,1]$. We will distinguish between several cases. If $n N<2$, that is if $n=N=1$, then this inequality is verified for all $p$ and all $\e>0$. This inequality holds true also for $p=2$ and any choice of $\e$. In the complementary case, we notice that the left hand side is a polynomial of second degree in $t$, with positive leading term, strictly negative at $t=0$ for any choice of $\e \in (0,1)$. So we need to prove that its positive root is greater or equal than $1$; for $p>2$, which amounts to show
\begin{align*}
&t_1:=\frac{(1-\e)2(p-2) + \sqrt{4(1-\e)^2(p-2)^2+4(nN-2+\e)(p-2)^2 nN(1-\e)}}{2 (nN-2+\e)(p-2)^2} \ge 1\\
&\iff (1-\e) + \sqrt{(1-\e)^2+(nN-2+\e) nN(1-\e)}\ge (nN-2+\e)(p-2).
\end{align*}
If $nN=2$, then for all $p \in (2,+\infty)$, we can choose $\e=\e(p)$ small enough so that this inequality holds (for example we can choose $\e=(p-1)^{-1}$). Otherwise, we use the upper bound $p<3+\tfrac{2}{nN-2}$ to get that for some small $\eta>0$ we have $p\le 3+\tfrac{2}{nN-2}-\eta$ and solve for the more strict inequality
\begin{align*}
&(1-\e) + \sqrt{(1-\e)^2+(nN-2+\e) nN(1-\e)}> (nN-2+\e)(\tfrac{nN}{nN-2}-\eta) \underbrace{\iff}_{\e \rightarrow 0} \\
&nN=1 + \sqrt{1+(nN-2) nN}> nN-(nN-2)\eta,
\end{align*}
which is clearly satisfied. Since the inequality at the limit is strict, the same remains true for $\e>0$ small enough depending on $\eta$ so on $p$ and $nN$.

Finally, for $p\in (1,2)$ we can use $p \ge 1$ to prove once again an inequality more strict than what we need
\begin{align*}
&t_1:=\frac{(1-\e)2(p-2) + \sqrt{4(1-\e)^2(p-2)^2+4(nN-2+\e)(p-2)^2 nN(1-\e)}}{2 (nN-2+\e)(p-2)^2} \ge 1\\
&\iff -(1-\e) + \sqrt{(1-\e)^2+(nN-2+\e) nN(1-\e)}\ge (nN-2+\e)(2-p)\\
&\Leftarrow -(1-\e) + \sqrt{(1-\e)^2+(nN-2+\e) nN(1-\e)}\ge (nN-2+\e).
\end{align*}
After rearranging and taking the square to both sides we get
\begin{equation*}
(1-\e)^2+(nN-2+\e) nN(1-\e)\ge (nN-1)^2 \iff \e \le \tfrac{(nN)^2-3 nN+2}{nN-1}.
\end{equation*}
Since we are in the case $nN>2$ we can find a small $\e>0$ depending only on $nN$ satisfying this inequality. This finishes the proof.
\end{proof}
In our case we have $\mathcal{L}_u=\mathcal{A}_{n,u}$, so Cordes' condition is satisfied only if $n=2,3$, for any dimension of the target Euclidean space $\R^N$. Appealing to Theorems $1.2.1-1.2.3$ and Remark $1.6.11$ in \cite{mau} we recover partially Calderon-Zygmund's regularity theory for the operator of our interest.
\begin{corollary}\label{cor.Cordesdirichlet}
Suppose the parameters $(n,N,p)$ satisfy \eqref{eq.pCordes}. Let $B$ be a ball. Then there exists parameters $q_0<2<q_1$ and a constant $C_1$, depending on the dimensions $n$, $N$ and $p$, such that for all $q \in (q_0,q_1)$ and data $f \in \mathbb{L}^q(B;\R^N)$, the Dirichlet problem
\begin{equation}
\begin{cases}
\mathcal{A}_{p,u} v= f \quad \text{in } B;\\
v=0 \quad \text{in } \partial B,
\end{cases}
\end{equation}
admits a unique solution $v \in W_0^{2,q}(B;\R^N)$, which enjoys the estimate
\begin{equation}
\norm{\nabla^2 v}_{q,B} \le C_1 \norm{f}_{q,B}.
\end{equation}
In particular, for every function $v\in W_0^{2,q}(B;\R^K)$ we have Calderon-Zygmund's type inequality
\begin{equation}
\norm{\nabla^2 v}_{q,B} \le C_1 \norm{\mathcal{A}_{p,u} v}_{q,B}.
\end{equation}
\end{corollary}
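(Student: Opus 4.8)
The plan is to reduce the statement to the abstract $\mathbb{L}^q$-theory for second order systems in non-divergence form satisfying Cordes' condition, as developed in \cite{mau}, and then to check that every constant it produces — in particular the endpoints $q_0$ and $q_1$ — depends only on $n$, $N$ and $p$.

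First I would freeze $u$ and record the structural properties of $\mathcal{A}_{p,u}$. Since $t(x):=|\nabla u|^2(x)/(\delta+|\nabla u|^2(x))\in[0,1]$, the coefficient array $A^{\alpha\beta}_{ij}(u)=\delta_{ij}\delta^{\alpha\beta}+(p-2)\nabla_iu^\alpha\nabla_ju^\beta/(\delta+|\nabla u|^2)$ is measurable and bounded in $\mathbb{L}^\infty(B)$ by a constant depending only on $p$, and its trace $\sum_{i,\alpha}A^{\alpha\alpha}_{ii}=nN+(p-2)t$ stays bounded away from $0$ (again with bounds depending only on $p$, $n$, $N$). By the previous Lemma, $\mathcal{A}_{p,u}$ satisfies the Cordes condition \eqref{eq.cordes} with a constant $\e=\e(p,nN)\in(0,1]$ that is independent of $u$ and of $\delta$. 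These are precisely the hypotheses under which the theory of \cite{mau} applies, and all the constants appearing there are quantified solely through $n$, $N$ and $\e$, hence through $n$, $N$, $p$.

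Next I would invoke the $\mathbb{L}^2$-layer of that theory: Cordes' condition yields Talenti's a priori estimate $\|\nabla^2v\|_{2,B}\le C(n,N,\e)\,\|\mathcal{A}_{p,u}v\|_{2,B}$ for $v\in W^{2,2}_0(B;\R^N)$, which gives uniqueness in $W^{2,2}_0$, while existence follows since in Campanato's ``near operators'' sense $\mathcal{A}_{p,u}$ is near the renormalised Laplacian $(nN)^{-1}\big(\sum_{i,\alpha}A^{\alpha\alpha}_{ii}\big)\Delta$ with a nearness constant $<1$ controlled by $\e$. The point is then that nearness in the Campanato sense is stable under small perturbations of the integrability exponent: the $\mathbb{L}^2$-nearness propagates to $\mathbb{L}^q$-nearness, with constant still $<1$, for all $q$ in an interval $(q_0,q_1)\ni2$ whose width is quantified only by $\e$, $n$ and $N$. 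This is exactly the content of Theorems $1.2.1$–$1.2.3$ and Remark $1.6.11$ in \cite{mau}, and it delivers, for every $q\in(q_0,q_1)$ and $f\in\mathbb{L}^q(B;\R^N)$, a unique solution $v\in W^{2,q}_0(B;\R^N)$ of the Dirichlet problem together with $\|\nabla^2v\|_{q,B}\le C_1\|f\|_{q,B}$. The last assertion is then immediate: given $v\in W^{2,q}_0(B;\R^N)$ one sets $f:=\mathcal{A}_{p,u}v\in\mathbb{L}^q$, and by the uniqueness just established $v$ is \emph{the} solution of the corresponding Dirichlet problem, so the estimate applies.

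The main point to be careful about is uniformity: as is typical for degenerate or double-phase problems, producing these estimates for a single frozen operator is standard, but here the constants and the interval $(q_0,q_1)$ must not depend on $u$ or on $\delta\in(0,1]$. This is guaranteed by the two uniform inputs isolated above — the $\delta$- and $u$-independence of the Cordes constant $\e$ (the previous Lemma) and the $u$-independence of the bounds on the coefficients (immediate from $t(x)\in[0,1]$) — after which the conclusion is a matter of reading off the quantitative dependence in \cite{mau}. It is also worth noting that no regularity of $u$ beyond $\nabla u\in\mathbb{L}^\infty$ enters, since $u$ appears only through the frozen measurable coefficients $A^{\alpha\beta}_{ij}(u)$.
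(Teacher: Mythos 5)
Your proposal is correct, and at heart it follows the same route as the paper: verify the Cordes condition uniformly in $u$ and $\delta$ (the previous Lemma), solve the problem at $q=2$ via the ``nearness to the Laplacian'' mechanism, and then perturb the exponent to an interval $(q_0,q_1)\ni 2$ whose width depends only on the Cordes constant $\e=\e(p,nN)$. The one real difference is the level at which you stop: you treat the exponent--perturbation step as a black box, citing Theorems $1.2.1$--$1.2.3$ and Remark $1.6.11$ of \cite{mau} (which is indeed exactly how the Corollary is introduced in the text), whereas the paper's written proof makes this step explicit and self-contained. Concretely, the paper defines $\gamma:=\bigl(\sum_{i,\alpha}A^{\alpha\alpha}_{ii}\bigr)/\bigl(\sum_{i,j,\alpha,\beta}(A^{\alpha\beta}_{ij})^2\bigr)$, runs a contraction $T$ on $W^{2,q}_0$ for the fixed-point equation $\Delta U=(\Delta-\gamma\mathcal{A}_{p,u})w+\gamma f$, uses Miranda--Talenti's estimate (with optimal constant $1$ on the ball) together with the pointwise identity $\sum|\delta^{\alpha\beta}\delta_{ij}-\gamma A^{\alpha\beta}_{ij}|^2\le 1-\e$ to get the contraction at $q=2$, and then interpolates the optimal Calder\'on--Zygmund constant via Riesz--Thorin, $C_{CZ}(q)\le C_{CZ}(Q)^{Q(q-2)/(q(Q-2))}$ with $C_{CZ}(2)=1$, to keep $C_{CZ}(q)\sqrt{1-\e}<1$ for $q$ near $2$ on either side. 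Your version buys brevity at the cost of self-containedness; the paper's buys an explicit, quantified $(q_0,q_1)$ and a proof that does not lean on the reference. Your deduction of the final Calder\'on--Zygmund inequality from uniqueness is the same as the paper's implicit one, and your observation that only $\nabla u\in\mathbb{L}^\infty$ and the uniform Cordes constant enter is the right uniformity check. No gaps.
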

In an effort to make this work self-contained, we recall the proof of this result in our restricted case.
\begin{proof}
We start by proving the result for $q=2$. Firstly, let us set
\begin{equation*}
\gamma(x):=\tfrac{\sum_{i=1}^{n} \sum_{\alpha=1}^{N} A^{\alpha \alpha}_{i i} }{\sum_{i,j=1}^{n} \sum_{\alpha,\beta=1}^{\N} (A^{\alpha \beta}_{i j})^2}
\end{equation*}
and rewrite the system as
\begin{equation*}
\Delta v=(\Delta - \gamma \mathcal{A}_{p,u}) v+\gamma f.
\end{equation*}
In order to see that this problem is uniquely solvable, with corresponding estimates, we define for all fixed $w \in W^{2,2}_0(B;\R^N)$ the map $U=:Tw \in W^{2,2}_0(B;\R^N)$ as the only solution of the system
\begin{equation*}
\begin{cases}
    \Delta U=(\Delta - \gamma \mathcal{A}_{p,u}) w+\gamma f \ \text{ on } B;\\
    U=0 \ \text{ on } \partial B.
\end{cases}
\end{equation*}
We are going to show that $T$ is a contraction. Recalling Miranda-Talenti's estimate (with optimal constant $1$ since the ball $B$ is bounded, convex and regular)
\begin{equation*}
\int_B |\nabla^2 v|^2 \le \int_B |\Delta v|^2, \quad \forall v \in W^{2,2}_0(B;\R^N),
\end{equation*}
we obtain
\begin{align*}
&\norm{Tw_1-Tw_2}^2_{W^{2,2}_0}=\norm{U_1-U_2}^2_{W^{2,2}_0} \le \norm{\Delta U_1-\Delta U_2}^2_{\mathbb{L}^2}=\norm{\Delta (w_1-w_2)-\gamma \mathcal{A}_{p,u}(w_1-w_2) }^2_{\mathbb{L}^2}\\
&\le \int_B \Big( \sum_{\alpha \beta} \sum_{i,j} |\delta^{\alpha \beta} \delta_{i j} -\gamma A^{\alpha \beta}_{i j}(u)|^2 \Big) \Big( \sum_{\alpha \beta} \sum_{i,j} |\nabla^2_{i j} (w_1^{\alpha \beta} -w_2^{\alpha \beta})|^2 \Big) \le (1-\e)\norm{w_1-w_2}^2_{W^{2,2}_0},
\end{align*}
where we have used Cordes condition to deduce
\begin{align*}
&\sum_{\alpha \beta} \sum_{i,j} |\delta^{\alpha \beta} \delta_{i j} -\gamma A^{\alpha \beta}_{i j}(u)|^2= \sum_{\alpha \beta} \sum_{i,j} (\delta^{\alpha \beta} \delta_{i j})^2 -2 \gamma \delta^{\alpha \beta} \delta_{i j} A^{\alpha \beta}_{i j}(u)+\gamma^2 (A^{\alpha \beta}_{i j}(u))^2=\\
&nN - 2 \sum_{\alpha} \sum_{i} \gamma A^{\alpha \alpha}_{i i}(u) + \gamma^2 \sum_{\alpha \beta} \sum_{i,j} A^{\alpha \beta}_{i j}(u) = nN -\tfrac{ \Big( \sum_{i=1}^{n} \sum_{\alpha=1}^{N} A^{\alpha \alpha}_{i i} \Big)^2 }{\sum_{i,j=1}^{n} \sum_{\alpha,\beta=1}^{\N} (A^{\alpha \beta}_{i j})^2} \le nN-(nN-1+\e)=1-\e.
\end{align*}
Hence $T$ is indeed a contraction and its unique fixed point $v$ satisfying the estimate
\begin{equation}
\norm{v}_{W^{2,2}_0} \le \norm{\Delta v}_{\mathbb{L}^2}=\norm{\gamma f+\Delta v-\gamma \mathcal{A}_{p,u}v }_{\mathbb{L}^2}\le \norm{\gamma}_{\infty} \norm{f}_{\mathbb{L}^2}+\sqrt{1-\e}\norm{v}_{W^{2,2}_0}.
\end{equation}
We can therefore reabsorb and conclude the case $q=2$. For all $q \in (1,+\infty)$ we define the optimal Calderon-Zygmund constant $C_{CZ}(q)$ as the smallest constant such that
\begin{equation}
\norm{v}_{W^{2,q}_0} \le C_{CZ}(q) \norm{\Delta v}_{\mathbb{L}^q} \quad \forall v \in W^{2,q}_0.
\end{equation}
By Miranda-Talenti's estimate we know $C_{CZ}(2)=1$. Moreover, for any $Q \in (2,+\infty)$, Riesz-Thorin's interpolation inequality gives
\begin{equation}
C_{CZ}(q) \le C_{CZ}(2)^{\alpha} C_{CZ}^{1-\alpha}(Q)=C_{CZ}(Q)^{\tfrac{Q(q-2)}{q(Q-2)}}, \quad \text{where } \tfrac{1}{q}= \tfrac{\alpha}{2}+\tfrac{1-\alpha}{Q}.
\end{equation}
Therefore, choosing $q_1>2$ close enough to $2$ we can assume $C_{CZ}(q) \sqrt{1-\e} < 1$ for all $q \in [2,q_1)$. We can argue analogously relatively to the H\"older conjugate $Q'$ to get the existence of $q_0<2$ such that $C_{CZ}(q) \sqrt{1-\e} < 1$ for all $q \in (q_0,2]$. Extending the definition of the operator $T$ to maps in $W^{2,q}_0$, we can prove it to be a contraction exactly as before:
\begin{align*}
&\norm{Tw_1-Tw_2}^q_{W^{2,q}_0}=\norm{U_1-U_2}^q_{W^{2,q}_0} \le C_{CZ}(q)^q \norm{\Delta U_1-\Delta U_2}^2_{\mathbb{L}^q}\\
&\le C_{CZ}(q)^q \norm{\Delta (w_1-w_2)-\gamma \mathcal{A}_{p,u}(w_1-w_2) }^q_{\mathbb{L}^q} \le C_{CZ}(q)^q (1-\e)^{\frac{q}{2}}\norm{w_1-w_2}^q_{W^{2,q}_0}.
\end{align*}
The estimate is proven similarly
\begin{equation}
\norm{v}_{W^{2,q}_0} \le C_{CZ}(q) \norm{\Delta v}_{\mathbb{L}^q}\le C_{CZ}(q)(\norm{\gamma}_{\infty} \norm{f}_{\mathbb{L}^q}+\sqrt{1-\e}\norm{v}_{W^{2,q}_0}),
\end{equation}
concluding the proof after reabsorbing.
\end{proof}

One could also consider interior regularity estimates obtaining the following Corollary.
\begin{corollary}\label{cor.cordesbound}
Suppose the parameters $(n,N,p)$ satisfy \eqref{eq.pCordes}. By possibly restricting the interval $(q_0,q_1)$, given by Corollary \ref{cor.Cordesdirichlet}, while still mantaining the condition $q_0<2<q_1$, for every $v \in W^{2,q}(\Omega;\R^N)$, where $q \in (q_0,q_1)$, and any ball $B_{2r}(x)\subseteq \Omega$ we have
\begin{equation}
r^{2-\frac{n}{q}}\norm{\nabla^2 v}_{q,B_r(x)} \le C_2 r^{2-\frac{n}{q}} \norm{\mathcal{A}_{p,u}[v]}_{q,B_r(x)}+ C_2 r^{1-\frac{n}{2}}\norm{\nabla v}_{2, B_{2 r}(x)}.
\end{equation}
\end{corollary}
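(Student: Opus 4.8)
The plan is to reduce the interior estimate to the global Dirichlet-type estimate of Corollary \ref{cor.Cordesdirichlet} via a standard cut-off argument. First I would fix the ball $B_{2r}(x)\subseteq\Omega$ and, after translating and rescaling so that $x=0$ and working on $B_1$ (keeping track of the scaling factors $r^{2-n/q}$ and $r^{1-n/2}$, which are precisely the scale-invariant versions of the two norms appearing), choose a cut-off function $\eta\in C_c^\infty(B_{2})$ with $\eta\equiv 1$ on $B_1$, $0\le\eta\le1$, $|\nabla\eta|\le C$, $|\nabla^2\eta|\le C$. Then $w:=\eta v\in W_0^{2,q}(B_2;\R^N)$, and I would apply the Calderón–Zygmund-type inequality from Corollary \ref{cor.Cordesdirichlet} to $w$: $\norm{\nabla^2 w}_{q,B_2}\le C_1\norm{\mathcal{A}_{p,u}[w]}_{q,B_2}$.

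The next step is to expand $\mathcal{A}_{p,u}[\eta v]$ using the Leibniz rule. Since $\mathcal{A}_{p,u}$ is a second-order operator with bounded measurable coefficients $A^{\alpha\beta}_{ij}(u)$ (bounded uniformly: $|A^{\alpha\beta}_{ij}(u)|\le 1+|p-2|$, independent of $u$ and $\delta$), one has
\begin{equation*}
\mathcal{A}_{p,u}[\eta v]=\eta\,\mathcal{A}_{p,u}[v]+R,\qquad |R|\le C\big(|\nabla\eta|\,|\nabla v|+|\nabla^2\eta|\,|v|\big),
\end{equation*}
with $C=C(n,N,p)$. Hence
\begin{equation*}
\norm{\nabla^2 v}_{q,B_1}\le\norm{\nabla^2 w}_{q,B_2}\le C_1\norm{\mathcal{A}_{p,u}[v]}_{q,B_1}+C\big(\norm{\nabla v}_{q,B_2}+\norm{v}_{q,B_2}\big),
\end{equation*}
where in the last term I used $\supp\nabla\eta,\supp\nabla^2\eta\subset B_2$ and $|\nabla\eta|,|\nabla^2\eta|\le C$.

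It remains to absorb the lower-order terms $\norm{\nabla v}_{q,B_2}+\norm{v}_{q,B_2}$ into an $\mathbb{L}^2$-norm of $\nabla v$ on the slightly larger ball, which is where the restriction of the interval $(q_0,q_1)$ enters. If $q\le 2$ this is immediate from Hölder's inequality (with the correct power of $r$ appearing after undoing the rescaling, since $\norm{\nabla v}_{q,B_2}\le C\norm{\nabla v}_{2,B_2}$ on a normalized ball), and the Poincaré inequality on $B_2$ (applicable since we may subtract a constant from $v$, which does not change $\nabla^2 v$ or $\mathcal{A}_{p,u}[v]$) controls $\norm{v}_{q,B_2}$ by $\norm{\nabla v}_{q,B_2}$. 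If $q>2$ one first uses interpolation between $\mathbb{L}^2$ and $W^{2,q}$ (Gagliardo–Nirenberg): for any $\theta\in(0,1)$, $\norm{\nabla v}_{q,B_2}\le C\big(\sigma\norm{\nabla^2 v}_{q,B_2}+\sigma^{-\gamma}\norm{\nabla v}_{2,B_2}\big)$ with $\sigma$ small, together with a covering/iteration argument on a family of concentric balls $B_1\subset B_{\rho}\subset B_{2}$ to absorb the $\norm{\nabla^2 v}_{q}$ term on the right-hand side; this forces one to shrink $q_1$ slightly so that the absorption constant is admissible. The main obstacle is precisely this absorption step when $q>2$: one must run the standard iteration lemma (e.g. the one controlling $\sup$ of a function over a family of balls with a small self-improving term) carefully so that the resulting constant $C_2$ depends only on $n,N,p$, and one must verify that shrinking $(q_0,q_1)$ preserves $q_0<2<q_1$, which it does since $q=2$ is always interior to the admissible range by Corollary \ref{cor.Cordesdirichlet}. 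Undoing the initial rescaling restores the factors $r^{2-n/q}$ and $r^{1-n/2}$ exactly as stated.
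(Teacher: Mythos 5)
Your cut-off argument is a legitimate way to localize Corollary \ref{cor.Cordesdirichlet}, but it is a genuinely different route from the paper's. The paper does not cut off: it takes $h$ harmonic in $B_{2r}$ with $h=v$ on $\partial B_{2r}$, applies the Calder\'on--Zygmund/Cordes absorption to $w=v-h$ (which has zero Dirichlet data and satisfies $\Delta w=\Delta v$), and controls the error term $\norm{\nabla^2 h}_{q,B_r}$ by the interior derivative estimates for harmonic functions together with the Dirichlet minimality of $h$, which gives $\norm{\nabla^2 h}_{q,B_r}\le C r^{-1+n\frac{2-q}{2q}}\norm{\nabla h}_{2,B_{2r}}\le C r^{-1+n\frac{2-q}{2q}}\norm{\nabla v}_{2,B_{2r}}$ directly. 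This is why the sub-natural $\mathbb{L}^2$-norm of $\nabla v$ appears on the right at no cost, and why the paper never needs the Gagliardo--Nirenberg interpolation and the concentric-ball iteration that your argument requires when $q>2$. Your route trades the harmonic comparison for the Leibniz commutator $|\nabla\eta|\,|\nabla v|+|\nabla^2\eta|\,|v|$; that is workable (the coefficients of $\mathcal{A}_{p,u}$ are uniformly bounded, so the commutator bound is correct), but it is strictly more machinery in the $q>2$ case.

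Two points need fixing. First, with a cut-off supported in $B_{2r}$ the leading term necessarily comes out as $\norm{\mathcal{A}_{p,u}[v]}_{q,B_{2r}}$ --- indeed you write $B_1$ after rescaling where it should be $B_2$, since $\eta\,\mathcal{A}_{p,u}[v]$ is supported in the larger ball --- whereas the statement, and its use in Theorem \ref{th.regularityD3} where $(p-3)\norm{\nabla^2 u}_{q,\cdot}$ must be reabsorbed into the left-hand side on the \emph{same} ball, has this term on $B_r$. Your argument therefore proves a slightly weaker inequality; to recover the stated one you would have to run the concentric-ball iteration on the leading term as well, or accept the $B_{2r}$ version and redo the absorption in Theorem \ref{th.regularityD3} with an additional iteration. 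Second, your claim that the interpolation/absorption step "forces one to shrink $q_1$" is off: that absorption works for any small weight $\sigma$ and imposes no condition on $q$. The restriction of $(q_0,q_1)$ comes solely from the Cordes condition $C_{CZ}(q)\sqrt{1-\e}<1$, which you inherit automatically by invoking Corollary \ref{cor.Cordesdirichlet} as a black box.
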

\begin{proof}
Let $h$ be the harmonic function $\Delta h=0$ satisfying boundary conditions $h=v$ on $\partial B_{2 r}$. Then $w:=v-h$ has zero Dirichlet boundary data and is a weak solution of
\begin{equation}
\Delta w=\Delta v=(\Delta - \gamma \mathcal{A}_{p,u}) v+\gamma \mathcal{A}_{p,u}[v].
\end{equation}
Here we have defined $\gamma$ as in the previous proof.
Combining Calderon-Zygmund's inequality for the Laplacian, the triangular inequality and Cordes' condition we get
\begin{equation}
\begin{aligned}
&\norm{\nabla^2 v}_{q,B_r} -\norm{\nabla^2 h}_{q,B_r} \le \norm{\nabla^2 w}_{q,B_r} \le C_{CZ}(q) \norm{(\Delta - \gamma \mathcal{A}_{p,u}) v}_{q,B_r}+C_{CZ}(q) \norm{\gamma \mathcal{A}_{p,u}[v]}_{q,B_r} \\
&\le C_{CZ}(q) \sqrt{1-\e} \norm{\nabla^2 v}_{q,B_r}+C \norm{\mathcal{A}_{p,u}[v]}_{q,B_r}.
\end{aligned}
\end{equation}
As in the previous proof, we can assume that $C_{CZ}(q)\sqrt{1-\e}<1$. Using the derivative estimates for harmonic function, expressed in the particular form of \cite{eva2} Part I, Section 2.2 Theorem 7, applied to $\nabla h$, we obtain
\begin{equation}
\norm{\nabla^2 h}_{q,B_r} \le C r^{-1+n \frac{2-q}{2q}} \norm{\nabla h}_{2,B_{2r}},
\end{equation}
from which we can rearrange the inequality above to get
\begin{equation}
r^{2-\frac{n}{q}} \norm{\nabla^2 v}_{q,B_r} \le C r^{2-\frac{n}{q}} \norm{\mathcal{A}_{p,u}[v]}_{q,B_r}+ C r^{1-\frac{n}{2}} \norm{\nabla h}_{2,B_{2r}} \le C r^{2-\frac{n}{q}} \norm{\mathcal{A}_{p,u}[v]}_{q,B_r}+C r^{1-\frac{n}{2}} \norm{\nabla v}_{2,B_{2r}},
\end{equation}
where the last inequality follows from the minimality of $h$ with respect to the Dirichlet energy, under fixed boundary conditions.
\end{proof}
\begin{remark}
Let us explicitly remark that the constants in the previous Corollary degenerate only when Cordes' costant $\e$ approaches zero, which is never the case for our $\mathcal{L}_u=\mathcal{A}_{3,u}$. Also, if we happen to know that $\nabla v \in \mathbb{L}^3$, as in the case of our interest, then we can substitute the last term with $C_2 r^{1-\frac{n}{3}} \norm{\nabla v}_{3,B_{2r}}$ by a simple application of H\"older's inequality.
\end{remark}
\subsection{Elementary inequalities}\label{subsec.elementary}
We now list some elementary inequalities, some satisfied by the integrand defining $E^{(s)}_{p,\delta}$, with particular attention to the uniformity of the constants as $p\searrow n$ and $\delta \searrow 0$. We will implicitely assume that $p \in (n,P_0)$ for some $P_0 \in (n,n+1)$, $\delta \in (0,1)$ and $s \in [0,1]$.
\begin{equation}\label{eq.elementary0}
|x-y|^n \le 2^{n-1} (|x|^n+|y|^n), \quad |x-y|^p \le 2^{P_0-1}(|x|^p+|y|^p).
\end{equation}
Using that for every $r \in (2,p)$ and $t \ge 0$ one has $t^r \le t^2+t^p$, we arrive to
\begin{equation}\label{eq.elementary1}
\begin{aligned}
&\max \set{ |x|^n-a_0(n,P_0),|x|^p}\le (s+(\delta+|x|^2)^{\frac{n}{2}})^{\frac{p}{n}}-(s+\delta^{\frac{n}{2}})^{\frac{p}{n}} \le C_3(n,P_0)(1+|x|^p).
\end{aligned}
\end{equation}
Using the convexity of $(1+x)^\alpha$ for $\alpha\ge 1$, we get
\begin{equation}\label{eq.elementary3}
    \tfrac{1}{p} \big[ (1+(\delta+|x|^2)^{\frac{n}{2}})^{\frac{p}{n}}-(1+\delta^{\frac{n}{2}})^{\frac{p}{n}}\big] \le \tfrac{1}{n}|x|^n+ C_4(n,P_0) \max \set{|x|^p,1}(P_0-n+ \delta).
\end{equation}
Setting $V(X):=(s+(\delta+|X|^2)^{\frac{n}{2}})^{\frac{p-n}{2 n}}(\delta+|x|^2)^{\frac{n-2}{4}} X$, we have for all $X,Y \in \R^{nN}$ and for some constants $c_0=c_0(n,P_0)>0$ and $c_1=c_1(n,N,P_0)>0$ uniform as $p\searrow n$ and $\delta \rightarrow 0$
\begin{equation}\label{eq.elementary2}
\begin{aligned}
[(s+(\delta+|X|^2)^{\frac{n}{2}})^{\frac{p-n}{n}} (\delta+|X|^2)^{\frac{n-2}{2}} X &- (s+(\delta+|Y|^2)^{\frac{n}{2}})^{\frac{p-n}{n}}(\delta+|Y|^2)^{\frac{n-2}{2}} Y]\cdot (X-Y) \\
&\ge c_0 |V(X)-V(Y)|^2 \ge c_1 |X-Y|^p.
\end{aligned}
\end{equation}
The proof of this inequality is quite lenghty, so it is presented in the Appendix.

Finally, we prove a Gagliardo-Niremberg type inequality. This will be used in the proof of Theorem \ref{th.SUregularity}, point $(c)$, to bound the quadratic term in \eqref{eq.ELEpdeltaSU}. Since we will be interested in the case $2 q>n=3$, we cannot apply directly Proposition $3.2$ in Struwe \cite{stru2} but we need to adapt its proof to our simpler case. See also \cite{riv} for a related result.
\begin{lemma}[Gagliardo-Nirenberg type Lemma]
For any ball $B_{R}(x) \subseteq \R^n$, any $q \in [\tfrac{n}{2},n]$ and any function $f \in W^{2,q}(B_{R}(x))$, we have $\nabla f \in \mathbb{L}^{2q}(B_{R}(x))$ and the following estimate holds
\begin{equation}\label{eq.gagliardo}
R^{2-\frac{n}{q}}\norm{\nabla f}_{ \mathbb{L}^{2q}(B_{R}(x)) }^2 \le C_5(n) \norm{\nabla f}_{\mathbb{L}^{n}(B_{R}(x))}(\norm{\nabla f}_{\mathbb{L}^{n}(B_{R}(x))}+R^{2-\frac{n}{q}}\norm{\nabla^2 f}_{\mathbb{L}^{q}(B_{R}(x))}).
\end{equation}
\end{lemma}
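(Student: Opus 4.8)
The plan is to strip the scaling weights by reducing to the unit ball, then run a Gagliardo--Nirenberg interpolation whose interpolation parameter turns out to be exactly $\tfrac{1}{2}$, and finally square to reach the stated product form on the right-hand side.

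First I would reduce to $B_1$. For $f\in W^{2,q}(B_R(x))$ set $\tilde f(y):=f(x+Ry)$, $y\in B_1(0)$. The change of variables $y=x+Rz$ gives, for any exponent $r$, the identities $\norm{\nabla\tilde f}_{\mathbb{L}^r(B_1)}=R^{1-n/r}\norm{\nabla f}_{\mathbb{L}^r(B_R(x))}$ and $\norm{\nabla^2\tilde f}_{\mathbb{L}^r(B_1)}=R^{2-n/r}\norm{\nabla^2 f}_{\mathbb{L}^r(B_R(x))}$. Taking $r=2q$ for the left-hand side, $r=n$ for the first factor on the right, and $r=q$ for the Hessian term, one sees that \eqref{eq.gagliardo} is literally equivalent to its weightless form on $B_1$,
\begin{equation*}
\norm{\nabla \tilde f}_{\mathbb{L}^{2q}(B_1)}^2 \le C_5(n)\,\norm{\nabla \tilde f}_{\mathbb{L}^n(B_1)}\big(\norm{\nabla \tilde f}_{\mathbb{L}^n(B_1)}+\norm{\nabla^2 \tilde f}_{\mathbb{L}^q(B_1)}\big).
\end{equation*}
So from now on I work on $B_1$, write $g:=\nabla\tilde f\in W^{1,q}(B_1;\R^n)$ (applied componentwise, so $\norm{\nabla g}_{\mathbb{L}^q}=\norm{\nabla^2\tilde f}_{\mathbb{L}^q}$), and put $X:=\norm{g}_{\mathbb{L}^n(B_1)}+\norm{\nabla g}_{\mathbb{L}^q(B_1)}$. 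Since $q\le n$, Hölder on the bounded ball gives $\norm{g}_{\mathbb{L}^q(B_1)}\le c_n\norm{g}_{\mathbb{L}^n(B_1)}$, hence $\norm{g}_{\mathbb{L}^n(B_1)}\le X$ and $\norm{g}_{W^{1,q}(B_1)}\le c_n X$.

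Next comes the interpolation. For $q<n$ the Sobolev embedding $W^{1,q}(B_1)\hookrightarrow\mathbb{L}^{q^*}(B_1)$, $q^*=nq/(n-q)$, yields $\norm{g}_{\mathbb{L}^{q^*}(B_1)}\le C(n,q)\,X$; the hypothesis $q\ge n/2$ is precisely what makes $n\le 2q\le q^*$, so $\mathbb{L}^{2q}$ lies between $\mathbb{L}^n$ and $\mathbb{L}^{q^*}$ and Hölder interpolation gives $\norm{g}_{\mathbb{L}^{2q}}\le\norm{g}_{\mathbb{L}^n}^{1-\lambda}\norm{g}_{\mathbb{L}^{q^*}}^{\lambda}$ with $\tfrac{1}{2q}=\tfrac{1-\lambda}{n}+\tfrac{\lambda}{q^*}$; using $\tfrac{1}{q^*}=\tfrac{1}{q}-\tfrac{1}{n}$, this solves to $\lambda=\tfrac{1}{2}$ for \emph{every} admissible $q$. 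Squaring and using $\norm{g}_{\mathbb{L}^n}\le X$ then gives $\norm{g}_{\mathbb{L}^{2q}(B_1)}^2\le C(n,q)\,\norm{g}_{\mathbb{L}^n(B_1)}\,X$, i.e. \eqref{eq.gagliardo} with a constant that still depends on $q$. The endpoint $q=n$ runs identically with any fixed $m\ge 2n$ in place of $q^*$ (legitimate since $W^{1,n}(B_1)\hookrightarrow\mathbb{L}^m(B_1)$ for every finite $m$), interpolating $\mathbb{L}^{2n}$ between $\mathbb{L}^n$ and $\mathbb{L}^m$ with exponent $\le\tfrac{1}{2}$; only $\norm{g}_{\mathbb{L}^n}\le X$ and the bound ``exponent $\le\tfrac{1}{2}$'' are actually used.

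The one step I expect to require care is getting the constant $C_5$ to depend on $n$ alone: the critical-Sobolev constant $C(n,q)$ degenerates as $q\uparrow n$, so one should either interpolate $\mathbb{L}^{2q}$ against a \emph{fixed} exponent kept strictly below the Sobolev conjugate over the whole range (the embedding constant is then continuous, hence bounded, on the compact interval $q\in[\tfrac{n}{2},n]$, and the resulting interpolation exponent stays $\le\tfrac{1}{2}$, which is all one needs), or quote the Gagliardo--Nirenberg inequality on $\R^n$ with $\theta=\tfrac{1}{2}$ together with the fact that an extension operator $W^{1,q}(B_1)\to W^{1,q}(\R^n)$ with support in a fixed ball has operator norm bounded uniformly for $q\in[\tfrac{n}{2},n]$. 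Beyond this bookkeeping there is no real difficulty: the whole content of the lemma is the scaling reduction and the identity $\lambda=\tfrac{1}{2}$.
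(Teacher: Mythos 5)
Your route is genuinely different from the paper's. You reduce to $B_1$ by scaling (the paper does this too), but then you use the critical Sobolev embedding $W^{1,q}(B_1)\hookrightarrow \mathbb{L}^{q^*}(B_1)$ followed by Lebesgue interpolation of $\mathbb{L}^{2q}$ between $\mathbb{L}^n$ and $\mathbb{L}^{q^*}$, where the exponent works out to exactly $\tfrac12$. The paper instead extends $f$ to $B_2$, multiplies by a cut-off, applies the Adams--Frazier inequality $\norm{\nabla G}_{\mathbb{L}^{2q}}^2\le C[G]_{BMO}\norm{\nabla^2 G}_{\mathbb{L}^q}$, and controls $[G]_{BMO}$ by $\norm{\nabla f}_{\mathbb{L}^n}$ via Poincar\'e. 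Your scaling bookkeeping and the computation $\lambda=\tfrac12$ are correct, and for $q$ in any compact subinterval of $[\tfrac n2,n)$ your argument is a complete and more elementary proof.

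There is, however, a genuine gap at and near $q=n$, which is exactly where the $BMO$ route earns its keep. At $q=n$ you interpolate $\mathbb{L}^{2n}$ between $\mathbb{L}^n$ and a fixed $\mathbb{L}^m$, $m<\infty$; solving $\tfrac1{2n}=\tfrac{1-\lambda}{n}+\tfrac{\lambda}{m}$ gives $\lambda=\tfrac{m}{2(m-n)}>\tfrac12$ for every finite $m$ (it tends to $\tfrac12$ only as $m\to\infty$), not $\lambda\le\tfrac12$ as you assert. Consequently you obtain $\norm{g}_{2n}^2\le C\norm{g}_n^{2(1-\lambda)}X^{2\lambda}$ with $2(1-\lambda)<1$, which is strictly weaker than $\norm{g}_n X$ whenever the Hessian term dominates $\norm{g}_n$; the desired inequality does not follow. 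The same obstruction explains why your first repair of the constant fails: there is no single fixed exponent $m$ with $2q\le m\le q^*$ for all $q\in[\tfrac n2,n]$, since at $q=\tfrac n2$ one has $q^*=2q=n$ while at $q=n$ one needs $m\ge 2n$. So the blow-up of the critical Sobolev constant as $q\uparrow n$ is not removable within this scheme. Your second suggestion (the first-order Gagliardo--Nirenberg inequality $\norm{g}_{\mathbb{L}^{2q}}^2\le C\norm{g}_{\mathbb{L}^n}\norm{\nabla g}_{\mathbb{L}^q}$ on $\R^n$ plus a uniformly bounded extension operator) is the correct fix, but making its constant uniform on $[\tfrac n2,n]$ including the endpoint is precisely the content of the $BMO$-based inequality the paper quotes, so at that point the two proofs merge. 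Note that in the paper's application ($n=3$, $q\in[2,q_1)$ with $q_1$ close to $2$) the exponent stays away from $n$, so your argument does cover the case actually used; it just does not prove the lemma as stated with $C_5=C_5(n)$ on the full closed interval.
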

\begin{proof}
The proof follows similar lines as in \cite{stru2}. Firstly, notice that by Sobolev's embedding we have $W^{2,q} \subseteq W^{1,2q} \subset BMO$ for $q$ in the assumed range. In order to prove the estimate \eqref{eq.gagliardo}, we start recalling the following Gagliardo-Niremberg type inequality due to Adams-Frazier (see \cite{ada}): for every $q>1$ and any $G \in W^{2,q} \cap BMO(\R^n)$ with compact support, we have
\begin{equation}
\norm{\nabla G}_{\mathbb{L}^{2q}(\R^n)}^2 \le C(q) [G]_{BMO(\R^n)} \norm{\nabla^2 G}_{\mathbb{L}^{q}(\R^n)}.
\end{equation}
Due to the scale invariance of the thesis, we can assume $B:=B_R(x)=B_1(0)$; moreover, without loss of generality we assume $f$ has mean value $0$ on $B$. For any such $f$ we can find an extension $F \in W^{2,q}(B_2)$ such that for some constant $C=C(n,q) \le C_5(n)$
\begin{equation*}
\norm{\nabla^2 F}_{\mathbb{L}^{q}(B_2)}  \le C \norm{\nabla^2 f}_{\mathbb{L}^{q}(B)} \quad \text{and} \quad \norm{\nabla F}_{\mathbb{L}^{n}(B_2)} \le  C \norm{\nabla f}_{\mathbb{L}^{n}(B)}.
\end{equation*}
Let $\eta \in C^\infty_c(B_2(0))$ be a cut-off function such that $\eta \in [0,1]$ and $\eta \equiv 1$ on $B$. Applying the inequality above to $G:=\eta (F-[F]_{B_2}) \in W^{2,q}$ we get
\begin{equation*}
\norm{\nabla f}_{\mathbb{L}^{2q}(B)}^2 \le \norm{\nabla G}_{\mathbb{L}^{2q}(\R^n)}^2 \le C [G]_{BMO(\R^n)} \norm{\nabla^2 G}_{\mathbb{L}^{q}(\R^n)}.
\end{equation*}
For what regards the first term we estimate it through Poincar\'e's inequality
\begin{equation*}
[G]_{BMO(\R^n)}\le C \norm{\nabla G}_{\mathbb{L}^{n}(\R^n)} \le C(\norm{F-[F]_{B_2}}_{\mathbb{L}^{n}(B_2)}+\norm{\nabla F}_{\mathbb{L}^{n}(B_2)})  \le C \norm{\nabla F}_{\mathbb{L}^{n}(B_2)}\le  C \norm{\nabla f}_{\mathbb{L}^{n}(B)}.
\end{equation*}
Combining also with H\"older's inequality we obtain for the second term
\begin{align*}
\norm{\nabla^2 G}_{\mathbb{L}^{q}(\R^n)}&\le C(\norm{F-[F]_{B_2}}_{\mathbb{L}^{q}(B_2)}+\norm{\nabla F}_{\mathbb{L}^{q}(B_2)}+\norm{\nabla^2 F}_{\mathbb{L}^{q}(B_2)}) \le C(\norm{\nabla F}_{\mathbb{L}^{n}(B_2)}+\norm{\nabla^2 F}_{\mathbb{L}^{q}(B_2)})\\
&\le C(\norm{\nabla f}_{\mathbb{L}^{n}(B)}+\norm{\nabla^2 f}_{\mathbb{L}^{q}(B)}).
\end{align*}
from which we conclude the proof.
\end{proof}
\subsection{Variational theory for the approximating functionals}
The main reason to introduce the functionals $E_{p,\delta}$ where $p>n$, is that we can find critical points to them by variational methods; we follow closely the argument in \cite{sac}.

We start by considering the Dirichlet problem associated to our energies. Morrey's theorem guarantees the lower semi-continuity of $E_{p,\delta}$ with respect to the weak convergence in $W^{1,p}$, thanks to which we can ensure the existence of at least one solution to \eqref{eq.ELEpdelta} under extensible Dirichlet boundary condition.
\begin{lemma}\label{lemma.dirichlet}
For any $p \in (n,P_0)$, $\delta \in [0,1]$ and function $u_0 \in W^{1,p}(B;\N)$, where $B \subset \M$ is a ball, there exists a function $\tilde{u}_0 \in W^{1,p}(B;\N)$ minimizing the functional $E_{p,\delta}$ amongst all maps in the set

\noindent $\mathcal{E}:= \left\{ u \in W^{1,p}(B;\N) \mid Tr(u)=Tr(u_0) \text{ on } \partial B \right\}$. In particular, $\tilde{u}_0$ solves the associated Euler-Lagrange system \eqref{eq.ELEpdelta} weakly.
\end{lemma}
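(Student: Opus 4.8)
The plan is to use the direct method in the calculus of variations. First I would verify that the admissible class $\mathcal{E}$ is non-empty (it contains $u_0$) and that the functional $E_{p,\delta}$ is bounded below on it; this is immediate since the integrand of $E_{p,\delta}$ is non-negative by the lower bound in \eqref{eq.elementary1}, so in fact $E_{p,\delta}\ge 0$. Next, take a minimizing sequence $(u_j)\subset \mathcal{E}$, so that $E_{p,\delta}(u_j;B)\to \inf_{\mathcal{E}} E_{p,\delta}$. The coercivity estimate from \eqref{eq.elementary1}, namely $\frac{1}{p}|x|^p \le (1+(\delta+|x|^2)^{n/2})^{p/n}-(1+\delta^{n/2})^{p/n}$ pointwise, gives a uniform bound on $\|\nabla u_j\|_{\mathbb{L}^p(B)}$; combined with the fact that the $u_j$ take values in the compact manifold $\N\subset\R^N$ (hence $\|u_j\|_{\mathbb{L}^\infty}$ is bounded), we conclude $(u_j)$ is bounded in $W^{1,p}(B;\R^N)$. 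By reflexivity of $W^{1,p}$ for $p\in(n,P_0)$, after passing to a subsequence we get $u_j \rightharpoonup \tilde u_0$ weakly in $W^{1,p}(B;\R^N)$, and by Rellich--Kondrachov $u_j\to \tilde u_0$ strongly in $\mathbb{L}^p$ and a.e.; the a.e. convergence forces $\tilde u_0(x)\in \N$ a.e., so $\tilde u_0\in W^{1,p}(B;\N)$. The trace operator being weakly continuous on $W^{1,p}$, we also retain $Tr(\tilde u_0)=Tr(u_0)$ on $\partial B$, so $\tilde u_0 \in \mathcal{E}$.

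Then I would invoke lower semicontinuity: the integrand $\xi \mapsto \frac{1}{p}[(1+(\delta+|\xi|^2)^{n/2})^{p/n}-(1+\delta^{n/2})^{p/n}]$ is convex in $\xi$ (this is noted right after \eqref{eq.ELEpdelta} in the Setting subsection, and follows since it is a non-decreasing convex function composed with the convex map $\xi\mapsto (\delta+|\xi|^2)^{n/2}$), and it is continuous and non-negative, so Morrey's theorem on weak lower semicontinuity of convex integral functionals applies and yields
\begin{equation*}
E_{p,\delta}(\tilde u_0;B) \le \liminf_{j\to\infty} E_{p,\delta}(u_j;B) = \inf_{\mathcal{E}} E_{p,\delta}.
\end{equation*}
Since $\tilde u_0\in \mathcal{E}$, equality holds and $\tilde u_0$ is a minimizer. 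Finally, to see that $\tilde u_0$ solves \eqref{eq.ELEpdelta} weakly, I would perform the standard outer-variation argument: for $\varphi\in C^\infty_c(B;\R^N)$, use the nearest-point projection $\Pi$ onto $\N$ (smooth in a tubular neighbourhood) and consider $t\mapsto \Pi(\tilde u_0+t\varphi)\in \mathcal{E}$, differentiate $E_{p,\delta}$ at $t=0$, and use that this is a minimum to get $\tfrac{d}{dt}\big|_{t=0}E_{p,\delta}(\Pi(\tilde u_0+t\varphi))=0$; unwinding the derivative of $\Pi$ produces exactly the second fundamental form term $A_u(\nabla u,\nabla u)$ on the right-hand side, giving the weak form of \eqref{eq.ELEpdelta}.

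The main subtlety — the only place requiring care — is ensuring the admissible class $\mathcal{E}$ is actually non-empty with a well-defined trace and that the $W^{1,p}$ bound does not degenerate as $p\searrow n$ or $\delta\searrow 0$; but since here $p$ and $\delta$ are fixed this is not an issue, and the uniformity concerns are deferred to later sections. One should also note the boundary case $\delta=0$: the integrand is still convex and non-negative and the coercivity bound \eqref{eq.elementary1} still holds with constants uniform in $\delta\in[0,1]$, so the argument goes through verbatim. A minor technical point is that the weak $W^{1,p}$ limit a priori only lies in $W^{1,p}(B;\R^N)$ and one must pass to the constraint $\N$ via the a.e.\ convergence of a subsequence, which is where Rellich is used.
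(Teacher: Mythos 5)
Your proposal is correct and follows essentially the same route as the paper: a minimizing sequence, coercivity via \eqref{eq.elementary1}, weak $W^{1,p}$-compactness, preservation of the target $\N$ and of the trace under the limit, and weak lower semicontinuity from convexity of the integrand. The only cosmetic difference is that the paper secures the constraint and the boundary condition through Sobolev embedding plus Arzel\`a--Ascoli (i.e.\ $C^{0,\alpha}$-convergence, using $p>n$), whereas you use Rellich--Kondrachov and compactness of the trace operator; both are valid.
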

\begin{proof}
We pick a minimizing sequence $(u_k) \subset \mathcal{E}$ ($\mathcal{E}\neq \emptyset$ since $u_0$ belongs to it); the minimizing property of $(u_k)$ ensures that $\norm{\nabla u_k}_p^p \le E_{p,\delta}(u_k) \le C<\infty$ for some constant $C$, and therefore also the boundedness of $(u_k)$ in $W^{1,p}(B;\R^N)$ (recall that the target $\N$ is closed and hence bounded). Extracting a subsequence (and relabeling) we can assume that $u_k \rightharpoonup \tilde{u}_0$ for some $\tilde{u}_0 \in W^{1,p}(B;\R^N)$ by weak compactness; moreover, possibly extracting a further subsequence, the convergence is also in $C^{0,\alpha}$ for some $\alpha<1-\tfrac{n}{p}$, as one can see by combining Sobolev's embedding and Arzel\`a-Ascoli's theorem. In particular, the target is preserved in the limit, i.e. $\tilde{u}_0 \in W^{1,p}(B;\N)$; the same holds for the boundary condition, in other words we have that $Tr(u_0)=Tr(u_k) \rightarrow Tr(\tilde{u}_0)=Tr(u_0)$, so ultimately $\tilde{u}_0 \in \mathcal{E}$. The lower semi-continuity of $E_{p,\delta}$ allows us to conclude the minimality of $\tilde{u}_0$, and therefore also the last statement follows.
\end{proof}
Later, we are gonna be interested in maps minimizing in the restricted class of maps with small image, whose existence is ensured by the following Proposition, adapted from Proposition $3.1$ in \cite{far}.
We set $\rho_0:=\min \set{i_N, \norm{A}_\infty^{-1} }$, where $i_N$ is the injectivity radius of $\N$ and $A$ is the second fundamental form of $\N$.
\begin{proposition}\label{prop.dirichlet}
Consider a radius $\rho \in (0,\rho_0)$, a ball $B \subset \R^n$ and a map $u_0 \in W^{1,p}(B;\N)$ satisfying $u_0(B) \subseteq B_{\rho}^\N(y_0)$ for some $y_0 \in \N$. Then there exists a map $\tilde{u}_0 \in W^{1,p}(B;\N)$ solving \eqref{eq.ELEpdelta} such that $\tilde{u}_0(B) \subseteq B_{\rho}^\N(y_0)$, $\tilde{u}_0=u_0$ along $\partial B$, and $\tilde{u}_0$ is minimizing the functional $E_{p,\delta}(\cdot;B)$ amongst all functions $v \in W^{1,p}(B;\N)$ such that $v=u_0$ along $\partial B$ and such that $v(B) \subseteq B_{\rho}^\N(y_0)$.
\end{proposition}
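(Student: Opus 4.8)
The plan is to first produce a minimizer of $E_{p,\delta}$ over the \emph{constrained} admissible class
\begin{equation*}
\mathcal{C}:=\set{v\in W^{1,p}(B;\N)\mid Tr(v)=Tr(u_0)\text{ on }\partial B,\ v(B)\subseteq \overline{B^{\N}_{\rho}(y_0)}},
\end{equation*}
and then to show that the obstacle $v(B)\subseteq\overline{B^{\N}_{\rho}(y_0)}$ is never active at the minimizer, so that it actually solves the full system \eqref{eq.ELEpdelta}. The first step is a verbatim repetition of the direct method in Lemma \ref{lemma.dirichlet}: $u_0\in\mathcal{C}\neq\emptyset$, a minimizing sequence is bounded in $W^{1,p}(B;\R^N)$ by \eqref{eq.elementary1}, and since $p>n$ a subsequence converges weakly in $W^{1,p}$ and strongly in $C^{0,\alpha}(\overline B)$; the $C^{0}$-convergence preserves both the trace and the pointwise constraint, so $\mathcal{C}$ is weakly closed, and Morrey's lower semicontinuity of $E_{p,\delta}$ provides a minimizer $\tilde u_0\in\mathcal{C}$.

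The heart of the matter is that the constraint is inactive. For $\rho<\rho_0$ the function $f(y):=\tfrac12 d_{\N}(y,y_0)^2$ is smooth and uniformly convex on $\overline{B^{\N}_{\rho}(y_0)}$, say $\Hess f\ge\lambda_0 h$ with $\lambda_0>0$; this is classical (cf.\ \cite{far}), the threshold $\rho_0=\min\set{i_N,\norm{A}^{-1}_\infty}$ controlling the injectivity radius and, through the Gauss equation, the sectional curvature of $\N$. One tests the minimality of $\tilde u_0$ against the \emph{inward} variation $w_t:=\exp_{\tilde u_0}\big(-t\,\psi\,\mathrm{grad}\,f(\tilde u_0)\big)$, for $0\le\psi\in C^\infty_c(B)$ with $\psi\le1$ and small $t>0$. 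Since $\mathrm{grad}\,f(y)=-\exp_y^{-1}(y_0)$, the point $w_t(x)$ sits on the minimizing geodesic from $\tilde u_0(x)$ to $y_0$ at parameter $t\psi(x)\in[0,1]$, whence $d_{\N}(w_t(x),y_0)\le d_{\N}(\tilde u_0(x),y_0)\le\rho$; as $w_t=u_0$ near $\partial B$, we get $w_t\in\mathcal{C}$ and so $\tfrac{d}{dt}\big|_{0^+}E_{p,\delta}(w_t)\ge0$. A standard first-variation computation — legitimate for $W^{1,p}$ maps by dominated convergence, and in which the normal (second fundamental form) component of the ambient derivative of $\mathrm{grad}\,f(\tilde u_0)$ drops out when paired with the tangent vectors $\nabla\tilde u_0$ — gives, writing $\phi:=f\circ\tilde u_0$ and $a(x):=\big(1+(\delta+|\nabla\tilde u_0|^2)^{\frac n2}\big)^{\frac{p-n}{n}}(\delta+|\nabla\tilde u_0|^2)^{\frac{n-2}{2}}\ge0$,
\begin{equation*}
0\le\frac{d}{dt}\Big|_{0^+}E_{p,\delta}(w_t)=-\int_B a\,\scal{\nabla\psi}{\nabla\phi}-\int_B a\,\psi\,\Hess f(\nabla\tilde u_0,\nabla\tilde u_0)\le-\int_B a\,\scal{\nabla\psi}{\nabla\phi}-\lambda_0\int_B a\,\psi\,|\nabla\tilde u_0|^2 .
\end{equation*}
Thus $\int_B a\,\scal{\nabla\phi}{\nabla\psi}\le0$ for all $0\le\psi\in C^\infty_c(B)$, i.e.\ $\phi$ is a weak subsolution of the degenerate divergence-form equation $\Div(a\nabla\phi)\ge0$.

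One then concludes by the weak maximum principle. Testing the displayed inequality with the admissible function $\psi=(\phi-k)_+$, where $k:=\mathrm{ess\,sup}_{\partial B}(f\circ u_0)$, yields $\int_B a\,|\nabla(\phi-k)_+|^2\le0$; since $a\ge0$ and $|\nabla\phi|\le C|\nabla\tilde u_0|$ forces $\nabla(\phi-k)_+$ to vanish also where $a=0$, we obtain $\phi\le k$ on $B$. Because $p>n$ makes $u_0$ continuous on $\overline B$ with $u_0(B)\subseteq B^{\N}_\rho(y_0)$, one has $k<\rho^2/2$ (equivalently, $\phi$ cannot attain $\rho^2/2$ at an interior point, by the strong maximum principle), hence $\tilde u_0(B)\subseteq B^{\N}_\rho(y_0)$. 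Consequently, for any $V\in C^\infty_c(B;\R^N)$ the image $\tilde u_0(\supp V)$ is a compact subset of the open ball, so for $|t|$ small the two-sided variation $\exp_{\tilde u_0}(tV)$ (with $V$ replaced by its $T\N$-tangential part) stays admissible, whence $\tfrac{d}{dt}\big|_0E_{p,\delta}=0$ — which is exactly the weak form of \eqref{eq.ELEpdelta} — and $\tilde u_0$ a fortiori minimizes $E_{p,\delta}$ among all maps valued in the open ball $B^{\N}_\rho(y_0)$ with the given boundary datum. The steps I expect to be most delicate are the first-variation identity together with the verification that the inward variation is genuinely admissible (precisely where the hypothesis $\rho<\rho_0$, through the convexity of $f$, is used), and running the maximum principle for the possibly degenerate coefficient $a$ in the case $\delta=0$.
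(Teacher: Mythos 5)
Your overall strategy is the same as the paper's: produce a minimizer in a range-constrained class by the direct method, show that the range constraint is slack in the interior, and then obtain the Euler--Lagrange system \eqref{eq.ELEpdelta} from two-sided interior variations. Your mechanism for the middle step differs from the paper's and is, in itself, sound: the paper works in normal coordinates at $y_0$ with the radial contractions $v_t=\exp_{y_0}((1-t\eta)v)$ and the test function $\eta=\max\set{|v|^2-\rho^2,0}$, whereas you use inward variations along $-\mathrm{grad}\,f$ for $f=\tfrac12 d_{\N}(\cdot,y_0)^2$ and convert the resulting variational inequality into the statement that $\phi=f\circ\tilde u_0$ is a weak subsolution of $\Div(a\nabla\phi)\ge 0$; your treatment of the degenerate set $\set{a=0}$ via $|\nabla\phi|\le C|\nabla\tilde u_0|$ is correct, and the first-variation identity is fine.

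The gap is the combination of your single-radius constraint set with the claim $k<\rho^2/2$. You minimize over $\set{v(B)\subseteq\overline{B_\rho^\N(y_0)}}$ with the \emph{same} radius $\rho$ as in the hypothesis, and your maximum principle only yields $\phi\le k:=\mathrm{ess\,sup}_{\partial B}(f\circ u_0)$. The hypothesis $u_0(B)\subseteq B_\rho^\N(y_0)$ concerns the open ball $B$; for the (continuous, since $p>n$) trace on $\partial B$ it only forces $d_\N(u_0,y_0)\le\rho$, and equality can occur (model case: $d_\N(u_0(x),y_0)=\rho|x|$ on $B=B_1(0)$). In that borderline case $k=\rho^2/2$, you only get $\tilde u_0(B)\subseteq\overline{B_\rho^\N(y_0)}$, and the final step collapses: the two-sided variations $\exp_{\tilde u_0}(tV)$ are admissible only if $\tilde u_0(\supp V)$ sits a positive distance inside the constraint set, which you no longer know. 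The parenthetical appeal to the strong maximum principle does not close this, since $\Div(a\nabla\cdot)$ has an unbounded measurable coefficient (degenerate when $\delta=0$) and no strong maximum principle for it is available at this stage of the theory. The repair is exactly the paper's device: fix $\rho_1\in(\rho,\rho_0)$ and minimize over $\set{v(B)\subseteq\overline{B_{\rho_1}^\N(y_0)}}$. Then $k\le\rho^2/2<\rho_1^2/2$, your argument gives $f\circ\tilde u_0\le\rho^2/2$, so the image of $\tilde u_0$ stays at distance at least $\rho_1-\rho$ from $\partial B_{\rho_1}^\N(y_0)$; this makes the two-sided variations admissible for small $|t|$ and also yields the claimed minimality among maps into $B_\rho^\N(y_0)$, since that class is contained in the one over which $\tilde u_0$ minimizes.
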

\begin{proof}
The proof follows closely the one of Propositon $3.1$ in \cite{far}. Fix a radius $\rho_1 \in (\rho,\rho_0)$. Consider initially the problem of minimizing $E_{p,\delta}(\cdot;B)$ amongst all the maps $v \in W^{1,p}(B;\N)$ with $v=u_0$ along $\partial B$ and $v(B) \subseteq \bar{B}_{\rho_1}^\N(y_0)$. Pick a minimizing sequence $(v_k)$. Since $E_{p,\delta}$ controls the $p$-norm of the gradient, we have that $(v_k)$ is bounded in $W^{1,p}(B;\R^N)$, and therefore $v_k \rightharpoonup v_\infty$ weakly in $W^{1,p}(B;\R^N)$ for some $v_\infty \in W^{1,p}(B;\R^N)$ (after possibly extract a sequence). As before, by Rellich's theorem, we have $v_k \rightarrow v_\infty$ in $\mathbb{L}^p$ and by combining Sobolev's embedding and Arzel\`a-Ascoli's theorem the convergence is also in $C^{0,\alpha}$ for some $\alpha<1-\tfrac{n}{p}$. Thus we deduce that $v_\infty \in W^{1,p}(B;\N)$, $v_\infty (B) \subseteq \bar{B}_{\rho_1}^\N(y_0)$, and also that $v_\infty=u_0$ along $\partial B$. Therefore by weak lower semi-continuity of the energy $E_{p,\delta}$, $v_\infty$ solves this preliminary minimization problem. We now want to show $v_\infty (B) \subseteq B_{\rho}^\N(y_0)$ and that $v_\infty$ solves \eqref{eq.ELEpdelta} in normal coordinates around $y_0$. We start by proving the inclusion, denoting $v:=v_\infty$ for brevity. In normal coordinates around $y_0$, this amounts to prove $|v|<\rho$. In order to do so, we take a function $\eta \in W_0^{1,p}(B;\R)$, $\eta \ge 0$, and consider the comparison functions $v_t:=exp_{y_0}((1-t \eta)v)$, for $t \in [0,\norm{\eta}_{\infty}^{-1}]$. Notice that $v_0=v$, $v_t \in W^{1,p}(B;\N)$, $v_t (B) \subseteq \bar{B}_{\rho_1}^\N(y_0)$ and $v_t=u_0$ along $\partial B$, so $v_t$ can be chosen as comparison in the minimization problem above, hence $E_{p,\delta}(v;B) \le E_{p,\delta}(v_t;B)$ for all $t$. Taking the right $t$-derivative at $t=0$, we know $\partial_t E_{p,\delta}(v_t;B) \mid_{t=0} \ge 0$, that is
\begin{equation}
\int_{B}(1+(\delta+|\nabla v|^2)^{\frac{n}{2}})^{\frac{p-n}{n}}(\delta+|\nabla v|^2)^{\frac{n-2}{2}} \nabla v \nabla(\eta v)-\int_{B} (1+(\delta+|\nabla v|^2)^{\frac{n}{2}})^{\frac{p-n}{n}}(\delta+|\nabla v|^2)^{\frac{n-2}{2}} A_v(\nabla v, \nabla v) \eta v \le 0.
\end{equation}
Choosing $\eta:= \max \set{|v|^2-\rho^2,0}$, we see that $\eta \in W_0^{1,p}(B_1;\R)$, with explicit expression for the gradient: $\nabla \eta=2 v \nabla v$ if $|v|\ge \rho$ and $\nabla \eta =0$ if $|v|<\rho$. Plugging into the inequality above, we deduce
\begin{equation*}
\footnotesize
\begin{aligned}
\int_{B} \tfrac{1}{2}(1+(\delta+|\nabla v|^2)^{\frac{n}{2}})^{\frac{p-n}{n}}(\delta+|\nabla v|^2)^{\frac{n-2}{2}} |\nabla \eta|^2+\int_{B} (1+(\delta+|\nabla v|^2)^{\frac{n}{2}})^{\frac{p-n}{n}}(\delta+|\nabla v|^2)^{\frac{n-2}{2}}(|\nabla v|^2- A_v(\nabla v, \nabla v) v) \eta \le 0.
\end{aligned}
\end{equation*}
Since $\N$ is closed and $|v|$ is small enough, we can assume that $|\nabla v|^2- A_v(\nabla v, \nabla v) v \ge 0$, and therefore
\begin{equation*}
\int_{B} (1+(\delta+|\nabla v|^2)^{\frac{n}{2}})^{\frac{p-n}{n}}(\delta+|\nabla v|^2)^{\frac{n-2}{2}} |\nabla \eta|^2 \le 0 \ \Rightarrow \ \nabla \eta \equiv 0 \ \Rightarrow \ |v|<\rho.
\end{equation*}
Lastly, we need to prove that $v$ is a solution to \eqref{eq.ELEpdelta}. Take an arbitrary map $\phi \in W_0^{1,p}(B;\R^N)$ and consider the comparison map $v_t:=exp_{y_0}(v+t \phi)$. By what we have just proved, we know that $v_t(B) \subseteq \bar{B}_{\rho_1}^\N(y_0)$ for $|t|<(\rho_1-\rho) \norm{\phi}_\infty^{-1}$; clearly, we also have $v_t=u_0$ along $\partial B$. Once again this implies that $v_t$ can be compared to $v$ in energy, and thus $E_{p,\delta}(v_t;B) \ge E_{p,\delta}(v;B)$ for all $t$. We conclude by taking the derivative at $t=0$, and setting $\Tilde{u}_0:=v_\infty$.
\end{proof}
 In the following, we are gonna use some classical results due to Palais, as done in \cite{sac}. Since $p>n$, the natural energy space $W^{1,p}(\M;\N)$ is a $C^2$ separable Banach manifold. Moreover, we know that $E_{p,\delta}$ is $C^2$ on this manifold, and satisfies the Palais-Smale condition in a complete Finsler metric on $W^{1,p}(\M;\N)$. In particular, $E_{p,\delta}$ attains its minimum on every connected component of $W^{1,p}(\M;\N)$, and whenever $E_{p,\delta}$ does not have critical points in the slab $W^{1,p}(\M;\N) \cap E_{p,\delta}^{-1}((a,b))$, then there exists a deformation retraction of the sub-level set $W^{1,p}(\M;\N) \cap E_{p,\delta}^{-1}((-\infty,b))$ into the smaller sub-level set $W^{1,p}(\M;\N) \cap E_{p,\delta}^{-1}((-\infty,a))$.
\vspace{0.3cm}

Now we want to develop a unified treatement for the variational analysis of all the functionals in the family $E_{p,\delta}$. As in \cite{sac}, we denote by $\N_0:=\set{u\in W^{1,p}(\M;\N) \mid \forall x \in \M, \ u(x) \equiv y_0, \text{ for some } y_0 \in \N}$ the set of constant maps. This is the common submanifold of minima to $E_{p,\delta}$, where all the functionals take the minimal value $0$. The homotopy types of the spaces $W^{1,p}(\M^n;\N)$ are equivalent for all $p>n$, and equivalent also to the homotopy types of $C^0(\M;\N)$ and $C^\infty(\M;\N)$. With this identification in mind, we can adapt Proposition $2.4$ in \cite{sac} to our case. More precisely, we fix a smooth map $u$ in some connected component of $W^{1,p}(\M^n;\N)$, and denote by $B:=\norm{\nabla u}_{\infty}$. Then we clearly see that the minimum of $E_{p,\delta}$ over that connected component in consideration satisfies for all $p\in(n,P_0)$ and $\delta \in [0,1]$
\begin{equation}\label{eq.boundmin}
    \min E_{p,\delta} \le \tfrac{1}{p}[(1+(\delta+B^2)^{\frac{n}{2}})^{\frac{p}{n}}-(1+\delta^{\frac{n}{2}})^{\frac{p}{n}} ] Vol(\M):=B_{p,\delta}.
\end{equation}
Exactly as in \cite{sac}, we can obtain a family of deformations of suitable sub-level sets on the common minima sub-manifold $\N_0$, as expressed in the following Theorem.
\begin{theorem}[Theorem $2.6$ in \cite{sac}]\label{th.lowestenergyretraction}
For any $p\in(n,P_0)$ and $\delta \in [0,1]$ there exist a constant $\eta:=\eta(p,\delta,n,\N)$ and a deformation retraction $\sigma:W^{1,p}(\M^n;\N) \cap E_{p,\delta}^{-1}([0,\eta)) \rightarrow \N_0$.
\end{theorem}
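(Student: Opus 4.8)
The plan is to construct the retraction by hand, exactly as in the proof of Theorem~$2.6$ in \cite{sac}: a map of sufficiently small $E_{p,\delta}$-energy has image contained in a small geodesic ball of $\N$, and one geodesically contracts that ball to its centre. Assume $\M$ connected and fix once and for all a point $x_0\in\M$. For $y_0\in\N$ and $t\in[0,1]$ put $\sigma^{y_0}_t:=\exp_{y_0}\circ\,\big((1-t)\,\cdot\,\big)\circ\exp_{y_0}^{-1}$, a smooth self-map of the geodesic ball $B^{\N}_\rho(y_0)$ whenever $\rho<i_N$; the homotopy will be $\sigma(u,t):=\sigma^{u(x_0)}_t\circ u$. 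Then $\sigma(\cdot,0)=\mathrm{id}$, $\sigma(u,1)$ is the constant map equal to $u(x_0)\in\N_0$, and $\sigma(\cdot,t)=\mathrm{id}$ on $\N_0$ for every $t$ since $\exp_{y_0}^{-1}(y_0)=0$; continuity of $(u,t)\mapsto\sigma(u,t)$ in $W^{1,p}$ will follow from the joint smoothness of $(y_0,t,z)\mapsto\exp_{y_0}\big((1-t)\exp_{y_0}^{-1}z\big)$ on its natural domain, from the continuity of $u\mapsto u(x_0)$ (here $p>n$ is used), and from the continuity of composition of a $C^{0}\cap W^{1,p}$ map with a smooth map.

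The step I expect to require the only real argument is that $\sigma^{y_0}_t$ is metric non-expanding on a small enough ball, uniformly in $y_0$ and in $t$. For $t\in[0,1)$ the map $\sigma^{y_0}_t$ equals $\phi^{y_0}_{s}$ with $s=\log\tfrac{1}{1-t}\ge0$, where $\phi^{y_0}_{\bullet}$ is the flow of the vector field $X_{y_0}:=-\operatorname{grad}\!\big(\tfrac12 d_\N(y_0,\cdot)^2\big)$ (in normal coordinates at $y_0$, $\sigma^{y_0}_t$ is the dilation $w\mapsto(1-t)w$ and $X_{y_0}$ is $w\mapsto-w$), and $\mathcal{L}_{X_{y_0}}h=-\Hess\!\big(d_\N(y_0,\cdot)^2\big)$. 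Since $\Hess\!\big(\tfrac12 d_\N(y_0,\cdot)^2\big)$ equals $h$ at $y_0$, Hessian comparison together with the compactness of $\N$ furnishes a radius $\rho_1=\rho_1(\N)\in(0,i_N)$ such that $\mathcal{L}_{X_{y_0}}h\le -h<0$ on $B^{\N}_{\rho_1}(y_0)$ for every $y_0$. Because $X_{y_0}$ points inwards, the forward flow $\phi^{y_0}_s$ ($s\ge0$) stays in $B^{\N}_{\rho_1}(y_0)$, and integrating $\tfrac{d}{ds}(\phi^{y_0}_s)^{*}h=(\phi^{y_0}_s)^{*}\big(\mathcal{L}_{X_{y_0}}h\big)\le-(\phi^{y_0}_s)^{*}h$ for $s\ge0$ gives $(\phi^{y_0}_s)^{*}h\le h$, i.e. $|d\sigma^{y_0}_t|_h\le1$ on $B^{\N}_{\rho_1}(y_0)$ for all $t\in[0,1]$ (the case $t=1$ being trivial). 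This is special to the \emph{geodesic} contraction: a naive contraction such as $z\mapsto\Pi_\N\big((1-t)z+t\,y_0\big)$ has differential of norm possibly $>1$ near $y_0$, and would eventually push the homotopy out of the sublevel set.

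It then remains only to pick $\eta$ and to check that the homotopy stays inside the sublevel set. By the left inequality in \eqref{eq.elementary1} with $s=1$ one has $\int_\M|\nabla u|^p\,d\mu_g\le pE_{p,\delta}(u;\M)$, so $E_{p,\delta}(u)<\eta$ forces $\norm{\nabla u}_{\mathbb{L}^p(\M)}\le(p\eta)^{1/p}$, whence, by the Morrey embedding $W^{1,p}(\M)\hookrightarrow C^{0,1-n/p}(\M)$ (valid as $p>n$), $\diam_\N\!\big(u(\M)\big)\le C(\M,g,p)(p\eta)^{1/p}$. Choosing $\eta=\eta(p,n,\M,\N)$ — which may in fact be taken independent of $\delta$ — so small that this last quantity is $<\rho_1$, every $u\in W^{1,p}(\M;\N)\cap E_{p,\delta}^{-1}([0,\eta))$ satisfies $u(\M)\subseteq B^{\N}_{\rho_1}(u(x_0))$; hence $\sigma(u,t)$ is a well-defined element of $W^{1,p}(\M;\N)$, and by the previous paragraph $|\nabla\sigma(u,t)|\le|\nabla u|$ a.e. on $\M$. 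Since the integrand of $E_{p,\delta}$ in \eqref{eq.Epdelta} is nondecreasing in $|\nabla u|$, this yields $E_{p,\delta}(\sigma(u,t))\le E_{p,\delta}(u)<\eta$ for every $t$, so $\sigma$ maps $\big(W^{1,p}(\M;\N)\cap E_{p,\delta}^{-1}([0,\eta))\big)\times[0,1]$ into $W^{1,p}(\M;\N)\cap E_{p,\delta}^{-1}([0,\eta))$; then $\sigma(\cdot,1)$ is the desired strong deformation retraction onto $\N_0$.
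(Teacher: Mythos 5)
Your construction is exactly the one the paper (following Theorem 2.6 of Sacks--Uhlenbeck) sketches: small energy plus Morrey forces the image into a uniformly small geodesic ball, which is then contracted to its centre through the exponential map. The proposal is correct, and the Lie-derivative argument showing $|d\sigma^{y_0}_t|_h\le 1$ is a welcome explicit justification of the (implicit) fact that the geodesic contraction does not increase $E_{p,\delta}$, so the homotopy stays in the sublevel set.
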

The idea of the proof is rather simple and completely analogous to the one in \cite{sac}, so we just sketch it here. Since the energy $E_{p,\delta}$ controls the $\mathbb{L}^p$-norm of the gradient by \eqref{eq.elementary1}, and the target $\N$ is closed, for all maps $u$ in the sub-level set, for $\eta$ small enough, their images must lie in small enough geodesic balls of the target manifold, which we can assume to be all included in the images through the exponential maps of $\N$ based at suitable centers $y \in \N$, of balls of some uniform radius in $T_y \N$ (of course the centers $y$ depend on the map $u$ in consideration). Without loss of generality, there exist retractions of these balls to the origin in $T_y \N$. Composing these with the respective exponential maps, and then internally with the maps $u$ in consideration, we can retract each map $u$ to its respective constant map to $y$. This means that we have a retraction of the full sub-level set to $\N_0$. See Theorem $2.6$ in \cite{sac} for details.

Fix base points $x_0 \in \M$ and $y_0 \in \N$ and denote by $\Omega(\M;\N)$ the space of continuous maps sending $x_0$ to $y_0$. There exists a natural fibration $\pi_{x_0}:C^0(\M;\N)\rightarrow \N$ given by the evaluation of any map at $x_0$, whose fiber is exactly $\Omega(\M;\N)$. This fibration induces the following splitting of the homotopy groups
\begin{equation}\label{eq.homotopysplitting}
\pi_k(C^0(\M;\N))=\pi_k(\Omega(\M;\N)) \oplus \pi_k(\N)
\end{equation}
and ultimately the following existence Theorem.
\begin{theorem}[Theorem 2.7 in \cite{sac}]\label{th.SU2.7}
    If $\Omega(\M;\N)$ is not contractible, then after possibly increasing the constant $B_{p,\delta}>0$ from \eqref{eq.boundmin}, for all $p\in(n,P_0)$ and $\delta \in [0,1]$, $E_{p,\delta}$ has a critical value in the interval $(\tfrac{1}{2}\eta,B_{p,\delta})$, where $\eta$ is the same of Theorem \ref{th.lowestenergyretraction}.
\end{theorem}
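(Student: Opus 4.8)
The plan is to run a mountain--pass type min--max argument on the $C^2$ Banach manifold $W^{1,p}(\M;\N)$, exploiting exactly the three ingredients recorded above: the Palais--Smale condition for $E_{p,\delta}$, the deformation lemma on slabs free of critical points, and the low--energy deformation retraction onto $\N_0$ from Theorem~\ref{th.lowestenergyretraction}. This is the direct analogue of the proof of Theorem~2.7 in \cite{sac}. \emph{Choice of the min--max family.} Since $W^{1,p}(\M;\N)$, $C^0(\M;\N)$ and $C^\infty(\M;\N)$ share a homotopy type and a CW complex is contractible iff all its homotopy groups vanish, non--contractibility of $\Omega(\M;\N)$ yields some $m\ge 0$ with $\pi_m(\Omega(\M;\N))\ne 0$. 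Assume first $m\ge 1$. By the splitting \eqref{eq.homotopysplitting}, induced by the evaluation fibration $\pi_{x_0}$ and its section $y\mapsto(\mathrm{const.}=y)$, we fix a nonzero class $\phi$ in the $\pi_m(\Omega(\M;\N))$--summand of $\pi_m(W^{1,p}(\M;\N))$, equivalently $\phi\in\ker(\pi_{x_0})_\ast$; this is precisely what prevents $\phi$ from being represented by a map into the constants $\N_0$, since $\pi_{x_0}$ restricts to the identity on $\N_0$. Fix a representative $g_0\colon S^m\to W^{1,p}(\M;\N)$ of $\phi$, let $\mathcal F$ be the set of all continuous $g\colon S^m\to W^{1,p}(\M;\N)$ freely homotopic to $g_0$, and set
\begin{equation*}
\beta_{p,\delta}:=\inf_{g\in\mathcal F}\ \max_{t\in S^m}E_{p,\delta}(g(t)).
\end{equation*}
If instead $m=0$, i.e. $\Omega(\M;\N)$ is disconnected, one argues more simply still: a connected component of $W^{1,p}(\M;\N)$ disjoint from $\N_0$ carries a minimizer of $E_{p,\delta}$ by Palais--Smale, and the level estimate below applies to it verbatim.

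\emph{Pinching the level.} For the lower bound, suppose some $g\in\mathcal F$ had $\max_t E_{p,\delta}(g(t))<\eta$. Then $g(S^m)$ lies in $E_{p,\delta}^{-1}([0,\eta))$, which by Theorem~\ref{th.lowestenergyretraction} deformation--retracts onto $\N_0$; composing $g$ with this retraction (and with $\N_0\hookrightarrow W^{1,p}(\M;\N)$) produces a map freely homotopic to $g$, hence to $g_0$, but factoring through $\N_0$. Applying $(\pi_{x_0})_\ast$ and using $(\pi_{x_0})_\ast\phi=0$ together with $\pi_{x_0}|_{\N_0}=\mathrm{id}$, this forces $\phi=[g_0]=0$, a contradiction. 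Hence $\beta_{p,\delta}\ge\eta>\tfrac12\eta$. For the upper bound, evaluating the $\max$ over a single fixed $g_1\in\mathcal F$ gives $\beta_{p,\delta}\le\max_t E_{p,\delta}(g_1(t))<\infty$ by continuity of $E_{p,\delta}$ and compactness of $S^m$, so after possibly enlarging the constant $B_{p,\delta}$ from \eqref{eq.boundmin} we obtain $\tfrac12\eta<\beta_{p,\delta}<B_{p,\delta}$.

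\emph{Criticality of $\beta_{p,\delta}$.} If $\beta_{p,\delta}$ were not a critical value, the Palais--Smale condition (through which the set of critical values is closed) would give $\e>0$ with no critical point of $E_{p,\delta}$ in the slab $E_{p,\delta}^{-1}\big((\beta_{p,\delta}-\e,\beta_{p,\delta}+\e)\big)$; the deformation lemma recalled above then yields a deformation retraction of $E_{p,\delta}^{-1}\big((-\infty,\beta_{p,\delta}+\e)\big)$ into $E_{p,\delta}^{-1}\big((-\infty,\beta_{p,\delta}-\e)\big)$. Choosing $g\in\mathcal F$ with $\max_t E_{p,\delta}(g(t))<\beta_{p,\delta}+\e$ and composing with this retraction produces $g'\in\mathcal F$ — still freely homotopic to $g_0$, since the deformation takes place inside $W^{1,p}(\M;\N)$ — with $\max_t E_{p,\delta}(g'(t))\le\beta_{p,\delta}-\e$, contradicting the definition of $\beta_{p,\delta}$. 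Therefore $\beta_{p,\delta}$ is a critical value lying in $(\tfrac12\eta,B_{p,\delta})$, which is the assertion.

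\emph{Where the difficulty lies.} The routine parts — the $C^2$ Banach--manifold structure, Palais--Smale, and the deformation lemma — have been set up above and are used exactly as in \cite{sac}. The one genuinely delicate point is the \emph{choice of the class $\phi$}: it must detect the non--contractibility of the \emph{fiber} $\Omega(\M;\N)$ rather than of $W^{1,p}(\M;\N)$ itself, so that it is not annihilated by the retraction of low--energy maps onto $\N_0$. The fibration splitting \eqref{eq.homotopysplitting} — equivalently, the requirement $\phi\in\ker(\pi_{x_0})_\ast\setminus\{0\}$ — is precisely what makes this work, and it is why the hypothesis of the theorem is phrased in terms of $\Omega(\M;\N)$; once this is arranged, the lower bound $\beta_{p,\delta}\ge\eta$ and hence the whole conclusion follow.
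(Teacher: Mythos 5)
Your proof is correct and follows essentially the same route as the paper: both rest on the splitting \eqref{eq.homotopysplitting} to produce a class in $\pi_k(\Omega(\M;\N))$ that cannot be represented through $\N_0$, the low-energy retraction of Theorem \ref{th.lowestenergyretraction} for the lower bound, and the Palais--Smale deformation lemma for criticality, with the disconnected case handled separately by minimization exactly as in the paper. The only (cosmetic) difference is that you phrase the argument as an explicit min--max value $\beta_{p,\delta}$ shown to be critical, whereas the paper argues directly by contradiction that the absence of critical values in the whole interval would retract the large sublevel set onto $\N_0$; the two formulations are equivalent.
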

\begin{proof}
    The proof follows the same lines as in \cite{sac}. In the easier case, we assume that $C^0(\M;\N)$ is not connected. Then we can minimize $E_{p,\delta}$ in a connected component not containing $\N_0$ and from the discussion above we have \eqref{eq.boundmin}, that is we have the claimed upper bound. Theorem \ref{th.lowestenergyretraction} implies the claimed lower bound.

    Suppose instead that $C^0(\M;\N)$ is connected. By assumption, we can pick a homotopically non-trivial class $[\gamma] \in \pi_k(\Omega(\M;\N))$ for some $k\ge 0$, and by \eqref{eq.homotopysplitting} we must have $k \ge 1$. Fix any representative $\gamma \in [\gamma]$, and notice that $\gamma:S^k \rightarrow \Omega(\M;\N) \subset C^0(\M;\N)$ seen as a map into $C^0(\M;\N)$ is not homotopic to any map $\tilde{\gamma}:S^k \rightarrow \N_0$. Moreover, if we set
    \begin{equation}
        B:=\max_{\theta \in S^k} \max_{x \in \M} |\nabla_x [\gamma(\theta)](x)|,
    \end{equation}
    then we have the upper bound on the energies
    \begin{equation*}
        E_{p,\delta}(\gamma(\theta)) \le \tfrac{1}{p}[(1+(\delta+B^2)^{\frac{n}{2}})^{\frac{p}{n}}-(1+\delta^{\frac{n}{2}})^{\frac{p}{n}}] Vol(\M)=:B'_{p,\delta} \quad \forall \theta \in S^k.
    \end{equation*}
    Suppose by contradiction that no critical values in the energy range considered in the statement. Then by the general discussion above, there exists a deformation retraction $\rho$ between the level sets
    \begin{equation*}
        \rho:W^{1,p}(\M^n;\N) \cap E_{p,\delta}^{-1}( [0,B_{p,\delta}) ) \rightarrow E_{p,\delta}^{-1}((0,\tfrac{1}{2} \eta)).
    \end{equation*}
    Composing this with the retraction $\sigma$ from Theorem \ref{th.lowestenergyretraction}, we obtain a retraction
    \begin{equation*}
         \sigma \circ \rho:W^{1,p}(\M^n;\N) \cap E_{p,\delta}^{-1}([0,B_{p,\delta} ) ) \rightarrow \N_0.
    \end{equation*}
    Therefore, $\sigma \circ \rho \circ \gamma:S^k \rightarrow \N_0$ is homotopic to $\gamma$, a contradiction.
\end{proof}
The important Corollary of this result is the following analogue to Proposition $2.8$ in \cite{sac}.
\begin{theorem}[Proposition $2.8$ in \cite{sac}]\label{th.SU2.8}
If $\M=S^n$ and for some $k \in \mathbb{N}$ we have $\pi_{k+n}(\N)\neq 0$, then for the same constant $B_{p,\delta} >0$ as above, for all $p\in(n,P_0)$ and $\delta \in [0,1]$, $E_{p,\delta}$ has a critical value in the interval $(\tfrac{1}{2}\eta,B_{p,\delta} )$.
\end{theorem}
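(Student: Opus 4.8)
The plan is to deduce this statement directly from Theorem~\ref{th.SU2.7}: it suffices to check that, when $\M=S^n$, the topological hypothesis appearing there --- non-contractibility of the based mapping space $\Omega(S^n;\N)$ --- is implied by $\pi_{k+n}(\N)\neq 0$. No new analysis is needed; the whole content is the iterated loop space identification below.

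First I would recall that $\Omega(\M;\N)$ was defined as the fibre over $y_0$ of the evaluation fibration $\pi_{x_0}\colon C^0(\M;\N)\to\N$, i.e.\ the space of based continuous maps $(\M,x_0)\to(\N,y_0)$. For $\M=S^n$ (with $x_0$ its basepoint) this is exactly $\mathrm{Map}_*(S^n,\N)$, which, by the suspension--loop adjunction and $S^n=\Sigma^n S^0$, is homotopy equivalent to the $n$-fold iterated loop space $\Omega^n\N$. Hence for every $j\ge 0$ one has the standard isomorphisms
\[
\pi_j\!\left(\Omega(S^n;\N)\right)\;\cong\;\pi_j(\Omega^n\N)\;\cong\;\pi_{j+n}(\N).
\]
Taking $j=k$ and invoking the hypothesis $\pi_{k+n}(\N)\neq 0$ gives $\pi_k(\Omega(S^n;\N))\neq 0$; since every homotopy group of a contractible space vanishes, $\Omega(S^n;\N)$ is not contractible. (One may also note that this makes the splitting \eqref{eq.homotopysplitting} transparent in this case.)

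With this in hand I would simply apply Theorem~\ref{th.SU2.7} with $\M=S^n$: it produces, after possibly enlarging the constant $B_{p,\delta}$ from \eqref{eq.boundmin}, a critical value of $E_{p,\delta}$ in the interval $(\tfrac12\eta,B_{p,\delta})$ for every $p\in(n,P_0)$ and $\delta\in[0,1]$, where $\eta$ is the constant of Theorem~\ref{th.lowestenergyretraction} --- precisely the assertion. There is no genuine analytic obstacle, since all the variational machinery (the Palais--Smale condition on $W^{1,p}(\M;\N)$, the deformation retractions off the sublevel sets, the lower bound $\tfrac12\eta$ and the explicit upper bound $B_{p,\delta}$) is already contained in Theorems~\ref{th.lowestenergyretraction} and~\ref{th.SU2.7}. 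The only mildly delicate point, and the closest thing to a ``hard step'', is bookkeeping the basepoint conventions so that $\Omega(S^n;\N)$ is literally $\mathrm{Map}_*(S^n,\N)$ and the adjunction above applies verbatim; and, should one wish to include $k=0$ in the hypothesis, one observes that $\pi_0(\Omega(S^n;\N))\cong\pi_n(\N)\neq 0$ forces $\Omega(S^n;\N)$ to have more than one path component --- again precluding contractibility --- after which the argument of Theorem~\ref{th.SU2.7} still yields the claimed critical value (tracing through its proof via a component of $W^{1,p}(S^n;\N)$ disjoint from $\N_0$).
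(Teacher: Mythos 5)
Your proposal is correct and is essentially the paper's own argument: the proof given there consists precisely of the identification $\pi_k(\Omega(S^n;\N))\cong\pi_{k+n}(\N)\neq 0$, hence non-contractibility of $\Omega(S^n;\N)$, followed by an application of Theorem~\ref{th.SU2.7}. Your additional remarks on the iterated loop space and basepoint conventions merely make explicit what the paper leaves implicit.
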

\begin{proof}
By assumption we know that $\pi_k(\Omega(S^n;\N))=\pi_{k+n}(\N)\neq 0$, hence is not contractible. Applying the Theorem \ref{th.SU2.7} above, we obtain the thesis.
\end{proof}
\subsection{Technical Lemmas for Regularity}
Once we have non-trivial critical maps for $E_{p,\delta}$, we need to analyze their regularity.

Already for fixed $p\in (n,P_0)$ and $\delta \in (0,1)$, the regularity of solutions to \eqref{eq.ELEpdelta} is not obvious since the right hand side is only in $\mathbb{L}^1$, not even when one drops the required uniformity of regularity. By Sobolev's embedding we know its a-priori $C^{0,\alpha}$-regularity, for $\alpha=1-\tfrac{n}{p}$. Assuming smallness of the $E_{p,\delta}$-energy, this can be used to deduce a Caccioppoli-type inequality and improve the integrability of the gradient of the solutions. However, no higher regularity can be deduced at this stage, indeed, in the easier case of $p$-harmonic maps, the only methods to deduce their $C^{1,\beta}$-regularity the authors are aware of, are the ones by Duzaar-Mingione \cite{duz3}, Fuchs \cite{fuc0} and Hardt-Lin \cite{har1}. Respectively, the analysis in \cite{duz3} is based on the assumption $\alpha \in (\tfrac{p}{p+1},1)$ and a excess decay estimate for a non-linear quantity of the solution (thus we need to have an exponent close enough to $1$, which the a-priori Sobolev regularity cannot guarantee); \cite{fuc0} follows similar arguments with a different notion of excess, not easily adaptable to our energies; in \cite{har1}, the authors develop a parametrisation procedure, writing locally the target $\N$ as a graph, around the small image of the $p$-harmonic map. This allows them to rewrite $D_p$ as a functional of scalar valued functions, and deduce that the critical map considered correspond to a critical function under this identification.
The method we choose, consists in showing that any solution of \eqref{eq.ELEpdelta} is actually locally minimizing, in a small enough ball, adapting \cite{far}, and then we combine \cite{luc1} and \cite{duz3} to get the not uniform smoothness.
\newline

With this aim in mind, we now prove some technical Lemmas adapting results by Fardoun-Regbaoui in \cite{far}.
\begin{lemma}[Lemma $2.1$ in \cite{far}]\label{lemma.uniquenessbounds}
Let $X,Y$ be two vectors in the Euclidean space $\R^K$, then for any $\delta \in [0,1]$ and $p\in (n,P_0)$ we have 
\begin{equation}\label{eq.uniquenesslowerbound}
\begin{aligned}
  &[(1+(\delta+|X|^2)^{\frac{n}{2}} )^{\frac{p-n}{n}} (\delta+|X|^2)^{\frac{n-2}{2}} X-(1+(\delta+|Y|^2)^{\frac{n}{2}} )^{\frac{p-n}{n}} (\delta+|Y|^2)^{\frac{n-2}{2}}  Y] \cdot (X-Y) \\
  &\ge \tfrac{1}{2} [(1+(\delta+|X|^2)^{\frac{n}{2}} )^{\frac{p-n}{n}} (\delta+|X|^2)^{\frac{n-2}{2}} +(1+(\delta+|Y|^2)^{\frac{n}{2}} )^{\frac{p-n}{n}} (\delta+|Y|^2)^{\frac{n-2}{2}} ] |X-Y|^2,
  \end{aligned}
\end{equation}
and
\begin{equation}\label{eq.uniquenessupperbound}
\begin{aligned}
 &|(1+(\delta+|X|^2)^{\frac{n}{2}} )^{\frac{p-n}{2 n}} (\delta+|X|^2)^{\frac{n-2}{4}} X-(1+(\delta+|Y|^2)^{\frac{n}{2}} )^{\frac{p-n}{2 n}} (\delta+|Y|^2)^{\frac{n-2}{4}}  Y| \\
 &\le  \tfrac{p}{2} [(1+(\delta+|X|^2)^{\frac{n}{2}} )^{\frac{p-n}{2 n}} (\delta+|X|^2)^{\frac{n-2}{4}}+(1+(\delta+|Y|^2)^{\frac{n}{2}} )^{\frac{p-n}{2 n}} (\delta+|Y|^2)^{\frac{n-2}{4}}] |X-Y|.
 \end{aligned}
\end{equation}
\end{lemma}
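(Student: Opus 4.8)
The plan is to reduce both inequalities to one-variable monotonicity and logarithmic-derivative facts about the radial profiles. Set, for $t\ge 0$,
\[
 f(t):=(1+(\delta+t)^{\frac n2})^{\frac{p-n}{n}}(\delta+t)^{\frac{n-2}{2}},\qquad g(t):=(1+(\delta+t)^{\frac n2})^{\frac{p-n}{2n}}(\delta+t)^{\frac{n-2}{4}}=\sqrt{f(t)},
\]
so that the vector fields appearing in \eqref{eq.uniquenesslowerbound} and \eqref{eq.uniquenessupperbound} are $X\mapsto f(|X|^2)X$ and $h(X):=g(|X|^2)X$. Since $p\ge n\ge 2$, each of $f,g$ is a product of nonnegative nondecreasing functions of $t$, hence itself nonnegative and nondecreasing, with $f'\ge 0$ and $g'\ge 0$ on $(0,+\infty)$.

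For \eqref{eq.uniquenesslowerbound} I would simply expand both sides: writing $a:=f(|X|^2)$ and $b:=f(|Y|^2)$, the cross terms in $\langle X,Y\rangle$ cancel and one is left with the exact identity
\[
 \big[f(|X|^2)X-f(|Y|^2)Y\big]\cdot(X-Y)-\tfrac12\big[f(|X|^2)+f(|Y|^2)\big]\,|X-Y|^2=\tfrac12\big(f(|X|^2)-f(|Y|^2)\big)\big(|X|^2-|Y|^2\big),
\]
whose right-hand side is $\ge 0$ because $f$ is nondecreasing. This settles the first inequality.

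For \eqref{eq.uniquenessupperbound} I would use the fundamental theorem of calculus along the segment $Z(t):=Y+t(X-Y)$, $t\in[0,1]$. One has $Dh(Z)=g(|Z|^2)\,I+2g'(|Z|^2)\,Z\otimes Z$, which is positive semidefinite (as $g,g'\ge 0$), so its operator norm equals its largest eigenvalue $g(|Z|^2)+2g'(|Z|^2)|Z|^2$. The one key scalar estimate is the bound on the logarithmic derivative of $g$:
\[
 \frac{t\,g'(t)}{g(t)}=\frac{p-n}{4}\cdot\frac{t(\delta+t)^{\frac n2-1}}{1+(\delta+t)^{\frac n2}}+\frac{(n-2)\,t}{4(\delta+t)}\le\frac{p-n}{4}+\frac{n-2}{4}=\frac{p-2}{4},
\]
both fractions being $\le 1$ by $t\le\delta+t$; hence $\|Dh(Z)\|\le\tfrac p2\,g(|Z|^2)$. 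Integrating gives $|h(X)-h(Y)|\le\tfrac p2|X-Y|\int_0^1 g(|Z(t)|^2)\,dt$, and by convexity of the Euclidean norm $|Z(t)|\le\max(|X|,|Y|)$, so by monotonicity of $g$ we get $g(|Z(t)|^2)\le\max\{g(|X|^2),g(|Y|^2)\}\le g(|X|^2)+g(|Y|^2)$, which yields \eqref{eq.uniquenessupperbound}.

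I do not expect a serious obstacle here; the only point requiring a word of care is the applicability of the fundamental theorem of calculus to $h$ in the degenerate case $\delta=0$, $2<n<6$, where the segment $Z(\cdot)$ may pass through the origin and $h$ fails to be $C^1$ there. This is harmless: the bound $t\,g'(t)\le\tfrac{p-2}{4}g(t)$ keeps $\|Dh\|$ locally bounded, so $h$ is locally Lipschitz on $\R^K$ and $t\mapsto h(Z(t))$ is absolutely continuous, with the chain rule valid at a.e.\ $t$ (the non-differentiability set of $h$ being met at most once); alternatively, the $\delta=0$ instance follows by continuity from the cases $\delta>0$.
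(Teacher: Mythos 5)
Your proof is correct and follows essentially the same route as the paper's: for \eqref{eq.uniquenesslowerbound} both arguments reduce to the cancellation of cross terms and the monotonicity of $t\mapsto f(t)$ (your exact identity is just the coordinate-free form of the paper's two-dimensional computation), and for \eqref{eq.uniquenessupperbound} both differentiate $Z\mapsto g(|Z|^2)Z$ along the segment, bound the derivative by $\tfrac p2 g(|Z|^2)$, and conclude via $|Z(t)|\le\max\{|X|,|Y|\}$ and the monotonicity of $g$. Your closing remark on the degenerate case $\delta=0$ is a point the paper silently skips, and your resolution of it is valid.
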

\begin{proof}
In order to prove the first inequality, we can argue as in the proof of \eqref{eq.elementary2} and assume $X=(1,0)$ and $Y=(r \cos(\theta), r \sin(\theta))$. Towards proving \eqref{eq.uniquenesslowerbound}, we develop its left hand side
\begin{align*}
    &[(1+(\delta+|X|^2)^{\frac{n}{2}} )^{\frac{p-n}{n}} (\delta+|X|^2)^{\frac{n-2}{2}} X-(1+(\delta+|Y|^2)^{\frac{n}{2}} )^{\frac{p-n}{n}} (\delta+|Y|^2)^{\frac{n-2}{2}}  Y] \cdot (X-Y)\\
    &= [(1+(\delta+1)^{\frac{n}{2}})^{\frac{p-n}{n}} (\delta+1)^{\frac{n-2}{2}} (1,0)-(1+(\delta+r^2)^{\frac{n}{2}})^{\frac{p-n}{n}} (\delta+r^2)^{\frac{n-2}{2}} (r \cos(\theta),r \sin{\theta})] \cdot\\
    &\cdot (1-r \cos(\theta),-r \sin{\theta})= f(1)-f(1) r \cos(\theta)-f(r) r \cos(\theta)+f(r) r^2 (\cos^2(\theta)+\sin^2(\theta))\\
    &=f(1)-f(1) r \cos(\theta)-f(r) r \cos(\theta)+f(r) r^2.
\end{align*}
Here we have set $f(t):=(1+(\delta+t^2)^{\frac{n}{2}} )^{\frac{p-n}{n}} (\delta+t^2)^{\frac{n-2}{2}}$.
The right hand side can be rewritten as
\begin{align*}
    &\tfrac{1}{2} [(1+(\delta+|X|^2)^{\frac{n}{2}} )^{\frac{p-n}{n}} (\delta+|X|^2)^{\frac{n-2}{2}} +(1+(\delta+|Y|^2)^{\frac{n}{2}} )^{\frac{p-n}{n}} (\delta+|Y|^2)^{\frac{n-2}{2}} ] |X-Y|^2\\
    &= \tfrac{1}{2} [f(1)+f(r)] (1-2r\cos(\theta)+r^2)= \tfrac{1}{2}f(1)+\tfrac{1}{2}f(r)-r f(1) \cos(\theta)-r f(r) \cos(\theta)+\tfrac{1}{2} r^2 f(1)+\tfrac{1}{2} r^2 f(r).
\end{align*}
Plugging in \eqref{eq.uniquenesslowerbound}, and rearranging the terms, we see that the claimed inequality is true if and only if
\begin{equation*}
    0\le \tfrac{1}{2} (f(1)+f(r)r^2)-\tfrac{1}{2} r^2 f(1)-\tfrac{1}{2}f(r)=\tfrac{1}{2}(r^2-1)(f(r)-f(1)).
\end{equation*}
which is clearly true for all values of $r\ge 0$.
\vspace{0.3cm}

Towards proving \eqref{eq.uniquenessupperbound}, we firstly set $F(Z):=(1+(\delta+|Z|^2)^{\frac{n}{2}} )^{\frac{p-n}{2 n}} (\delta+|Z|^2)^{\frac{n-2}{4}} Z$ for any $Z\in \R^K$, and then notice that the left-hand-side of \eqref{eq.uniquenessupperbound} is equal to $|F(X)-F(Y)|$. By the mean value theorem, we have
\begin{equation*}
    |F(X)-F(Y)| \le \sup_{t \in [0,1]} |\nabla F(tX+(1-t)Y)| |X-Y|.
\end{equation*}
Since we know that
\begin{align*}
    &|\nabla F(Z)|=| \tfrac{p-n}{2 n}(1+(\delta+|Z|^2)^{\frac{n}{2}})^{\frac{p-3 n}{2 n}} \tfrac{n}{2} (\delta+|Z|^2)^{\frac{n-2}{2}} 2 Z \otimes (\delta+|Z|^2)^{\frac{n-2}{4}} Z\\
    &+(1+(\delta+|Z|^2)^{\frac{n}{2}})^{\frac{p-n}{2 n}} \tfrac{n-2}{4} (\delta+|Z|^2)^{\frac{n-6}{4}} 2 Z \otimes Z+(1+(\delta+|Z|^2)^{\frac{n}{2}})^{\frac{p-n}{2 n}} (\delta+|Z|^2)^{\frac{n-2}{4}} Id | \\
    &\le (1+(\delta+|Z|^2)^{\frac{n}{2}})^{\frac{p-3 n}{2 n}} (\delta+|Z|^2)^{\frac{n-6}{4}} \Big[ \tfrac{p-n}{2}(\delta+|Z|^2)^{\frac{n}{2}}|Z|^2+\tfrac{n-2}{2} (1+(\delta+|Z|^2)^{\frac{n}{2}})|Z|^2\\
    &+ (1+(\delta+|Z|^2)^{\frac{n}{2}})(\delta+|Z|^2) \Big]\le (1+(\delta+|Z|^2)^{\frac{n}{2}})^{\frac{p-3 n}{2 n}} (\delta+|Z|^2)^{\frac{n-6}{4}} \tfrac{p}{2} (1+(\delta+|Z|^2)^{\frac{n}{2}})(\delta+|Z|^2)\\
    &=\tfrac{p}{2} (1+(\delta+|Z|^2)^{\frac{n}{2}})^{\frac{p-n}{2 n}} (\delta+|Z|^2)^{\frac{n-2}{4}}.
\end{align*}
Plugging in the inequality above, and using that $|t X+(1-t) Y|\le \max \set{|X|,|Y|}$ we conclude as follows
\begin{align*}
    &|(1+(\delta+|X|^2)^{\frac{n}{2}} )^{\frac{p-n}{2 n}} (\delta+|X|^2)^{\frac{n-2}{4}} X-(1+(\delta+|Y|^2)^{\frac{n}{2}} )^{\frac{p-n}{2 n}} (\delta+|Y|^2)^{\frac{n-2}{4}}  Y| \\
    &\le \tfrac{p}{2}  (1+(\delta+|(\max \set{|X|,|Y|})|^2)^{\frac{n}{2}})^{\frac{p-n}{2 n}} (\delta+|(\max \set{|X|,|Y|})|^2)^{\frac{n-2}{4}} |X-Y|\\
    &\le \tfrac{p}{2} \max \set{(1+(\delta+|X|^2)^{\frac{n}{2}} )^{\frac{p-n}{2 n}} (\delta+|X|^2)^{\frac{n-2}{4}},(1+(\delta+|Y|^2)^{\frac{n}{2}} )^{\frac{p-n}{2 n}} (\delta+|Y|^2)^{\frac{n-2}{4}} } |X-Y| \\
    &\le \tfrac{p}{2} [(1+(\delta+|X|^2)^{\frac{n}{2}} )^{\frac{p-n}{2 n}} (\delta+|X|^2)^{\frac{n-2}{4}}+(1+(\delta+|Y|^2)^{\frac{n}{2}} )^{\frac{p-n}{2 n}} (\delta+|Y|^2)^{\frac{n-2}{4}}] |X-Y|.
\end{align*}
\end{proof}
Recall a crucial inequality from \cite{far}.
\begin{lemma}[Lemma $2.2$ in \cite{far}]\label{lemma.secondfundamentalform}
There exists a constant $C_6=C_6(n,\N)$ such that for arbitrary $u,v \in \N$ and associated vectors $U \in (T_u \N)^n$ and $V \in (T_v \N)^n$, the second fundamental form satisfies
\begin{equation}\label{eq.secondfundamentalform}
    |A_u(U,U)-A_v(V,V)|\le C_6 (|U|^2+|V|^2) |u-v|+C_6(|U|+|V|)|U-V|,
\end{equation}
where $|\cdot|$ denotes the Euclidean norm on the tangent spaces $T_u \N$ and $T_v \N$.
\end{lemma}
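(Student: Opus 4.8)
The plan is to reduce \eqref{eq.secondfundamentalform} to two elementary facts about the second fundamental form of the isometric embedding $\N\subset\R^N$: its uniform boundedness, and its Lipschitz dependence on the base point. Both follow from the smoothness and the compactness of $\N$ (indeed $\norm{A}_\infty$ has already been used, e.g.\ in the definition of $\rho_0$). Throughout I read $A_u(U,U)$, for $U=(U_1,\dots,U_n)\in(T_u\N)^n$, as the trace-type sum $\sum_{i=1}^nA_u(U_i,U_i)$, and correspondingly $|U|^2=\sum_{i=1}^n|U_i|^2$; the analogous conventions apply to $v$ and $V$.

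The one genuinely delicate point is that cross terms such as $A_v(U_i,U_i)$, in which $U_i\in T_u\N$ but not a priori $U_i\in T_v\N$, are not meaningful for a fibrewise object; so the first thing I would do is extend $A$ to an ambient tensor. Letting $P(w)\in\mathrm{End}(\R^N)$ denote the orthogonal projection onto $T_w\N$, which extends smoothly to a tubular neighbourhood of $\N$, one has the Gauss-formula identity $A_w(X,Y)=(D_XP)(w)\,Y$ for $X,Y\in T_w\N$; the right-hand side makes sense for arbitrary $X,Y\in\R^N$, so it provides a smooth, ambient, bilinear extension of the second fundamental form (which we need not check is symmetric off the tangent spaces). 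Since $\N$ is compact, this gives a constant $C=C(n,\N)$ with
\begin{equation*}
|A_w(X,Y)|\le C|X||Y|,\qquad |A_{w_1}(X,Y)-A_{w_2}(X,Y)|\le C|w_1-w_2||X||Y|
\end{equation*}
for all $w,w_1,w_2\in\N$ and $X,Y\in\R^N$ — the first inequality being boundedness of $DP$, the second the Lipschitz continuity of $DP$ on the compact set $\N$.

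With this in hand I would telescope index by index:
\begin{equation*}
A_u(U_i,U_i)-A_v(V_i,V_i)=\bigl[A_u(U_i,U_i)-A_v(U_i,U_i)\bigr]+\bigl[A_v(U_i,U_i)-A_v(V_i,V_i)\bigr].
\end{equation*}
The first bracket is bounded by $C|u-v|\,|U_i|^2$ via the Lipschitz estimate above. For the second bracket, bilinearity of $A_v$ gives $A_v(U_i,U_i)-A_v(V_i,V_i)=A_v(U_i-V_i,U_i)+A_v(V_i,U_i-V_i)$, which the boundedness bound estimates by $C(|U_i|+|V_i|)|U_i-V_i|$. Summing over $i=1,\dots,n$, using $\sum_i|U_i|^2=|U|^2$, and handling the remaining sum either by the crude estimates $|U_i|\le|U|$, $|V_i|\le|V|$, $|U_i-V_i|\le|U-V|$ (costing only a factor $n$) or by Cauchy--Schwarz, one arrives at \eqref{eq.secondfundamentalform} after absorbing the dimensional constant into $C_6=C_6(n,\N)$.

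I expect no real obstacle: once the ambient extension of the first step is in place, the rest is pure linear algebra together with the two compactness bounds. The only place to exercise care is precisely that first step — treating $A$ as a smooth tensor defined along all of $\N$ so that the mixed-basepoint expressions are well defined — together with being consistent about the convention $A_u(U,U)=\sum_iA_u(U_i,U_i)$ when summing the single-index estimates.
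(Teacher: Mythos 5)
Your proof is correct. Note that the paper itself gives no proof of this lemma — it is quoted verbatim from Fardoun--Regbaoui \cite{far} — and your argument is essentially the standard one used there: extend $A$ to a smooth ambient bilinear form (your choice $A_w(X,Y)=(D_XP)(w)Y$ via the projection-valued map $P$ is a clean way to do this), then use boundedness and Lipschitz continuity of this extension on the compact manifold $\N$ together with the telescoping/bilinearity identity. You correctly isolate the one genuine subtlety, namely that the mixed expression $A_v(U_i,U_i)$ with $U_i\in T_u\N$ only makes sense after such an extension; the remaining summation over $i$ via Cauchy--Schwarz is routine.
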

In the following, we prove a weighted Poincar\'e inequality, assuming that the image of a solution to \eqref{eq.ELEpdelta} is contained in a small enough ball. This should be compared to the intrinsic Poincar\'e's inequalities typical of double-phase problems, for example Theorem $1.6$ in \cite{col0}.
\begin{lemma}[Proposition 2.1 in \cite{far}]\label{lemma.imagepoincare}
By possibly restricting the constant $\rho_0$ from Proposition \ref{prop.dirichlet}, and for all $p\in (n,P_0)$ and $\delta \in (0,1)$ the following statement holds true. If $u \in W^{1,p}(B_{R_0}(x_0);\N)$ is a solution to \eqref{eq.ELEpdelta} satisfying $u(B_{R_0}(x_0)) \subseteq B^\N_{\rho}(y_0)$ for some point $y_0 \in \N$ and radius $\rho \in (0,\rho_0]$, then for any test function $\phi \in  W^{1,p}_0(B_{R_0}(x_0);\R^N)$ we have
\begin{equation}\label{eq.imagepoincare}
   \footnotesize \int_{B_{R_0}(x_0)} (1+(\delta+|\nabla u|^2)^{\frac{n}{2}})^{\frac{p-n}{n}}(\delta+|\nabla u|^2)^{\frac{n-2}{2}}|\nabla u|^2 |\phi|^2 \le 16 \rho^2 \int_{B_{R_0}(x_0)} (1+(\delta+|\nabla u|^2)^{\frac{n}{2}})^{\frac{p-n}{n}}(\delta+|\nabla u|^2)^{\frac{n-2}{2}} |\nabla \phi|^2.
\end{equation}
\end{lemma}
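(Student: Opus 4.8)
The plan is to test the weak formulation of \eqref{eq.ELEpdelta} against the variation $\varphi := (u-y_0)|\phi|^2$, where $y_0$ is viewed as a fixed element of $\R^N$ via the isometric embedding $\N\subseteq\R^N$. Since $p>n$, Sobolev's embedding gives $\phi\in C^0\cap L^\infty$, hence $|\phi|^2\in W^{1,p}_0(B_{R_0}(x_0))\cap L^\infty$ with $\nabla|\phi|^2=2\,\phi^k\nabla\phi^k$; combined with $u-y_0\in W^{1,p}\cap L^\infty$ this shows $\varphi\in W^{1,p}_0(B_{R_0}(x_0);\R^N)$, so it is an admissible test function (the weak form of \eqref{eq.ELEpdelta}, with the second fundamental form term, holds against any such $\psi\in W^{1,p}_0\cap L^\infty$, cf.\ Lemma \ref{lemma.dirichlet}). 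Abbreviating $w:=u-y_0$, so that $|w|\le\rho$ on $B_{R_0}(x_0)$, and writing $b:=(1+(\delta+|\nabla u|^2)^{\frac n2})^{\frac{p-n}{n}}(\delta+|\nabla u|^2)^{\frac{n-2}{2}}\ge 0$ for the common nonnegative coefficient in \eqref{eq.ELEpdelta}, the weak equation reads $\int b\,\nabla u\cdot\nabla\varphi=\int b\,\big(A_u(\nabla u,\nabla u)\cdot\varphi\big)$.

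Next I would expand the left-hand side. From $\nabla\varphi^j=|\phi|^2\nabla u^j+w^j\nabla|\phi|^2$ and $\sum_j w^j\nabla u^j=\tfrac12\nabla|w|^2$ one obtains the identity
\[ \int b\,|\nabla u|^2|\phi|^2 \;=\; -\tfrac12\int b\,\nabla|w|^2\cdot\nabla|\phi|^2 \;+\; \int b\,\big(A_u(\nabla u,\nabla u)\cdot w\big)|\phi|^2 . \]
Set $I:=\int b\,|\nabla u|^2|\phi|^2$ and $J:=\int b\,|\nabla\phi|^2$ (both finite: $b|\nabla u|^2\lesssim 1+|\nabla u|^p$ and, by H\"older, $b|\nabla\phi|^2\lesssim(1+|\nabla u|^{p-2})|\nabla\phi|^2\in\mathbb L^1$). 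For the first term on the right I would use $|\nabla|w|^2|\le 2|w||\nabla u|\le 2\rho|\nabla u|$ and $|\nabla|\phi|^2|\le 2|\phi||\nabla\phi|$, and then the Cauchy--Schwarz inequality with respect to the nonnegative weight $b$, to bound it by $2\rho\int b\,|\nabla u||\phi||\nabla\phi|\le 2\rho\,\sqrt I\,\sqrt J$.

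For the second term, since $\nabla u(x)\in T_{u(x)}\N$ the quantity $A_u(\nabla u,\nabla u)$ is a sum over the $n$ coordinate directions of values of the second fundamental form, hence $|A_u(\nabla u,\nabla u)|\le\norm{A}_\infty|\nabla u|^2$; together with $|w|\le\rho$ this gives $\big|\int b\,(A_u(\nabla u,\nabla u)\cdot w)|\phi|^2\big|\le\norm{A}_\infty\rho\,I$. Restricting $\rho_0$ further so that $\norm{A}_\infty\rho_0\le\tfrac12$, this contribution is absorbed into the left-hand side, leaving $\tfrac12 I\le 2\rho\,\sqrt I\,\sqrt J$, i.e.\ $\sqrt I\le 4\rho\,\sqrt J$; squaring yields exactly \eqref{eq.imagepoincare}.

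The argument is in essence an energy-comparison identity, and the only delicate points I anticipate are the admissibility of $\varphi$ (which is why $p>n$, hence $\phi\in L^\infty$, is used) and the degenerate, double-phase nature of the weight $b$, which may vanish as $\nabla u\to 0$ or grow as $|\nabla u|\to\infty$. The latter causes no obstacle because $b$ enters only as a nonnegative weight inside a Cauchy--Schwarz estimate and no ellipticity or boundedness of $b$ is invoked. The numerical constant $16$ is forced precisely by the two absorptions above.
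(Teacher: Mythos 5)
Your proposal is correct and follows essentially the same route as the paper: testing \eqref{eq.ELEpdelta} with $(u-y_0)|\phi|^2$, bounding the second-fundamental-form term by $\norm{A}_\infty\rho\, I$ and absorbing it after shrinking $\rho_0$, and applying Cauchy--Schwarz with respect to the weight $b$ to the cross term, which yields $\tfrac12 I\le 2\rho\sqrt{I}\sqrt{J}$ and hence the constant $16$. The extra care you take with the admissibility of the test function and the integrability of $b|\nabla\phi|^2$ is consistent with (and slightly more explicit than) the paper's argument.
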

\begin{proof}
By density, we can consider $\phi \in C^\infty_0(B_{R_0}(x_0);\R^N)$. Test the system \eqref{eq.ELEpdelta} with $|\phi|^2 (u-y_0)$, to obtain, using the hypothesis together with Cauchy-Schwartz's inequality
\begin{align*}
&\int_{B_{R_0}(x_0) } (1+(\delta+|\nabla u|^2)^{\frac{n}{2}})^{\frac{p-n}{n}}(\delta+|\nabla u|^2)^{\frac{n-2}{2}} |\nabla u|^2 |\phi|^2 \\
&\le \int_{B_{R_0}(x_0) }(1+(\delta+|\nabla u|^2)^{\frac{n}{2}})^{\frac{p-n}{n}}(\delta+|\nabla u|^2)^{\frac{n-2}{2}} |A_u(\nabla u, \nabla u)| |\phi|^2 |u-y_0|\\
&+\int_{B_{R_0}(x_0) } 2 |\phi| |\nabla \phi|(1+(\delta+|\nabla u|^2)^{\frac{n}{2}})^{\frac{p-n}{n}}(\delta+|\nabla u|^2)^{\frac{n-2}{2}} |\nabla u| |u-y_0| \\
&\le C(\N) \rho \int_{B_{R_0}(x_0) }(1+(\delta+|\nabla u|^2)^{\frac{n}{2}})^{\frac{p-n}{n}}(\delta+|\nabla u|^2)^{\frac{n-2}{2}} |\nabla u|^2 |\phi|^2 \\
&+ 2 \rho \Big(\int_{B_{R_0}(x_0) } (1+(\delta+|\nabla u|^2)^{\frac{n}{2}})^{\frac{p-n}{n}}(\delta+|\nabla u|^2)^{\frac{n-2}{2}} |\nabla \phi|^2 \Big)^{\frac{1}{2}} \cdot \\
&\cdot \Big(\int_{B_{R_0}(x_0) } (1+(\delta+|\nabla u|^2)^{\frac{n}{2}})^{\frac{p-n}{n}}(\delta+|\nabla u|^2)^{\frac{n-2}{2}} |\nabla u|^2 |\phi|^2 \Big)^{\frac{1}{2}}.
\end{align*}
Taking $\rho_0$ small enough so that $2 C(\N) \rho_0 < 1$, we conclude rearranging this inequality.
\end{proof}
We recall the following Proposition from Luckhaus \cite{luc1}, restricted to the particular autonomous case of interest for us (by the smoothness of the Riemannian metrics involved there is no loss of generality in doing so). This will guarantee the H\"older regularity with respect to arbitrary exponents $\alpha \in (0,1)$ for minimizers of the $E_{p,\delta}$-energy.
\begin{proposition}[Proposition 1 in \cite{luc1}]\label{prop.luckhaus}
Let $p>1$ and $u \in W^{1,p}(B_{2 R_0}(x_0);\N)$ be locally minimizing the energy $G(u):=\int_{B_{2 R_0}(x_0)} G(\nabla u)$, satisfying the following assumptions.
 \begin{itemize}
     \item[(A1)] $G(\cdot)$ is convex and verifies the inequality
     \begin{equation*}
         l_1^{-1} |\xi|^p-l_2 \le G(\xi) \le l_1 |\xi|^p+l_2;
     \end{equation*}
     \item[(A2)] Any possible blow-up function $F(\xi):= \lim_{i \rightarrow \infty} |\lambda_i|^{-p} G(\lambda_i \xi)$ along a sequence $\lambda_i \rightarrow +\infty$, is such that any solution $v \in W^{1,p}(B_{2 R_0}(x_0);\R^N)$ of the associated Euler-Lagrange system
     \begin{equation*}
         -\Div[\partial_\xi F(\nabla v)]=0,
     \end{equation*}
     is in $C^{0,\mu}(B_{R_0}(x_0);\R^N)$ with bound $[v]_{C^{0,\mu}} \le l_{3,\mu} \norm{v}_{W^{1,p}}$ for some $l_{3,\mu} >0$ and $\mu \in (0,1)$.
 \end{itemize}
 Then for every $\Tilde{\mu}<\mu$ there exist constants $L$, $L_{3}$, $\Tilde{\e}_0$ and $\Tilde{\tau}_\mu$, all of them depending on $\N$, $G$ and $\Tilde{\mu}$ such that, for all radii $r<\Tilde{\tau}_\mu R_0$ and balls $B_r(x)\subset B_{R_0}(x_0)$ such that $r^{p-n} D_p(u;B_r(x) )<\Tilde{\e}_0^p$, we have
 \begin{equation}
     r'^{p-n} \int_{B_{r'}(x)} |\nabla u|^p \le L \Big( \frac{r'}{r} \Big)^{\Tilde{\mu} p} r^{p-n} \int_{B_{r}(x)} |\nabla u|^p, \quad \forall r' \le r,
 \end{equation}
 or equivalently, $u \in C_{loc}^{0,\Tilde{\mu}}(B_{\Tilde{\tau}_\mu R_0};\N)$ with bound $[u]_{C^{0,\Tilde{\mu}}} \le L_{3} R_0^{1-\frac{n}{p} } D_p(u;B_{R_0}(x_0))^{\frac{1}{p}}$.
\end{proposition}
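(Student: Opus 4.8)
The plan is to deduce the decay estimate from a single ``one step'' improvement, and to prove the latter by a blow-up and compactness contradiction, in the spirit of the classical partial-regularity scheme (Hardt--Lin, Luckhaus). Set $\Phi(x,r):=r^{p-n}\int_{B_r(x)}|\nabla u|^p$. I would first show: \emph{for every $\tilde\mu<\mu$ there are $\theta\in(0,1)$ and $\tilde\e_0>0$, depending on $\N,G,\tilde\mu$, such that if $B_r(x)\subseteq B_{R_0}(x_0)$ with $r<\tilde\tau_\mu R_0$ and $\Phi(x,r)<\tilde\e_0^p$, then $\Phi(x,\theta r)\le\theta^{\tilde\mu p}\Phi(x,r)$.} Since $\theta^{\tilde\mu p}<1$, the smallness hypothesis is preserved after one step, so the estimate iterates to $\Phi(x,\theta^k r)\le\theta^{k\tilde\mu p}\Phi(x,r)$; interpolating the dyadic scales by the monotonicity of $s\mapsto\Phi(x,s)$ (true for local minimizers, and in the range $p>n$ even elementary) gives the Morrey decay $\Phi(x,s)\le L(s/r)^{\tilde\mu p}\Phi(x,r)$. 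Finally Poincar\'e's inequality, $\int_{B_s(x)}|u-[u]_{B_s(x)}|^p\le C\,s^p\int_{B_s(x)}|\nabla u|^p=C\,s^n\,\Phi(x,s)$, converts this into the Campanato condition $s^{-n}\int_{B_s(x)}|u-[u]_{B_s(x)}|^p\le C\,s^{\tilde\mu p}$, equivalent by Campanato's theorem to $u\in C^{0,\tilde\mu}_{loc}$ with the stated bound.

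For the one-step estimate I argue by contradiction: fixing $\theta$ (to be chosen later in terms of the constant in hypothesis (A2)) and letting $\tilde\tau_\mu,\tilde\e_0\downarrow 0$, I obtain balls $B_{r_i}(x_i)\subseteq B_{R_0}(x_0)$ with $r_i\to 0$ and $\e_i^p:=\Phi(x_i,r_i)\to 0$, along which the decay fails: $\Phi(x_i,\theta r_i)>\theta^{\tilde\mu p}\e_i^p$. Rescaling space and amplitude, put $w_i(y):=\e_i^{-1}\big(u(x_i+r_i y)-[u]_{B_{r_i}(x_i)}\big)$ for $y\in B_1$, so that $\int_{B_1}|\nabla w_i|^p=1$ and $[w_i]_{B_1}=0$; by Poincar\'e $(w_i)$ is bounded in $W^{1,p}(B_1;\R^N)$, hence $w_i\rightharpoonup w$ weakly (subsequence), strongly in $L^p$, with $\int_{B_1}|\nabla w|^p\le 1$. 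Each $w_i$ is a local minimizer of the rescaled functional $v\mapsto\int_{B_1}\sigma_i^{-p}G(\sigma_i\nabla v)=:\int_{B_1}G_i(\nabla v)$, with $\sigma_i:=\e_i/r_i$, among maps valued in the rescaled target $\N_i':=\e_i^{-1}(\N-[u]_{B_{r_i}(x_i)})$. Using (A1) one checks that $(G_i)$ converges, along a subsequence, to a blow-up $F$ of $G$ of the form in (A2) (after, if needed, a further localization ensuring the scaling factor diverges; $F$ inherits the pure growth $l_1^{-1}|\xi|^p\le F(\xi)\le l_1|\xi|^p$), while the submanifolds $\N_i'$ flatten onto the linear tangent space $T_{y_*}\N$, $y_*:=\lim[u]_{B_{r_i}(x_i)}$, so the second fundamental form terms become lower order and disappear in the limit.

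The technical core is the passage to the limit. A Caccioppoli inequality for $\N$-valued minimizers, obtained by testing minimality against the nearest-point projection onto $\N$ of $w_i$ truncated by a cut-off (the curvature corrections being controlled by the shrinking oscillation, cf.\ Lemma~\ref{lemma.secondfundamentalform}), gives uniform local bounds on $\nabla w_i$; more importantly, comparing $w_i$ on a ball $B_\rho\Subset B_1$ with an $\N_i'$-valued competitor that agrees with $w_i$ on $\partial B_\rho$ and is close to $w$ inside --- built by interpolating across a thin annulus via Luckhaus' lemma and then projecting --- the minimality forces $\int_{B_\rho}G_i(\nabla w_i)\to\int_{B_\rho}F(\nabla w)$ for a.e.\ $\rho\in(0,1)$. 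Combined with the uniform strict monotonicity of $\xi\mapsto\partial_\xi G_i(\xi)$ (the inequality \eqref{eq.elementary2}, with constants independent of the scaling), this upgrades the convergence to strong convergence $\nabla w_i\to\nabla w$ in $L^p_{loc}(B_1)$ and identifies $w$ as a local minimizer of $\int F(\nabla\cdot)$ among $T_{y_*}\N$-valued maps, hence --- the constraint being linear --- a solution of $-\Div[\partial_\xi F(\nabla w)]=0$. Hypothesis (A2) then yields $w\in C^{0,\mu}(B_{1/2};\R^N)$ with $[w]_{C^{0,\mu}}\le l_{3,\mu}$, and a Caccioppoli inequality for the solution $w$ turns this H\"older bound into the decay $\Phi_w(\theta)\le C_0\,\theta^{\mu p}$ with $C_0=C_0(\N,G,\mu)$.

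To conclude, I choose $\theta$ so small that $\theta^{\tilde\mu p}\ge 2C_0\,\theta^{\mu p}$, which is possible precisely because $\tilde\mu<\mu$; $\tilde\e_0$ is then whatever comes out of the compactness. By the strong convergence, $\Phi_{w_i}(\theta)\to\Phi_w(\theta)\le C_0\theta^{\mu p}$, whereas the failure of decay reads $\Phi_{w_i}(\theta)=\e_i^{-p}\Phi(x_i,\theta r_i)>\theta^{\tilde\mu p}\ge 2C_0\theta^{\mu p}$, a contradiction for $i$ large. This proves the one-step estimate, and the iteration above finishes the proof. I expect the main obstacle to lie in the compactness step: producing the Caccioppoli inequality for $\N$-valued minimizers with constants stable under the zoom, constructing admissible ($\N$-valued) comparison maps via Luckhaus' interpolation lemma so as to get the convergence of energies and thence strong $W^{1,p}$ convergence, and verifying that the rescaled integrands $G_i$ converge to a blow-up $F$ of $G$ covered by (A2) --- all while keeping the curvature contributions asymptotically negligible, so that the limit genuinely solves the homogeneous system $-\Div[\partial_\xi F(\nabla w)]=0$.
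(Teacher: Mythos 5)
Your proposal reconstructs the classical blow-up-and-compactness proof (one-step excess decay by contradiction, Luckhaus' interpolation lemma to build admissible $\N$-valued comparison maps, strong convergence from convergence of energies plus strict monotonicity, then hypothesis (A2) applied to the limit), which is precisely the argument of Proposition 1 in \cite{luc1}; the paper does not reprove this Proposition but imports it, only re-verifying afterwards (in the remark and in the ``Uniform Luckhaus' Lemma'') that (A1)--(A2) and the comparison-map construction hold uniformly in $p$ and $\delta$. The one point you gloss over --- the contradiction sequence may have bounded amplitude-to-radius ratio $\sigma_i=\e_i/r_i$, in which case the rescaled integrands converge to a finite rescaling of $G$ rather than to a blow-up along $\lambda_i\to+\infty$ as (A2) requires, and ``a further localization'' cannot by itself force $\sigma_i\to\infty$ --- is a genuine case distinction handled in \cite{luc1}, but since the statement is being quoted from that reference this is its burden rather than a flaw in your architecture.
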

\begin{remark}
As implicitely highlighted in the Introduction, Proposition $1$ in \cite{luc1} applies also to some double-phase problems, as long as condition $(A2)$ is met.
\end{remark}
\begin{remark}
We will see that for minimizers as in Theorem \ref{th.SUregularity}, point $(a)$, this theorem can be applied with constants uniform in $\delta$ and $p$. Indeed the hypotheses $(A1)$ and $(A2)$ are gonna be verified uniformly. At the same time, the proof of this proposition in \cite{luc1} relies on a blow-up argument and a comparison map construction, Lemma $1$ there, which are both uniform in our parameters. The latter, is based on a decomposition of $\partial B_{R_0}(x_0)$, in cells of dimension $j=1,...,n-1$, and the extension is done linearly if $j \le \floor{p-1}$, with the use of Sobolev embedding in order to control the oscillations, and $0$-homogeneous for the higher dimensional cells. In our situation, the second case never appears, and the first extension leads to uniform estimates as $p \searrow n$, for any $\delta \in [0,1]$, since $W^{1,p}(\R^{n-1}) \subseteq C^{0,\frac{1}{n}}(\R^{n-1})$, as we shall see below. Here we will be assuming $P_0 < n+1$. Compare with \cite{luc1}.
\end{remark}
\begin{lemma}[Uniform Luckhaus' Lemma]
Suppose $p \in (n,n+1)$, $u,v \in W^{1,p}(S^{n-1};\R^N)$, $\lambda \in (0,\tfrac{1}{2}]$ and $\e \in (0,1)$. Set
\begin{equation*}
    \int_{S^{n-1}} |\nabla u|^p+|\nabla v|^p+\tfrac{|u-v|^p}{\e^p}=:K^p.
\end{equation*}
There exist constants $C_6(n)$, $C_7(n)$ and a map $\phi:A(1-\lambda,1) \longrightarrow \R^N$ such that
\begin{align*}
\phi(x)=u(x) \ \ \text{if } |x|=1,\\
\phi(x)=v(\tfrac{x}{1-\lambda}) \ \ \text{if } |x|=1-\lambda,\\
\int_{A(1-\lambda,1)} |\nabla \phi|^p \le C_6 K^p \Big(1+\big(\tfrac{\e}{\lambda} \big)^p \Big) \lambda,\\
\phi(A(1-\lambda,1)) \subset T_\rho(u(S^{n-1})) \cup T_\rho(v(S^{n-1}) ), 
\end{align*}
where $\rho=C_7 K\e^{1-\beta_0}$, and $\beta_0:=\tfrac{2n-1}{2n}$.
\end{lemma}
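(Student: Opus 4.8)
The plan is to reproduce Luckhaus' simplicial interpolation (the construction in \cite{luc1} sketched in the remark above) and to track why every constant is uniform, using crucially that since $p\in(n,n+1)$ every face of a triangulation of $S^{n-1}$ has dimension $j\le n-1<p$ with a uniform gap $p-j\ge1$; this makes the relevant Morrey, Poincar\'e and Gagliardo--Nirenberg constants on faces and on $S^{n-1}$ bounded for $p$ in the compact range $[n,n+1]$, so that no $0$-homogeneous extension is ever needed and all constants below depend only on $n$. Fix a smooth triangulation of $S^{n-1}$ together with a subdivision scheme of bounded shape regularity, and let $\mathcal{T}_h$ be the resulting triangulation of mesh size $h:=\min\{\e,h_0(n)\}\le\e<1$, refining both endpoint spheres; denote by $\Pi_h u,\Pi_h v$ the piecewise-affine interpolants on $\mathcal{T}_h$ (well defined, since $W^{1,p}(S^{n-1})\hookrightarrow C^0$). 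Using $p-(n-1)\ge1$, on each cell $Q$ one has $\operatorname{osc}_Q u\le C(n)h^{1-\frac{n-1}{p}}\norm{\nabla u}_{\mathbb{L}^p(Q)}\le C(n)h^{\frac1{2n}}K$ (and likewise for $v$), whence by Poincar\'e and shape regularity $\norm{u-\Pi_h u}_{\mathbb{L}^p(S^{n-1})}+\norm{v-\Pi_h v}_{\mathbb{L}^p(S^{n-1})}\le C(n)hK$, $\norm{\nabla\Pi_h u}_{\mathbb{L}^p}+\norm{\nabla\Pi_h v}_{\mathbb{L}^p}\le C(n)K$, and $\norm{\Pi_h u-\Pi_h v}_{\mathbb{L}^p(S^{n-1})}\le C(n)(h+\e)K\le C(n)\e K$. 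A covering/interpolation argument (optimising over balls of radius $\e$) moreover gives $\norm{u-v}_{\mathbb{L}^\infty(S^{n-1})}\le C(n)\e^{1-\frac{n-1}{p}}K\le C(n)\e^{\frac1{2n}}K$, which is exactly where the exponent $\beta_0=\frac{2n-1}{2n}$, i.e.\ $1-\beta_0=\frac1{2n}\le1-\frac{n-1}{p}$, enters.

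Write $x=t\omega$ with $t\in[1-\lambda,1]$ and split $A(1-\lambda,1)$ into three radial slabs of thickness $\lambda/3$. On the outer slab $t\in[1-\lambda/3,1]$ put $\phi(t\omega):=\chi(t)\,u(\omega)+(1-\chi(t))\,\Pi_h u(\omega)$ with $\chi$ smooth, $\chi(1)=1$, $\chi(1-\lambda/3)=0$, $|\chi'|\le C/\lambda$; on the inner slab $t\in[1-\lambda,1-2\lambda/3]$ interpolate symmetrically between $\Pi_h v$ and $v(\,\cdot\,/(1-\lambda))$. On the middle slab $t\in[1-2\lambda/3,1-\lambda/3]$ put $\phi(t\omega):=\Psi(s(t),\omega)$, where $s$ is the affine reparametrisation onto $[0,1]$ and $\Psi\colon S^{n-1}\times[0,1]\to\R^N$ is the piecewise-affine map on the standard prismatic triangulation of $S^{n-1}\times[0,1]$ built from $\mathcal{T}_h$ (using only the product vertices), with vertex values $(q,0)\mapsto v(q)$ and $(q,1)\mapsto u(q)$. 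Then $\Psi(0,\cdot)=\Pi_h v$ and $\Psi(1,\cdot)=\Pi_h u$, so the three pieces glue to a $W^{1,p}$ map with boundary values $u$ on $|x|=1$ and $v(x/(1-\lambda))$ on $|x|=1-\lambda$.

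On a transition slab $|\nabla\phi|^p\lesssim\lambda^{-p}|u-\Pi_h u|^p+|\nabla u|^p+|\nabla\Pi_h u|^p$ (and its $v$-analogue), so integration over a slab of thickness $\lambda/3$ yields $\lesssim\lambda^{1-p}\norm{u-\Pi_h u}_{\mathbb{L}^p(S^{n-1})}^p+\lambda K^p\lesssim\lambda^{1-p}h^pK^p+\lambda K^p\lesssim\lambda^{1-p}\e^pK^p+\lambda K^p$. Over the middle slab $\int|\nabla\phi|^p\lesssim\lambda^{1-p}\int_{S^{n-1}\times[0,1]}|\partial_s\Psi|^p+\lambda\int_{S^{n-1}\times[0,1]}|\nabla_\omega\Psi|^p$; the point of the prismatic triangulation is that on each top-dimensional simplex $\partial_s\Psi$ equals a \emph{single} difference $u(q_m)-v(q_m)$ taken along a unit-length vertical edge, so $\int_{S^{n-1}\times[0,1]}|\partial_s\Psi|^p\lesssim\norm{\Pi_h u-\Pi_h v}_{\mathbb{L}^p(S^{n-1})}^p\lesssim\e^pK^p$, whereas after removing that vertical part every tangential difference on the simplex is $\le\operatorname{osc}_\sigma u+\operatorname{osc}_\sigma v$ across a cell of inradius $\sim h$, hence $|\nabla_\omega\Psi|\lesssim h^{-1}(\operatorname{osc}_\sigma u+\operatorname{osc}_\sigma v)$ and $\int_{S^{n-1}\times[0,1]}|\nabla_\omega\Psi|^p\lesssim\norm{\nabla u}_{\mathbb{L}^p}^p+\norm{\nabla v}_{\mathbb{L}^p}^p\lesssim K^p$ (no $u-v$ term survives, which is what makes the radial cost cheap). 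Summing the three slabs gives $\int_{A(1-\lambda,1)}|\nabla\phi|^p\le C_6(n)K^p(\lambda+\e^p\lambda^{1-p})=C_6(n)K^p(1+(\e/\lambda)^p)\lambda$. For the inclusion: on the outer slab $|\phi(t\omega)-u(\omega)|=(1-\chi(t))|\Pi_h u(\omega)-u(\omega)|\le\operatorname{osc}_Q u\le C(n)\e^{\frac1{2n}}K$ with $Q\ni\omega$; on the inner slab symmetrically with $v$; on the middle slab $\phi(t\omega)$ is a convex combination of $\{u(q),v(q):q\text{ a vertex of the cell }\sigma\ni\omega\}$, so for any vertex $q_0$ of $\sigma$, $|\phi(t\omega)-u(q_0)|\le\operatorname{osc}_\sigma u+\operatorname{osc}_\sigma v+\norm{u-v}_{\mathbb{L}^\infty(S^{n-1})}\le C(n)\e^{\frac1{2n}}K$. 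Hence $\phi(A(1-\lambda,1))\subset T_\rho(u(S^{n-1}))\cup T_\rho(v(S^{n-1}))$ with $\rho=C_7(n)K\e^{1-\beta_0}$.

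The hardest point is the middle slab: one must route the interpolation so that the discrepancy $u-v$ — only small in $\mathbb{L}^p$, not pointwise — is spent along unit-length radial edges and never across the short tangential directions, so that its $\mathbb{L}^p$-norm rather than its pointwise size controls the radial energy; the prismatic triangulation accomplishes exactly this, and it is what produces the sharp factor $\e^p\lambda^{1-p}$ rather than something worse. A secondary but genuinely necessary point is to verify that every constant involved — Poincar\'e, Morrey, Gagliardo--Nirenberg, affine interpolation, shape regularity — remains bounded as $p\searrow n$, which is precisely the role of the restriction $p\in(n,n+1)$, forcing $p-j\ge1$ for all cell dimensions $j\le n-1$.
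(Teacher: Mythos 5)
Your construction is correct, but it takes a genuinely different route from the paper's. The paper follows Luckhaus closely: it decomposes $S^{n-1}$ into dyadic cells, averages over $SO(n)$ to position the decomposition so that $|\nabla u|^p+|\nabla v|^p+\e^{-p}|u-v|^p$ is controlled on every lower-dimensional skeleton $Q_j$, chains Morrey-type oscillation estimates over the skeleton to get the $\mathbb{L}^\infty$ bound on $u-v$ (which is what produces $\rho=C_7K\e^{1-\beta_0}$), and then uses the completely elementary radial interpolation $\phi(x)=u(\tfrac{x}{|x|})+\tfrac{1-|x|}{\lambda}\bigl(v(\tfrac{x}{|x|})-u(\tfrac{x}{|x|})\bigr)$, whose gradient is pointwise bounded by $|\nabla u|+|\nabla v|+\lambda^{-1}|u-v|$, so the energy bound is a one-line consequence. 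You instead fix a triangulation of mesh $h\sim\e$, pass to piecewise-affine interpolants, and assemble $\phi$ from three radial slabs with a prismatic interpolation in the middle, obtaining the $\mathbb{L}^\infty$ bound on $u-v$ by optimising Morrey's inequality against the $\mathbb{L}^p$-average over balls of radius $\e$ rather than by chaining over dyadic cells. What your route buys is that the $SO(n)$-averaging and the skeleton-by-skeleton extension --- the core technical devices of the original lemma --- disappear entirely, because for $p>n>n-1$ every estimate can be run on full $(n-1)$-cells and at vertices; this is a clean way of exploiting exactly the feature the paper's preceding remark points to. What it costs is that the middle slab is heavier than necessary: the naive segment interpolation between $u(\omega)$ and $v(\omega)$ already spends the discrepancy $u-v$ purely in the radial direction while keeping the tangential gradient a convex combination of $\nabla u$ and $\nabla v$, so the prismatic triangulation solves a difficulty that the direct interpolation does not actually have. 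Two small points you should make explicit: the step from $\int_{S^{n-1}\times[0,1]}|\partial_s\Psi|^p$ to $\norm{\Pi_h u-\Pi_h v}_{\mathbb{L}^p}^p$ compares vertex values of $u-v$ with cell averages and therefore costs an extra $h^p\norm{\nabla(u-v)}_{\mathbb{L}^p}^p\le\e^pK^p$ (harmless, but it is where the Morrey oscillation bound is used again); and the interpolation-error and stability estimates for $\Pi_h$ should be justified by exactly the Poincar\'e--Morrey argument on a single cell, which is where the uniformity in $p\in(n,n+1)$ is actually checked.
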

\begin{proof}
Decompose $S^{n-1}$ into cells of diameter $\lambda$ as follows (it is convenient to set $\lambda=:2^{-\nu}$). Since $B_1$ is bi-Lipschitz homeomorphic to the open unit cube, we firstly decompose the boundary of the latter into open cubes $Q^j_i$ of side length $\lambda$ and dimensions between $0$ and $n-1$. There exist bi-Lipschitz homeomorphisms $\Phi_i^j$ (with distortions bounded by some dimensional constant) from these cubes into the desired disjoint cells $\set{e_i^j}$, so we obtain a decomposition
\begin{equation*}
    S^{n-1}:=\bigcup_{j=0}^{n-1} \bigcup_{i=1}^{k_j} e_i^j=:\bigcup_{j=0}^{n-1} Q_j.
\end{equation*}
For any positive measurable function $f$ we have
\begin{equation*}
    \int_{SO(n)} \int_{\omega Q_j} f d \mu_{S^{n-1}} d\omega= \mathcal{H}^j(Q_j) \int_{S^{n-1}} f.
\end{equation*}
Thus one can choose a rotation $\omega \in SO(n)$ such that for all $j$ one has
\begin{equation*}
\int_{\omega Q_j} |\nabla u|^p+|\nabla v|^p+\tfrac{|u-v|^p}{\e^p} \le c(n) K^p \lambda^{j+1-n}.
\end{equation*}
Without loss of generality we assume $\omega=id$. Sobolev's embedding allows to control the oscillations of $u-v$ on $Q_j$ for any $j \le \floor{p-1}=n-1$, hence for all $j$. More precisely, since by our choice $\beta \in (\tfrac{n-1}{n},1)$, if a cell $e \in Q_j$ is bi-Lipschitz homeomorphic to a ball of radius $r$ in $\R^j$, then as in \cite{luc1}
\begin{equation*}
\text{osc}_e |u-v| \le c(n,\mu) r^\mu \Big( \int_{e} |\nabla (u-v)|^p \Big)^{\frac{\beta}{p}} \Big( \int_{e} |u-v|^p \Big)^{\frac{1-\beta}{p}},
\end{equation*}
where $\mu< \tfrac{1}{j}-\tfrac{1}{p}$, for example we can take $\mu:=\tfrac{1}{n(n-1)}$, hence uniform in $p$. By construction, every cell of diameter $2^{-\nu}$ can be joined to a cell of diameter $2^{-\nu+1}$ by at most $2^{n-j}$-cells of diameter $2^{-\nu}$.
Summing up over $\nu$ we get as in \cite{luc1}
\begin{equation*}
\text{osc}_{Q_j} |u-v| \le c(n) \Big( \int_{Q_j} |\nabla u|^p+|\nabla v|^p \Big)^{\frac{\beta}{p}} \Big( \int_{Q_j} |u-v|^p \Big)^{\frac{1-\beta}{p}} \le c(n)  K \e^{1-\beta} \lambda^{\frac{j+1-n}{p}},
\end{equation*}
for all $j$. The decomposition in cells extends to the whole annulus $A(1-\lambda,1)$ through cells
\begin{equation*}
    \hat{e}_i^j:= \set{x \mid \tfrac{x}{|x|} \in e_i^j, \ 1-\lambda < |x| < 1}.
\end{equation*}
We interpolate linearly
\begin{equation*}
\phi(x):=u(\tfrac{x}{|x|})+ \tfrac{1-|x|}{\lambda}\big( v(\tfrac{x}{|x|})-u(\tfrac{x}{|x|}) \big).
\end{equation*}
Therefore, we can bound
\begin{equation*}
\int_{\hat{e}_i^j} |\nabla \phi|^p \le c \int_{1-\lambda}^1 r^{(j+1)-1} \int_{\partial B_r(x_0) \cap \hat{e}_i^j} |\nabla \phi|^p \le c \lambda \int_{S^{n-1}} |\nabla u|^p+|\nabla v|^p+\lambda^{-p} |u-v|^p.
\end{equation*}
Let us denote by $\hat{Q}_j:=\bigcup_i \hat{e}_i^j$. We deduce
\begin{equation*}
\int_{\hat{Q}_{n-1}} |\nabla \phi|^p \le c(n) \lambda^{(n-1)+2-n} K^p \Big(1+\big(\tfrac{\e}{\lambda} \big)^p \Big)=c(n) \lambda K^p \Big(1+\big(\tfrac{\e}{\lambda} \big)^p \Big).
\end{equation*}
\end{proof}

Next, we prove an analogue of Lemma 3 in \cite{duz3}, allowing us to link the H\"older continuity of the gradient of the solutions of \eqref{eq.ELEpdelta} to that of their corresponding non-linear $V$ quantity defined in the previous Subsection \ref{subsec.elementary}.
\begin{lemma}\label{lemma.nonlinearholder}
For $p\in (n,P_0)$ and $\delta \in [0,1]$, consider a matrix field $F:B_r(x) \rightarrow \R^{n N}$ so that the non-linear function $V(F)(x):=(1+(\delta+|F(x)|^2)^{\frac{n}{2}})^{\frac{p-n}{2 n}}(\delta+|F(x)|^2)^{\frac{n-2}{4}} F(x)$ is $C^{0,\beta}$-H\"older continuous for some exponent $\beta \in (0,1)$ and some constant $C$, both uniform in $p$ and $\delta$. Then $F$ is $C^{0, \Tilde{\beta}}$-regular where $\Tilde{\beta}:= \tfrac{2 \beta}{p}$, with semi-norm bound $[F]_{C^{0,\Tilde{\beta_1}} } \le C_8(P_0,n,\N,C,\beta)[V(F)]_{C^{0,\beta} }^{\frac{1}{p} }$ uniform in $p \in (n,P_0)$ and $\delta \in [0,1]$.
\end{lemma}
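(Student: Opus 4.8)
The plan is to reduce the statement to the purely algebraic inequality \eqref{eq.elementary2} specialized to the parameter value $s=1$. Indeed, the non-linear quantity $V(F)(x)=(1+(\delta+|F(x)|^2)^{\frac{n}{2}})^{\frac{p-n}{2n}}(\delta+|F(x)|^2)^{\frac{n-2}{4}}F(x)$ considered here is precisely the $V$ of \eqref{eq.elementary2} evaluated at $s=1$. Hence, for arbitrary points $a,b$ in the domain ball, applying \eqref{eq.elementary2} to the pair $X:=F(a)$, $Y:=F(b)$ and keeping only the outermost inequality of the chain there, one obtains
\[
c_1\,|F(a)-F(b)|^p\;\le\;c_0\,|V(F)(a)-V(F)(b)|^2,
\]
where $c_0=c_0(n,P_0)$ and $c_1=c_1(n,N,P_0)$ are the constants from \eqref{eq.elementary2}, uniform as $p\searrow n$ and $\delta\searrow 0$. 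Note that the monotonicity term on the left-hand side of \eqref{eq.elementary2} plays no role: only the lower bound $c_0|V(X)-V(Y)|^2\ge c_1|X-Y|^p$ is used.

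First I would then insert the Hölder hypothesis $|V(F)(a)-V(F)(b)|\le [V(F)]_{C^{0,\beta}}|a-b|^\beta$, which gives
\[
|F(a)-F(b)|\;\le\;(c_0/c_1)^{1/p}\,[V(F)]_{C^{0,\beta}}^{2/p}\,|a-b|^{2\beta/p}
\]
for all $a,b$. This is exactly the assertion that $F\in C^{0,\tilde\beta}$ with $\tilde\beta=\tfrac{2\beta}{p}$, and the semi-norm bound follows with $C_8:=(c_0/c_1)^{1/p}\,[V(F)]_{C^{0,\beta}}^{1/p}$. Since $p\in(n,P_0)$ ranges in a bounded interval and $c_0,c_1$ are uniform, the factor $(c_0/c_1)^{1/p}$ is controlled by a constant depending only on $n,N,P_0$; combined with the allowed dependence of $C_8$ on the constant $C=[V(F)]_{C^{0,\beta}}$ from the hypothesis, this yields the claimed uniformity in $p\in(n,P_0)$ and $\delta\in[0,1]$.

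There is no real obstacle beyond invoking \eqref{eq.elementary2}, whose somewhat lengthy proof is deferred to the Appendix; the remainder is bookkeeping. The only mild point to verify is that \eqref{eq.elementary2} is genuinely available with $s=1$ and with constants independent of $\delta$ down to $\delta=0$ — which is precisely how it is stated — so that the final Hölder semi-norm estimate for $F$ inherits the same uniformity in the parameters.
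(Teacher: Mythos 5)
Your proposal is correct and follows essentially the same route as the paper: both apply the lower bound $c_0|V(X)-V(Y)|^2\ge c_1|X-Y|^p$ from \eqref{eq.elementary2} (with $s=1$) to $X=F(a)$, $Y=F(b)$, insert the H\"older hypothesis on $V(F)$, and take $p$-th roots, with the uniformity of $c_0,c_1$ in $p$ and $\delta$ giving the uniform constant.
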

\begin{proof}
From the second inequality in \eqref{eq.elementary2} we deduce that for all $x,y \in \bar{B}_s$
\begin{align*}
    |F(x)-F(y)|^p \le c_1^{-1} |V(F)(x)-V(F)(y)|^2 \le C [V(F)]_{C^{0,\beta} } |x-y|^{2 \beta}.
\end{align*}
\end{proof}
Finally, let us prove an analogue to Riesz' theorem, which we will use in Section \ref{sec.minmax}.
\begin{lemma}\label{lemma.riesz}
Let $p \in (n,P_0)$, $\delta \in [0,1]$, and suppose $(f_k)_{k \in \mathbb{N}},f \in \mathbb{L}^p$ are such that $f_k \rightarrow f$ almost everywhere and $E_{p,\delta}(f_k) \rightarrow E_{p,\delta}(f)$. Then $f_k \rightarrow f$ strongly in $\mathbb{L}^p$.
\end{lemma}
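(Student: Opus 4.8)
The plan is to run a Brezis--Lieb/Riesz-type argument based on Fatou's lemma, using only the elementary inequalities collected in Subsection \ref{subsec.elementary} and no strict convexity. Write $\Phi(t):=(1+(\delta+t^2)^{\frac n2})^{\frac pn}-(1+\delta^{\frac n2})^{\frac pn}$ for $t\ge 0$, so that $E_{p,\delta}(g)=\tfrac1p\int \Phi(|g|)\,d\mu_g$ and $\Phi$ is continuous in $t$. From \eqref{eq.elementary1} with $s=1$ we have the two-sided bound $|x|^p\le \Phi(|x|)\le C_3(1+|x|^p)$ for every $x$. Since the domain ($\M$, hence of finite measure) is compact and $f\in\mathbb L^p$, the upper bound gives $E_{p,\delta}(f)=\tfrac1p\int\Phi(|f|)\,d\mu_g<\infty$.

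First I would record the pointwise non-negativity that legitimizes Fatou. By \eqref{eq.elementary0}, $|f_k-f|^p\le 2^{P_0-1}(|f_k|^p+|f|^p)$, and then by the lower bound in \eqref{eq.elementary1}, $|f_k|^p+|f|^p\le \Phi(|f_k|)+\Phi(|f|)$; hence the functions
\[
h_k:=2^{P_0-1}\bigl(\Phi(|f_k|)+\Phi(|f|)\bigr)-|f_k-f|^p
\]
are non-negative. Since $f_k\to f$ almost everywhere and $\Phi$ is continuous, $\Phi(|f_k|)\to\Phi(|f|)$ and $|f_k-f|^p\to 0$ a.e., so $h_k\to 2^{P_0}\Phi(|f|)$ a.e.

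Next I would apply Fatou's lemma to $(h_k)$ and insert the energy hypothesis. The convergence $E_{p,\delta}(f_k)\to E_{p,\delta}(f)$ is exactly $\int\Phi(|f_k|)\,d\mu_g\to\int\Phi(|f|)\,d\mu_g$, a finite limit, so
\[
2^{P_0}\!\int\Phi(|f|)\,d\mu_g\le\liminf_{k\to\infty}\int h_k\,d\mu_g
=2^{P_0}\!\int\Phi(|f|)\,d\mu_g-\limsup_{k\to\infty}\int|f_k-f|^p\,d\mu_g,
\]
where in the last step I used that $\liminf(a_k-b_k)=\lim a_k-\limsup b_k$ when $\lim a_k$ exists. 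Cancelling the finite quantity $\int\Phi(|f|)\,d\mu_g$ yields $\limsup_k\int|f_k-f|^p\,d\mu_g\le 0$, hence $f_k\to f$ strongly in $\mathbb L^p$.

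There is no genuine obstacle here: the only points needing a word of care are that the domain has finite measure (so that the constant in $\Phi(|x|)\le C_3(1+|x|^p)$ is integrable and $E_{p,\delta}(f)<\infty$) and that the combinatorial constant in $h_k\ge 0$ is precisely the one furnished by \eqref{eq.elementary0}--\eqref{eq.elementary1}; both are immediate. One could instead upgrade a.e.\ convergence plus $\mathbb L^p$-boundedness to weak $\mathbb L^p$-convergence and invoke uniform convexity, but the Fatou argument avoids identifying the weak limit and uses only $p\ge 1$, which holds since $p>n\ge2$.
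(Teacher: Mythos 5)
Your proof is correct and follows essentially the same route as the paper: the same Riesz/Brezis--Lieb-type argument via Fatou's lemma applied to the non-negative functions $2^{c}(\Phi(|f_k|)+\Phi(|f|))-|f_k-f|^p$, with the non-negativity supplied by \eqref{eq.elementary0}--\eqref{eq.elementary1} (the paper uses the constant $2^p$ where you use $2^{P_0-1}$, an immaterial difference). Your explicit rewriting of $\liminf(a_k-b_k)=\lim a_k-\limsup b_k$ is, if anything, a slightly cleaner way to extract the conclusion than the paper's "chain of identities" phrasing.
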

\begin{proof}
The functions
\begin{equation*}
 g_k:=2^p[((1+(\delta+|f_k|^2)^{\frac{n}{2}})^{\frac{p}{n}}-(1+\delta^{\frac{n}{2}})^{\frac{p}{n}})+((1+(\delta+|f|^2)^{\frac{n}{2}})^{\frac{p}{n}}-(1+\delta^{\frac{n}{2}})^{\frac{p}{n}})]-|f_k-f|^p
\end{equation*}
are non negative since by \eqref{eq.elementary1} we can bound
\begin{equation*}
    g_k \ge 2^p(|f_k|^p+|f|^p)-|f_k-f|^p \ge 0,
\end{equation*}
and are converging almost everywhere by assumption to
\begin{equation*}
2^{p+1}((1+(\delta+|f|^2)^{\frac{n}{2}})^{\frac{p}{n}}-(1+\delta^{\frac{n}{2}})^{\frac{p}{n}}).
\end{equation*}
By assumption, and using Fatou's Lemma, we obtain
\begin{align*}
&2^{p+1} E_{p,\delta}(f) \le \int 2^{p+1} ((1+(\delta+|f|^2)^{\frac{n}{2}})^{\frac{p}{n}}-(1+\delta^{\frac{n}{2}})^{\frac{p}{n}}) \le \liminf_k \int g_k \\
&\le \liminf_k \int 2^p[((1+(\delta+|f_k|^2)^{\frac{n}{2}})^{\frac{p}{n}}-(1+\delta^{\frac{n}{2}})^{\frac{p}{n}})+((1+(\delta+|f|^2)^{\frac{n}{2}})^{\frac{p}{n}}-(1+\delta^{\frac{n}{2}})^{\frac{p}{n}})]\\
&= \liminf_k 2^{p} E_{p,\delta}(f_k)+2^{p} E_{p,\delta}(f)= 2^{p+1} E_{p,\delta}(f).
\end{align*}
Therefore we must have a chain of identities, concluding the proof.
\end{proof}
\section{Uniform Regularity}\label{sec.regularity}
As described in the introduction, our proof of Theorem \ref{th.SUregularity} is split in three cases. 
\subsection{Locally minimizing maps}\label{subsec.minimizers}
Throughout this subsection, we will assume to have a family $(u_{p,\delta})$ of maps all locally minimizing $E_{p,\delta}$ on a fixed ball $B_{R_0}(x_0)$. The uniform H\"older regularity relies on Luckhaus' regularity result Proposition \ref{prop.luckhaus}. The uniform higher order regularity is then deduced through an excess decay estimate relative to non-linear functions of the gradients of the solutions, adapting \cite{duz3}.
\begin{proof}[Proof of Theorem \ref{th.SUregularity}, part (a)]
Firstly, we will prove the H\"older continuity relative to an arbitrary exponent $\tilde{\mu} \in (0,1)$. In order to do so, we are going to apply Proposition \ref{prop.luckhaus}. To start, we notice that from the inequality \eqref{eq.elementary1} we deduce
\begin{equation*}
R_0^{p-n} D_p(u_{p,\delta};B_{R_0}(x_0)) \le R_0^{p-n} E_{p,\delta}(u_{p,\delta};B_{R_0}) \le \e_0^p.
\end{equation*}
Moreover, $\e_0$ can be made as small as we wish, in particular less than the constant $\tilde{\e}$ in Proposition \ref{prop.luckhaus}, and we will fix it later. Observe that the hypotheses $(A1)$ and $(A_2)$ in that Proposition are clearly satisfied by our functionals, and more precisely, we prove below their uniformity for $p \in (n,P_0)$ and $\delta \in [0,1]$.

The functionals in consideration $E_{p,\delta}$ have integrands $G(\xi):= (1+(\delta+|\xi|^2)^{\frac{n}{2}})^{\frac{p}{n}}-(1+\delta^{\frac{n}{2}})^{\frac{p}{n}}$. Hypothesis $(A1)$ is verified for constants $l_1=C_3(n,P_0)$ and $l_2=a_0$ by \eqref{eq.elementary1}, once recalled that $G(\xi)$ is convex (already observed in Section \ref{sec.preliminary}). For what regards $(A2)$, that is the regularity property for the blow-up equation, this is satisfied for any $\mu \in (0,1)$ uniformly, since we have $F(\xi):=|\xi|^p$ (the standard $p$-energy integrand), and the $C^{0,\mu}$-regularity for arbitrary $\mu \in (0,1)$ comes from the classical result of Uhlenbeck \cite{uhl0}. Let us remark that this is uniform for $p \in (n,P_0)$ and $\delta \in [0,1]$.

Therefore, for any $\tilde{\mu} \in (0,1)$, we can fix $\mu \in (\tilde{\mu} ,1)$ and use Proposition \ref{prop.luckhaus} to deduce that $u_{p,\delta} \in C^{0,\tilde{\mu} }_{loc}(B_{ \tilde{\tau}_\mu R_0};\N)$ with uniform bounds. More precisely, for any $r_1 <\tilde{\tau}_\mu R_0$ and balls $B_{r_1}(x_1) \subset B_{R_0}(x_0)$ we have the uniform Morrey norm bound
\begin{equation}\label{eq.morreydelta}
\norm{\nabla u_{p,\delta} }_{\mathbb{L}^{p,q-n}(B_{r_1}(x_1))} \sim [u_{p,\delta}]_{C^{0,\Tilde{\mu} }(B_{r_1}(x_1))} \le C R_0^{\frac{p}{q}-\frac{n}{p} } E_{p,\delta}(B_{R_0}(x_0))^{\frac{1}{p}} \quad \text{for } q=\tfrac{p}{1-\Tilde{\mu} }.
\end{equation}

In particular, we can choose $\Tilde{\mu} \in (\tfrac{p}{p+1},1)$ and we now adapt the proof from Lemma $6$ in \cite{duz3} to get higher regularity as follows. Firstly, we fix a ball $B_{r_2}(x_2) \subset \subset B_{r_1}(x_1)$ (without loss of generality $r_2<1$) and let $h_{p,\delta} \in W^{1,p}(B_{r_2}(x_2);\R^N)$ be the unique map such that $Tr(h_{p,\delta})=Tr(u_{p,\delta})$ along $\partial B_{r_2}(x_2)$ solving weakly
\begin{equation}\label{eq.harmonicdelta}
-\Div[(1+(\delta+|\nabla h|^2)^{\frac{n}{2}})^{\frac{p-n}{n}}(\delta+|\nabla h|^2)^{\frac{n-2}{2}} \nabla h]=0.
\end{equation}
We choose the test map $\phi:= u_{p,\delta}-h_{p,\delta} \in W^{1,p}_0(B_{r_2}(x_2);\R^N)$ in both the systems \eqref{eq.ELEpdelta} and \eqref{eq.harmonicdelta} and take their difference to get (we drop the pedices $_{p,\delta}$)
\begin{equation}
\begin{aligned}
&\int_{B_{r_2}(x_2)} [(1+(\delta+|\nabla u|^2)^{\frac{n}{2}})^{\frac{p-n}{n}}(\delta+|\nabla u|^2)^{\frac{n-2}{2}} \nabla u -(1+(\delta+|\nabla h|^2)^{\frac{n}{2}})^{\frac{p-n}{n}}(\delta+|\nabla h|^2)^{\frac{n-2}{2}} \nabla h](\nabla u-\nabla h) \\
&=\int_{B_{r_2}(x_2)} (1+(\delta+|\nabla u|^2)^{\frac{n}{2}})^{\frac{p-n}{n}}(\delta+|\nabla u|^2)^{\frac{n-2}{2}} A_{u}(\nabla u, \nabla u)\cdot (u-h).
\end{aligned}
\end{equation}
Using the elementary inequality \eqref{eq.elementary2}, we can bound the left hand side from below as
\begin{equation*}
\begin{aligned}
&\int_{B_{r_2}(x_2)} [(1+(\delta+|\nabla u|^2)^{\frac{n}{2}})^{\frac{p-n}{n}}(\delta+|\nabla u|^2)^{\frac{n-2}{2}} \nabla u -(1+(\delta+|\nabla h|^2)^{\frac{n}{2}})^{\frac{p-n}{n}}(\delta+|\nabla h|^2)^{\frac{n-2}{2}} \nabla h](\nabla u-\nabla h) \\
&\ge c_0 \int_{B_{r_2}(x_2)} |V(\nabla u)-V(\nabla h)|^2,
\end{aligned}
\end{equation*}
where we recall $V(\xi):=(1+(\delta+|\xi|^2)^{\frac{n}{2}})^{\frac{p-n}{2 n}}(\delta+|\xi|^2)^{\frac{n-2}{4}} \xi$. For the right hand side, we argue exactly as in \cite{duz3}
\begin{align*}
&\int_{B_{r_2}(x_2)} (1+(\delta+|\nabla u|^2)^{\frac{n}{2}})^{\frac{p-n}{n}}(\delta+|\nabla u|^2)^{\frac{n-2}{2}} A_{u}(\nabla u, \nabla u)\cdot (u-h) \\
&\le c \sup_{B_{r_2}(x_2)} |u-h| \int_{B_{r_2}(x_2)} (1+|\nabla u_p|^n)^{\frac{p}{n}}\le c E_{p,\delta}(B_{r_2}(x_2))^{1+\frac{1}{p}} r_2^{\Tilde{\mu} +p\Tilde{\mu} +n-p},
\end{align*}
where we have used \eqref{eq.morreydelta}, $\tilde{\mu}<1$, as well as the $\tilde{\mu}$-H\"older regularity of $h$, see \cite{gia1}; let us explicitely remark that we can appeal to \cite{gia1} thanks to the convexity of the integrand of $E_{p,\delta}$, ensuring that any ($\R^N$-valued) critical map is a minimizer with respect to its own boundary values. Notice that $\gamma_1:=\Tilde{\mu} +p\Tilde{\mu} -p \in (0,1)$, so that after multiplying by $r_2^{-n}$ we arrive to
\begin{equation}
\fint_{B_{r_2}(x_2)} |V(\nabla u)-V(\nabla h)|^2 \le c  E_{p,\delta}(B_{r_2}(x_2))^{1+\frac{1}{p}} r_2^{\gamma_1}.
\end{equation}
The results in \cite{gia1} apply to $h$, ensuring that for some constants $c>0$ and $\tau, \gamma_2 \in (0,1)$
\begin{equation}
\Phi(u;\tau \rho,y):=\fint_{B_{\tau r_2}(x_2)} |V(\nabla h)-[V(\nabla h)]_{B_{\tau r_2}(x_2)}|^2 \le \tau^{2 \gamma_2} \fint_{B_{r_2}(x_2)} |V(\nabla h)-[V(\nabla h)]_{B_{r_2}(x_2)}|^2.
\end{equation}
This is enough to deduce the decay inequality $(64)$ in \cite{duz3}, which eventually implies $V(\nabla u) \in C^{0,\alpha}_{loc}$ for $\alpha:=\min \set{\gamma_1,\gamma_2}$ and hence $u \in C^{1,\Tilde{\alpha}}_{loc}(B_{\tau_0 R_0}(x_0);\N)$ by our Lemma \ref{lemma.nonlinearholder}, concluding the proof.
\end{proof}
\begin{remark}
Notice that from $p>n$ we deduce
\begin{equation*}
R_1^{p-n} D_p(u_{p,\delta};B_{R_1}(x_1)) \le R_0^{p-n} D_p(u_{p,\delta};B_{R_0}(x_0)) \le R_0^{p-n} E_{p,\delta}(u_{p,\delta};B_{R_0}) \le \e_0^p,
\end{equation*}
for all balls $B_{R_1}(x_1) \subset B_{R_0}(x_0)$, so we could deduce a uniform full $C^{1,\alpha_0}(B_{R_0}(x_0))$-regularity using a covering argument.
\end{remark}

\subsection{Homogeneous targeted maps}
In this subsection, we consider the second case treated in Theorem \ref{th.SUregularity}: the homogeneous targeted critical maps. The proof follows closely the one by Toro-Wang in \cite{tor}, therefore we start by recalling some common preliminary material.

Let $(\N,h)$ be a smooth compact homogeneous space with a left invariant metric $h$. We can identify it with a quotient of Lie groups $G/H$, where $G$ is connected and $H$ is a closed subgroup of it. The Lie algebra of $G$ will be denoted by $\mathfrak{g}$.

Consider a solution $(u_{p,\delta})$ of \eqref{eq.ELEpdelta} and a smooth Killing tangent vector field $X$ on $\N$. Then the tangent vector field $W:=(1+(\delta+|\nabla u|^2)^{\frac{n}{2}})^{\frac{p-n}{n}}(\delta+|\nabla u|^2)^{\frac{n-2}{2}} h(\nabla u,X(u))$ is divergence free in the weak sense: indeed for any test function $\zeta \in C^{\infty}_c$ we can test \eqref{eq.ELEpdelta} with $\zeta X(u)$ (which is tangent to $\N$ and therefore orthogonal to $A_u$)
\begin{align*}
0&=\int (1+(\delta+|\nabla u|^2)^{\frac{n}{2}})^{\frac{p-n}{n}}(\delta+|\nabla u|^2)^{\frac{n-2}{2}} \nabla u \nabla (\zeta X(u))\\
&= \int (1+(\delta+|\nabla u|^2)^{\frac{n}{2}})^{\frac{p-n}{n}}(\delta+|\nabla u|^2)^{\frac{n-2}{2}} \nabla u[ \nabla \zeta X(u) + \zeta (\nabla^h_{\nabla u} X)(u)] \\
&= \int (1+(\delta+|\nabla u|^2)^{\frac{n}{2}})^{\frac{p-n}{n}}(\delta+|\nabla u|^2)^{\frac{n-2}{2}} \nabla u X(u) \nabla \zeta.
\end{align*}
In the last step, we have used that for the Killing vector field, one must have $\scal{\nabla u}{(\nabla^h_{\nabla u} X)(u)}=0$. This is exactly the divergence-free property claimed above.

Helein in \cite{hel4} proved that in homogeneous manifolds $(\N,h)$, there exist $\ell:=dim(\mathfrak{g})$-smooth tangent vector fields $Y_1$,..., $Y_\ell$ and $\ell$ smooth Killing tangent vector fields $X_1$,..., $X_\ell$ generating in the following sense: for any tangent vector field $V$ one can decompose it as follows
\begin{equation*}
V=\sum_{i=1}^\ell h(V,X_i)Y_i.
\end{equation*}
Applying this to the vector field defining the equation we get \eqref{eq.ELEpdelta}
\begin{equation*}
    (1+(\delta+|\nabla u|^2)^{\frac{n}{2}})^{\frac{p-n}{n}}(\delta+|\nabla u|^2)^{\frac{n-2}{2}} \nabla u= (1+(\delta+|\nabla u|^2)^{\frac{n}{2}})^{\frac{p-n}{n}}(\delta+|\nabla u|^2)^{\frac{n-2}{2}} \sum_{i=1}^\ell h(\nabla u,X_i(u))Y_i(u),
\end{equation*}
and recalling that $W$ is divergence-free we obtain
\begin{equation*}
-\Div[(1+(\delta+|\nabla u|^2)^{\frac{n}{2}})^{\frac{p-n}{n}}(\delta+|\nabla u|^2)^{\frac{n-2}{2}}\nabla u^\alpha]= \sum_{i=1}^\ell W^\alpha \nabla ( Y_i(u)).
\end{equation*}
This system is equivalent to \eqref{eq.ELEpdelta} as every step so far can be run backwards, but now the right hand side enjoys the gradient-curl structure suitable to unlock the classical compensated compactness results a la Coifman-Meyer-Lions-Semmes. Combining this with the Hardy-BMO duality, we can now prove Theorem \ref{th.SUregularity}, point $(b)$; the argument is classical by now, but we will repeat it for sake of completeness, and because there is some novel minor difficulty due to the double-phase type nature of the problem.

We start by recalling the following result from Strzelecky, allowing to avoid the use of the local Hardy space $\mathcal{H}^1_{loc}$ (which would still produce a rigorous proof, see \cite{tor}).
\begin{lemma}[Corollary 3 in \cite{strz2}]
Let $B$ be a ball in $\R^n$. Suppose that for some $p\in(1,\infty)$, we have $Y \in W^{1,p}(B)$ and that
$W \in \mathbb{L}^{\frac{p}{p-1}}(B;\R^n)$. Assume further that $W$ is divergence free in the distributional sense. Then there exists a function $h \in \mathcal{H}^1(\R^n)$ such that
$h=\nabla Y \cdot W$ in $B$, and that satisfies the bound
\begin{equation*}
\norm{h}_{\mathcal{H}^1(\R^n)} \le C_9 \norm{\nabla Y}_{\mathbb{L}^{p} } \norm{W}_{\mathbb{L}^{\frac{p}{p-1}} }
\end{equation*}
The constant $C_9$ does not depend on the size of $B$, and can be also chosen uniformly for $p$ in a neighbourhood of $n$.
\end{lemma}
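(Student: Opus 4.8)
The plan is to read this as a div--curl/compensated-compactness estimate of Coifman--Lions--Meyer--Semmes type and to reduce it, via a routine extension, to the model situation on all of $\R^n$. Since $\Div W=0$ in $B$, the Poincar\'e lemma with $\mathbb{L}^{p'}$-estimates on a ball (equivalently, a homotopy/Bogovski\u\i{}-type inversion of $\Div$ on forms) produces an antisymmetric matrix field $\mathcal{B}=(\mathcal{B}_{ij})$ with $W^i=\partial_j\mathcal{B}_{ij}$ on $B$ and $\norm{\nabla\mathcal{B}}_{\mathbb{L}^{p'}(B)}\le C(n)\norm{W}_{\mathbb{L}^{p'}(B)}$; extending $\mathcal{B}$ to a compactly supported $\widehat{\mathcal{B}}\in W^{1,p'}(\R^n)$ with comparable norm and setting $\widehat W^i:=\partial_j\widehat{\mathcal{B}}_{ij}$ gives a divergence-free, compactly supported $\widehat W\in\mathbb{L}^{p'}(\R^n;\R^n)$ that \emph{coincides with $W$ on $B$} and satisfies $\norm{\widehat W}_{\mathbb{L}^{p'}(\R^n)}\le C(n)\norm{W}_{\mathbb{L}^{p'}(B)}$. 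Likewise extend $Y$ to $\widehat Y\in W^{1,p}(\R^n)$ with $\norm{\nabla\widehat Y}_{\mathbb{L}^{p}(\R^n)}\le C(n)\norm{\nabla Y}_{\mathbb{L}^{p}(B)}$. Then $h:=\nabla\widehat Y\cdot\widehat W$ equals $\nabla Y\cdot W$ on $B$, and by antisymmetry of $\widehat{\mathcal{B}}$ and symmetry of the Hessian one also has $h=\partial_j(\widehat{\mathcal{B}}_{ij}\,\partial_i\widehat Y)$, which exhibits the ``Jacobian-type'' structure underlying the estimate; it remains to bound $\norm{h}_{\mathcal{H}^1(\R^n)}$.

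For the $\mathcal{H}^1$-bound I would use the maximal characterisation. Fix $\Phi\in C^\infty_c(\R^n)$ with $\int\Phi=1$ and write $\Phi_t=t^{-n}\Phi(\cdot/t)$. Using $\Div\widehat W=0$ and integrating by parts, for every constant $c$,
\[
(h*\Phi_t)(x)=\int_{\R^n}\widehat W(y)\cdot(\nabla\Phi_t)(x-y)\,\big(\widehat Y(y)-c\big)\,dy .
\]
Choosing $c=[\widehat Y]_{B_{Ct}(x)}$ (the ball containing the support of $(\nabla\Phi_t)(x-\cdot)$), using $|(\nabla\Phi_t)|\le C\,t^{-n-1}\mathbf{1}_{B_{Ct}}$, H\"older's inequality with exponents $\big((q^{*})',q^{*}\big)$ and the scale-invariant Sobolev--Poincar\'e inequality $\norm{\widehat Y-[\widehat Y]_{B_{Ct}(x)}}_{\mathbb{L}^{q^{*}}(B_{Ct}(x))}\le C\norm{\nabla\widehat Y}_{\mathbb{L}^{q}(B_{Ct}(x))}$, and converting $\mathbb{L}$-norms over $B_{Ct}(x)$ into averages, one gets
\[
\sup_{t>0}\,|(h*\Phi_t)(x)|\le C\,\big(M_{(q^{*})'}\widehat W\big)(x)\,\big(M_{q}\nabla\widehat Y\big)(x),\qquad M_sf:=\big(M(|f|^{s})\big)^{1/s},
\]
with $M$ the Hardy--Littlewood maximal function. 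Here $q$ is an auxiliary exponent chosen \emph{slightly below} $p$, subject to $q<n$ and $p<q^{*}$, so that $M_{q}$ is bounded on $\mathbb{L}^{p}$ and $M_{(q^{*})'}$ is bounded on $\mathbb{L}^{p'}$; such a $q$ exists precisely because $p$ is close to $n$. Integrating in $x$ and applying H\"older with exponents $(p',p)$ together with these maximal-function bounds yields $\norm{h}_{\mathcal{H}^1(\R^n)}\le C\norm{\widehat W}_{\mathbb{L}^{p'}}\norm{\nabla\widehat Y}_{\mathbb{L}^{p}}$, and combining with the extension estimates gives the claimed inequality on $B$.

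The two uniformity statements then follow by bookkeeping of constants: the potential/extension step depends only on a fixed reference ball and on $p'$, hence is uniform for $p$ in a compact neighbourhood of $n$; the Sobolev--Poincar\'e constant depends continuously on $(q,q^{*})$ and stays bounded there (with $n\ge 2$ keeping all exponents admissible, $q\in(n/2,n)$ say); and the maximal-function operator norms stay bounded as long as the relevant exponents remain away from $1$, which holds uniformly. Moreover every inequality invoked — the dilation-based seminorm of $\mathcal{H}^1$, Sobolev--Poincar\'e, the maximal-function bounds — is invariant under $x\mapsto\lambda x$, so the final constant does not depend on the radius of $B$. I expect the main obstacle to be exactly this endpoint issue: one cannot apply the maximal function at the exponent $p'$ directly, which forces the slightly subcritical Sobolev--Poincar\'e inequality together with the constraint $q<p<q^{*}$; a secondary technical point is performing the divergence-free extension of $W$ so that it still agrees with $W$ on \emph{all} of $B$ while keeping the $\mathbb{L}^{p'}$-bound scale-invariant, which is handled by the potential representation rather than by a naive cut-off.
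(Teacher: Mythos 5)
Your argument is correct and is essentially the proof of the cited result: the paper does not reprove this lemma but attributes it to Strzelecki, whose argument rests precisely on the two ingredients you use, namely the Iwaniec--Lutoborski antisymmetric potential together with a Sobolev extension to produce a globally divergence-free $\widehat W$ agreeing with $W$ on $B$, followed by the Coifman--Lions--Meyer--Semmes maximal-function estimate via Sobolev--Poincar\'e. One small correction: an auxiliary exponent $q$ with $q<n$ and $q<p<q^{*}$ exists for \emph{every} $p\in(1,\infty)$ (take $q\in(np/(n+p),\min\{p,n\})$), not only for $p$ close to $n$; the closeness of $p$ to $n$ matters only for the claimed uniformity of the constant, which your bookkeeping already handles.
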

The claimed uniformity for $p\in (n,P_0)$ follows clearly from the proof in \cite{strz2}, since it relies on the extension operator in Sobolev spaces and on Proposition $4.1$ in \cite{iwa3}, both holding with uniform constants.
\begin{proof}[Theorem \ref{th.SUregularity}]
First of all we consider an arbitrary ball $B_{2r}(x) \subset B_{R_0}(x_0)$. From the Lemma above, there exist $h^1,...,h^N \in \mathcal{H}^1$ such that \eqref{eq.ELEpdelta} implies
\begin{equation*}
-\Div[(1+(\delta+|\nabla u|^2)^{\frac{n}{2}})^{\frac{p-n}{n}}(\delta+|\nabla u|^2)^{\frac{n-2}{2}}\nabla u^\alpha]=h^\alpha \quad \text{on } B_{2r}(x),
\end{equation*}
where we also have the uniform bound
\begin{equation*}
\norm{h^\alpha}_{\mathcal{H}^1} \le C \norm{\nabla (Y(u))}_{\mathbb{L}^{p} } \norm{W}_{\mathbb{L}^{\frac{p}{p-1}} } \le C \int_{B_{2r}(x)} (1+(\delta+|\nabla u|^2)^{\frac{n}{2}})^{\frac{p}{n}} \le C E_{p,\delta}(u;B_{2r}(x))+Cr^n.
\end{equation*}
Notice that we have used \eqref{eq.elementary1} in the last step. Consider a cut off function $\eta \in C^\infty_c(B_{2 r}(x))$ with $\eta \equiv 1$ on $B_{r}(x)$, $\eta \in [0,1]$ and $|\nabla \eta| \le 2 r^{-1}$; we choose as test map $\phi:= \eta^p (u-[u]_{A(r,2r)})$, where $[u]_{A(r,2r)}$ is the average of $u$ on the annulus $A(r,2r)$, so that from the equation above we deduce after some routine calculation
\begin{align*}
&\int_{B_{r}(x)} (1+(\delta+|\nabla u|^2)^{\frac{n}{2}})^{\frac{p-n}{n}}(\delta+|\nabla u|^2)^{\frac{n-2}{2}} \tfrac{|\nabla u|^2}{2} \le\int_{B_{2r}(x)} \eta^p (1+(\delta+|\nabla u|^2)^{\frac{n}{2}})^{\frac{p-n}{n}}(\delta+|\nabla u|^2)^{\frac{n-2}{2}} \tfrac{|\nabla u|^2}{2} \\
&\le C \int_{A(r,2r)} r^{-2} (1+(\delta+|\nabla u|^2)^{\frac{n}{2}})^{\frac{p-n}{n}}(\delta+|\nabla u|^2)^{\frac{n-2}{2}} |u-[u]_{A(r,2r)}|^2 + \int_{B_{2r}(x)} \eta^p (u-[u]_{A(r,2r)}) h.
\end{align*}
In order to treat the first term, we use Young's inequality with exponents $(\tfrac{p}{p-n}, \tfrac{p}{n-2},\tfrac{p}{2})$, the bound $(\delta+|\nabla u|^2)^{\frac{p}{2}} \le (1+(\delta+|\nabla u|^2)^{\frac{n}{2}})^{\frac{p}{n}}$ and then Poincare' inequality together with \eqref{eq.elementary1} to get
\begin{align*}
&\int_{A(r,2r)} r^{-2} (1+(\delta+|\nabla u|^2)^{\frac{n}{2}})^{\frac{p-n}{n}}(\delta+|\nabla u|^2)^{\frac{n-2}{2}} |u-[u]_{A(r,2r)}|^2 \le \tfrac{p-2}{p} \int_{A(r,2r)} (1+(\delta+|\nabla u|^2)^{\frac{n}{2}})^{\frac{p}{n}}\\
&+C\int_{A(r,2r)} |\nabla u|^p \le C \int_{A(r,2r)} (1+(\delta+|\nabla u|^2)^{\frac{n}{2}})^{\frac{p}{n}}-(1+\delta^{\frac{n}{2}})^{\frac{p}{n}}+C r^n \le C E_{p,\delta}(u;A(r,2r))+C r^n.
\end{align*}
For what regards the second term, we first claim that the test function $\phi= \eta^p (u-[u]_{A(r,2r)})$ belongs to BMO and can be bounded uniformly in this space in terms of the rescaled energy $r^{1-\frac{n}{p}} E_{p,\delta}(u;B_{2r}(x))$. Indeed, since $spt(\phi) \subset B_{2r}(x)$ and $p>n$, then several applications of Poincare's and H\"older's inequalities, on the cube or the ball, combined with \eqref{eq.elementary0} and the bounds on $\eta$, allow us to bound
\begin{align*}
&\fint_Q |\phi-[\phi]_Q|\le \Big( \fint_Q |\phi-[\phi]_Q|^n \Big)^{\frac{1}{n}}\le C \Big(\int_Q |\nabla \phi|^n \Big)^{\frac{1}{n}} \le C \Big(\int_{B_{2r}(x)} |\nabla \phi|^n \Big)^{\frac{1}{n}} \le C r^{1-\frac{n}{p}} \Big(\int_{B_{2r}(x)} |\nabla \phi|^p \Big)^{\frac{1}{p}} \\
&\le C r^{1-\frac{n}{p}} \Big(\int_{B_{2r}(x)} |\nabla u|^p+\int_{A(r,2r)} |\tfrac{u-[u]_{A(r,2r)}}{r}|^p \Big)^{\frac{1}{p}} \le C r^{1-\frac{n}{p}} E_{p,\delta}(u;B_{2r}(x)),
\end{align*}
as claimed. Notice that we used once again \eqref{eq.elementary1}, and the constant is independent of $\delta$ and $p$.

Therefore the second term can be bounded via duality
\begin{align*}
\int_{B_{2r}(x)} \eta^p (u-[u]_{A(r,2r)}) h &\le \norm{\eta^p (u-[u]_{A(r,2r)})}_{BMO} \cdot \norm{h}_{\mathcal{H}^1} \\
&\le C r^{1-\frac{n}{p}} E_{p,\delta}(u;B_{2r}(x))( E_{p,\delta}(u;B_{2r}(x))+r^n)\le C \e_0 E_{p,\delta}(u;B_{2r}(x))+Cr^n.
\end{align*}
In the last step, the monotonicity of the rescaled energy and the smallness assumption are used.

The left hand side instead, controls the $E_{p,\delta}$-energy up to some good factors, as we see by some formal manipulations, the inequality $(\delta+|\nabla u|^2)^{\frac{n}{2}} \le (1+(\delta+|\nabla u|^2)^{\frac{n}{2}})$, and weighted Young's inequality with exponents $(\tfrac{p}{p-n},\tfrac{p}{n})$ and $(\tfrac{p}{p-2},\tfrac{p}{2})$:
\begin{align*}
&\tfrac{1}{2}\int_{B_{r}(x)} (1+(\delta+|\nabla u|^2)^{\frac{n}{2}})^{\frac{p-n}{n}}(\delta+|\nabla u|^2)^{\frac{n-2}{2}} |\nabla u|^2 = \tfrac{1}{2}\int_{B_{r}(x)} (1+(\delta+|\nabla u|^2)^{\frac{n}{2}})^{\frac{p-n}{n}}(\delta+|\nabla u|^2)^{\frac{n}{2}} \\
&-\delta(1+(\delta+|\nabla u|^2)^{\frac{n}{2}})^{\frac{p-n}{n}}(\delta+|\nabla u|^2)^{\frac{n-2}{2}}=\tfrac{1}{2}\int_{B_{r}(x)} (1+(\delta+|\nabla u|^2)^{\frac{n}{2}})^{\frac{p}{n}}-(1+(\delta+|\nabla u|^2)^{\frac{n}{2}})^{\frac{p-n}{n}} \\
&-\delta(1+(\delta+|\nabla u|^2)^{\frac{n}{2}})^{\frac{p-n}{n}}(\delta+|\nabla u|^2)^{\frac{n-2}{2}}\\
&\ge (\tfrac{1}{2}-\tfrac{p-n}{2 p})\int_{B_{r}(x)} (1+(\delta+|\nabla u|^2)^{\frac{n}{2}})^{\frac{p}{n}}-(1+(\delta+|\nabla u|^2)^{\frac{n}{2}})^{\frac{p-2}{n}}-Cr^n \\
&\ge (\tfrac{1}{2}-\tfrac{p-n}{2 p}-\eta \tfrac{p-2}{2p})\int_{B_{r}(x)} (1+(\delta+|\nabla u|^2)^{\frac{n}{2}})^{\frac{p}{n}} -C_\eta r^n
\ge \tfrac{1}{4} E_{p,\delta}(u,B_{r}(x)) -C r^n.
\end{align*}
In the last step we chose $P_0$ close enough to $n$, for example such that $\tfrac{p-n}{p}\le\tfrac{1}{8}$, and the weight $\eta$ small enough. Summarising, we have obtained
\begin{equation*}
E_{p,\delta}(u,B_{r}(x))\le C E_{p,\delta}(u;A(r,2r))+ C \e_0 E_{p,\delta}(u;B_{2r}(x))+ C r^n.
\end{equation*}
A standard hole-filling technique leads us to (for $\e_0$ small enough)
\begin{equation*}
E_{p,\delta}(u,B_{r}(x))\le \theta E_{p,\delta}(u;B_{2r}(x)) + C r^n,
\end{equation*}
for some uniform constant $\theta \in (0,1)$, and iterating we arrive to
\begin{equation*}
r^{p-n-p \alpha_0} E_{p,\delta}(u,B_{r}(x)) \le (2R_0)^{p-n-p \alpha_0} E_{p,\delta}(u,B_{2 R_0}(x))+C (2R_0)^{p-p \alpha_0},
\end{equation*}
for a uniform exponent $\alpha_0 \in (0,1)$, which corresponds to the uniform H\"older regularity $u_{p,\delta} \in C^{0,\alpha_0}(B_{R_0})$. Let us explicitly remark that the second term comes from the fact that we avoided an intrinsic approach a la Colombo-Mingione \cite{col0}.

Notice that, even though this norm can be made arbitrary small, a priori we cannot reabsorb the second summand of the right hand side in the first, in order to conclude our estimate in Theorem \ref{th.SUregularity}. However, once unlocked the uniform H\"older regularity on a fixed scale, we can show the uniqueness of the solutions $u_{p,\delta}$ on a uniform scale (see next Subsection), and therefore their minimality, and conclude by appealing to point $(a)$ in Theorem \ref{th.SUregularity}.
\end{proof}

\subsection{Uniqueness of solutions with small range}
In this subsection, we want to show the uniqueness of the solutions to \eqref{eq.ELEpdelta} under fixed boundary condition, assuming their images are small; in particular, any weak solution to this equation coincides with a locally energy minimizing solution (which always exists thanks to Lemma \ref{lemma.dirichlet}) on small enough scales. Later, we will see how this newly gained local minimality will allow us to prove the a-priori (non-uniform) smoothness of weak solutions to \eqref{eq.ELEpdelta} needed to justify the derivation of the equation \eqref{eq.ELEpdeltaSU}.
\begin{proposition}\label{prop.uniqueness1}
There exists a constant $\rho_1=\rho_1(P_0,n,\N)$ such that, if $u, v \in W^{1,p}(B_{R_0}(x_0);\N)$ are two solutions to \eqref{eq.ELEpdelta} under the same boundary condition $u=v$ along $\partial B_{R_0}(x_0)$, whose images satisfy $u(B_{R_0}(x_0)),v(B_{R_0}(x_0)) \subseteq B_{\rho}^\N(y_0)$ for some $y_0 \in \N$ and $\rho \in (0,\rho_1]$, then $u=v$ on $B_{R_0}(x_0)$.
\end{proposition}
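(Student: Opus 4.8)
The strategy is the classical comparison argument: test the difference of the systems \eqref{eq.ELEpdelta} for $u$ and for $v$ against $w:=u-v$, bound the resulting coercive term from below by the elementary inequality \eqref{eq.uniquenesslowerbound}, and show the right-hand side is quadratically small in $\rho$ by combining the pointwise structure of the curvature term with the weighted Poincar\'e inequality of Lemma~\ref{lemma.imagepoincare}. Abbreviating $a(\xi):=(1+(\delta+|\xi|^2)^{\frac n2})^{\frac{p-n}{n}}(\delta+|\xi|^2)^{\frac{n-2}{2}}$, note that $w\in W^{1,p}_0(B_{R_0}(x_0);\R^N)\cap\mathbb{L}^\infty$ (since $u=v$ on $\partial B_{R_0}(x_0)$ and both images lie in a small ball), so $w$ is an admissible test function in the weak form of \eqref{eq.ELEpdelta}. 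Subtracting the two weak formulations yields
\begin{equation*}
\int_{B_{R_0}(x_0)}\big[a(\nabla u)\nabla u-a(\nabla v)\nabla v\big]\cdot\nabla w=\int_{B_{R_0}(x_0)}\big[a(\nabla u)A_u(\nabla u,\nabla u)-a(\nabla v)A_v(\nabla v,\nabla v)\big]\cdot w,
\end{equation*}
and \eqref{eq.uniquenesslowerbound} of Lemma~\ref{lemma.uniquenessbounds} bounds the left-hand side below by $\tfrac12\int_{B_{R_0}(x_0)}\big(a(\nabla u)+a(\nabla v)\big)|\nabla w|^2$.

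The heart of the matter is the right-hand side. I would exploit that $A_u(\nabla u,\nabla u)$ is normal to $T_u\N$, so that in the pairing with $w=u-v$ only the component of the chord $u-v$ normal to $T_u\N$ contributes; since $\N\subset\R^N$ is a fixed smooth closed submanifold and (possibly after shrinking $\rho_1$) $|u-v|$ is small enough, this normal component has size $O(|u-v|^2)$, whence $\big|A_u(\nabla u,\nabla u)\cdot w\big|\le C|\nabla u|^2|w|^2$ and, symmetrically, $\big|A_v(\nabla v,\nabla v)\cdot w\big|\le C|\nabla v|^2|w|^2$, with $C=C(n,\N)$. Alternatively one writes the integrand as $a(\nabla u)[A_u(\nabla u,\nabla u)-A_v(\nabla v,\nabla v)]+[a(\nabla u)-a(\nabla v)]A_v(\nabla v,\nabla v)$, estimates the second-fundamental-form difference by Lemma~\ref{lemma.secondfundamentalform} and the coefficient difference through \eqref{eq.uniquenessupperbound}, and disposes of the mixed weights via $a(\nabla u)|\nabla v|^2\le a(\nabla u)|\nabla u|^2+a(\nabla v)|\nabla v|^2$, which follows from the monotonicity of $t\mapsto a(t)$. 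Either way the right-hand side is bounded by $C\int_{B_{R_0}(x_0)}\big(a(\nabla u)|\nabla u|^2+a(\nabla v)|\nabla v|^2\big)|w|^2$, up to a term of the form $\int a(\nabla u)\big(|\nabla u|+|\nabla v|\big)|\nabla w|\,|w|$ in the second route, which is absorbed by Cauchy--Schwarz together with the Poincar\'e bound below.

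Applying Lemma~\ref{lemma.imagepoincare} with test function $w$ to $u$ and to $v$ separately converts the previous bound into $16\,C\rho^2\int_{B_{R_0}(x_0)}\big(a(\nabla u)+a(\nabla v)\big)|\nabla w|^2$, which is at most $32\,C\rho^2$ times the lower bound for the left-hand side. Since the left-hand side equals the right-hand side, choosing $\rho_1=\rho_1(P_0,n,\N)>0$ so small that $32\,C\rho_1^2<\tfrac12$ (and that the extra $C\rho_1$-term from the second route, if used, is reabsorbed) forces $\int_{B_{R_0}(x_0)}\big(a(\nabla u)+a(\nabla v)\big)|\nabla w|^2=0$; as $\delta>0$ makes the weight strictly positive, $\nabla w\equiv0$, and the boundary condition $w\in W^{1,p}_0$ then gives $w\equiv0$, i.e.\ $u=v$ on $B_{R_0}(x_0)$.

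The single real obstacle is the estimate of the curvature term: one must gain \emph{two} powers of $w=u-v$, not just the single power that $|w|\le 2\rho$ supplies, and this is precisely what the normal-component observation — or the combination of Lemmas~\ref{lemma.uniquenessbounds} and~\ref{lemma.secondfundamentalform} with the monotonicity of $a$ — provides; everything else is routine bookkeeping. Crucially, the constant in Lemma~\ref{lemma.imagepoincare}, as well as the constant in \eqref{eq.uniquenesslowerbound}, is already independent of $p$ and $\delta$, so the threshold $\rho_1$ depends only on $P_0$, $n$ and $\N$, as claimed; no uniformity beyond that is needed here, since this proposition will be used only to deduce local minimality of weak solutions on a (possibly $p$-dependent, but that is harmless) scale.
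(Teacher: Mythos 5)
Your argument is correct, and your ``alternative'' route is in fact the paper's proof: there one applies Lemma~\ref{lemma.secondfundamentalform} to the weighted gradients $U=V(\nabla u)$, $V=V(\nabla v)$, splits the result into a term $I$ quadratic in $|u-v|$ and a mixed term $II$, controls $II$ by \eqref{eq.uniquenessupperbound} plus a weighted Young inequality (absorbing a $\tfrac14$-fraction of the coercive left-hand side), and finally estimates $I$ via Lemma~\ref{lemma.imagepoincare}, exactly as you sketch. Your \emph{primary} route --- using that $A_u(\nabla u,\nabla u)\perp T_u\N$ so that only the normal component of the chord $u-v$ contributes, and that this component is $O(|u-v|^2)$ for a $C^2$ submanifold --- is a genuine shortcut the paper does not take: it bounds each curvature term separately by $C\,a(\nabla u)|\nabla u|^2|u-v|^2$ (and symmetrically in $v$), so the mixed term $II$, Lemma~\ref{lemma.secondfundamentalform}, and inequality \eqref{eq.uniquenessupperbound} are never needed, and one passes directly to the weighted Poincar\'e inequality and absorption. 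The price is the extra smallness requirement on $\rho_1$ so that the chord estimate $|P^\perp_u(u-v)|\le C(\N)|u-v|^2$ is valid, which you correctly flag; since that threshold depends only on $\N$, the uniformity of $\rho_1$ in $p$ and $\delta$ is preserved either way. One cosmetic caveat: for $\delta=0$ and $n>2$ the weight $a(\nabla u)+a(\nabla v)$ can vanish, but only where $\nabla u=\nabla v=0$, so the conclusion $\nabla w\equiv 0$ still follows without invoking $\delta>0$.
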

\begin{proof}
Set $B_{R_0}(x_0)=:B$. We adapt the proof of the uniqueness for $p$-harmonic maps given by Fardoun-Regbaoui, see Theorem $1.1$ in \cite{far}. By assumptions, for all test functions $\phi \in W^{1,p}_0 \cap \mathbb{L}^\infty$ the following two equations are verified
\begin{equation*}
\int_{B}(1+(\delta+|\nabla u|^2)^{\frac{n}{2}})^{\frac{p-n}{n}}(\delta+|\nabla u|^2)^{\frac{n-2}{2}} \nabla u \nabla \phi=\int_{B} (1+(\delta+|\nabla u|^2)^{\frac{n}{2}})^{\frac{p-n}{n}}(\delta+|\nabla u|^2)^{\frac{n-2}{2}} A_u(\nabla u, \nabla u) \phi,
\end{equation*}
and
\begin{equation*}
\int_{B}(1+(\delta+|\nabla v|^2)^{\frac{n}{2}})^{\frac{p-n}{n}}(\delta+|\nabla v|^2)^{\frac{n-2}{2}} \nabla v \nabla \phi=\int_{B} (1+(\delta+|\nabla v|^2)^{\frac{n}{2}})^{\frac{p-n}{n}}(\delta+|\nabla v|^2)^{\frac{n-2}{2}} A_v(\nabla v, \nabla v) \phi.
\end{equation*}
We take the difference of these equations and choose $\phi=u-v$ as test function to get
\begin{equation}\label{eq.SUdifferenceuv}
\footnotesize \begin{aligned}
&\int_{B} [(1+(\delta+|\nabla u|^2)^{\frac{n}{2}})^{\frac{p-n}{n}}(\delta+|\nabla u|^2)^{\frac{n-2}{2}} \nabla u -(1+(\delta+|\nabla v|^2)^{\frac{n}{2}})^{\frac{p-n}{n}}(\delta+|\nabla v|^2)^{\frac{n-2}{2}} \nabla v](\nabla u-\nabla v)=\\
&\int_{B}[ (1+(\delta+|\nabla u|^2)^{\frac{n}{2}})^{\frac{p-n}{n}}(\delta+|\nabla u|^2)^{\frac{n-2}{2}} A_u(\nabla u, \nabla u) -(1+(\delta+|\nabla v|^2)^{\frac{n}{2}})^{\frac{p-n}{n}}(\delta+|\nabla v|^2)^{\frac{n-2}{2}} A_v(\nabla v, \nabla v)] (u-v).
\end{aligned}
\end{equation}
Inequality \eqref{eq.uniquenesslowerbound} in our technical Lemma \ref{lemma.uniquenessbounds} gives the lower bound on the left hand side of \eqref{eq.SUdifferenceuv}
\begin{align*}
&\int_{B} [(1+(\delta+|\nabla u|^2)^{\frac{n}{2}})^{\frac{p-n}{n}}(\delta+|\nabla u|^2)^{\frac{n-2}{2}} \nabla u -(1+(\delta+|\nabla v|^2)^{\frac{n}{2}})^{\frac{p-n}{n}}(\delta+|\nabla v|^2)^{\frac{n-2}{2}}\nabla v](\nabla u-\nabla v)\ge \\
&\tfrac{1}{2} \int_{B} [(1+(\delta+|\nabla u|^2)^{\frac{n}{2}})^{\frac{p-n}{n}}(\delta+|\nabla u|^2)^{\frac{n-2}{2}} +(1+(\delta+|\nabla v|^2)^{\frac{n}{2}})^{\frac{p-n}{n}}(\delta+|\nabla v|^2)^{\frac{n-2}{2}}]|\nabla u-\nabla v|^2.
\end{align*}
In order to estimate from above the right hand side of \eqref{eq.SUdifferenceuv}, we use the inequality \eqref{eq.secondfundamentalform} from Lemma \ref{lemma.secondfundamentalform} with $U:=(1+(\delta+|\nabla u|^2)^{\frac{n}{2}})^{\frac{p-n}{2 n}}(\delta+|\nabla u|^2)^{\frac{n-2}{4}}\nabla u$ and $V:=(1+(\delta+|\nabla v|^2)^{\frac{n}{2}})^{\frac{p-n}{2 n}}(\delta+|\nabla v|^2)^{\frac{n-2}{4}}\nabla v$
\begin{equation*}
\footnotesize \begin{aligned}
&\int_{B}[ (1+(\delta+|\nabla u|^2)^{\frac{n}{2}})^{\frac{p-n}{n}}(\delta+|\nabla u|^2)^{\frac{n-2}{2}} A_u(\nabla u, \nabla u) -(1+(\delta+|\nabla v|^2)^{\frac{n}{2}})^{\frac{p-n}{n}}(\delta+|\nabla v|^2)^{\frac{n-2}{2}} A_v(\nabla v, \nabla v)] (u-v)\\
&\le C \int_{B}[ (1+(\delta+|\nabla u|^2)^{\frac{n}{2}})^{\frac{p-n}{n}}(\delta+|\nabla u|^2)^{\frac{n-2}{2}} |\nabla u|^2 +(1+(\delta+|\nabla v|^2)^{\frac{n}{2}})^{\frac{p-n}{n}}(\delta+|\nabla v|^2)^{\frac{n-2}{2}} |\nabla v|^2] |u-v|^2\\
&+[ (1+(\delta+|\nabla u|^2)^{\frac{n}{2}})^{\frac{p-n}{2 n}}(\delta+|\nabla u|^2)^{\frac{n-2}{4}} |\nabla u| +(1+(\delta+|\nabla v|^2)^{\frac{n}{2}})^{\frac{p-n}{2 n}}(\delta+|\nabla v|^2)^{\frac{n-2}{4}} |\nabla v|] \cdot \\
&\cdot \Big|(1+(\delta+|\nabla u|^2)^{\frac{n}{2}})^{\frac{p-n}{2 n}}(\delta+|\nabla u|^2)^{\frac{n-2}{4}}\nabla u- (1+(\delta+|\nabla v|^2)^{\frac{n}{2}})^{\frac{p-n}{2 n}}(\delta+|\nabla v|^2)^{\frac{n-2}{4}}\nabla v\Big| |u-v|=:I+II.
\end{aligned}
\end{equation*}
For what regards the term $II$, the second inequality \eqref{eq.uniquenessupperbound} of Lemma \ref{lemma.uniquenessbounds} gives the following estimate
\begin{equation*}
\footnotesize
\begin{aligned}
&II \le C \int_{B} [ (1+(\delta+|\nabla u|^2)^{\frac{n}{2}})^{\frac{p-n}{2 n}}(\delta+|\nabla u|^2)^{\frac{n-2}{4}} |\nabla u| +(1+(\delta+|\nabla v|^2)^{\frac{n}{2}})^{\frac{p-n}{2 n}}(\delta+|\nabla v|^2)^{\frac{n-2}{4}} |\nabla v|] |u-v| \cdot\\
&\cdot [(1+(\delta+|\nabla u|^2)^{\frac{n}{2}})^{\frac{p-n}{2 n}}(\delta+|\nabla u|^2)^{\frac{n-2}{4}}+(1+(\delta+|\nabla v|^2)^{\frac{n}{2}})^{\frac{p-n}{2 n}}(\delta+|\nabla v|^2)^{\frac{n-2}{4}}] |\nabla u-\nabla v| \\
&\le C I+ \frac{1}{4}  \int_{B} [(1+(\delta+|\nabla u|^2)^{\frac{n}{2}})^{\frac{p-n}{n}}(\delta+|\nabla u|^2)^{\frac{n-2}{2}} +(1+(\delta+|\nabla v|^2)^{\frac{n}{2}})^{\frac{p-n}{n}}(\delta+|\nabla v|^2)^{\frac{n-2}{2}}]|\nabla u-\nabla v|^2.
\end{aligned}
\end{equation*}
In the last step we have used a weighted Young $(2,2)$-inequality. The first term $I$ can be estimated through Lemma \ref{lemma.imagepoincare}, so that we have
\begin{equation*}
I \le C \rho^2 \int_{B}[(1+(\delta+|\nabla u|^2)^{\frac{n}{2}})^{\frac{p-n}{n}}(\delta+|\nabla u|^2)^{\frac{n-2}{2}} +(1+(\delta+|\nabla v|^2)^{\frac{n}{2}})^{\frac{p-n}{n}}(\delta+|\nabla v|^2)^{\frac{n-2}{2}}] |\nabla u-\nabla v|^2,
\end{equation*}
therefore, plugging the last two inequalities in the calculation above we see that the right hand side is ultimately bounded by
\begin{equation*}
(C \rho^2+ \frac{1}{4}) \int_{B}[(1+(\delta+|\nabla u|^2)^{\frac{n}{2}})^{\frac{p-n}{n}}(\delta+|\nabla u|^2)^{\frac{n-2}{2}} +(1+(\delta+|\nabla v|^2)^{\frac{n}{2}})^{\frac{p-n}{n}}(\delta+|\nabla v|^2)^{\frac{n-2}{2}}] |\nabla u-\nabla v|^2.
\end{equation*}
Combined with the lower bound for the left hand side, this gives
\begin{equation*}
(\frac{1}{4}-C \rho^2) \int_{B}[(1+(\delta+|\nabla u|^2)^{\frac{n}{2}})^{\frac{p-n}{n}}(\delta+|\nabla u|^2)^{\frac{n-2}{2}} +(1+(\delta+|\nabla v|^2)^{\frac{n}{2}})^{\frac{p-n}{n}}(\delta+|\nabla v|^2)^{\frac{n-2}{2}}] |\nabla u-\nabla v|^2 \le 0,
\end{equation*}
so for $\rho_1=\rho_1(P_0,n,\N)$ small enough we obtain $\nabla u=\nabla v$, and therefore $u=v$ by the common boundary condition as required.
\end{proof}
As a corollary, any solution with small range is a local energy minimizer, so we have a direct analogue of Theorem $1.2$ in \cite{far}.
\begin{corollary}\label{cor.uniqueness1}
There exist (not-uniform) constants $\alpha=\alpha(P_0,p,n,\N), \tau_1=\tau_1(P_0,p,n,\N) \in (0,1)$ and $\rho_2=\rho_2(P_0,n,\N)>0$ such that, if $u \in W^{1,p}(B_{R_0}(x_0);\N)$ is a solution to \eqref{eq.ELEpdelta} satisfying $u(B_{R_0}(x_0)) \subseteq B_{\rho_2}^\N(y_0)$ for some $y_0 \in \N$, then $u \in C^{1,\alpha}_{loc}(B_{R_0}(x_0);\N)$. Moreover, $u$ is minimizing the functional $E_{p,\delta}(\cdot;B_{\tau_1 R_0}(y))$ for all $B_{\tau_1 R_0}(y) \subset B_{R_0}(x_0)$ amongst all functions $v \in W^{1,p}(B_{\tau_1 R_0}(y);\N)$ such that $v=u$ along $\partial B_{\tau_1 R_0}(y)$ and such that $v(B_{\tau_1 R_0}(y)) \subseteq B_{\rho_2}^\N(P_1)$.
\end{corollary}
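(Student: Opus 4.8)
The plan is to mimic the scheme of Theorem~1.2 in \cite{far}: a solution of \eqref{eq.ELEpdelta} with sufficiently small range must coincide, on any ball inside its domain, with the constrained energy minimizer produced in Proposition~\ref{prop.dirichlet}, and therefore inherits its (non-uniform) regularity. First I would fix $\rho_2 \in (0,\rho_1]$ small enough that $\rho_2$ lies below the radius $\rho_0$ of Proposition~\ref{prop.dirichlet} (after its possible further restriction) and below the convexity radius of $(\N,h)$; here $\rho_1$ is the constant of Proposition~\ref{prop.uniqueness1}. I would also pick $\tau_1 \in (0,1)$ small enough that the constructions below can be carried out on every ball $B_{\tau_1 R_0}(y) \subset B_{R_0}(x_0)$, not tracking the (non-uniform, possibly $p$-dependent) size of $\tau_1$.

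Given such a ball, I would apply Proposition~\ref{prop.dirichlet} with the datum $u_0 := u|_{B_{\tau_1 R_0}(y)}$, which has range in $B^{\N}_{\rho_2}(y_0)$: this yields $\tilde u \in W^{1,p}(B_{\tau_1 R_0}(y);\N)$ solving \eqref{eq.ELEpdelta}, agreeing with $u$ on $\partial B_{\tau_1 R_0}(y)$, with $\tilde u(B_{\tau_1 R_0}(y)) \subseteq B^{\N}_{\rho_2}(y_0)$, and minimizing $E_{p,\delta}(\cdot;B_{\tau_1 R_0}(y))$ among all maps with the same boundary trace and range in $B^{\N}_{\rho_2}(y_0)$. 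Now $u|_{B_{\tau_1 R_0}(y)}$ and $\tilde u$ are two solutions of \eqref{eq.ELEpdelta} on $B_{\tau_1 R_0}(y)$ with the same boundary trace and images in $B^{\N}_{\rho_2}(y_0)$ with $\rho_2 \le \rho_1$, so Proposition~\ref{prop.uniqueness1} (applied on this ball) forces $u = \tilde u$ there. Hence $u$ is itself a minimizer of $E_{p,\delta}(\cdot;B_{\tau_1 R_0}(y))$ in the stated restricted class, which is the second assertion of the Corollary.

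For the regularity, I would upgrade restricted minimality to genuine local minimality: given any competitor $v$ with $v = u$ on $\partial B_{\tau_1 R_0}(y)$ and no constraint on its range, the nearest-point projection $\Pi$ onto the geodesically convex ball $\bar B^{\N}_{\rho_2}(y_0)$ is $1$-Lipschitz and fixes $u$ on the boundary, so $\Pi \circ v$ is an admissible restricted competitor with $|\nabla(\Pi\circ v)| \le |\nabla v|$ pointwise, whence $E_{p,\delta}(\Pi\circ v;B_{\tau_1 R_0}(y)) \le E_{p,\delta}(v;B_{\tau_1 R_0}(y))$ by monotonicity of the integrand; thus $u$ minimizes unconditionally on $B_{\tau_1 R_0}(y)$. (Alternatively, the comparison maps used in the Uniform Luckhaus Lemma above stay in a tube around the images, so restricted minimality already feeds Luckhaus' argument directly.) Since $p > n$, for fixed $p,\delta$ one has $r^{p-n}E_{p,\delta}(u;B_r(z)) \le r^{p-n}E_{p,\delta}(u;B_{R_0}(x_0)) \to 0$ as $r \to 0$, so on sufficiently small balls the hypothesis of Proposition~\ref{prop.luckhaus} (equivalently of Theorem~\ref{th.SUregularity}, part (a)) holds; this gives $u \in C^{0,\tilde\mu}_{loc}$ for every $\tilde\mu < 1$, and then the Duzaar--Mingione excess-decay step of Theorem~\ref{th.SUregularity}(a) yields $u \in C^{1,\alpha}_{loc}(B_{R_0}(x_0);\N)$ after a covering argument, with constants that are allowed to degenerate as $p \searrow n$ or $\delta \searrow 0$.

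I expect the main obstacle to be the bookkeeping that makes the uniqueness step legitimate on a subball: one must check carefully that the minimizer $\tilde u$ of Proposition~\ref{prop.dirichlet} genuinely solves the full Euler--Lagrange system \eqref{eq.ELEpdelta} (rather than only a constrained inequality) and that both $u$ and $\tilde u$ meet the hypotheses of Proposition~\ref{prop.uniqueness1} on the same ball, together with isolating the passage from restricted to unrestricted minimality — the latter being precisely what is needed to invoke the $p$-minimizing regularity machinery of \cite{luc1, duz3}.
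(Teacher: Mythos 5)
Your argument follows the paper's proof almost verbatim in its main steps: fix $\rho_2<\min\{\rho_0,\rho_1\}$, produce the constrained minimizer $\tilde u$ on $B_{\tau_1 R_0}(y)$ via Proposition~\ref{prop.dirichlet} (which, as you suspected one must check, does assert that $\tilde u$ solves the full system \eqref{eq.ELEpdelta}, not merely a variational inequality), identify $u=\tilde u$ by Proposition~\ref{prop.uniqueness1}, and conclude restricted minimality. The only point where you genuinely diverge is the upgrade from range-restricted minimality to the minimality needed to invoke the Subsection~\ref{subsec.minimizers} machinery. The paper does this more cheaply: it chooses $\tau_1$ so small that, by Sobolev embedding ($p>n$), any competitor $v$ with the same trace and $\|\nabla u-\nabla v\|_p$ small automatically has image in $B^{\N}_{\rho_2}(y_0)$, hence is already an admissible restricted competitor; this gives minimality in a $W^{1,p}$-neighbourhood, which suffices for the Luckhaus and Duzaar--Mingione arguments. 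Your alternative via composition with the nearest-point projection $\Pi$ onto $\bar B^{\N}_{\rho_2}(y_0)$ would yield the stronger unconditional minimality, but the claim that $\Pi$ is $1$-Lipschitz is not automatic on a general target: for a geodesically convex ball in positive curvature the nearest-point projection need not be distance-nonincreasing, and the standard fix is a radial retraction whose energy-decreasing property is proved via Hessian comparison for the distance function on a \emph{regular} ball (radius below both the injectivity radius and $\pi/(2\sqrt{\kappa})$). If you go that route you should either impose that extra smallness on $\rho_2$ and cite the retraction lemma, or simply adopt the paper's Sobolev-embedding shortcut, which sidesteps the issue entirely. With that repair, your proof is complete and equivalent in substance to the paper's.
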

\begin{proof}
Firstly, we prove the second assertion, choosing $\rho_2<\min \set{\rho_1,\rho_0}$, an arbitrary $B:=B_{\tau_1 R_0}(y) \subset B_{R_0}(x_0)$, $\tau_1$ to be fixed later, and apply Proposition \ref{prop.dirichlet} above with radius $\rho_2$ to get a solution $\Tilde{u}$ of \eqref{eq.ELEpdelta}, which minimizes $E_{p,\delta}(\cdot;B)$ amongst all functions $v \in W^{1,p}(B;\N)$ such that $v=u$ along $\partial B$ and such that $v(B) \subseteq B_{\rho_2}^\N(y_0)$. Proposition \ref{prop.uniqueness1} above then implies that $u=\Tilde{u}$. It remains to show that $u \in C^{1,\alpha}_{loc}(B;\N)$. Notice that, since $p>n$, we can choose $\tau_1$ small enough so that Sobolev's embedding implies that for all $v \in W^{1,p}(B;\N)$ with $v=u$ along $\partial B$ and $\norm{\nabla u-\nabla v}_{p}$ small enough, we also have $v(B)\subseteq B_{\rho_2}^\N(y_0)$, and so $E_{p,\delta}(u;B) \le E_{p,\delta}(v;B)$ by the construction above. This means that $u$ is a local minimizer of $E_{p,\delta}$ (without any range restriction), and this is enough to deduce its regularity by what we proved in the Subsection \ref{subsec.minimizers}.
\end{proof}
Finally, we can deduce the (non-uniform) local regularity of arbitrary solutions to \eqref{eq.ELEpdelta} thanks to Sobolev's embedding.
\begin{corollary}\label{cor.uniqueness2}
Any solution $u_{p,\delta} \in W^{1,p}(B_{R_0}(x_0);\N)$ to \eqref{eq.ELEpdelta} is of class $C^{\infty}_{loc}(B_{R_0}(x_0);\N)$ and a local minimizer of $E_{p,\delta}$.
\end{corollary}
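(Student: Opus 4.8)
The plan is to remove the small-image restriction in Corollary \ref{cor.uniqueness1} by a continuity/localisation argument, and then to bootstrap the resulting $C^{1,\alpha}$ regularity by classical elliptic theory, using crucially that $\delta>0$.

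First, since $p>n$, Sobolev's embedding $W^{1,p}(B_{R_0}(x_0))\hookrightarrow C^{0,1-n/p}_{loc}(B_{R_0}(x_0))$ shows that any solution $u_{p,\delta}$ is locally continuous. Hence, fixing an arbitrary $x\in B_{R_0}(x_0)$, there is a radius $r=r(x)>0$ with $B_r(x)\subset B_{R_0}(x_0)$ and $u_{p,\delta}(B_r(x))\subseteq B^{\N}_{\rho_2}(u_{p,\delta}(x))$, with $\rho_2$ the constant from Corollary \ref{cor.uniqueness1}. Applying that corollary on $B_r(x)$ gives at once $u_{p,\delta}\in C^{1,\alpha}_{loc}(B_r(x);\N)$ and the fact that $u_{p,\delta}$ minimises $E_{p,\delta}(\cdot;B_{\tau_1 r}(y))$ among all competitors with the same boundary trace whose image lies in $B^{\N}_{\rho_2}(u_{p,\delta}(x))$, for every $B_{\tau_1 r}(y)\subset B_r(x)$. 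Exactly as in the proof of Corollary \ref{cor.uniqueness1}, since $p>n$ any competitor $v$ that is $W^{1,p}$-close to $u_{p,\delta}$ automatically satisfies $v(B_{\tau_1 r}(y))\subseteq B^{\N}_{\rho_2}(u_{p,\delta}(x))$ (again by Sobolev embedding), so the image restriction is vacuous and $u_{p,\delta}$ is a genuine local minimiser of $E_{p,\delta}$ near $x$. As $x$ was arbitrary, this already yields both $u_{p,\delta}\in C^{1,\alpha}_{loc}(B_{R_0}(x_0);\N)$ and the local minimality asserted in the statement.

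It remains to upgrade $C^{1,\alpha}_{loc}$ to $C^\infty_{loc}$, and here we use $\delta>0$. Since $\nabla u_{p,\delta}\in C^{0,\alpha}_{loc}\subset \mathbb{L}^\infty_{loc}$, the divergence-form system \eqref{eq.ELEpdelta} is non-degenerate with bounded coefficients and bounded right-hand side on every relatively compact subdomain, so a standard difference-quotient (Caccioppoli) argument gives $u_{p,\delta}\in W^{2,2}_{loc}$; this legitimises the pointwise rewriting \eqref{eq.ELEpdeltaSU}. Moving the Hessian term $(p-n)(\delta+|\nabla u_{p,\delta}|^2)^{\frac{n-2}{2}}\nabla^2 u_{p,\delta}(\nabla u_{p,\delta},\nabla u_{p,\delta})/(1+(\delta+|\nabla u_{p,\delta}|^2)^{\frac{n}{2}})$ to the left-hand side, \eqref{eq.ELEpdeltaSU} becomes a \emph{linear}, second order, uniformly elliptic system in Uhlenbeck's quasi-diagonal form, $B^{\alpha\beta}_{ij}(x)\,\nabla^2_{ij}u^\beta=f^\alpha$, whose coefficients $B^{\alpha\beta}_{ij}(x)$ are smooth functions of $\nabla u_{p,\delta}(x)$ --- hence $C^{0,\alpha}_{loc}$ --- and are uniformly elliptic thanks to $\delta\le\delta+|\nabla u_{p,\delta}|^2\le \delta+\|\nabla u_{p,\delta}\|^2_{\mathbb{L}^\infty_{loc}}$, and whose datum $f^\alpha=[A_{u_{p,\delta}}(\nabla u_{p,\delta},\nabla u_{p,\delta})]^\alpha$ is likewise $C^{0,\alpha}_{loc}$. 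Classical Schauder theory then gives $u_{p,\delta}\in C^{2,\alpha}_{loc}$; consequently $\nabla u_{p,\delta}\in C^{1,\alpha}_{loc}$, so the coefficients and datum above are $C^{1,\alpha}_{loc}$, yielding $u_{p,\delta}\in C^{3,\alpha}_{loc}$, and a straightforward induction gives $u_{p,\delta}\in C^\infty_{loc}(B_{R_0}(x_0);\N)$.

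The only genuinely new ingredient is the localisation step; everything after it is either the already-established Corollary \ref{cor.uniqueness1} or textbook elliptic bootstrapping. The one point deserving care is that the minimality coming from Corollary \ref{cor.uniqueness1} is a priori constrained to competitors with small image, but the strict inequality $p>n$ makes this constraint automatically satisfied for all $W^{1,p}$-nearby competitors --- precisely the mechanism already exploited inside the proof of Corollary \ref{cor.uniqueness1} --- so no extra work is needed to promote it to unconstrained local minimality.
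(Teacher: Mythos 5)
Your proposal is correct and follows essentially the same route as the paper: localise via Sobolev's embedding (using $p>n$) so that the image lies in a small geodesic ball, invoke Corollary \ref{cor.uniqueness1} to obtain $C^{1,\alpha}$-regularity and local minimality (with the image constraint rendered vacuous for nearby competitors), and then bootstrap to $C^\infty$ by Schauder theory using the non-degeneracy coming from $\delta>0$. The extra detail you supply on the difference-quotient step and the iterative Schauder scheme is just an expansion of what the paper compresses into ``standard Schauder's theory.''
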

\begin{proof}
For any point $x \in B_{R_0}(x_0)$, there exists a small enough constant $\tau_1$ such that the ball $B:=B_{\tau_1 R_0}(x) \subset B_{R_0}(x_0)$ and $(\tau_1 R_0)^{p-n}E_{p,\delta}(u_{p,\delta};B)<\e_3$, where $\e_3$ is as small as we wish. By Sobolev's embedding, the image $u_{p,\delta}(B_{\tau_1 R_0}(x))$ is contained in $B^N_{\rho_2}(u(x))$ for small enough $\e_3$ (this $\tau_1$ degenerates as $p \searrow n$). In particular, $u_{p,\delta}$ is locally minimizing and $C^{1,\alpha_1}_{loc}$ around $x$. Hence, the system is non-degenerate and uniformly elliptic (not uniformly in $\delta$), with $C^{0,\alpha_1}$-coefficients, so we can appeal to standard Schauder's theory to bootstrap this regularity to a smooth one.
\end{proof}
As a Corollary of the uniqueness, solutions with uniformly small enough energies are trivial.
\begin{corollary}\label{cor.trivial}
Suppose that $E_{p,\delta}(u_{p,\delta};\M) <\e_0$, for $\e_0$ as in \ref{th.SUregularity}, and that Theorem \ref{th.SUregularity} applies to them. Then $u_{p,\delta}\equiv y_{p,\delta}$ for points $y_{p,\delta}\in \N$.
\end{corollary}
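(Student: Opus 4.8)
The plan is to deduce triviality from two ingredients: (i) smallness of the total energy forces the image of $u_{p,\delta}$ to lie in an arbitrarily small geodesic ball of $\N$, \emph{uniformly in} $p$ and $\delta$; and (ii) a map into such a small ball solving \eqref{eq.ELEpdelta} on a \emph{closed} domain must be constant, by testing the equation against the constant function --- this is the degenerate limit of the weighted Poincar\'e inequality of Lemma~\ref{lemma.imagepoincare}.

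For step (i): since $\e_0$ in Theorem~\ref{th.SUregularity} may be taken as small as we please and one of the conditions (a), (b), (c) is assumed, we cover $\M$ by finitely many balls $B_{R_0}(x_i)$, on each of which $E_{p,\delta}(u_{p,\delta};B_{R_0}(x_i))\le E_{p,\delta}(u_{p,\delta};\M)<\e_0$. On each such ball, Theorem~\ref{th.SUregularity} together with the monotonicity/covering remark following its statement gives $[\nabla u_{p,\delta}]_{C^{0,\alpha_0}}\le C_0\,E_{p,\delta}(u_{p,\delta};\M)^{\frac1p}$, while \eqref{eq.elementary1} gives $\norm{\nabla u_{p,\delta}}_{\mathbb{L}^p}^p\le p\,E_{p,\delta}(u_{p,\delta};\M)$. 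Interpolating a $C^{0,\alpha_0}$-seminorm bound against an $\mathbb{L}^p$-bound on a ball of fixed radius (uniformly in $p\in(n,P_0)$, as $\alpha_0\in(0,1)$ is fixed), and then taking the maximum over the covering, yields $\norm{\nabla u_{p,\delta}}_{\mathbb{L}^\infty(\M)}\le C\,E_{p,\delta}(u_{p,\delta};\M)^{\frac1p}\le C\,\e_0^{\frac1p}$. Fixing a base point $x_\ast\in\M$ and setting $y_{p,\delta}:=u_{p,\delta}(x_\ast)$, integration of $|\nabla u_{p,\delta}|$ along minimizing geodesics gives $\text{osc}_\M\,u_{p,\delta}\le\diam(\M)\,\norm{\nabla u_{p,\delta}}_{\mathbb{L}^\infty(\M)}\le C\,\diam(\M)\,\e_0^{\frac1p}$, so after shrinking $\e_0$ once and for all, $u_{p,\delta}(\M)\subseteq B^\N_\rho(y_{p,\delta})$ with $\rho<\rho_0$, the radius from Proposition~\ref{prop.dirichlet}.

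For step (ii): since $\M$ is closed, $u_{p,\delta}-y_{p,\delta}\in W^{1,p}(\M;\R^N)\cap\mathbb{L}^\infty$ is an admissible test function for \eqref{eq.ELEpdelta} --- this is the choice ``$\phi\equiv1$'' in the proof of Lemma~\ref{lemma.imagepoincare}, whose zero-trace requirement is vacuous in the absence of boundary. Testing the equation and using $|A_{u_{p,\delta}}(\nabla u_{p,\delta},\nabla u_{p,\delta})|\le\norm{A}_\infty|\nabla u_{p,\delta}|^2$ together with $|u_{p,\delta}-y_{p,\delta}|\le\rho$, we arrive at
\[
\int_\M w\,|\nabla u_{p,\delta}|^2\le\norm{A}_\infty\,\rho\int_\M w\,|\nabla u_{p,\delta}|^2,\qquad w:=(1+(\delta+|\nabla u_{p,\delta}|^2)^{\frac n2})^{\frac{p-n}{n}}(\delta+|\nabla u_{p,\delta}|^2)^{\frac{n-2}{2}}.
\]
For $\rho$ small enough (e.g.\ $\rho<\rho_0\le\norm{A}_\infty^{-1}$) we reabsorb, so $\int_\M w\,|\nabla u_{p,\delta}|^2=0$. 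Since $w>0$ when $\delta>0$ (and in any case $w\,|\nabla u_{p,\delta}|^2$ vanishes exactly where $\nabla u_{p,\delta}=0$), this forces $\nabla u_{p,\delta}\equiv0$ a.e., hence $u_{p,\delta}\equiv y_{p,\delta}$ on each connected component of $\M$.

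The main obstacle is step (i): the reduction to a small image is exactly where the hypothesis that Theorem~\ref{th.SUregularity} applies is used. A pure Sobolev--Morrey argument would bound $\text{osc}_\M\,u_{p,\delta}$ only by a constant blowing up as $p\searrow n$, so it cannot be made small for a fixed $\e_0$; the uniform $C^{0,\alpha_0}$-estimate, with $\alpha_0$ bounded away from $0$, is what restores uniformity. By contrast, step (ii) is routine and insensitive to the double-phase structure: it is just the observation that the weighted Poincar\'e estimate of Lemma~\ref{lemma.imagepoincare} collapses to a vanishing statement when tested against a constant.
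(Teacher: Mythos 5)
Your proof is correct and follows essentially the same route as the paper: use the uniform regularity of Theorem \ref{th.SUregularity} (plus a covering argument) to force a small oscillation $\norm{u_{p,\delta}-y_{p,\delta}}_\infty\le C\e_0^{1/p}$, then test \eqref{eq.ELEpdelta} with $u_{p,\delta}$ minus a constant and absorb the second-fundamental-form term. The only cosmetic differences are that the paper centers at the average $[u]_{\M}$ rather than at $u(x_\ast)$ and does not spell out the interpolation step you use to get the $\mathbb{L}^\infty$ gradient bound.
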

\begin{proof}
By Theorem \ref{th.SUregularity} and Corollary \ref{cor.uniqueness2}, we know that $u:=u_{p,\delta}$ are locally minimizing in $\M$. A covering argument implies a uniform oscillation inequality (or H\"older continuity)
\begin{equation*}
\norm{u-[u]_{\M} }_{\infty} \le C E_{p,\delta}(u;\M)^{\frac{1}{p}} \le C \e_0.
\end{equation*}
Testing the system \eqref{eq.ELEpdelta} with $u-[u]_{\M}$ and combining \eqref{eq.elementary1} with the bound above we get
\begin{align*}
& \int_{\M} (1+(\delta+|\nabla u|^2)^{\frac{n}{2}})^{\frac{p-n}{n}}(\delta+|\nabla u|^2)^{\frac{n-2}{2}} |\nabla u |^2 \le \int_{\M} (1+(\delta+|\nabla u|^2)^{\frac{n}{2}})^{\frac{p-n}{n}}(\delta+|\nabla u|^2)^{\frac{n-2}{2}} A_{u}(\nabla u,\nabla u)\cdot\\
&\cdot (u-[u]_{\M}) \le C \e_0 \int_{\M} (1+(\delta+|\nabla u|^2)^{\frac{n}{2}})^{\frac{p-n}{n}}(\delta+|\nabla u|^2)^{\frac{n-2}{2}} |\nabla u |^2,
\end{align*}
from which we conclude by choosing $\e_0$ small enough (uniformly).
\end{proof}

\subsection{Three-dimensional critical maps}
In this subsection we treat the case of arbitrary critical maps to the functionals $E_{p,\delta}$ in case the domain has dimension $n=3$. Recall that, thanks to our Corollary \ref{cor.uniqueness2}, we are allowed to recast equation \eqref{eq.ELEpdelta} as in \eqref{eq.ELEpdeltaSU}, which we recall
\begin{equation*}
-\mathcal{L}_{u} [u]=(p-3) \tfrac{(\delta+|\nabla u|^2)^{\frac{1}{2}}\nabla^2 u (\nabla u,\nabla u)}{1+(\delta+|\nabla u|^2)^{\frac{3}{2}} } +A_u(\nabla u, \nabla u).
\end{equation*}
Here we have set 
\begin{equation*}
    (\mathcal{L}_{u} [v])^\alpha:=(\delta_{i j} \delta^{\alpha \beta}+ \tfrac{\nabla_i u^\alpha \nabla_j u^\beta}{\delta+|\nabla u|^2}) \nabla^2_{i j} v^\beta =:L^{\alpha \beta}_{i j}(u) \nabla^2_{i j} v^\beta,
\end{equation*}
and by the inequality \eqref{eq.pCordes} we know that this operator satisfies Cordes' condition. We will prove a result stronger than Theorem \ref{th.SUregularity}, point $(c)$, more resemblant of Proposition $3.1$ in \cite{sac}.
\begin{theorem}\label{th.regularityD3}
Suppose $n=3$. There exist constants $\alpha_2$, $\e_1>0$ and a threshold $P_1>3$, all depending only on the data $n, N, (\M,g)$ and $(\N,h)$, such that if $u$ is a solution to \eqref{eq.ELEpdeltaSU} for some $p \in (3,P_1)$ and $\delta \in (0,1)$, with $D_3(u,B_{R_0}(x_0))<\e_1^3$, then $u \in W^{2,q}(B_{\tau_0 R_0}(x_0))$ for all $q \in [2,q_1)$, where $q_1$ is as in Corollary \ref{cor.cordesbound}. Furthermore, $u \in C_{loc}^{1,\alpha_2}(B_{R_0}(x_0);\N)$ and is a local minimizer of $E_{p,\delta}$.
\end{theorem}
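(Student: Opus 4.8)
The strategy is to adapt the proof of Proposition~3.1 in \cite{sac}: convert the Euler--Lagrange system into the quasi-diagonal form \eqref{eq.ELEpdeltaSU} and apply the Calder\'on--Zygmund-type estimate for the Cordes operator $\mathcal{L}_u=\mathcal{A}_{3,u}$ from Corollary~\ref{cor.cordesbound} in place of the one for the Laplacian, controlling the quadratic term through the Gagliardo--Nirenberg-type inequality \eqref{eq.gagliardo}. As a preliminary, Corollary~\ref{cor.uniqueness2} already gives that any solution $u$ of \eqref{eq.ELEpdelta} is $C^\infty_{loc}$ and a local minimizer of $E_{p,\delta}$ (so the local minimality assertion is immediate, and $\nabla^2 u\in\mathbb{L}^q_{loc}$ for every $q$, albeit with bounds degenerating as $p\searrow3$, $\delta\searrow0$); being smooth, $u$ solves \eqref{eq.ELEpdeltaSU} classically. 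Since $p=3<3+\tfrac{2}{nN-2}$ whenever $nN>2$, the inequality \eqref{eq.pCordes} shows that $\mathcal{L}_u=\mathcal{A}_{3,u}$ satisfies Cordes' condition with a constant depending only on $n,N$ and uniform in $u$ and $\delta$, so Corollary~\ref{cor.cordesbound} is available with an exponent interval $(q_0,q_1)$, $q_0<2<q_1<3$, depending only on $n,N$. Finally, since the integrand is nonnegative, $D_3(u;B_\rho(x))\le D_3(u;B_{R_0}(x_0))<\e_1^3$ for every $B_\rho(x)\subset B_{R_0}(x_0)$; this is the only smallness at our disposal.

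The core is an a priori estimate. Fix $q\in[2,q_1)$ (note $[2,q_1)\subset[\tfrac n2,n]$, so \eqref{eq.gagliardo} applies). From \eqref{eq.ELEpdeltaSU} and the bound $\tfrac{(\delta+|\nabla u|^2)^{1/2}|\nabla u|^2}{1+(\delta+|\nabla u|^2)^{3/2}}\le1$ one has, pointwise, $|\mathcal{L}_u[u]|\le(p-3)|\nabla^2u|+C|\nabla u|^2$. For any ball with $B_{2r}(x)\subset B_{R_0}(x_0)$, insert this into the interior estimate of Corollary~\ref{cor.cordesbound} in the form of the Remark following it (legitimate since $\nabla u\in\mathbb{L}^3$ and $n=3$), and then bound $\norm{\nabla u}_{2q,B_r(x)}^2$ by \eqref{eq.gagliardo}. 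Writing $E:=r^{\,2-n/q}\norm{\nabla^2u}_{q,B_r(x)}$, which is finite by the a priori smoothness, this yields
\begin{equation*}
E\le\bigl[C(p-3)+C\,\norm{\nabla u}_{3,B_r(x)}\bigr]\,E+C\,\norm{\nabla u}^2_{3,B_r(x)}+C\,\norm{\nabla u}_{3,B_{2r}(x)},
\end{equation*}
with all constants depending only on the data. Choosing the threshold $P_1>3$ close enough to $3$ and $\e_1$ small enough that the bracket does not exceed $\tfrac12$ (recall $\norm{\nabla u}_{3,B_r(x)}\le3^{1/3}\e_1$), we absorb the first term and obtain
\begin{equation*}
r^{\,2-n/q}\norm{\nabla^2u}_{q,B_r(x)}\le C\,\norm{\nabla u}_{3,B_{2r}(x)}\le C\e_1\qquad\text{whenever }B_{2r}(x)\subset B_{R_0}(x_0).
\end{equation*}
Covering $B_{\tau_0R_0}(x_0)$ by finitely many balls of radius comparable to $R_0$ gives $u\in W^{2,q}(B_{\tau_0R_0}(x_0))$ for every $q\in[2,q_1)$, with bounds uniform in $p\in(3,P_1)$ and $\delta\in(0,1)$.

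It remains to upgrade this to $C^{1,\alpha_2}$. Unlike in \cite{sac}, the range $q<q_1<n=3$ excludes a direct Morrey embedding or a further Calder\'on--Zygmund iteration, so the conclusion must be routed differently. The embedding $W^{2,q}\hookrightarrow W^{1,q^{*}}$ with $q^{*}=\tfrac{3q}{3-q}\ge6$ gives $\nabla u\in\mathbb{L}^6_{loc}(B_{\tau_0R_0}(x_0))$, quantitatively $\norm{\nabla u}_{6,B_r(x)}\le C\bigl(\norm{\nabla^2u}_{2,B_r(x)}+r^{-1/2}\norm{\nabla u}_{3,B_r(x)}\bigr)\le Cr^{-1/2}\e_1$ by the case $q=2$ of the core estimate, hence a uniform bound $\norm{\nabla u}_{6,B_{\tau_0R_0}(x_0)}\le C(R_0)\e_1=:M$. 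Since $p<P_1<n+1<6$, H\"older's inequality and \eqref{eq.elementary1} give, for $x\in B_{\tau_0R_0}(x_0)$ and small $\rho$, $\rho^{\,p-n}E_{p,\delta}(u;B_\rho(x))\le C\rho^{p}+C\rho^{p/2}M^{p}$, which tends to $0$ as $\rho\to0$ uniformly in $p$ and $\delta$. Hence there is a uniform radius $\rho_*>0$ with $\rho_*^{\,p-n}E_{p,\delta}(u;B_{\rho_*}(x))<\e_0^p$ for all such $x$; as $u$ is a local minimizer of $E_{p,\delta}$, point $(a)$ of Theorem~\ref{th.SUregularity} applies on each ball $B_{\rho_*}(x)$, and a covering argument yields $u\in C^{1,\alpha_2}_{loc}(B_{R_0}(x_0);\N)$ (one may take $\alpha_2=\alpha_0$) together with the corresponding uniform semi-norm bounds on $B_{\tau_0R_0}(x_0)$, after possibly shrinking $\tau_0$.

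The delicate points are twofold. First, one must run the double absorption in the core estimate with constants that stay uniform as $p\searrow3$ and $\delta\searrow0$; this works precisely because the leading operator of \eqref{eq.ELEpdeltaSU} carries the \emph{fixed} exponent $n=3$ rather than $p$ (so Cordes' condition holds robustly by \eqref{eq.pCordes}) and because both the quadratic right-hand side and the perturbative Hessian term are controlled purely by $\norm{\nabla u}_3$ — the very norm assumed small, even though $\mathbb{L}^3$ sits below the natural energy level. Second, the failure of $W^{2,q}$ regularity with $q<n$ to self-improve forces the final $C^{1,\alpha}$ step through the gain of integrability $\nabla u\in\mathbb{L}^6_{loc}$ and the already-proven minimizer theory of point $(a)$ of Theorem~\ref{th.SUregularity}, instead of the Hessian-integrability iteration used in the two-dimensional case; I expect this rerouting, and the verification that $\rho_*$ is genuinely uniform, to be the main technical hurdle.
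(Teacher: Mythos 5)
Your proposal follows essentially the same route as the paper's proof: rewrite the system in the quasi-diagonal form \eqref{eq.ELEpdeltaSU}, apply the Cordes-type interior Calder\'on--Zygmund estimate of Corollary \ref{cor.cordesbound}, absorb both the $(p-3)$-Hessian term and the quadratic right-hand side via the Gagliardo--Nirenberg inequality \eqref{eq.gagliardo} using only the smallness of $\norm{\nabla u}_{3}$, and then feed the resulting uniform $W^{2,q}$ bound into the minimizer regularity of Theorem \ref{th.SUregularity}(a). The only difference is the endgame: the paper uses the Morrey embedding $W^{2,q}\hookrightarrow C^{0,2-3/q}$ (valid since $q\ge 2>3/2$, so your remark that Morrey is unavailable is slightly off --- it only fails for $\nabla u$) to get a uniformly small image on a uniform scale and then Corollary \ref{cor.uniqueness1} to deduce minimality at that \emph{uniform} scale before invoking point $(a)$, whereas you take minimality directly from Corollary \ref{cor.uniqueness2}, whose minimizing scale degenerates as $p\searrow 3$; since your $\mathbb{L}^6$-gradient bound also yields uniform $C^{0,1/2}$ oscillation control, you should close this small gap by routing through Corollary \ref{cor.uniqueness1} exactly as the paper does.
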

\begin{proof}
Fix an arbitrary ball $B_{2r}(x) \subset B_{R_0}(x_0)$. Take the $\R^N$-norm of the system above and use some elementary bounds to get
\begin{equation*}
|\mathcal{L}_{u}[u]| \le (p-3) |\nabla^2 u|+c(\mathcal{\N})|\nabla u|^2,
\end{equation*}
so taking the $\mathbb{L}^q(B_r(x))$-norm we get thanks to our Corollary \ref{cor.cordesbound}
\begin{equation*}
\begin{aligned}
&c r^{2-\frac{3}{q}} \norm{\nabla^2 u}_{q,B_r(x) } - c \norm{\nabla u}_{3, B_{2r}(x)} \le r^{2-\frac{3}{q}} \norm{\mathcal{L}_u[u]}_{q,B_r(x)} \\
&\le (p-3) r^{2-\frac{3}{q}} \norm{\nabla^2 u}_{q,B_r(x)}+c(\mathcal{\N}) r^{2-\frac{3}{q}} \norm{|\nabla 
 u|^2}_{q,B_r(x)}.
 \end{aligned}
\end{equation*}
Choosing $P_1$ close enough to $3$, we can absorb the first summand of the right hand side, and then use the inequality \eqref{eq.gagliardo}
\begin{equation}
 (c-P_1+3) r^{2-\frac{3}{q}} \norm{\nabla^2 u}_{q,B_r(x) } \le c \norm{\nabla u}_{3, B_{2r}(x) }+c(n,\N,q,\mathcal{\N}) \norm{\nabla u}_{3, B_{r}(x) } [\norm{\nabla u}_{3, B_{r}(x) }+ r^{2-\frac{3}{q}} \norm{\nabla^2 u}_{q,B_r(x)}].
\end{equation}
Choosing $\e_0$ small enough we can rearrange this inequality as
\begin{equation}
r^{2-\frac{3}{q}} \norm{\nabla^2 u}_{q,B_r(x)} \le \tfrac{c+c\e_0}{c-P_1+3-c \e_0} \norm{\nabla u}_{3, B_{2r}(x)}.
\end{equation}
Sobolev's embedding gives the uniform H\"older continuity of the solutions relative to some exponent $\alpha_2>\tfrac{1}{2}$, which we combine with the uniqueness Corollary \ref{cor.uniqueness1}, to deduce their minimality, and finally with Theorem \ref{th.SUregularity}, point $(a)$, concluding the proof.
\end{proof}
\begin{corollary}[Theorem \ref{th.SUregularity}, point $(c)$]
Suppose $n=3$. There exist constants $\alpha_0, \ \e_0>0$ and $P_0>3$, depending only on the data $n, N, (\M,g)$ and $(\N,h)$, such that if $u_{p,\delta}$ is a solution to \eqref{eq.ELEpdeltaSU} for some $p \in (3,P_0)$ and $\delta \in (0,1)$, with $R_0^{p-n} E_{p,\delta}(u,B_{R_0}(x_0))<\e_0$, then $u \in C^{1,\alpha_0}_{loc}(B_{R_0}(x_0);\N)$ and is a local minimizer of $E_{p,\delta}$.
\end{corollary}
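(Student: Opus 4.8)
The plan is to deduce this from Theorem~\ref{th.regularityD3} by converting the rescaled $E_{p,\delta}$-energy hypothesis into the $D_3$-smallness hypothesis required there. Concretely, I take $P_0:=P_1$, $\alpha_0:=\alpha_2$ (the constants of Theorem~\ref{th.regularityD3}), and fix $\e_0$, depending only on the data, at the end. The one pointwise input is the lower bound in \eqref{eq.elementary1}: for $n=3$ and any $s\in[0,1]$ it reads $|\xi|^3\le (s+(\delta+|\xi|^2)^{3/2})^{p/3}-(s+\delta^{3/2})^{p/3}+a_0$, so integrating over any ball $B$ gives $3\,D_3(u;B)=\int_B|\nabla u|^3\le p\,E^{(s)}_{p,\delta}(u;B)+a_0\,|B|$. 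Hence, as soon as $E^{(s)}_{p,\delta}(u;B)$ is below a data-threshold and $|B|$ is small, one has $D_3(u;B)<\e_1^3$ and Theorem~\ref{th.regularityD3} applies on $B$.

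The real work is matching scales: $D_3$ is scale invariant, whereas the only monotone and a-priori-controlled object is the $(p-3)$-homogeneous quantity $r^{p-3}E_{p,\delta}(u;B_r)$. Fix an interior point $x_1\in B_{R_0}(x_0)$, set $d':=\min\{\dist(x_1,\partial B_{R_0}(x_0)),1\}$, and rescale $B_{d'}(x_1)$ to $B_1(0)$ through $v(y):=u(x_1+d'y)$. Then $v$ solves the Euler--Lagrange system of the auxiliary functional $E^{(s)}_{p,\delta'}$ with admissible parameters $s:=(d')^3\in(0,1]$ and $\delta':=(d')^2\delta\in(0,1)$, and, using that $s+(\delta'+|\nabla v|^2)^{3/2}=(d')^3(1+(\delta+|\nabla u|^2)^{3/2})$ together with the monotonicity of the rescaled energy in the radius and in the domain of integration, the hypothesis becomes $E^{(s)}_{p,\delta'}(v;B_1(0))=(d')^{p-3}E_{p,\delta}(u;B_{d'}(x_1))\le R_0^{p-3}E_{p,\delta}(u;B_{R_0}(x_0))<\e_0$. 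One checks that Corollary~\ref{cor.uniqueness2}, Theorem~\ref{th.regularityD3} and Theorem~\ref{th.SUregularity}(a) hold verbatim, with the same uniform constants, for this $s$-family: the a-priori (non-uniform) smoothness is unaffected, the blow-up functional is still $|\xi|^p$, and the proof of Theorem~\ref{th.regularityD3} uses $s\in(0,1]$ only through the trivial bound $\tfrac{(\delta'+|\nabla v|^2)^{1/2}|\nabla v|^2}{s+(\delta'+|\nabla v|^2)^{3/2}}\le 1$.

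Now choose $\rho_*\in(0,\tfrac12]$, depending only on the data, with $\tfrac{a_0}{3}|B_{\rho_*}|<\tfrac12\e_1^3$, and then $\e_0<\tfrac{3}{2P_1}\e_1^3$ (and smaller than any threshold needed by Theorem~\ref{th.SUregularity}(a)). For every $y_0\in B_{1/2}(0)$ one has $B_{\rho_*}(y_0)\subset B_1(0)$, so by domain monotonicity $E^{(s)}_{p,\delta'}(v;B_{\rho_*}(y_0))\le E^{(s)}_{p,\delta'}(v;B_1(0))<\e_0$, whence the comparison of the first paragraph gives $D_3(v;B_{\rho_*}(y_0))<\tfrac p3\e_0+\tfrac{a_0}{3}|B_{\rho_*}|<\e_1^3$. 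Theorem~\ref{th.regularityD3}, applied on $B_{\rho_*}(y_0)$, then yields $v\in C^{1,\alpha_0}_{loc}(B_{\rho_*}(y_0);\N)$, shows $v$ is a local minimizer of $E^{(s)}_{p,\delta'}$ there, and, via Theorem~\ref{th.SUregularity}(a) (legitimate since $\rho_*^{p-3}E^{(s)}_{p,\delta'}(v;B_{\rho_*}(y_0))\le\e_0$), the quantitative interior bound for $[\nabla v]_{C^{0,\alpha_0}}$. Covering $B_{1/2}(0)$ by such balls and undoing the rescaling, $u\in C^{1,\alpha_0}_{loc}(B_{d'/2}(x_1);\N)$ is a local minimizer of $E_{p,\delta}$ near $x_1$; since $x_1\in B_{R_0}(x_0)$ was arbitrary, $u\in C^{1,\alpha_0}_{loc}(B_{R_0}(x_0);\N)$ is a local minimizer, and the accompanying uniform $C^{0,\alpha_0}$-seminorm estimate on $B_{\tau_0R_0}(x_0)$ follows from the same monotonicity argument used for Theorem~\ref{th.SUregularity}.

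The main obstacle is exactly the scaling bookkeeping above, which was absent in cases $(a)$ and $(b)$ where one compares directly with the scale-invariant $D_p$: here the only uniform a-priori quantity is the $(p-3)$-homogeneous rescaled energy while Theorem~\ref{th.regularityD3} consumes the scale-invariant $D_3$, and \eqref{eq.elementary1} moreover leaks the non-energetic term $a_0|B|$; both are tamed, but only after passing to the unit ball (which forces the auxiliary parameter $s=(d')^3$, hence the need for the $E^{(s)}$-framework) and localizing at a fixed small radius $\rho_*$. The only other thing to verify carefully is that every constant feeding Theorem~\ref{th.regularityD3} --- Cordes' constant, the Calder\'on--Zygmund constant $C_2$ of Corollary~\ref{cor.cordesbound}, the Gagliardo--Nirenberg constant $C_5$, and the Luckhaus/Duzaar--Mingione constants entering Theorem~\ref{th.SUregularity}(a) --- is independent of $s$, so that the $s$-family enjoys the same uniform estimates.
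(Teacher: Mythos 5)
Your proposal is correct and is essentially the deduction the paper leaves implicit: convert the rescaled $E_{p,\delta}$-smallness into $D_3$-smallness via the lower bound in \eqref{eq.elementary1} (absorbing the additive $a_0|B|$ leak by localizing at a fixed small radius) and then invoke Theorem \ref{th.regularityD3} together with Corollary \ref{cor.uniqueness1} and part $(a)$. The only cosmetic difference is that you normalize the ball to radius one and carry the auxiliary parameter $s=(d')^3$, whereas the paper's stated convention is to keep $s=1$ and not rescale the radius — a swap the paper itself declares to make "no major difference" — and your verification that the Cordes, Calder\'on--Zygmund, Gagliardo--Nirenberg and Luckhaus constants are $s$-independent is exactly the check that convention requires.
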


\section{Quantization of the energy}\label{sec.quantization}
In this section, we aim to prove Theorem \ref{th.quantization}. The argument follows partially the one by the second author in \cite{lam1}, however we need to adapt it to the weaker small energy regularity Theorem \ref{th.SUregularity}, leading to non-trivial modifications.
\subsection{Bubbles construction and concentration radii bound}
Let $\e_0$ be given by Theorem \ref{th.SUregularity} and consider throughout this entire Section, a family of solutions of \eqref{eq.ELEpdelta}, $(u_k:=u_{p_k,\delta_k})_{k \in \mathbb{\N}}$, to which Theorem \ref{th.SUregularity} applies, and where $p_k\in (n,P), \delta_k \in (0,1)$ and $p_k \searrow n$ and $\delta_k \searrow 0$ (relative to scales $s_k \in [0,1]$ possibly dependent on $k$), with uniformly bounded energies $E_{p_k,\delta_k}^{(s_k)}(u_k;\M) \le \Lambda_0$. We can extract a subsequence (not relabeled) such that $u_k$ converges weakly in $W^{1,n}(\M;\R^N)$ to a map $u$ thanks to \eqref{eq.elementary1}. If on a ball $B_r(x) \subseteq \M$ we have $E_{p_k,\delta_k}^{(s_k)}(u_k;B_r(x)) \le \e_0^{p_k}$ for all $k$ large enough, then Theorem \ref{th.SUregularity} guarantees a uniform $C^{1,\alpha_0}$-bound on the $u_k$'s on a slightly smaller ball $B_{\tau_0 r}(x)$, and thus by Arzel\`a-Ascoli's Theorem for any $\alpha_1<\alpha_0$ we can possibly extract a further subsequence converging in $C^{1,\alpha_1}(B_{\tau_0 r}(x))$ to $u$. For this reason, it is convenient to define the energy concentration set $\Sigma$ as
\begin{equation*}
\Sigma:= \set{x \in \M \mid \limsup_{k\rightarrow +\infty} E_{p_k,\delta_k}^{(s_k)}(u_k,B_r(x))^{\frac{1}{p_k}} \ge \e_0 \ \forall r>0}.
\end{equation*}
From the uniform boundedness of the $E_{p_k,\delta_k}$-energies of the sequence $(u_k)$, we see that $\Sigma$ consists of finitely many points $x_1$,...,$x_K$, where $K=K(\e_0,\Lambda_0,n)$. Indeed, given $K$ distinct points $x_1$,..., $x_K$ in $\Sigma$, we can find a radius $R$ smaller than $R_0:=\tfrac{1}{2} \min \set{d(x_i,x_j)}$ so that the balls $B_R(x_i)$ are mutually disjoint. Therefore we obtain by the definition of $\Sigma$ (along a sequence realising the $\limsup$)
\begin{align*}
\e_0^{p_k} K &\le \sum_{i=1}^K E^{(s)}_{p_k,\delta_k}(u_k,B_R(x_i)) \le E^{(s)}_{p_k,\delta_k}(u_k,\M) \le \Lambda_0,
\end{align*}
so that $K=K(\e_0,\Lambda_0,n)$ as claimed.

By the discussion above, outside the set $\Sigma$, $u_k$ converge ``smoothly" to a weakly $n$-harmonic map $u \in C^{1,\alpha_1}(\M\setminus \set{x_1,...,x_K};\mathcal{N})$ with finite $D_n$-energy. By Duzaar-Fuchs' removability of singularity Theorem in \cite{duz0}, we see that $u \in C^{1,\alpha_1}(\M;\mathcal{N})$. In order to construct the bubbles, along with some energy control, we adopt the classical method of maximal concentration function. Recall the definition of $R_0$ from above. Introduce for all $i=1,...,K$ and $k \in \mathbb{N}$ the maximal concentration functions $F^i_k(R):= \max_{y \in B_{R_0}(x_i)} E^{(s_k)}_{p_k,\delta_k}(u_k,B_R(y))$ for $R \in (0,R_0)$. By definition of $\Sigma$ we know that for all $R>0$, $F^i_k(R) \ge E^{(s_k)}_{p_k,\delta_k}(u_k,B_R(x_i)) \ge \e_0^{p_k} /2$ for all $k$ (after possibly extract a subsequence). On the other hand, for all $i=1,...,K$ and for all $k \in \mathbb{N}$ fixed, $F^i_k(R) \rightarrow 0$ as $R \rightarrow 0$, so there exists $x^i_k(R) \in B_{R_0}(x_i)$ such that $F^i_k(R)=E^{(s_k)}_{p_k,\delta_k}(u_k,B_R(x^i_k(R)))$ and, by diagonal argument, we can also ensure $x^i_k \rightarrow x^i$ and find radii $r^i_k \rightarrow 0$ (without loss of generality $r_k^i \le 1$) such that
\begin{equation}\label{eq.maxpropertybubble}
\max_{y \in B_{R_0}(x_i)} E^{(s_k)}_{p_k,\delta_k}(u_k,B_{r^i_k}(y))=E^{(s_k)}_{p_k,\delta_k}(u_k,B_{r^i_k}(x^i_k))=\tfrac{\e_0^{p_k}}{2}.
\end{equation}
We fix our attention on $x_1$. By possibly making $R_0$ smaller, we will consider the maps $u_k$ as defined on an Euclidean ball $B_{R_0}(0)$ thanks to the exponential map of $\M$ centered at $x_1$ and ignore the lower order terms coming from this identification; notice that we then have $x_k^1 \rightarrow 0$. Drop the upper index $\cdot^1$ to simplify the notation. We define rescaled maps $v_k:B_{r_k^{-1} R_0} \rightarrow \N$ as $v_k(x):=u_k( x_k+ r_k x)$, and notice that for any $k$ large enough, $v_k$ is a weak solution of the rescaled system \eqref{eq.ELEpdelta} with $s_k':=r_k^n s_k \in (0,1)$ and $\delta'_k:= r_k^2 \delta_k\in [0,1]$
\begin{equation}
\footnotesize
-\Div[(s_k'+(\delta'_k+|\nabla v_k|^2)^{\frac{n}{2}})^{\frac{p-n}{n}}(\delta'_k+|\nabla v_k|^2)^{\frac{n-2}{2}} \nabla v_k]=(s_k'+(\delta'_k+|\nabla v_k|^2)^{\frac{n}{2}})^{\frac{p-n}{n}}(\delta'_k+|\nabla v_k|^2)^{\frac{n-2}{2}} A_{v_k}(\nabla v_k, \nabla v_k).
\end{equation}
Moreover, equation \eqref{eq.maxpropertybubble} implies
\begin{equation}\label{eq.rescaledmaxpropertybubble}
\max_{y \in B_{r_k^{-1} R_0}(0)} r_k^{p_k-n} E^{(s'_k)}_{p_k,\delta'_k}(v_k,B_1(y))=r_k^{p_k-n} E^{(s'_k)}_{p_k,\delta'_k}(v_k,B_1(0))=\tfrac{\e_0^{p_k}}{2}.
\end{equation}
The same rescaling shows that $v_k$ have uniformly bounded $E^{(s'_k)}_{p_k,\delta'_k}$-energy for $k$ large enough since
\begin{equation}\label{eq.uniformboundrescale}
\begin{aligned}
    E^{(s'_k)}_{p_k,\delta'_k}(v_k;B_{R_0 r_k^{-1}}(0)) &= \tfrac{1}{p_k} \int_{ B_{R_0 r_k^{-1}}(0) } (s'_k+(\delta'_k+|\nabla v_k|^2)^{\frac{n}{2}})^{\frac{p_k}{n}}- (s'_k+\delta_k'^{ \frac{n}{2}})^{\frac{p_k}{n}} \\
    &\le r_k^{p_k-n} \tfrac{1}{p_k} \int_M (s+(\delta_k+|\nabla u_k|^2)^{\frac{n}{2}})^{\frac{p_k}{n}}- (s+\delta_k^{\frac{n}{2}})^{\frac{p_k}{n}} \le E^{(s)}_{p_k,\delta_k}(u_k;\M) \le \Lambda_0.
    \end{aligned}
\end{equation}
Finally, combining the inequalities \eqref{eq.elementary1} and \eqref{eq.rescaledmaxpropertybubble} we deduce the uniform smallness of the rescaled $E^{(s'_k)}_{p_k,\delta'_k}$-energies of the $v_k$'s on balls of radius $1$ centered at points in bigger and bigger domains. In particular, we can apply Theorem \ref{th.SUregularity} on any ball $B_1(y)$ with $y \in B_{r_k^{-1} R_0}(0)$, obtaining uniform $C^{1,\alpha_0}$-bound on larger and larger balls. Arzel\'a-Ascoli's theorem ensures $v_k \rightarrow \omega$ in $C^{1,\alpha_1}_{loc}(\R^n;\mathcal{N})$ for some $n$-harmonic map $\omega \in C^{1,\alpha_1}(\R^n;\mathcal{N})$ (hence locally minimizing by the uniqueness result of Fardoun-Regaboui \cite{far}). By conformal invariance of the $n$-harmonic map equation, and Duzaar-Fuchs' removability of singularity Theorem \cite{duz0}, we can extend $\omega$ to a map from $S^n$ into $\mathcal{N}$, which we will call a \emph{bubble} at the point $x^1$. We can repeat the argument to the rescaled functions $v_k$. Iterating this procedure we get a scheme of convergence to an object called \emph{bubble tree} (we use the terminology from \cite{par}). Since any bubble (at any step) must "carry" a non-trivial amount of energy $E^{(s'_k)}_{p_k,\delta'_k}$ at least $\tfrac{\e_0^{p_k}}{2}$, and the energies are uniformly bounded (at any step), the procedure ends after a finite amount of time repeating the argument given above for the cardinality of $\Sigma$.

In what follows, we want to find a precise asymptotic of the concentration radii with respect to the parameters $p_k$. We split the argument in two lemmas, inspired respectively by Li-Zhu \cite{liz} and by Lamm \cite{lam1}.

First of all we need to refine a bit the setting in order to get better bounds: as we will see in the proof of Theorem \ref{th.quantization}, from the classical induction argument of Ding and Tian \cite{din}, it will suffice to prove the energy identity in the presence of one single bubble. Therefore, thanks to the local nature of the problem, we can focus on a family of solutions $u_k:B_1\rightarrow \N$ with only one concentration point $x_1=0$. We denote by $\omega:S^n \rightarrow \N$ the bubble (sometimes identified with its composition with the stereographic projection $\omega:\R^n\rightarrow \N$), by $r_k$ the concentration radii associated to the bubble, without loss of generality $r_k \in (0,1]$, and by $u_\infty \in C^{1,\alpha}(B_1;\N)$ the $n$-harmonic map arising as a limit in $C^{1,\alpha}(B_1 \setminus \set{0};\N)$ of the family $u_k$. From the $C^{1,\alpha}$-convergence we see that for every $R_0 \in (0,1)$, we have
\begin{equation*}
\lim_{k \rightarrow +\infty} E_{p_k,\delta_k}(u_k,B_1 \setminus B_{R_0})=D_{n}(u_\infty,B_1 \setminus B_{R_0}).
\end{equation*}
By the definition of the bubble $\omega$, the rescalings $v_k(y):=u_k(r_k y)$ converge in $C^{1,\alpha}$ to $\omega$, so that for every $R>0$
\begin{equation*}
\lim_{k \rightarrow +\infty} E_{p_k,\delta_k}(u_k,B_{R r_k})=D_{n}(\omega,B_{R}).
\end{equation*}
Furthermore, for any fixed $M>1$, the same reasoning leads to the identities above guarantees
\begin{equation*}
\lim_{k \rightarrow +\infty} E_{p_k,\delta_k}(u_k,B_{R_0} \setminus B_{\frac{R_0}{M}})+E_{p_k,\delta_k}(u_k,B_{M R r_k} \setminus B_{R r_k}) \longrightarrow 0,
\end{equation*}
as $R_0 \rightarrow 0$ and $R \rightarrow +\infty$. Therefore, the proof of the energy identity will be reduced to show
\begin{equation}
\lim_{R \rightarrow +\infty} \lim_{R_0 \rightarrow 0} \lim_{k \rightarrow +\infty} E_{p_k,\delta_k}(u_k,A(r_k R,R_0))=0.
\end{equation}
Notice that the first limit to take is as $k \rightarrow +\infty$. As we will see, we have for $k$ large enough, under these hypotheses, the uniform bound
\begin{equation}\label{eq.gradientdecay}
|\nabla u_k|<\tfrac{C_{10} \e_0}{|x|}, \quad x \in A(r_k R,R_0).
\end{equation}
\begin{lemma}
In the situation above, assuming the bound \eqref{eq.gradientdecay}, the concentration radii verify
\begin{equation}\label{eq.concentrationradii}
1 \le \limsup_{k \rightarrow \infty} (r_k)^{n-p_k} \le C_{11}(n,\e_0,\Lambda_0) < \infty.
\end{equation}
\end{lemma}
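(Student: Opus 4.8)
The plan is to read off \eqref{eq.concentrationradii} from the rescaled maximal–concentration identity \eqref{eq.rescaledmaxpropertybubble} together with the uniform energy bound \eqref{eq.uniformboundrescale}; the gradient decay \eqref{eq.gradientdecay} is not actually used here (it is only recorded for the next lemma), so the argument is essentially algebraic. The lower bound is immediate: since $r_k\in(0,1]$ and $p_k>n$ we have $r_k^{\,n-p_k}=r_k^{-(p_k-n)}\ge 1$ for every $k$, hence $\limsup_{k\to\infty} r_k^{\,n-p_k}\ge 1$.

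For the upper bound, recall that $v_k(x):=u_k(x_k+r_kx)$ and that, with the matching rescaled parameters $s_k'=r_k^n s_k$ and $\delta_k'=r_k^2\delta_k$, the change of variables in $E_{p,\delta}$ turns \eqref{eq.maxpropertybubble} into \eqref{eq.rescaledmaxpropertybubble}. Reading off the equality realised at the centre of the maximum,
\[
E^{(s_k')}_{p_k,\delta_k'}\!\big(v_k,B_1(0)\big)=\tfrac{\e_0^{p_k}}{2}\,r_k^{\,n-p_k},
\qquad\text{i.e.}\qquad
r_k^{\,n-p_k}=\frac{2\,E^{(s_k')}_{p_k,\delta_k'}(v_k,B_1(0))}{\e_0^{p_k}}.
\]
Since $r_k\to 0$, for $k$ large we have $B_1(0)\subset B_{R_0r_k^{-1}}(0)$, and then \eqref{eq.uniformboundrescale} gives $E^{(s_k')}_{p_k,\delta_k'}(v_k,B_1(0))\le\Lambda_0$; using $n<p_k<P_0<n+1$ and $\e_0<1$ (so that $\e_0^{p_k}\ge\e_0^{\,n+1}$) we obtain $r_k^{\,n-p_k}\le 2\Lambda_0\,\e_0^{-(n+1)}=:C_{11}(n,\e_0,\Lambda_0)$, and passing to the $\limsup$ completes the proof.

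It is worth noting that the argument closes equally well from the bubble side, which is also how one sees the inequalities above are not vacuous: since $v_k\to\omega$ in $C^{1,\alpha_1}_{loc}(\R^n;\N)$ while $s_k',\delta_k'\to 0$ and $p_k\searrow n$, we have $E^{(s_k')}_{p_k,\delta_k'}(v_k,B_1(0))\to D_n(\omega,B_1(0))$, so the identity forces $\lim_k r_k^{\,n-p_k}=\tfrac{2}{\e_0^{\,n}}D_n(\omega,B_1(0))$; conversely the maximality in \eqref{eq.rescaledmaxpropertybubble} propagates energy, $E^{(s_k')}_{p_k,\delta_k'}(v_k,B_1(y))\le E^{(s_k')}_{p_k,\delta_k'}(v_k,B_1(0))$ for every admissible $y$, so if $D_n(\omega,B_1(0))$ vanished then $D_n(\omega,B_1(y))=0$ for all $y\in\R^n$ and $\omega$ would be constant, contradicting that it is a non-trivial bubble (one even gets $D_n(\omega,B_1(0))\ge\tfrac{\e_0^{\,n}}{2}$ directly from the displayed identity and $r_k^{\,n-p_k}\ge1$). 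The only point that requires mild care is therefore the sign of the power of $r_k$ produced by the scaling of $E_{p,\delta}$: because $E_{p,\delta}$ is not conformally invariant for $p>n$ and $r_k\le 1$, this power cooperates with the uniform bound \eqref{eq.uniformboundrescale}, and once that is checked the lemma is purely computational.
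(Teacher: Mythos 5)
Your argument has a genuine gap: it hinges on reading \eqref{eq.rescaledmaxpropertybubble} literally, but the exponent printed there is a sign typo, and once it is corrected your computation collapses. Carrying out the change of variables $v_k(x)=u_k(x_k+r_kx)$ with $s_k'=r_k^n s_k$, $\delta_k'=r_k^2\delta_k$ gives
\begin{equation*}
E^{(s_k')}_{p_k,\delta_k'}(v_k;B_1(y))=r_k^{p_k-n}\,E^{(s_k)}_{p_k,\delta_k}(u_k;B_{r_k}(x_k+r_ky)),
\end{equation*}
so \eqref{eq.maxpropertybubble} yields $E^{(s_k')}_{p_k,\delta_k'}(v_k;B_1(0))=r_k^{p_k-n}\tfrac{\e_0^{p_k}}{2}$; the prefactor in \eqref{eq.rescaledmaxpropertybubble} should therefore be $r_k^{n-p_k}$, not $r_k^{p_k-n}$. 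This is also forced by internal consistency: the paper immediately uses \eqref{eq.rescaledmaxpropertybubble} to deduce the uniform \emph{smallness} of the energies of the $v_k$ on unit balls so as to apply Theorem \ref{th.SUregularity} there, which requires $E^{(s_k')}_{p_k,\delta_k'}(v_k;B_1(y))\le r_k^{p_k-n}\e_0^{p_k}/2\le\e_0^{p_k}/2$, whereas your reading would give $E^{(s_k')}_{p_k,\delta_k'}(v_k;B_1(0))=r_k^{n-p_k}\e_0^{p_k}/2\ge\e_0^{p_k}/2$, possibly unbounded. With the correct identity your display becomes $r_k^{p_k-n}=2E^{(s_k')}_{p_k,\delta_k'}(v_k,B_1(0))/\e_0^{p_k}$, and combining with $E^{(s_k')}_{p_k,\delta_k'}(v_k,B_1(0))\le\Lambda_0$ only gives $r_k^{p_k-n}\le 2\Lambda_0\e_0^{-p_k}$, which is vacuous (the left-hand side is $\le 1$ automatically) and says nothing about $\limsup_k r_k^{n-p_k}$. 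The same sign error undoes your closing argument "from the bubble side": the correct scaling gives $D_n(\omega,B_1(0))\le\e_0^n/2$, an upper bound, and the whole point of the lemma is to exclude $r_k^{p_k-n}\to 0$, i.e.\ $D_n(\omega,B_1(0))=0$, with a constant depending only on $n,\e_0,\Lambda_0$.

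This is why the lemma is not purely algebraic and why \eqref{eq.gradientdecay}, together with the regularity of $\omega$ and $u_\infty$, genuinely enters: the paper's proof establishes locally uniform gradient bounds for $v_k$, identifies $\lim_k\int_{B_R}|\nabla v_k|^{p_k}=\int_{B_R}|\nabla\omega|^{n}$ by dominated convergence, and plays the energy gap $\int_{\R^n}|\nabla\omega|^n\ge\tfrac12\e_0^n$ of the non-trivial bubble against the global bound $\int_{B_R}|\nabla v_k|^{p_k}=r_k^{p_k-n}\int_{B_{Rr_k}}|\nabla u_k|^{p_k}\le r_k^{p_k-n}\,p_k\Lambda_0$ (via \eqref{eq.elementary1}), which forces $\liminf_k r_k^{p_k-n}\ge c(n)\e_0^n/\Lambda_0$ and hence \eqref{eq.concentrationradii}. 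Your qualitative remark that $D_n(\omega,B_1(0))=0$ would make $\omega$ constant by the maximality in the concentration construction is correct, but even granting non-triviality of $\omega$ it only yields finiteness of the $\limsup$ along the subsequence, not the uniform constant $C_{11}(n,\e_0,\Lambda_0)$; the quantitative bound must come from the large-ball comparison above. Only the elementary lower bound $1\le\limsup_k r_k^{n-p_k}$ in your first paragraph is correct as written.
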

\begin{proof}
From the regularity of the bubble $\omega$ and compactness of $S^n$, we have $\norm{\nabla \omega}_{\infty} \le C$, so by the smooth convergence of the rescalings $v_k$ we have for $k$ large enough
\begin{equation*}
|\nabla v_k| \le C, \ \text{on } B_{R} \ \Rightarrow |\nabla u_k| \le \tfrac{C}{r_k}, \ \text{on } B_{R r_k}.
\end{equation*}
Arguing similarly, the smoothness of $u_{\infty}$ implies (for $R_0$ small enough depending only on $u_\infty$, and for $k$ large enough)
\begin{equation*}
|\nabla u_{\infty}| \le \tfrac{C}{R_0}, \ \text{on } A(R_0,1) \ \Rightarrow |\nabla u_k| \le \tfrac{C}{R_0}, \ \text{on } A(R_0,1).
\end{equation*}
Therefore, combining with \eqref{eq.gradientdecay}, we deduce $\norm{\nabla u_k}_{\infty,B_1} \le C r_k^{-1}$ for $k$ large enough depending only on $R$ and $R_0$, so $\nabla v_k$ is uniformly locally bounded by the constant $C$. Lebesgue's dominated convergence theorem imply, together with the regularity of $\omega$ and of the convergence, that
\begin{align*}
\liminf_{R \rightarrow \infty} \liminf_{k \rightarrow \infty} \int_{B_R} |\nabla v_k|^{p_k}= \liminf_{R \rightarrow \infty} \int_{B_R} \lim_{k \rightarrow \infty} |\nabla v_k|^{p_k}= \lim_{R \rightarrow \infty} \int_{B_R} |\nabla \omega|^{n}=\int_{\R^n} |\nabla \omega|^{n} \ge \tfrac{1}{2}\e_0^n.
\end{align*}
However, for any $R>0$, we can use the energy bound and \eqref{eq.elementary1} to get
\begin{align*}
\limsup_{R \rightarrow \infty} \limsup_{k \rightarrow \infty} \int_{B_R} |\nabla v_k|^{p_k}=\limsup_{R \rightarrow \infty} \limsup_{k \rightarrow \infty} r_k^{p_k-n} \int_{B_{R r_k}} |\nabla u_k|^{p_k} \le \Lambda_0\limsup_{k \rightarrow \infty} r_k^{p_k-n}.
\end{align*}
Hence, we conclude
\begin{equation*}
    \limsup_{k \rightarrow \infty} r_k^{n-p_k} \le 2 \Lambda_0 \e_0^{-n}<\infty. \qedhere
\end{equation*}
\end{proof}
\begin{lemma}
In the situation above, assume the bound \eqref{eq.gradientdecay} and that the maps $(u_k)$ verify the assumption \eqref{eq.entropy}, which we recall here for convenience
\begin{equation}\label{eq.entropycondition}
\lim_{k \rightarrow \infty} (p_k-n)\int_M (1+(\delta_k+|\nabla u_k|^2)^{\tfrac{n}{2}})^{\frac{p_k}{n}}\log(1+(\delta_k+|\nabla u_k|^2)^{\tfrac{n}{2}}) =0.
\end{equation}
Then the concentration radii verify
\begin{equation}\label{eq.concentrationradiientropy}
\lim_{k \rightarrow \infty} (r_k)^{n-p_k} =1.
\end{equation}
\end{lemma}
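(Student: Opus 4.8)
The plan is to read off the asymptotics of $r_k^{p_k-n}$ from the entropy quantity \eqref{eq.entropy} after rescaling at the bubble. First I would observe that, since $r_k\in(0,1]$ and $p_k>n$, one automatically has $r_k^{n-p_k}\ge 1$ for every $k$; hence, in view of the preceding lemma (see \eqref{eq.concentrationradii}), it suffices to prove $(p_k-n)|\log r_k|\to 0$, because $\log\bigl(r_k^{n-p_k}\bigr)=(p_k-n)|\log r_k|$, and this forces $r_k^{n-p_k}\to 1$, which is \eqref{eq.concentrationradiientropy}.

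The core step is the change of variables $x=x_k+r_k y$, $v_k(y):=u_k(x_k+r_k y)$, applied to the entropy integrand restricted to the concentration ball $B_{Rr_k}(x_k)$. For $k$ large this ball lies in $B_1$, and since the integrand of \eqref{eq.entropycondition} is pointwise nonnegative, this localized contribution is squeezed between $0$ and $(p_k-n)\int_\M(\cdots)\to 0$. Setting $\delta_k':=r_k^2\delta_k$ and $w_k:=(\delta_k'+|\nabla v_k|^2)^{\frac n2}$, the chain rule gives $1+(\delta_k+|\nabla u_k|^2)^{\frac n2}=r_k^{-n}(r_k^n+w_k)$, so the logarithm produces the crucial extra term $n|\log r_k|$ (this is precisely the role of the ``$+1$'' in the definition of $E_{p,\delta}$). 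Using $dx=r_k^n\,dy$, the localized entropy integral equals $A_k+B_k$ with
\[
A_k:=(p_k-n)\,r_k^{n-p_k}\!\int_{B_R}\!(r_k^n+w_k)^{\frac{p_k}{n}}\log(r_k^n+w_k),\qquad
B_k:=n(p_k-n)|\log r_k|\,r_k^{n-p_k}\!\int_{B_R}\!(r_k^n+w_k)^{\frac{p_k}{n}}.
\]
I would then show $A_k\to 0$: by the $C^{1,\alpha_1}_{loc}$-convergence $v_k\to\omega$ coming from the bubble construction, the $w_k$ are bounded on $B_R$ by a constant $M=M(R,\omega)$ uniformly in $k$, and $\sup_{t\in(0,M+1]}t^{p_k/n}|\log t|$ is bounded uniformly for $p_k\in(n,P_0)$; hence $|A_k|\le C(M,n,P_0)R^n(p_k-n)r_k^{n-p_k}\to 0$ since $p_k-n\to 0$ and $r_k^{n-p_k}\le C_{11}$ by \eqref{eq.concentrationradii}. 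Consequently $B_k\to 0$.

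To conclude, I would bound the integral appearing in $B_k$ from below away from zero: since $w_k\ge|\nabla v_k|^n$, one has $\int_{B_R}(r_k^n+w_k)^{\frac{p_k}{n}}\ge\int_{B_R}|\nabla v_k|^{p_k}$, and from the proof of the previous lemma $\liminf_{R\to\infty}\liminf_{k\to\infty}\int_{B_R}|\nabla v_k|^{p_k}=\int_{\R^n}|\nabla\omega|^n\ge\tfrac12\e_0^n$; so after fixing $R$ large and then $k$ large, $\int_{B_R}(r_k^n+w_k)^{\frac{p_k}{n}}\ge\tfrac14\e_0^n=:c_0>0$. Inserting this together with $r_k^{n-p_k}\ge 1$ into the formula for $B_k$ gives $n c_0 (p_k-n)|\log r_k|\le B_k\to 0$, whence $(p_k-n)|\log r_k|\to 0$ and the claim follows. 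I expect the only genuinely delicate points to be (i) carrying out the rescaling carefully enough to isolate the non-homogeneous term $n|\log r_k|$ as the obstruction to be killed, and (ii) securing the strictly positive lower bound $c_0$, which rests on the non-triviality of the bubble $\omega$; everything else is a soft limiting argument resting on the uniform regularity of Theorem \ref{th.SUregularity}.
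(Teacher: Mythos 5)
Your argument is correct, but it is genuinely different from the one in the paper. You rescale to the bubble scale and exploit the fact that the ``$+1$'' in the integrand of \eqref{eq.Epdelta} is not scale-invariant: under $x=x_k+r_ky$ the logarithm splits off the term $n|\log r_k|$, whose coefficient you then bound below by the $D_n$-energy of the non-trivial bubble $\omega$, while the remainder $A_k$ is killed by the uniform $C^{1}_{loc}$-bounds on $v_k$ over $B_R$ together with $(p_k-n)\to 0$ and \eqref{eq.concentrationradii}. The paper instead stays at the original scale: it introduces the superlevel set $\Omega_k=\{|\nabla u_k|\gtrsim \e_0 r_k^{-1}\}\subset B_{r_k}(x_k)$, proves the volume lower bound $|\Omega_k|\ge c\,r_k^{p_k}$ by contradiction with the normalization \eqref{eq.maxpropertybubble}, and then restricts the entropy integral to $\Omega_k$, where $\log(|\nabla u_k|^n)\gtrsim n|\log r_k|-C$, to reach the same conclusion $\limsup_k c\log(r_k^{n-p_k})\le 0$. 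Both proofs rest on the same two pillars — the entropy hypothesis and the fact that the concentration ball carries a definite amount of gradient — but your route trades the paper's measure-theoretic superlevel-set argument for the $C^{1,\alpha}_{loc}$-convergence $v_k\to\omega$ on compact sets (already available from the bubble construction), which makes the mechanism by which the logarithm of the concentration radius enters more transparent; the paper's version is slightly more self-contained at the concentration scale, needing only the normalization \eqref{eq.maxpropertybubble} and the small-energy gradient bound rather than the full convergence to the bubble. The only points worth being careful about, which you do handle, are that $A_k$ is not sign-definite (so one must show $A_k\to 0$ separately before concluding $B_k\to 0$ from $A_k+B_k\to 0$) and that the lower bound $\int_{B_R}(r_k^n+w_k)^{p_k/n}\ge c_0>0$ requires fixing $R$ large before sending $k\to\infty$.
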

\begin{proof}
For every $k \in \mathbb{N}$ we define
\begin{equation*}
\Omega_k := \set{x \in B_{r_k}(x_k) \mid |\nabla u_k(x)| \ge \e_0 (4 C_3 C_{11} \omega_n)^{\frac{-1}{p_k}} r_k^{-1}}.
\end{equation*}
Here $C_3$ is the constant in \eqref{eq.elementary1} and $C_{11}$ is the constant in \eqref{eq.concentrationradii}.
We claim that $Vol(\Omega_k) \ge c r_k^{p_k}$ for some constant $c>0$ and for every $k$. Otherwise, there exists a subsequence $k_m$ such that for every $m \in \mathbb{\N}$ we have
\begin{equation*}
Vol(\Omega_{k_m}) \le \tfrac{c}{m} r_{k_m}^{p_{k_m}}.
\end{equation*}
From Theorem \ref{th.SUregularity} and equation \eqref{eq.maxpropertybubble}, we deduce the uniform gradient bound
\begin{equation*}
\norm{\nabla u_{k_m}}_{\mathbb{L}^\infty(B_{r_{k_m}}(x_k))} \le \frac{C \e_0}{r_{k_m}}.
\end{equation*}
Notice that we used the full power of \eqref{eq.maxpropertybubble} to get the bound above on the full ball $B_{r_{k_m}}(x_{k_m})$.
From the definition of $\Omega_k$, we can refine this estimate for points $x \in B_{r_{k_m}}(x_{k_m}) \setminus \Omega_{k_m}$ as follows
\begin{equation*}
|\nabla u_{k_m}|(x) \le \e_0 (4 C_3 C_{11} \omega_n)^{\frac{-1}{p_k}} r_{k_m}^{-1}.
\end{equation*}
By construction of $r_k$, $x_k$ and $\Omega_{k}$, and applying the inequalities obtained above, as well as inequality \eqref{eq.elementary1}, we obtain
\begin{equation*}
\begin{aligned}
&\tfrac{\e_0^{p_{k_m}}}{2}=E_{p_{k_m},\delta_{k_m}}(u_{k_m},B_{r_{k_m}}(x_{k_m})) = \int_{\Omega_{k_m}} (1+(\delta_{k_m}+|\nabla u_{k_m}|^2)^{\frac{n}{2}})^{\frac{p_{k_m}}{n}}-(1+\delta_{k_m}^{\frac{n}{2}})^{\frac{p_{k_m}}{n}} +\\
&+\int_{B_{r_{k_m}}(x_k) \setminus \Omega_{k_m}} (1+(\delta_{k_m}+|\nabla u_{k_m}|^2)^{\frac{n}{2}})^{\frac{p_{k_m}}{n}}-(1+\delta_{k_m}^{\frac{n}{2}})^{\frac{p_{k_m}}{n}}\\
&\le \tfrac{c}{m} r_{k_m}^{p_{k_m}} C_3 (1+C\e_0^{p_{k_m}} r_{k_m}^{-p_{k_m}} )+ \omega_n r_{k_m}^n C_3 (1+\e_0^{p_{k_m}}  (4 C_3 C_{11} \omega_n r_{k_m}^{p_{k_m}})^{-1} )\\
&\le (\tfrac{1}{4 C_{11}}r_{k_m}^{n-p_{k_m}}+ \tfrac{c}{m})\e_0^{p_{k_m}} +\mathfrak{o}_m(1) \le \tfrac{1}{3} \e_0^{p_{k_m}}, \ \text{for large enough } m.
\end{aligned}
\end{equation*}
This gives the desired contradiction, proving finally the claim. We are now ready to prove \eqref{eq.concentrationradiientropy}, appealing to the volume bound just proven, the entropy assumption \eqref{eq.entropycondition}, inequality \eqref{eq.concentrationradii} and the monotonicity of the logarithm and of the function $y \mapsto y^{\frac{p_k}{n}}\log(y)$ (in what follows, the constant $c$ varies from line to line but always remains positive)
\begin{align*}
0&=\lim_{k\rightarrow \infty} (p_k-n)\int_M (1+(\delta_k+|\nabla u_k|^2)^{\tfrac{n}{2}})^{\frac{p_k}{n}}\log(1+(\delta_k+|\nabla u_k|^2)^{\tfrac{n}{2}})  \\
&\ge \limsup_{k \rightarrow \infty} (p_k-n)\int_{\Omega_k} |\nabla u_k|^{p_k}\log(|\nabla u_k|^n)\\
&\ge \limsup_{k\rightarrow \infty} n (p_k-n) c r_k^{p_k} \e_0^{p_k} (4 C_3 C_{11} \omega_n)^{-1} r_{k}^{-p_k} \log(\e_0 (4 C_3 C_{11} \omega_n)^{\frac{-1}{p_k}} r_{k}^{-1} )\\
&=\limsup_{k\rightarrow \infty} c (n-p_k) (\log(r_k)-\log(c))=\limsup_{k\rightarrow \infty} c \log(r_k^{n-p_k}) \ge 0,
\end{align*}
from which we can easily deduce the thesis.
\end{proof}
\begin{remark}
We remark explicitly that, after this Lemma, the entropy bound passes naturally to the rescalings $v_k$ introduced above, and we can iterate the procedure since the proof works identically.
\end{remark}
\subsection{Hopf differential type estimate}
In the following Lemma, we show that the rotational component of the gradient contributes more than the radial one to the $E_{p,\delta}$-energy, up to an infinitesimal term as $p$ goes to $n$. The proof follows a similar argument to the one in Lemma $2.4$ in \cite{lam1}.
\begin{lemma}\label{lemma.hopf1}
In the situation above, we have for every $r \in (0,1)$, every $p \in (n,P_0)$ and $\delta \in [0,1]$, the following inequality
\begin{equation}\label{eq.hopf1}
\begin{aligned}
&\int_{\partial B_r} (1+(\delta+|\nabla u|^2)^{\frac{n}{2}})^{\frac{p-n}{n}}(\delta+|\nabla u|^2)^{\frac{n-2}{2}} |\partial_r u_p|^2\le \tfrac{p-n}{(p-1) r}\int_{B_r} (1+(\delta+|\nabla u|^2)^{\frac{n}{2}})^{\frac{p}{n}}\\
&+\tfrac{1}{p-1}\int_{\partial B_r} (1+(\delta+|\nabla u|^2)^{\frac{n}{2}})^{\frac{p-n}{n}} [1+(\delta+|\nabla u|^2)^{\frac{n-2}{2}} (\delta+ r^{-2} |\nabla_S u_p|^2)].
\end{aligned}
\end{equation}
\end{lemma}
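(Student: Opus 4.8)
The plan is to exploit the (asymptotic) conformal invariance of the problem by integrating a Pohozaev-type identity derived from testing the Euler--Lagrange system with the radial dilation field. Concretely, on a ball $B_r$ I would test the weak formulation of \eqref{eq.ELEpdelta} with the vector field $\phi := (x\cdot\nabla)u = r\,\partial_r u$ (properly cut off, or handled via the a priori smoothness from Corollary \ref{cor.uniqueness2} so that all integrations by parts are licit, the boundary terms on $\partial B_r$ being legitimate). Since $A_u(\nabla u,\nabla u)\perp T_u\N$ and $\phi$ is tangent to $\N$, the right-hand side contribution of \eqref{eq.ELEpdelta} drops out, exactly as in the computations already carried out in the homogeneous-target case in Section \ref{sec.regularity}. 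What survives is an identity relating the bulk integral of the Lagrangian density $\mathcal L(|\nabla u|):=(1+(\delta+|\nabla u|^2)^{n/2})^{p/n}$ over $B_r$, the boundary integral over $\partial B_r$ of the same density, and the boundary integral of the ``radial'' piece $(1+(\delta+|\nabla u|^2)^{n/2})^{\frac{p-n}{n}}(\delta+|\nabla u|^2)^{\frac{n-2}{2}}|\partial_r u|^2$.

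The key computational steps, in order: first, split $|\nabla u|^2 = |\partial_r u|^2 + r^{-2}|\nabla_S u|^2$ into radial and spherical (tangential) parts, so that on $\partial B_r$ the density decomposes accordingly; this is where the term $1+(\delta+|\nabla u|^2)^{\frac{n-2}{2}}(\delta + r^{-2}|\nabla_S u|^2)$ on the right of \eqref{eq.hopf1} will come from. Second, carry out the integration by parts on the bulk term $\int_{B_r}\mathcal L$: writing $\Div(x\,\mathcal L) = n\mathcal L + x\cdot\nabla\mathcal L$ produces, after accounting for $\nabla\mathcal L = \mathcal L'(\ldots)\nabla(\delta+|\nabla u|^2)$ and using the equation to rewrite the resulting second-derivative terms, a boundary term $r\int_{\partial B_r}\mathcal L$ and the scaling factor $n$ versus $p$ — the mismatch $p-n$ is precisely what yields the coefficient $\frac{p-n}{(p-1)r}$ in front of $\int_{B_r}(1+(\delta+|\nabla u|^2)^{n/2})^{p/n}$. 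Third, collect the boundary terms: the natural boundary quantity arising from the inner variation with the field $\phi$ is the ``Hopf-differential'' trace $2(1+(\delta+|\nabla u|^2)^{n/2})^{\frac{p-n}{n}}(\delta+|\nabla u|^2)^{\frac{n-2}{2}}|\partial_r u|^2 - \mathcal L$ integrated over $\partial B_r$; rearranging this, together with the radial/spherical split and discarding the manifestly nonnegative spherical contribution on the favorable side, isolates $\int_{\partial B_r}(1+(\delta+|\nabla u|^2)^{n/2})^{\frac{p-n}{n}}(\delta+|\nabla u|^2)^{\frac{n-2}{2}}|\partial_r u|^2$ on the left, with coefficient $\frac{1}{p-1}$ appearing after dividing through by the algebraic factor $(p-1)$ that is generated by combining the $n\mathcal L$ bulk term with the $\mathcal L'(\delta+|\nabla u|^2)^{n/2}$ term (here one uses $t\mathcal L'(t) = \frac{p}{n}(1+t)^{p/n-1}t$ with $t=(\delta+|\nabla u|^2)^{n/2}$ and the elementary bound $(1+t)^{p/n-1}t \le (1+t)^{p/n}$).

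The main obstacle I anticipate is not conceptual but bookkeeping: keeping careful track of the various powers $\frac{p-n}{n}$, $\frac{n-2}{2}$, $\frac{p}{n}$ through the differentiation of the composite density $\mathcal L$, and in particular verifying that \emph{all} error terms coming from the $\delta$-regularization and from the ``$+1$'' inside the density either cancel, get absorbed into the displayed right-hand side, or are genuinely nonnegative and can be thrown away. A secondary point to be careful about is the justification of the boundary integrations by parts: although Corollary \ref{cor.uniqueness2} gives interior smoothness, one should either work on $\partial B_r$ for a.e. $r$ (via the coarea formula, so that the trace of $\nabla u$ is well-defined and in the relevant integrability class) or invoke the interior regularity on an annulus slightly larger than $B_r$; I would take the former route, as in \cite{lam1}, Lemma 2.4. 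Once the identity is assembled, the inequality \eqref{eq.hopf1} follows by dropping the nonnegative spherical term on the left-hand side of the identity and applying the elementary estimate $(1+t)^{p/n-1}t\le(1+t)^{p/n}$ noted above; the uniformity of all constants as $p\searrow n$ and $\delta\searrow 0$ is automatic since no constant larger than those from \eqref{eq.elementary1} enters.
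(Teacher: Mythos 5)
Your plan is correct and follows essentially the same route as the paper's proof: testing \eqref{eq.ELEpdelta} with the radial field $x\cdot\nabla u$, killing the right-hand side by orthogonality of $A_u$ to $T\N$, integrating the bulk density by parts to trade $n$ for $p$ (whence the $\tfrac{p-n}{(p-1)r}$ coefficient), splitting $|\nabla u|^2$ on $\partial B_r$ into radial and spherical parts, and reabsorbing the radial boundary contribution to produce the factor $\tfrac{1}{p-1}$, with the bound $(1+t)^{p/n-1}t\le(1+t)^{p/n}$ handling the bulk term exactly as in the paper. The only cosmetic imprecision is that nothing is ``dropped'' on the left: the spherical contribution stays on the right-hand side of \eqref{eq.hopf1}, while the radial one is moved to the left — but this is exactly the rearrangement you describe.
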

\begin{proof}
Test the system \eqref{eq.ELEpdelta} with $\phi^\alpha:= x \nabla u^\alpha$: by $C^{1,\alpha_0}$-regularity of $u_{p,\delta}$ we are allowed to do this, since the integrand is of the form $\mathbb{L}^q \cdot \mathbb{L}^\infty$, for some $q>1$, and a cut-off and limit procedure allows to consider non-trivial boundary values in the test function. Using also the orthogonality between the second fundamental form and the tangent bundle $T\N$ we get
\begin{equation*}
\footnotesize
\begin{aligned}
0&=\int_{B_r} -\Div[(1+(\delta+|\nabla u|^2)^{\frac{n}{2}})^{\frac{p-n}{n}}(\delta+|\nabla u|^2)^{\frac{n-2}{2}} \nabla u^\alpha] x \nabla u^\alpha=\int_{B_r} (1+(\delta+|\nabla u|^2)^{\frac{n}{2}})^{\frac{p-n}{n}}(\delta+|\nabla u|^2)^{\frac{n-2}{2}}|\nabla u|^2\\
&-\int_{\partial B_r} r (1+(\delta+|\nabla u|^2)^{\frac{n}{2}})^{\frac{p-n}{n}}(\delta+|\nabla u|^2)^{\frac{n-2}{2}} |\partial_r u_p|^2+\tfrac{1}{n}\int_{B_r} (1+(\delta+|\nabla u|^2)^{\frac{n}{2}})^{\frac{p-n}{n}} x \nabla[1+(\delta+|\nabla u|^2)^{\frac{n}{2}}].
\end{aligned}
\end{equation*}
In order to treat the last term, we integrate by parts, obtaining
\begin{equation*}
\begin{aligned}
&\tfrac{1}{n}\int_{B_r} (1+(\delta+|\nabla u|^2)^{\frac{n}{2}})^{\frac{p-n}{n}} x \nabla[1+(\delta+|\nabla u|^2)^{\frac{n}{2}}]=\tfrac{1}{p}\int_{B_r} \nabla[(1+(\delta+|\nabla u|^2)^{\frac{n}{2}})^{\frac{p}{n}}] x\\
&=\tfrac{1}{p}\int_{\partial B_r} r (1+(\delta+|\nabla u|^2)^{\frac{n}{2}})^{\frac{p}{n}}-\tfrac{n}{p}\int_{B_r} (1+(\delta+|\nabla u|^2)^{\frac{n}{2}})^{\frac{p}{n}}.
\end{aligned}
\end{equation*}
Combining the inequalities above and using the decomposition $|\nabla u|^2=|\partial_r u_p|^2+ r^{-2}|\nabla_S u_p|^2$, we deduce
\begin{equation*}
\begin{aligned}
&\int_{\partial B_r} r (1+(\delta+|\nabla u|^2)^{\frac{n}{2}})^{\frac{p-n}{n}}(\delta+|\nabla u|^2)^{\frac{n-2}{2}} |\partial_r u_p|^2=\int_{B_r}  (1+(\delta+|\nabla u|^2)^{\frac{n}{2}})^{\frac{p-n}{n}}(\delta+|\nabla u|^2)^{\frac{n-2}{2}}|\nabla u|^2\\
&+\tfrac{1}{p}\int_{\partial B_r} r (1+(\delta+|\nabla u|^2)^{\frac{n}{2}})^{\frac{p}{n}}-\tfrac{n}{p}\int_{B_r} (1+(\delta+|\nabla u|^2)^{\frac{n}{2}})^{\frac{p}{n}}\le \tfrac{p-n}{p}\int_{B_r} (1+(\delta+|\nabla u|^2)^{\frac{n}{2}})^{\frac{p}{n}}\\
&+ \tfrac{1}{p}\int_{\partial B_r} r (1+(\delta+|\nabla u|^2)^{\frac{n}{2}})^{\frac{p-n}{n}}[1+(\delta+|\nabla u|^2)^{\frac{n-2}{2}}(\delta+|\partial_r u_p|^2+ r^{-2}|\nabla_S u_p|^2)],
\end{aligned}
\end{equation*}
which can be rearranged as
\begin{equation*}
\begin{aligned}
&\int_{\partial B_r} (1+(\delta+|\nabla u|^2)^{\frac{n}{2}})^{\frac{p-n}{n}}(\delta+|\nabla u|^2)^{\frac{n-2}{2}} |\partial_r u_p|^2\le \tfrac{p-n}{(p-1) r}\int_{B_r} (1+(\delta+|\nabla u|^2)^{\frac{n}{2}})^{\frac{p}{n}}\\
&+\tfrac{1}{p-1}\int_{\partial B_r} (1+(\delta+|\nabla u|^2)^{\frac{n}{2}})^{\frac{p-n}{n}} [1+(\delta+|\nabla u|^2)^{\frac{n-2}{2}} (\delta+ r^{-2} |\nabla_S u_p|^2)].
\end{aligned}
\end{equation*}
\end{proof}
Thanks to the Lemma above, it suffices to control (in energy) the tangential component of the gradient of the solutions in order to get decay of the energy along the necks. This observation leads a great advantage, as one can extend the control of the energy decay from the fixed scale given by Theorem \ref{th.SUregularity} to arbitrary large scales, with an explicit bound. The argument differs from the original argument of Sacks-Uhlenbeck, Theorem $3.6$ in \cite{sac}, or Lamm, Lemma $2.5$ in \cite{lam1}, since we cannot control the $\mathbb{L}^\infty-$norm of the Hessian of the solutions. This will cause a slightly worse dependence on the radii of the annulus in consideration, which however will still be enough to prove the energy identity later on in the Section.
\begin{lemma}\label{lemma.hopf2}
There exist uniform $\e_2>0$, $C_{12}>0$ such that for all $\e<\e_2$ and for all solutions $u_{p,\delta}$ of \eqref{eq.ELEpdelta} the following statement holds. Assume $1>R_2\ge 8 R_1>0$. If for every $r \in (R_1,\tfrac{R_2}{2})$ we have
$E_{p,\delta}(u_{p,\delta},A(r,2 r))<\e$, then for every $p\in (n,P_0)$ and $\delta \in (0,1)$ we have
\begin{equation}\label{eq.hopf2}
\int_{A(2 R_1,\tfrac{R_2}{4})} r^{-p}|\nabla_S u|^p \le c \e^{\frac{1}{p}}(1+R_1^{n-p}+R_2^{n-p}).
\end{equation}
\end{lemma}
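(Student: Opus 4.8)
The plan is to run a neck analysis in the spirit of \cite{sac,lam1}, but with the pointwise Hessian bound (which is not available here) replaced by the Pohozaev-type identity of Lemma~\ref{lemma.hopf1}; it is precisely the $(p-n)$ weight in that identity that will eventually absorb the factor $(p-n)^{-1}$ produced by the dyadic summation. First I would shrink $\e_2$ so that $\e_2<\e_0^{P_0}$ and Theorem~\ref{th.SUregularity} applies to every rescaled dyadic annulus: for $r\in(R_1,R_2/2)$ the map $y\mapsto u(x+ry)$ on a fixed annulus has $E_{p,\delta}$-energy $<\e$, hence a uniform $C^{1,\alpha_0}$ bound with seminorm $\le C\e^{1/p}$; undoing the scaling and patching the overlapping annuli gives the pointwise decay $|x|\,|\nabla u|(x)\le C\e^{1/p}$ on $A(2R_1,R_2/4)$ (this is exactly \eqref{eq.gradientdecay}), and from \eqref{eq.elementary1} one records $\int_{A(r,2r)}|\nabla u|^p\le pE_{p,\delta}(u,A(r,2r))<p\e$ and $\int_{A(r,2r)}|\nabla u|^n\le p\e+Cr^n$. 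Next I would make the algebraic reduction: with $W:=(1+(\delta+|\nabla u|^2)^{n/2})^{(p-n)/n}(\delta+|\nabla u|^2)^{(n-2)/2}$ one has $W\ge|\nabla u|^{p-2}\ge(r^{-1}|\nabla_S u|)^{p-2}$, so $r^{-p}|\nabla_S u|^p\le W\,r^{-2}|\nabla_S u|^2$ pointwise, and it suffices to estimate $\int_{A(2R_1,R_2/4)}W\,r^{-2}|\nabla_S u|^2$.

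The core is the control of this tangential spherical energy. On each dyadic annulus $A_j:=A(2^jR_1,2^{j+1}R_1)$ (with $2^JR_1\approx R_2/4$) the \emph{per-scale} oscillation is $\|u-[u]_{A_j}\|_{L^\infty(A_j)}\le C\,2^jR_1\,\|\nabla u\|_{L^\infty(A_j)}\le C\e^{1/p}$ by the previous step. Testing \eqref{eq.ELEpdelta} with $\eta^p(u-[u]_{A_j})$ for a cutoff $\eta$ adapted to $A_{j-1}\cup A_j\cup A_{j+1}$, using \eqref{eq.secondfundamentalform} on the curvature term and absorbing it into the left-hand side — legitimate exactly because the per-scale oscillation is $O(\e^{1/p})$ — yields a hole-filling/three-annulus inequality $E_{p,\delta}(u,A_j)\le\theta\,[E_{p,\delta}(u,A_{j-1})+E_{p,\delta}(u,A_{j+1})]+\mathrm{err}_j$ with uniform $\theta<1$ and $\mathrm{err}_j=O(r_j^n)$ collecting the lower-order phase and $\delta$ contributions. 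Combining this with Lemma~\ref{lemma.hopf1} integrated over $r\in(2R_1,R_2/4)$ bounds $\int W\,r^{-2}|\nabla_S u|^2$ by the boundary energies $E_{p,\delta}(u,A(R_1,2R_1))+E_{p,\delta}(u,A(R_2/4,R_2/2))<2\e$, plus the Pohozaev error $\tfrac{p-n}{p-1}\int_{2R_1}^{R_2/4}\tfrac{1}{r}\int_{B_r}(1+(\delta+|\nabla u|^2)^{n/2})^{p/n}\,dr$, plus $\int_{2R_1}^{R_2/4}\int_{\partial B_r}(1+(\delta+|\nabla u|^2)^{n/2})^{(p-n)/n}$ and a $\delta$-term. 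The Pohozaev error is $\le\tfrac{p-n}{p-1}(p\Lambda_0+C)\log(R_2/R_1)$, and the elementary bound $(p-n)\log(1/t)\le C\,t^{-(p-n)}$ for $t\in(0,1)$ cancels the logarithmic divergence and produces the $R_1^{n-p}+R_2^{n-p}$; the remaining errors are handled the same way.

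Collecting everything and extracting one factor $\e^{1/p}$ (from the pointwise bound of the first step, or from $E_{p,\delta}(u,A(r,2r))^{1/p}<\e^{1/p}$) gives $\int_{A(2R_1,R_2/4)}r^{-p}|\nabla_S u|^p\le c\,\e^{1/p}(1+R_1^{n-p}+R_2^{n-p})$, as claimed. The hard part is the middle paragraph: lacking an $L^\infty$ bound on $\nabla^2u$, one cannot run the clean differential-inequality argument of \cite{sac,lam1} that returns a pure boundary estimate with geometric decay, so the accumulated error over the $\sim\log(R_2/R_1)$ dyadic scales is only controlled by $1+R_1^{n-p}+R_2^{n-p}$ (which is still enough for the energy identity). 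The delicate bookkeeping is to keep every constant uniform as $p\searrow n$ and $\delta\searrow0$ — in particular to verify that the $(p-n)$ weight in the Pohozaev error exactly compensates the $(p-n)^{-1}$ coming from the geometric series of ratio $2^{n-p}$, and that the curvature absorption in the three-annulus step survives because only the small per-scale oscillation, not the total oscillation across the neck, enters.
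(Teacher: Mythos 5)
Your first step (uniform pointwise decay $|x|\,|\nabla u|\le C\e^{1/p}$ on the neck via Theorem \ref{th.SUregularity} applied to dyadic annuli) and your final bookkeeping of the $R_1^{n-p}+R_2^{n-p}$ factors are consistent with what is actually needed. The core of your argument, however, has a directional error that I do not see how to repair. You propose to bound the tangential energy $\int W\,r^{-2}|\nabla_S u|^2$ by ``combining'' a three-annulus inequality with Lemma \ref{lemma.hopf1}, but the Pohozaev identity \eqref{eq.hopf1} controls the \emph{radial} component $\int_{\partial B_r}W|\partial_r u|^2$ in terms of the \emph{tangential} component plus errors — it goes the wrong way. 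Indeed, in the proof of the energy identity the two lemmas are used in series: Lemma \ref{lemma.hopf1} reduces the neck energy to its tangential part, and Lemma \ref{lemma.hopf2} is precisely the independent input that kills the tangential part. Using \eqref{eq.hopf1} inside the proof of \eqref{eq.hopf2} is circular. Moreover, the hole-filling inequality $E_{p,\delta}(u,A_j)\le\theta\,[E_{p,\delta}(u,A_{j-1})+E_{p,\delta}(u,A_{j+1})]+\mathrm{err}_j$ is asserted but not derived, and even granted, you do not explain how it produces a bound on the tangential energy by the two boundary annuli; as stated it only constrains the total energy profile along the neck.

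The missing idea is a comparison with \emph{radial} maps: on each dyadic annulus $A_k=A(2^kR_1,2^{k+1}R_1)$ let $h$ minimize $E_{p,\delta}$ among maps with constant boundary values $\fint_{\partial B_{2^kR_1}}u$ and $\fint_{\partial B_{2^{k+1}R_1}}u$; then $h$ is radial, so $\nabla_S h\equiv 0$ and $r^{-1}|\nabla_S u|=r^{-1}|\nabla_S(u-h)|\le|\nabla u-\nabla h|$. Testing the difference of the two Euler--Lagrange systems with $u-h$ on $A_k$ and invoking the monotonicity inequality \eqref{eq.elementary2} gives $c\int_{A_k}|\nabla u-\nabla h|^p$ on the left, while the right-hand side consists of the curvature term (small because $|u-h|\le C\e^{1/p}$ on each $A_k$ by the per-scale oscillation bound, exactly as you observe) and boundary flux terms on $\partial B_{2^kR_1}$, $\partial B_{2^{k+1}R_1}$. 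The decisive point is that these boundary terms \emph{telescope} when summing over $k$, leaving only the two extreme spheres, which are controlled by the pointwise gradient bound and contribute the $R_1^{n-p}+R_2^{n-p}$. This dyadic radial replacement (the adaptation of Sacks--Uhlenbeck's harmonic replacement announced in the introduction) is what replaces the unavailable $L^\infty$ Hessian bound; your proposal never constructs a comparison map, and without one the tangential energy is not reached. Your reduction $W\ge|\nabla u|^{p-2}$ is correct but does not help, since the quantity $\int W r^{-2}|\nabla_S u|^2$ it leads to is still the tangential energy you have no independent handle on.
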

\begin{proof}
First of all, let us assume that $\e_2<3^{-n}\e_0$, where $\e_0$ comes from Theorem \ref{th.SUregularity}, and let $y \in A(2 R_1,\tfrac{R_2}{4})$. Then we have $\tfrac{2|y|}{3},\tfrac{4|y|}{3} \in (R_1,\tfrac{R_2}{2})$ and $B_{\tfrac{|y|}{3}}(y) \subset A(\tfrac{2|y|}{3},\tfrac{4|y|}{3}) \subset A(R_1,\tfrac{R_2}{2})$, so we can use Theorem \ref{th.SUregularity} to obtain the pointwise bound
\begin{equation}\label{eq.gradientannulus}
|y| |\nabla u_{p,\delta}(y)| \le 3 \tfrac{|y|}{3} \norm{\nabla u_{p,\delta} }_{\infty,B_{\tfrac{\tau_0 |y|}{3}}(y)} \le 3 C E_{p,\delta}(u_{p,\delta},B_{\tfrac{|y|}{3}}(y))^{\frac{1}{p}} \le 3C \e^{\frac{1}{p}}.
\end{equation}
Notice that this bound holds also on the closure of the annulus $A(2 R_1,\tfrac{R_2}{4})$. We will now approximate the solutions $u_{p,\delta}$ with piecewise "$p$-harmonic" functions. In order to do so, split $A(2 R_1,\tfrac{R_2}{4})$ in (tilted-)dyadic annuli $A_k:=A(2^k R_1,2^{k+1}R_1)$, $A_l:=A(2^l R_1,\tfrac{R_2}{4})$, where $l:= \floor{\log_2 \big(\tfrac{R_2}{4R_1} \big)} \in \mathbb{\N}$.

Define the approximant $h_{p,\delta}=h_{p,\delta}(r)$ piecewise, as follows: we minimize on each annulus $A_k$ the $E_{p,\delta}$-energy of functions $v \in W^{1,p}(A_k;\R^\N)$ with constant Dirichlet boundary data $v =\fint_{\partial B_{2^k R_1}} u_{p,\delta}$ on $\partial B_{2^k R_1}$ and $v =\fint_{\partial B_{2^{k+1} R_1}} u_{p,\delta}$ on $\partial B_{2^{k+1} R_1}$ for $k=1,...,l-1$, and analogously for $k=l$. Notice that $h_{p,\delta}$ is a radial function. Clearly, the following equation is satisfied in the weak sense (we drop the pedices $\cdot_{p,\delta}$ to shorten the equation)
\begin{equation*}
\begin{aligned}
 &-\Div[(1+(\delta+|\nabla u|^2)^{\frac{n}{2}})^{\frac{p-n}{n}}(\delta+|\nabla u|^2)^{\frac{n-2}{2}} \nabla u]+\Div[(1+(\delta+|\nabla h|^2)^{\frac{n}{2}})^{\frac{p-n}{n}}(\delta+|\nabla h|^2)^{\frac{n-2}{2}} \nabla h] \\
 &=(1+(\delta+|\nabla u|^2)^{\frac{n}{2}})^{\frac{p-n}{n}}(\delta+|\nabla u|^2)^{\frac{n-2}{2}} A_u(\nabla u, \nabla u).
\end{aligned}
\end{equation*}
As done in the proof of Lemma \ref{lemma.hopf1}, we can justify testing the system by $u-h$ on $A_k$, so that integrating by parts and using the elementary inequality \eqref{eq.elementary2} we obtain for a uniform constant $c=c(n,\N,P_0)$
\begin{equation*}
\footnotesize
\begin{aligned}
&c \int_{A_k} |\nabla u-\nabla h|^p \le \int_{A_k} ((1+(\delta+|\nabla u|^2)^{\frac{n}{2}})^{\frac{p-n}{n}}(\delta+|\nabla u|^2)^{\frac{n-2}{2}} \nabla u-(1+(\delta+|\nabla h|^2)^{\frac{n}{2}})^{\frac{p-n}{n}}(\delta+|\nabla h|^2)^{\frac{n-2}{2}} \nabla h)\\
&(\nabla u-\nabla h) = \int_{A_k} (u-h) (1+(\delta+|\nabla u|^2)^{\frac{n}{2}})^{\frac{p-n}{n}}(\delta+|\nabla u|^2)^{\frac{n-2}{2}} A_u(\nabla u, \nabla u) \\
&+ \int_{\partial B_{2^{k+1} R_1}} (u-h) [(1+(\delta+|\nabla u|^2)^{\frac{n}{2}})^{\frac{p-n}{n}}(\delta+|\nabla u|^2)^{\frac{n-2}{2}} \partial_r u-(1+(\delta+|\nabla h|^2)^{\frac{n}{2}})^{\frac{p-n}{n}}(\delta+|\nabla h|^2)^{\frac{n-2}{2}} \partial_r h]\\
&-\int_{\partial B_{2^k R_1}} (u-h) [(1+(\delta+|\nabla u|^2)^{\frac{n}{2}})^{\frac{p-n}{n}}(\delta+|\nabla u|^2)^{\frac{n-2}{2}} \partial_r u-(1+(\delta+|\nabla h|^2)^{\frac{n}{2}})^{\frac{p-n}{n}}(\delta+|\nabla h|^2)^{\frac{n-2}{2}} \partial_r h]\\
&= \int_{A_k} (u-h) (1+(\delta+|\nabla u|^2)^{\frac{n}{2}})^{\frac{p-n}{n}}(\delta+|\nabla u|^2)^{\frac{n-2}{2}} A_u(\nabla u, \nabla u) \\
&+ \int_{\partial B_{2^{k+1} R_1}} (u-h) (1+(\delta+|\nabla u|^2)^{\frac{n}{2}})^{\frac{p-n}{n}}(\delta+|\nabla u|^2)^{\frac{n-2}{2}} \partial_r u\\
&-\int_{\partial B_{2^k R_1}} (u-h) (1+(\delta+|\nabla u|^2)^{\frac{n}{2}})^{\frac{p-n}{n}}(\delta+|\nabla u|^2)^{\frac{n-2}{2}}\partial_r u.
\end{aligned}
\end{equation*}
In the last step, we have used the Dirichlet boundary data satisfied by $h$ as well as that $|\nabla h|=|h_r|$ is a radial function. From the boundary data of $h$ and the $C^{1,\alpha}$-bounds for $u$ and $h$ on $A_k$, we deduce
\begin{equation}\label{eq.bounduminush}
|u-h|(y) \le |u(y)-\fint_{\partial B_{2^k R_1} } u|+|h(y)-\fint_{\partial B_{2^k R_1}} h| \le \norm{u}_{C^0(A_k)} + \norm{h}_{C^0(A_k)} \le C \e^{\frac{1}{p}} \quad \forall y \in A_k.
\end{equation}
Applying this to the inequality above we deduce
\begin{equation*}
\footnotesize
\begin{aligned}
&\int_{A_k} |\nabla u-\nabla h|^p \le C \e^{\frac{1}{p}} \int_{A_k} (1+(\delta+|\nabla u|^2)^{\frac{n}{2}})^{\frac{p}{n}} + \int_{\partial B_{2^{k+1} R_1}} (u-h) (1+(\delta+|\nabla u|^2)^{\frac{n}{2}})^{\frac{p-n}{n}}(\delta+|\nabla u|^2)^{\frac{n-2}{2}} \partial_r u\\
&-\int_{\partial B_{2^{k+1} R_1}} (u-h) (1+(\delta+|\nabla u|^2)^{\frac{n}{2}})^{\frac{p-n}{n}}(\delta+|\nabla u|^2)^{\frac{n-2}{2}}\partial_r u.
\end{aligned}
\end{equation*}
We will sum these over all $k=1,...,l$ and exploit the telescopic nature of the sum. We can combine inequality \eqref{eq.elementary1}, with the gradient bound from \eqref{eq.gradientannulus} and the uniform bound \eqref{eq.bounduminush} to estimate this sum as
\begin{equation*}
\begin{aligned}
&\int_{A(2 R_1,\tfrac{R_2}{4})} |\nabla u-\nabla h|^p \le C \e^{\frac{1}{p}} \int_{A(2 R_1,\tfrac{R_2}{4})} (1+(\delta+|\nabla u|^2)^{\frac{n}{2}})^{\frac{p}{n}}\\
&+  C \e^{\frac{1}{p}} \int_{\partial B_{\tfrac{R_2}{4}}} (1+(\delta+|\nabla u|^2)^{\frac{n}{2}})^{\frac{p-n}{n}}(\delta+|\nabla u|^2)^{\frac{n-2}{2}} |\partial_r u|\\
&+C \e^{\frac{1}{p}} \int_{\partial B_{2R_1}} (u-h)(1+(\delta+|\nabla u|^2)^{\frac{n}{2}})^{\frac{p-n}{n}}(\delta+|\nabla u|^2)^{\frac{n-2}{2}} |\partial_r u| \le C \e^{\frac{1}{p}}(1+R_1^{n-p}+R_2^{n-p}).
\end{aligned}
\end{equation*}
Notice that we have also used $R_1<R_2\le 1$ and H\"older's inequality for the boundary terms. Using the fact that $h_p$ is radial, we can bound
\begin{equation*}
r^{-2}|\nabla_S u|^2=r^{-2}|\nabla_S u-\nabla_S h|^2 \le r^{-2}|\nabla_S u-\nabla_S h|^2+|\partial_r u-\partial_r h|^2=|\nabla u-\nabla h|^2,
\end{equation*}
from which we deduce the thesis by rising to the $p/2$-power
\begin{equation*}
\int_{A(2 R_1,\tfrac{R_2}{4})} r^{-p}|\nabla_S u|^p \le C \e^{\frac{1}{p}}(1+R_1^{n-p}+R_2^{n-p}).
\end{equation*}
\end{proof}
\subsection{Proof of the energy identity}
\begin{proof}[Proof of Theorem \ref{th.quantization}]
First of all, we recall the setting for convenience of the reader. From the classical induction argument of Ding and Tian \cite{din}, it suffices to prove the energy identity in the presence of one single bubble. Therefore, thanks to the local nature of the problem, we focus on a family of solutions $u_k:B_1\rightarrow \N$ relative to parameters $p_k$, $\delta_k$ and $s=1$, admitting only one energy concentration point $x_1=0$. According to what done in this section so far, we denote by $\omega:S^n \rightarrow \N$ the bubble, by $r_k$ the concentration radii associated to the bubble, and by $u_\infty \in C^{1,\alpha}(B_1;\N)$ the $n$-harmonic map arising as a limit in $C^{1,\alpha}(B_1 \setminus \set{0};\N)$ of the family $u_k$. From the $C^{1,\alpha}$-convergence we see that for every $R_0 \in (0,1)$, we have
\begin{equation*}
\lim_{k \rightarrow +\infty} E_{p_k,\delta_k}(u_k,B_1 \setminus B_{R_0})=D_n(u_\infty,B_1 \setminus B_{R_0}).
\end{equation*}
By the definition of the bubble $\omega$, the rescalings $v_k(y):=u_k(r_k y)$ converge in $C^{1,\alpha}$ to $\omega$, so that for every $R>0$
\begin{equation*}
\lim_{k \rightarrow +\infty} E_{p_k,\delta_k}(u_k,B_{R r_k})=D_{n}(\omega,B_{R}).
\end{equation*}
Furthermore, for any fixed $M>1$, the same reasoning leading to the identities above guarantees
\begin{equation}\label{eq.decayM}
\lim_{k \rightarrow +\infty} E_{p_k,\delta_k}(u_k,B_{R_0} \setminus B_{\frac{R_0}{M}})+E_{p_k,\delta_k}(u_k,B_{M R r_k} \setminus B_{R r_k}) \longrightarrow 0,
\end{equation}
as $R_0 \rightarrow 0$ and $R \rightarrow +\infty$. Therefore, the proof of the energy identity is reduced to show
\begin{equation}\label{eq.limitquantization}
\lim_{R \rightarrow +\infty} \lim_{R_0 \rightarrow 0} \lim_{k \rightarrow +\infty} E_{p_k,\delta_k}(u_k,A(r_k R,R_0))=0
\end{equation}
To begin, we need to work towards proving the pointwise gradient bound \eqref{eq.gradientdecay}, assumed throughout the entire Section. We claim that for any $\e>0$ there exists $k_0 \in \mathbb{\N}$ such that for all $k\ge k_0$ and for all $r \in [R r_k,\tfrac{R_0}{2}]$ we have
\begin{equation*}
E_{p_k,\delta_k}(u_k,B_{2 r} \setminus B_r)<\e.
\end{equation*}
The proof relies on the fact that $\omega$ is the unique bubble. Indeed, suppose by contradiction that for some $\e>0$ and for every $k$ there exists a radius $t_k \in (R r_k,\tfrac{R_0}{2})$ such that
\begin{equation*}
E_{p_k,\delta_k}(u_k,B_{2 t_k} \setminus B_{t_k})=\max_{[R r_k,\tfrac{R_0}{2}]}  E_{p_k,\delta_k}(u_k,B_{2 r} \setminus B_r)\ge \e.
\end{equation*}
Notice that we have used \eqref{eq.decayM} to ensure that $t_k$ is not equal to $R r_k$ or $\tfrac{R_0}{2}$. Even more precisely, exploiting the full power of \eqref{eq.decayM}, we get the stronger relations
\begin{equation*}
\tfrac{R_0}{t_k}\longrightarrow +\infty \quad \text{and} \quad \tfrac{R r_k}{t_k}\longrightarrow 0.
\end{equation*}
In order to construct a different bubble, we let $\tilde{v}_k:A(\tfrac{R_0}{t_k},\tfrac{R r_k}{t_k}) \rightarrow \N$ be the rescaling $\tilde{v}_k(x):=u_k(t_k x)$. Notice that the $\tilde{v}_k$ solve \eqref{eq.ELEpdelta} with $\delta':= \delta_k \cdot t_k^2$ and $s':=1 \cdot t_k^n$, and they satisfy $t_k^{n-p_k} E^{(s')}_{p_k,\delta'}(\tilde{v}_k,B_2 \setminus B_1) \ge \e$ by rescaling, and $ t_k^{n-p_k} E^{(s')}_{p_k,\delta'}(\tilde{v}_k,A(\tfrac{R_0}{t_k},\tfrac{R r_k}{t_k})) \le 4 \Lambda_0$ (we can just repeat the argument used in inequality \eqref{eq.uniformboundrescale}). Notice that $t_k^{n-p_k} \rightarrow 1$ since $R_0$ and $R$ are fixed, and we can use \eqref{eq.concentrationradiientropy}. Given this uniform energy bound, we can assume without loss of generality that $\tilde{v}_k \rightarrow \tilde{\omega}$ weakly in $W^{1,n}_{loc}(\R^n \setminus \set{0};\N)$, for some $n$-harmonic map $\tilde{\omega}:\R^n \rightarrow \N$ with finite $n$-energy.

We now distinguish two cases. In the first case, we assume that for some $\tilde{r}>0$ we have
\begin{equation*}
\sup_{k \in \mathbb{\N}} \sup_{x \in B_4 \setminus B_{\frac{1}{4}} } E^{(s')}_{p_k,\delta'}(\tilde{v}_k,B_{\tilde{r} }(x))<\e_0.
\end{equation*}
Thus, we can use Theorem \eqref{th.SUregularity} together with a covering argument, to deduce $\tilde{v}_k \rightarrow \tilde{\omega}$ in $C^{1,\alpha_1}(B_2 \setminus B_{\frac{1}{2}})$ for some $\alpha_1<\alpha_0$. The limit procedure ensures that $E_n(\tilde{\omega},B_2 \setminus B_{\frac{1}{2}})\ge \e$, so that $\tilde{\omega}$ is a non-constant map. We now observe that $B_2 \setminus B_{\frac{1}{2}}$ is conformally equivalent to $S^n \setminus \set{S,\N}$, so we can lift $\tilde{\omega}$ and extend it to the whole $S^n$ by Duzaar-Fuchs' removability of the singularity Theorem \cite{duz0}, that is we have constructed a new (non-trivial) bubble, contradicting the uniqueness of $\omega$ assumed from the beginning of the proof.

In the second and complementary case, there exists at least one energy accumulation point (i.e. at a energy level higher than $\e_0$) for the sequence $\tilde{v}_k$ in $B_4 \setminus B_{\frac{1}{4}}$. We can therefore argue as in the beginning of this section, namely through the use of the maximal concentration function, to show the existence of a non-trivial bubble, contradicting once again the uniqueness of the bubble $\omega$. This concludes the proof of the claim.
\vspace{0.5cm}

Combine Lemma \ref{lemma.hopf1}, the orthogonal decomposition of $\nabla u$ and weighted Young's inequality with exponents $(\tfrac{p}{p-2},\tfrac{p}{2})$ to get
\begin{equation*}
\footnotesize
\begin{aligned}
&E_{p_k,\delta_k}(u_k,A(r_k R,R_0))=\int_{A(r_k R,R_0)} (1+(\delta_k+|\nabla u_k|^2)^{\frac{n}{2}})^{\frac{p_k}{n}} - (1+\delta_k^{\frac{n}{2}})^{\frac{p}{n}} \le \int_{A(r_k R,R_0)} (1+(\delta_k+|\nabla u_k|^2)^{\frac{n}{2}})^{\frac{p_k-n}{n}}\\
&+(1+(\delta_k+|\nabla u_k|^2)^{\frac{n}{2}})^{\frac{p_k-n}{n}} [\delta_k (\delta_k+|\nabla u_k|^2)^{\frac{n-2}{2}}+(\delta_k+|\nabla u_k|^2)^{\frac{n-2}{2}}(|\partial_r u_k|^2+|x|^{-2} |\nabla_S u_k|^2)]\\
&= \int_{A(r_k R,R_0)}(1+(\delta_k+|\nabla u_k|^2)^{\frac{n}{2}})^{\frac{p_k-n}{n}}+(1+(\delta_k+|\nabla u_k|^2)^{\frac{n}{2}})^{\frac{p_k-n}{n}} (\delta_k+|\nabla u_k|^2)^{\frac{n-2}{2}}\\
&+ \int_{r_k R}^{R_0} \int_{\partial B_r} (1+(\delta_k+|\nabla u_k|^2)^{\frac{n}{2}})^{\frac{p_k-n}{n}} (\delta_k+|\nabla u_k|^2)^{\frac{n-2}{2}} (|\partial_r u_k|^2+r^{-2} |\nabla_S u_k|^2)\\
&\le \tfrac{p_k}{p_k-1} \int_{A(r_k R,R_0)} (1+(\delta_k+|\nabla u_k|^2)^{\frac{n}{2}})^{\frac{p_k-n}{n}}+(1+(\delta_k+|\nabla u_k|^2)^{\frac{n}{2}})^{\frac{p_k-n}{n}} (\delta_k+|\nabla u_k|^2)^{\frac{n-2}{2}}\\
&+\int_{r_k R}^{R_0} \tfrac{p_k-n}{(p_k-1)r} \int_{B_r} (1+(\delta_k+|\nabla u_k|^2)^{\frac{n}{2}})^{\frac{p_k}{n}}+ \int_{r_k R}^{R_0} \tfrac{1}{(p_k-1)r^2}\int_{\partial B_r} (1+(\delta_k+|\nabla u_k|^2)^{\frac{n}{2}})^{\frac{p_k-n}{n}} (\delta_k+|\nabla u_k|^2)^{\frac{n-2}{2}} |\nabla_S u_k|^2 \\
&\le \tfrac{n}{n-1} \int_{A(r_k R,R_0)}  (1+(\delta_k+|\nabla u_k|^2)^{\frac{n}{2}})^{\frac{p_k-n}{n}}+(1+(\delta_k+|\nabla u_k|^2)^{\frac{n}{2}})^{\frac{p_k-n}{n}} (\delta_k+|\nabla u_k|^2)^{\frac{n-2}{2}}\\
&+\int_{r_k R}^{R_0} \tfrac{p_k-n}{(n-1)r} \int_{B_r} (1+(\delta_k+|\nabla u_k|^2)^{\frac{n}{2}})^{\frac{p_k}{n}}+\tfrac{1}{4}\int_{A(r_k R,R_0)} (1+(\delta_k+|\nabla u_k|^2)^{\frac{n}{2}})^{\frac{p_k}{n}}+c(P_0,n) \int_{A(r_k R,R_0)} r^{-p_k}|\nabla_S u_k|^{p_k}.
\end{aligned}
\end{equation*}
Further applications of Young's inequality (relative to $(\tfrac{p_k}{p_k-n},\tfrac{p_k}{n})$ and weighted $(\tfrac{p}{p-2},\tfrac{p}{2})$ ) allow to estimate the first two terms on the RHS:
\begin{equation*}
\footnotesize
\begin{aligned}
&\int_{A(r_k R,R_0)}(1+(\delta_k+|\nabla u_k|^2)^{\frac{n}{2}})^{\frac{p_k-n}{n}} \le \int_{A(r_k R,R_0)} \tfrac{p_k-n}{p_k} (1+(\delta_k+|\nabla u_k|^2)^{\frac{n}{2}})^{\frac{p_k}{n}} +C (R_0^n-(r_k R)^n),\\
&\int_{A(r_k R,R_0)} (1+(\delta_k+|\nabla u_k|^2)^{\frac{n}{2}})^{\frac{p_k-n}{n}} (\delta_k+|\nabla u_k|^2)^{\frac{n-2}{2}} \le \int_{A(r_k R,R_0)} \tfrac{p_k-2}{p_k} \eta (1+(\delta_k+|\nabla u_k|^2)^{\frac{n}{2}})^{\frac{p_k}{n}} + \tfrac{C}{\eta} (R_0^n-(r_k R)^n).
\end{aligned}
\end{equation*}
For $k$ large enough, we have $\tfrac{n}{n-1}\tfrac{p_k-n}{p}\le \tfrac{1}{8}$, so choosing $\eta=\tfrac{n-1}{8 n}$ we can absorb the $E_{p_k,\delta_k}$-energy terms back to the left hand side; overall, we can finally rearrange, using Lemma \ref{lemma.hopf2}, the concentration radii estimate \eqref{eq.concentrationradiientropy} and the global $E_{p_k,\delta_k}$-energy bound to obtain
\begin{equation*}
\begin{aligned}
&E_{p_k}(u_k,A(r_k R,R_0))\le \int_{r_k R}^{R_0} \tfrac{2(p_k-n)}{(n-1)r} \int_{B_r}  (1+(\delta_k+|\nabla u_k|^2)^{\frac{n}{2}})^{\frac{p_k}{n}}+ C \int_{A(r_k R,R_0)} r^{-p_k}|\nabla_S u_k|^{p_k} \\
&+C (R_0^n-(r_k R)^n)\le \int_{r_k R}^{R_0} \tfrac{p_k-n}{(n-1)r} (2\Lambda_0+C r^n) + C\e^{\frac{1}{p_k}}(1+(r_k R)^{n-p_k}+R_0^{n-p_k})+C(R_0^n-(r_k R)^n)\\
&=C(p_k-n) \log \Big( \tfrac{R_0}{r_k R} \Big)+ C\e^{\frac{1}{p_k}}(1+(r_k R)^{n-p_k}+R_0^{n-p_k}) +C(R_0^n-(r_k R)^n)\\
&\underbrace{\longrightarrow}_{k\rightarrow \infty} C \log(1)+ C\e^{\frac{1}{n}}(1+1+1)+C R_0^n=C \e^{\frac{1}{n}}+C R_0^n.
\end{aligned}
\end{equation*}
Finally, we can let first $R_0 \rightarrow 0$ and $R\rightarrow +\infty$, then $\e \rightarrow 0$ to obtain \eqref{eq.limitquantization} and therefore conclude the proof.
\end{proof}

\section{Min-max construction}\label{sec.minmax}
In this section, we show how Theorem \ref{th.quantization} allows to solve min-max problems for the $E_n$-energy up to bubbles. Throughout the entire Section, we will assume that
\begin{equation*}
E_{p,\delta}(u):=\int_M (1+(\delta+|\nabla u|^2)^{\frac{n}{2}})^{\frac{p}{n}},
\end{equation*}
since this affine transformation does not change the critical points as well as the monotonicity of the functionals in the parameter $p$. Let us start with a Lemma, guaranteeing the existence of a family of critical points to $E_{p,\delta}$ satisfying the entropy condition in Theorem \ref{th.quantization} in quite general min-max problems. 
\begin{lemma}\label{lemma.minmax}
Let $p\in (n,P_0)$, $\delta \in [0,1]$ and $\mathcal{F} \subset \mathcal{P}(W^{1,p}(\M;\N))$ be a collection of sets. Consider a continuous semi-flow $\Phi:[0,+\infty)\times W^{1,p}(\M;\N) \rightarrow W^{1,p}(\M;\N)$ such that $\Phi(0,\cdot)=id$, $\Phi(t,\cdot)$ is a homeomorphism of $W^{1,p}(\M;\N)$ for any $t\ge 0$, and $E_{p,\delta}(\Phi(t,u))$ is non-increasing in $t$ for any $u \in W^{1,p}(\M;\N)$. Suppose further that for every $F \in \mathcal{F}$ and
all $t \in [0,+\infty)$ we have $\Phi(t,F) \subset F$. Define the min-max value
\begin{equation*}
\beta_{p,\delta}:= \inf_{F \in \mathcal{F}} \sup_{u\in F} E_{p,\delta}(u),
\end{equation*}
and assume that $\beta_{p,\delta}<\infty$ for all $p \in (n,P_0]$ and $\delta \in [0,1]$. Then for almost every $p \in (n,P_0)$ and every $\delta_0 \in [0,1]$ there exists a critical point $u_{p,\delta_0} \in C^{\infty}(\M;\N)$ of $E_{p,\delta_0}$ at the energy level $E_{p,\delta_0}(u_{p,\delta_0})=\beta_{p,\delta_0}$, satisfying the entropy condition
\begin{equation}\label{eq.entropyminmax}
\lim_{p\rightarrow n} (p-n)\int_M (1+(\delta_0+|\nabla u_{p,\delta_0}|^{2})^{\frac{n}{2}})^{\frac{p}{n}}\log(1+(\delta_0+|\nabla u_{p,\delta_0}|^{2})^{\frac{n}{2}}) =0.
\end{equation}
\end{lemma}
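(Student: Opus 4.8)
This is the classical Struwe monotonicity trick, adapted to the two–parameter family $E_{p,\delta}$. Fix $\delta_0 \in [0,1]$. The key point is that the map $p \mapsto \beta_{p,\delta_0}$ is monotone non-decreasing: indeed, since the integrand $(1+(\delta_0+|\nabla u|^2)^{n/2})^{p/n}$ is pointwise non-decreasing in $p$ (recall we normalised away the constant term in this section), for $p<p'$ and any admissible family $F$ we have $\sup_F E_{p,\delta_0} \le \sup_F E_{p',\delta_0}$, whence $\beta_{p,\delta_0}\le\beta_{p',\delta_0}$ after taking the infimum over $F\in\mathcal F$. A monotone function is differentiable almost everywhere, so the set $\mathcal G_{\delta_0}\subset (n,P_0)$ of points where $p\mapsto\beta_{p,\delta_0}$ is differentiable has full measure, and at such a point $\beta'_{p,\delta_0}:=\tfrac{d}{dp}\beta_{p,\delta_0}$ is finite. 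I will show that at every $p\in\mathcal G_{\delta_0}$ one can extract a critical point with the desired entropy bound; a standard Fubini-type argument then ensures the exceptional set in $(p,\delta_0)$ has measure zero, giving the statement "for a.e. $p$ and every $\delta_0$" — though in fact the cleaner route is simply to observe that for \emph{each} fixed $\delta_0$ the bad set in $p$ is null.

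**Extracting the critical point at a differentiability point.** Fix $p\in\mathcal G_{\delta_0}$ and pick $p_j\searrow p$. By definition of $\beta_{p_j,\delta_0}$ choose $F_j\in\mathcal F$ with $\sup_{F_j}E_{p_j,\delta_0}\le\beta_{p_j,\delta_0}+(p_j-p)$. For $u\in F_j$ write $\Lambda(u):=\int_M (1+(\delta_0+|\nabla u|^2)^{n/2})^{p/n}\log(1+(\delta_0+|\nabla u|^2)^{n/2})$, which is exactly $\tfrac{n}{p}\,\partial_p\big[\tfrac1n\text{(integrand)}\big]$ — more precisely $\tfrac{d}{dq}E_{q,\delta_0}(u)\big|_{q=p}=\tfrac1n\Lambda(u)$. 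A convexity/difference-quotient estimate gives, for $u\in F_j$,
\begin{equation*}
\tfrac{1}{n}(p_j-p)\,\Lambda(u)\le E_{p_j,\delta_0}(u)-E_{p,\delta_0}(u)\le \beta_{p_j,\delta_0}-\beta_{p,\delta_0}+2(p_j-p)+\big(E_{p,\delta_0}(u)-\beta_{p,\delta_0}\big)^{+},
\end{equation*}
where the last term is controlled by first running the semi-flow $\Phi$: replacing $F_j$ by $\Phi(T_j,F_j)$ (still admissible, and lowering $E_{p,\delta_0}$) we may assume $\sup_{F_j}E_{p,\delta_0}\le\beta_{p,\delta_0}+o(1)$ as $j\to\infty$. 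Dividing by $(p_j-p)$ and using differentiability of $\beta_{\cdot,\delta_0}$ at $p$, the right-hand side stays bounded by a constant $C$ depending on $\beta'_{p,\delta_0}$ but not on $j$. Hence there is a sequence $u_j\in F_j$ with $E_{p,\delta_0}(u_j)\to\beta_{p,\delta_0}$, $E_{p_j,\delta_0}(u_j)\to\beta_{p,\delta_0}$ (using continuity of $q\mapsto\beta_{q,\delta_0}$ at $p$, also from monotonicity), and $(p_j-p)\Lambda(u_j)\le C(p_j-p)\to 0$.

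**From the almost-critical sequence to a genuine critical point.** The sequence $u_j$ is a Palais–Smale-type sequence for $E_{p,\delta_0}$ at level $\beta_{p,\delta_0}$: by construction $E_{p,\delta_0}(u_j)\to\beta_{p,\delta_0}$, and by playing the $\Phi$-flow against the near-optimality (the deformation lemma recalled in the "Variational theory" subsection, i.e. the absence of a deformation retract across a critical slab) one deduces $\|dE_{p,\delta_0}(u_j)\|\to 0$; since $p>n$, $E_{p,\delta_0}$ satisfies the Palais–Smale condition on the Finsler manifold $W^{1,p}(\M;\N)$, so up to a subsequence $u_j\to u_{p,\delta_0}$ strongly in $W^{1,p}$, a critical point with $E_{p,\delta_0}(u_{p,\delta_0})=\beta_{p,\delta_0}$, and smooth by Corollary~\ref{cor.uniqueness2}. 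Strong $W^{1,p}$-convergence plus the uniform integrability furnished by the bound on $\int(1+(\delta_0+|\nabla u_j|^2)^{n/2})^{p_j/n}$ (which dominates $\Lambda(u_j)$ up to the factor $p_j-p$, cf.\ the elementary inequalities of Subsection~\ref{subsec.elementary}) lets us pass to the limit: $\Lambda(u_j)\to\Lambda(u_{p,\delta_0})$, so $\Lambda(u_{p,\delta_0})$ is finite. Finally, repeating the construction along \emph{any} sequence $p\searrow n$ inside $\mathcal G_{\delta_0}$ and relabelling, $(p-n)\Lambda(u_{p,\delta_0})\le C(p-n)\to 0$, which is precisely \eqref{eq.entropyminmax}.

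**The main obstacle.** The delicate point is not the monotonicity bookkeeping but ensuring that the bound on the difference quotient of $\beta$ genuinely transfers to a bound on $\Lambda$ evaluated at \emph{near-minimal} elements of $F_j$ — one must simultaneously be near the top of $F_j$ for $E_{p_j,\delta_0}$ and near the bottom (i.e.\ near $\beta_{p,\delta_0}$) for $E_{p,\delta_0}$, and these are reconciled only by first flowing $F_j$ down with $\Phi$ while carefully tracking that the flow does not destroy the $E_{p_j,\delta_0}$-sup estimate by more than $o(p_j-p)$. This is the classical subtlety in Struwe's argument; here it is slightly aggravated by the presence of the auxiliary parameter $\delta_0$ and the $(p,q)$-growth of the integrand, but since $\delta_0$ is frozen throughout and the flow $\Phi$ is $E_{p,\delta_0}$-decreasing, the one-parameter argument goes through verbatim. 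The reduction to a full-measure set of $p$ (for each $\delta_0$) is then immediate from differentiability a.e.\ of monotone functions.
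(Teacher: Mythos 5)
Your overall strategy coincides with the paper's: Struwe's monotonicity trick applied to $p\mapsto\beta_{p,\delta_0}$, the two-sided energy pinching for near-maximal elements of near-optimal families $F_j$, a pseudo-gradient deformation to upgrade these to an almost-critical sequence, and passage to the limit. However, there are two genuine gaps.

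First, the concluding step does not work as written. Your bound at a differentiability point has the form $\Lambda(u_{p,\delta_0})\le \tfrac{d\beta_{p,\delta_0}}{dp}+C$ with $C$ absolute (here $\Lambda$ is your entropy integral), so that $(p-n)\Lambda(u_{p,\delta_0})\le (p-n)\tfrac{d\beta_{p,\delta_0}}{dp}+C(p-n)$. The second term vanishes, but the first need \emph{not} tend to zero along an arbitrary sequence of differentiability points $p\searrow n$: monotonicity and boundedness of $\beta_{\cdot,\delta_0}$ only give $\tfrac{d\beta_{p,\delta_0}}{dp}\in\mathbb{L}^1((n,P_0))$, hence $\liminf_{p\to n}(p-n)\tfrac{d\beta_{p,\delta_0}}{dp}=0$, while the $\limsup$ can be positive. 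You must record the $\mathbb{L}^1$ bound and select the sequence $p\to n$ realising this liminf (the paper's \eqref{eq.minmax0} and the final display of its proof); writing ``$(p-n)\Lambda(u_{p,\delta_0})\le C(p-n)\to 0$ along \emph{any} sequence'' hides the $p$-dependence of your constant, which contains $\beta'_{p,\delta_0}$, and is not justified.

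Second, the step ``one deduces $\|dE_{p,\delta_0}(u_j)\|\to 0$'' skips the estimate that makes the contradiction argument close. The pseudo-gradient field has to be built for $E_{p_j,\delta_0}$ (so the flow preserves the upper bound $\sup_{F_j}E_{p_j,\delta_0}\le\beta_{p_j,\delta_0}+(p_j-p)$ that keeps the flowed maps inside the pinched set), while the quantity that must strictly decrease below $\beta_{p,\delta_0}$ is $\sup E_{p,\delta_0}$, whose time-derivative is the pairing of $dE_{p,\delta_0}$ with that field. One therefore needs the uniform smallness of $\|dE_{p_j,\delta_0}(v)-dE_{p,\delta_0}(v)\|$ over the pinched set (the paper's Step~2, a computation using the structure of the integrand and the pinching), together with its analogue in $\delta$ if one wants the uniformity in $\delta_0$ asserted in the statement. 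Two smaller points: the hypothesis $\Phi(t,F)\subset F$ does not make $\Phi(T_j,F_j)$ a member of $\mathcal F$, but your preliminary ``flow down'' is in any case superfluous, since $E_{p,\delta_0}(u)\le E_{p_j,\delta_0}(u)\le\beta_{p_j,\delta_0}+(p_j-p)$ already pins $E_{p,\delta_0}(u)$ from above on $F_j$; and in the limit you only need (and can only easily obtain) weak lower semicontinuity of $\partial_pE_{p,\delta_0}$ from the convexity of its integrand, rather than the full convergence $\Lambda(u_j)\to\Lambda(u_{p,\delta_0})$ you assert, whose justification by ``uniform integrability'' is not supplied.
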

\begin{remark}
As said in Section \ref{sec.preliminary} we know that, for all $p>n$ and $\delta \in (0,1)$, we can apply some standard min-max principle to obtain the existence of critical points $u_{p,\delta}$ of $E_{p,\delta}$ at energy level $E_{p,\delta}(u_{p,\delta})=\beta_{p,\delta}$. However, there is no guarantee a-priori, that the sequence satisfies the entropy condition \eqref{eq.entropyminmax}.

Moreover, we notice that the entropy condition gives some sort of asymptotic bound on the sets where the gradients of the solutions are high enough, therefore the uniformity of the statement in the parameter $\delta_0$ should not be surprising.

Finally, the proof is a bit harder than that of Lemma $1.4$ in \cite{lam1} exactly due to the presence of the parameter $\delta_0$.
\end{remark}
\begin{proof}
The proof is based on the powerful Struwe's monotonicity trick from \cite{stru3}. First of all, we see that $p \mapsto \beta_{p,\delta_0}$ is non-decreasing for any fixed $\delta_0$, and so it is differentiable almost everywhere, with non-negative derivative $\tfrac{d \beta_{p,\delta_0}}{d p}\in \mathbb{L}^1([n,P])$. Therefore we must have
\begin{equation}\label{eq.minmax0}
\liminf_{p \rightarrow n} (p-n)\log(\tfrac{1}{p-n})\tfrac{d \beta_{p,\delta_0} }{d p}=0.
\end{equation}
Let us pick a point of differentiability $p>n$ of $\beta_{p,\delta_0}$ and a non-increasing sequence $p_{k+1} \le p_k$, with $p_k \rightarrow p$. By definition of $\beta_{p_k,\delta_0}$, for every $k$ there exists an element $F_k:=F_k(\delta_0) \in \mathcal{F}$ such that 
\begin{equation*}
\sup_{u \in F_k} E_{p_k,\delta_0}(u) \le \beta_{p_k,\delta_0}+(p_k-p).
\end{equation*}
On the other hand, $p$ is a differentiability point for $\beta_{p,\delta_0}$, and hence for $k$ large enough
\begin{equation*}
\beta_{p_k,\delta_0} \le \beta_{p,\delta_0} + (\tfrac{d \beta_{p,\delta_0} }{d p}+1)(p_k-p),
\end{equation*}
which combined with the above inequality and the monotonicity of $p\mapsto E_{p,\delta_0}$ gives
\begin{equation*}
\beta_{p,\delta_0} \le \sup_{u \in F_k} E_{p,\delta_0}(u) \le \sup_{u \in F_k} E_{p_k,\delta_0}(u)\le \beta_{p_k,\delta_0}+(p_k-p)\le \beta_{p,\delta_0} + (\tfrac{d \beta_{p,\delta_0}}{d p}+2)(p_k-p).
\end{equation*}
Overall, we can choose a map $v_k \in F_k$ almost realising the $\sup E_{p,\delta_0}$ on $F_k$, in the sense that we have
\begin{equation}\label{eq.minmax1}
\beta_{p,\delta_0} -(p_k-p)\le E_{p,\delta_0}(v_k) \le E_{p_k,\delta_0}(v_k) \le \beta_{p_k,\delta_0}+(p_k-p)\le \beta_{p,\delta_0} + (\tfrac{d \beta_{p,\delta_0}}{d p}+2)(p_k-p).
\end{equation}
As in \cite{lam1}, we split now the proof in several steps.

\begin{itemize}
\item[Step 1]: For every $v \in W^{1,p}(\M;\mathcal{\N})$ satisfying \eqref{eq.minmax1} in place of $v_k$, we can bound the $p$-derivative of the energy as
\begin{equation*}
\partial_p E_{p,\delta_0}(v) \le \tfrac{d \beta_{p,\delta_0}}{d p}+3.
\end{equation*}
Indeed, \eqref{eq.minmax1} gives
\begin{equation*}
\tfrac{E_{p_k,\delta_0}(v)-E_{p,\delta_0}(v)}{p_k-p} \le \tfrac{d \beta_{p,\delta_0}}{d p}+3,
\end{equation*}
and therefore the mean value Theorem yields the existence of a point of differentiability $p' \in [p,p_k]$ such that
\begin{equation*}
\partial_p E_{p,\delta_0} \mid_{p=p'}(v) \le \tfrac{d \beta_{p,\delta_0}}{d p}+3
\end{equation*}
In order to conclude this step, it is sufficient to notice that
\begin{equation*}
\begin{aligned}
&\partial_p E_{p,\delta_0}(v)=\tfrac{p}{n}\int_M (1+(\delta_0+|\nabla v|^2)^{\frac{n}{2}})^{\frac{p}{n}}\log(1+(\delta_0+|\nabla v|^2)^{\frac{n}{2}}) \\
&\le \tfrac{p'}{n}\int_M (1+(\delta_0+|\nabla v|^2)^{\frac{n}{2}})^{\frac{p'}{n}}\log(1+(\delta_0+|\nabla v|^2)^{\frac{n}{2}}) = \partial_p E_{p,\delta_0} \mid_{p=p'}(v).
\end{aligned}
\end{equation*}
\item[Step 2]: We have the following uniform $C^1$-bound valid for all solution $v$ of \eqref{eq.minmax1} (in place of $v_k$)
\begin{equation}\label{eq.minmax2}
\sup_{v \in W^{1,p}, v \sim \eqref{eq.minmax1}} \sup_{\norm{\phi}_{W^{1,p_k}(v^*T\mathcal{\N})}\le 1} |\scal{d E_{p_k,\delta_0}(v)}{\phi}-\scal{d E_{p,\delta_0}(v)}{\phi}| \longrightarrow 0.
\end{equation}
In fact, for every $\phi \in W^{1,p_k}(v^*T\mathcal{\N})$ with norm less than or equal to $1$, we compute with the help of H\"older's inequality, Young's weighted inequality with exponents $(\tfrac{n}{n-1},n)$, $p_k-p\le 1$ and \eqref{eq.minmax1} several times
\begin{equation*}
\footnotesize
\begin{aligned}
&|\scal{d E_{p_k,\delta_0}(v)}{\phi}-\scal{d E_{p,\delta_0}(v)}{\phi}| \le \int_M [p_k(1+(\delta_0+|\nabla v|^2)^{\frac{n}{2}})^{\frac{p_k-n}{n}}-p(1+(\delta_0+|\nabla v|^2)^{\frac{n}{2}})^{\frac{p-n}{n}}] (\delta_0+|\nabla v|^2)^{\frac{n-2}{2}} \\
&|\nabla v| |\nabla \phi|\le (p_k-p) \Big( \int_M (1+(\delta_0+|\nabla v|^2)^{\frac{n}{2}})^{\frac{p_k}{n}}\Big)^{\frac{p_k-n}{p_k}} \Big( \int_M (\delta_0+|\nabla v|^2)^{\frac{p_k}{2}} \Big)^{\frac{n-1}{p_k}} \Big( \int_M |\nabla \phi|^{p_k}\Big)^{\frac{1}{p_k}}\\
&+p \int_M [(1+(\delta_0+|\nabla v|^2)^{\frac{n}{2}})^{\frac{p_k-n}{n}}-(1+(\delta_0+|\nabla v|^2)^{\frac{n}{2}})^{\frac{p-n}{n}}](\delta_0+|\nabla v|^2)^{\frac{n-2}{2}} |\nabla v| |\nabla \phi| \\
&\le \Big( \beta_{p,\delta_0} + (\tfrac{d \beta_{p,\delta_0}}{d p}+2)(p_k-p)+c \Big)^{\frac{p_k-1}{p_k}} (p_k-p)+p \int_M [(1+(\delta_0+|\nabla v|^2)^{\frac{n}{2}})^{\frac{p_k-n}{n}}-(1+(\delta_0+|\nabla v|^2)^{\frac{n}{2}})^{\frac{p-n}{n}}]\cdot\\
&\cdot [\tfrac{n-1}{n}\nu^{\tfrac{n}{1-n}}(\delta_0+|\nabla v|^2)^{\frac{n}{2}}+ \tfrac{1}{n}\nu^n|\nabla \phi|^n]\\
&\le c(p,P,\beta_{p,\delta_0},\tfrac{d \beta_{p,\delta_0}}{d p}) (p_k-p)+p\tfrac{n-1}{n}\nu^{\tfrac{n}{1-n}}(E_{p_k,\delta_0}(v)-E_{p,\delta_0}(v))+\tfrac{p}{n}\nu^n\int_M (1+(\delta_0+|\nabla v|^2)^{\frac{n}{2}})^{\frac{p_k-n}{n}}|\nabla \phi|^n\\
&\le c (p_k-p)(1+\nu^{\tfrac{n}{1-n}})+\tfrac{p}{n}\nu^n[\tfrac{p_k-n}{p_k}E_{p_k,\delta_0}(v)+c(p_k-n)+\tfrac{n}{p_k}\int_M |\nabla \phi|^{p_k}] \le c (p_k-p)^{\tfrac{n-1}{n}} \longrightarrow 0.
\end{aligned}
\end{equation*}
In the last step we have set $\nu:=(p_k-p)^{\tfrac{n-1}{n^2}}$. We explicitly remark that the constant can be chosen uniformly in $\delta_0$. This concludes the proof of the claim.

\item[Step 3]: We have the following limiting condition: for any sequence $\delta_k \rightarrow \delta \in [0,1]$ and any $p\in (n,P_0)$
\begin{equation}\label{eq.minmax2delta}
\sup_{v \in W^{1,p}, E_{p,\delta}(v) \le \beta_{P_0,1} } \sup_{\norm{\phi}_{W^{1,p}(v^*T\mathcal{\N})}\le 1} |\scal{d E_{p,\delta_k}(v)}{\phi}-\scal{d E_{p,\delta}(v)}{\phi}| \longrightarrow 0.
\end{equation}
Indeed, for every $v \in W^{1,p}$, such that $E_{p,\delta}(v) \le \beta_{P_0,1}$ and $\phi \in W^{1,p}(v^*T\mathcal{\N})$ with norm less than or equal to $1$, we use the following elementary inequalities
\begin{align*}
&(x+y)^{\frac{p-n}{n}} \le (x)^{\frac{p-n}{n}}+(y)^{\frac{p-n}{n}}, \quad \forall x,y \ge 0,\\
&(x+y)^{\frac{n-2}{n}} \le (x)^{\frac{n-2}{n}}+(y)^{\frac{n-2}{n}}, \quad \forall x,y \ge 0,\\
&|(a_1+b)^{\frac{n}{2}}-(a_2+b)^{\frac{n}{2}}| \le c(n)|a_1-a_2|[|a_1-a_2|^{\frac{n-2}{2}}+|b|^{\frac{n-2}{2}}],\\
&|\nabla v| \le(\delta_k+|\nabla v|^2)^{\frac{1}{2}}\le (1+(\delta+|\nabla v|^2)^{\frac{n}{2}})^{\frac{1}{n}},\\
&\tfrac{p}{p-1}(\tfrac{p-n}{n}(n-2)+1+n-2)\le p,\\
&\tfrac{p}{p-1}(p-n+\tfrac{n-2}{n}(n-2)+1)\le p
\end{align*}
together with H\"older's inequality with exponents $p$ and $\tfrac{p}{p-1}$ and $2 \le n < p$ to obtain
\begin{equation*}
\begin{aligned}
&|\scal{d E_{p,\delta_k}(v)}{\phi}-\scal{d E_{p,\delta}(v)}{\phi}| \le p \int_M |(1+(\delta_k+|\nabla v|^2)^{\frac{n}{2}})^{\frac{p-n}{n}}(\delta_k+|\nabla v|^2)^{\frac{n-2}{2}}\\
&-(1+(\delta+|\nabla v|^2)^{\frac{n}{2}})^{\frac{p-n}{n}}(\delta+|\nabla v|^2)^{\frac{n-2}{2}} \pm (1+(\delta+|\nabla v|^2)^{\frac{n}{2}})^{\frac{p-n}{n}}(\delta_k+|\nabla v|^2)^{\frac{n-2}{n} \frac{n}{2}} | |\nabla v| |\nabla \phi| \\
&\le p \int_M |(\delta_k+|\nabla v|^2)^{\frac{n}{2}}-(\delta+|\nabla v|^2)^{\frac{n}{2}}|^{\frac{p-n}{n}} (\delta_k+|\nabla v|^2)^{\frac{n-2}{2}}|\nabla v| |\nabla \phi|+(1+(\delta+|\nabla v|^2)^{\frac{n}{2}})^{\frac{p-n}{n}}\cdot\\
&\cdot|(\delta_k+|\nabla v|^2)^{\frac{n}{2}}-(\delta+|\nabla v|^2)^{\frac{n}{2}}|^{\frac{n-2}{n}}|\nabla v| |\nabla \phi| \le c \int_M |\delta_k-\delta|^{\frac{p-n}{n}} [|\delta_k-\delta|^{\frac{n-2}{2}}+|\nabla v|^{n-2}]^{\frac{p-n}{n}} \cdot\\
&\cdot (\delta_k+|\nabla v|^2)^{\frac{n-2}{2}}|\nabla v| |\nabla \phi|+|\delta_k-\delta|^{\frac{n-2}{n}} (1+(\delta+|\nabla v|^2)^{\frac{n}{2}})^{\frac{p-n}{n}} [|\delta_k-\delta|^{\frac{n-2}{2}}+|\nabla v|^{n-2}]^{\frac{n-2}{n}}|\nabla v| |\nabla \phi|\\
&\le c \int_M |\delta_k-\delta|^{\frac{p-n}{n}+\frac{n-2}{2}} (\delta_k+|\nabla v|^2)^{\frac{n-1}{2}} |\nabla \phi| + |\delta_k-\delta|^{\frac{p-n}{n}} (\delta_k+|\nabla v|^2)^{\frac{n-2}{2}} |\nabla v|^{\frac{p-n}{n}(n-2)+1} |\nabla \phi|\\
&+|\delta_k-\delta|^{\frac{n^2-4}{2n}} (1+(\delta+|\nabla v|^2)^{\frac{n}{2}})^{\frac{p-n}{n}} |\nabla v| |\nabla \phi| +|\delta_k-\delta|^{\frac{n-2}{n}} (1+(\delta+|\nabla v|^2)^{\frac{n}{2}})^{\frac{p-n}{n}}|\nabla v|^{\frac{n-2}{n}(n-2)+1} |\nabla \phi|\\
&\le c(P_0,n,\beta_{P,1}) [|\delta_k-\delta|^{\frac{p-n}{n}+\frac{n-2}{2}} +|\delta_k-\delta|^{\frac{p-n}{n}}+|\delta_k-\delta|^{\frac{n^2-4}{2n}}+|\delta_k-\delta|^{\frac{n-2}{n}}]\longrightarrow 0.
\end{aligned}
\end{equation*}
Notice that the bound found is not infinitesimal as $p \searrow n$ contrary to the case treated above; however, this lack of uniformity will not influence the rest of the proof, as Step $3$ will be used only in Step $4$ when $p>n$ is fixed.

\item[Step 4]: We refine the sequence $v_k$ to an asymptotic critical sequence $u_k \in W^{1,p_k}(\M;\mathcal{\N})$ satisfying similar properties; more precisely, we want to find $u_k$ satisfying not only \eqref{eq.minmax1} in place of $v_k$, but also
\begin{equation*}
   \sup_{\delta_0 \in [0,1]} \norm{d E_{p_k,\delta_0}(u_k)}_{(W^{1,p_k}(\M;\mathcal{\N}))^*} \longrightarrow 0.
\end{equation*}
Suppose instead by contradiction that
\begin{equation*}
\norm{d E_{p_k,\delta_k}(u)}_{(W^{1,p_k}(\M;\mathcal{\N}))^*} \ge 4 \eta,
\end{equation*}
for some $\eta>0$ and for all $u$ verifying \eqref{eq.minmax1} in place of $u_k$, with $\delta_k$ in place of $\delta_0$, and for all $k$ large enough.

For these $k$, consider pseudo-gradient vector fields $e_k \in C^{0,1}_{loc}(W^{1,p_k}(\M;\mathcal{\N});W^{1,p_k}(\M;u^*T\mathcal{\N}))$ for the functionals $E_{p_k,\delta_k}$ with $\norm{e_k(u)}_{W^{1,p_k}(u^*T\mathcal{\N})} \le 1$ and
\begin{equation}\label{eq.minmax3}
\scal{d E_{p_k,\delta_k}(u)}{e_k(u)} \le -\tfrac{1}{2}\norm{d E_{p_k,\delta_k}(u)}_{(W^{1,p_k}(\M;\mathcal{\N}))^*} \le -2 \eta,
\end{equation}
for all $u$ satisfying \eqref{eq.minmax1}. Without loss of generality we can assume that $\delta_k \rightarrow \delta_\infty \in [0,1]$. Consider a cut-off function $\psi \in C^\infty (\R)$ such that $0 \le \psi \le 1$, $\psi \equiv 1$ on $(-\infty,0]$, $\psi \equiv 0$ on $[1,+\infty]$, and set
\begin{equation*}
\psi_k(u):= \psi \big( \tfrac{E_{p,\delta_\infty}(u)-(\beta_{p_k,\delta_\infty}-(p_k-p))}{p_k-p}\big).
\end{equation*}
Introduce the cut-off pseudo-gradient vector fields $\tilde{e}_k(\cdot):=\psi_k(\cdot) e_k(\cdot)$. These fields generate flows $\Phi_k:\R_0^{+} \times W^{1,p_k}(\M;\mathcal{\N}) \rightarrow W^{1,p_k}(\M;\mathcal{\N})$ defined by
\begin{equation*}
\begin{cases}
&\tfrac{d}{dt} \Phi_k(t,u)=\tilde{e}_k(\Phi_k(t,u)), \quad\text{for } t>0,\\
&\Phi_k(0,u)=u.
\end{cases}
\end{equation*}
Choose $F_k:= F_k(\delta_\infty) \in \mathcal{F}$ as above, that is almost realising the infimum with $\delta_\infty$, meaning that
\begin{equation*}
\sup_{u \in F_k} E_{p_k,\delta_\infty}(u) \le \beta_{p_k,\delta_\infty}+(p_k-p),
\end{equation*}
and notice that by the dynamic closedness of the family assumed, for all $v \in F_k$ and $t \in \R_0^{+}$, $v_t:=\Phi_k(t,v) \in F_k$. By construction, we also have that
\begin{equation*}
M(t):=\sup_{v \in F_k} E_{p,\delta_\infty}(v_t) \le \sup_{v \in F_k} E_{p_k,\delta_\infty}(v_t)\le \sup_{u \in F_k} E_{p_k,\delta_\infty}(u)\le \beta_{p_k,\delta_\infty}+(p_k-p) \quad \forall t \ge 0.
\end{equation*}
This implies that the quantity $M(t)$, which by definition satisfies $M(t) \ge \beta_{p,\delta_\infty}$, is attained at points $v'_t$ satisfying \eqref{eq.minmax1} with $\delta_\infty$ in place of $\delta_0$, so that we must have $\psi_k(v'_t)=1$. Applying this identity, the definition of flow, together with \eqref{eq.minmax2} (and its uniformity in $\delta_0$), \eqref{eq.minmax2delta} and \eqref{eq.minmax3} we compute
\begin{equation*}
\begin{aligned}
&\tfrac{d}{dt} E_{p,\delta_\infty}(v'_t)=\scal{d E_{p,\delta_\infty}(v'_t)}{\tfrac{d}{dt} v'_t}=\psi_k(v'_t) \scal{d E_{p,\delta_\infty}(v'_t)}{e_k(v'_t)}=\scal{d E_{p,\delta_\infty}(v'_t)}{e_k(v'_t)}\\
&\le \scal{d E_{p_k,\delta_k}(v'_t)}{e_k(v'_t)}+|\scal{d E_{p,\delta_k}(v'_t)}{e_k(v'_t)}-\scal{d E_{p_k,\delta_k}(v'_t)}{e_k(v'_t)}|\\
&+|\scal{d E_{p,\delta_\infty}(v'_t)}{e_k(v'_t)}-\scal{d E_{p,\delta_k}(v'_t)}{e_k(v'_t)}| \le -2 \eta+ \mathfrak{o}_k(1).
\end{aligned}
\end{equation*}
Therefore $\tfrac{d}{dt} M(t) \le -\eta<0$, and hence for $t$ large enough $M(t)<\beta_{p,\delta_\infty}$, in contradiction to the inequality observed above, concluding the proof of this step. Let us explicitly remark that we are using the uniformity of \eqref{eq.minmax2} with respect to $\delta_k$ as well as that $p>n$ is fixed in this contradiction argument to use Step $3$.
\item[Step 5]: In the final step of the proof, for any given $\delta_0\in [0,1]$, we show that the sequence $u_k$ "constructed" in Step $4$ satisfies the entropy bound \eqref{eq.entropyminmax}.
In order to do that, we remark that the sequence $u_k$ verifies the bound $\norm{u_k}^{p_k}_{W^{1,p_k}(\M;\mathcal{\N})} \le E_{p_k,\delta_0}(u_{p_k,\delta_0}) \le \beta_{P,1}+2 <+\infty$, so it is uniformly bounded in $W^{1,p}(\M;\mathcal{\N})$; applying Rellich's and Kondrachov's theorems we may assume, after possibly extracting a subsequence, that $u_k$ converges weakly in $W^{1,p}(\M;\mathcal{\N})$, strongly in $\mathbb{L}^{p}(\M;\mathcal{\N})$, strongly in $W^{1,n}(\M;\mathcal{\N})$ to some $u_{p,\delta_0} \in W^{1,p}(\M;\mathcal{\N})$, and that their gradients converge almost everywhere to the gradient of $u_{p,\delta_0}$. We are gonna show that the convergence is strong in $W^{1,p}(\M;\mathcal{\N})$, and hence deduce that
 $u_{p,\delta_0}$ is a critical point at the right energy level. First of all,  since $C^\infty(\M;\N)$ is dense in $W^{1,p}(\M;\mathcal{\N})$ we can find a sequence $u^\ell\in C^\infty(\M;\N)$ such that $u^\ell \rightarrow u_{p,\delta_0}$ in $W^{1,p}(\M;\mathcal{\N})$. The Sublemma $(3.26)$ in Chapter $3$ of \cite{ura}, ensures that
\begin{equation*}
    \norm{(Id-P_{u_k})[u_k-u^\ell] }_{W^{1,p_k}} \longrightarrow 0,
\end{equation*}
where $P_u:W^{1,p}(\M;\R^N) \longrightarrow W^{1,p}(\M;u^*T\N)$ is at every point $x\in \M$ the orthogonal projection to the subspace $T_{u(x)}\N$.

After considering $E_{p_k,\delta_0}$ as a functional defined on $W^{1,p_k}(\M;\R^N)$, i.e. relaxing the $\N$-targeted assumption, we can combine this convergence with the one in Step $4$, to deduce
\begin{equation*}
\scal{d E_{p_k,\delta_0}(u_k)}{u_k-u^\ell } \underbrace{\longrightarrow}_{k,\ell \longrightarrow \infty} 0.
\end{equation*}
The convexity of the functional $E_{p_k,\delta_0}$ implies
\begin{align*}
&\int_M (1+(\delta_0+|\nabla u^\ell|^2)^{\frac{n}{2}})^{\frac{p_k}{n}} \ge \int_M ((1+(\delta_0+|\nabla u_k|^2)^{\frac{n}{2}})^{\frac{p_k}{n}} \\
&+p_k \int_M (1+(\delta_0+|\nabla u_k|^2)^{\frac{n}{2}})^{\frac{p_k-n}{n}} (\delta_0+|\nabla u_k|^2)^{\frac{n-2}{2}} \nabla u_k \nabla(u^\ell-u_k).
\end{align*}
Combining this inequality with the limit condition above we get
\begin{equation*}
\begin{aligned}
&\mathfrak{o}_k(1)= \scal{d E_{p_k,\delta_0}(u_k)}{u_k-u^\ell} \ge \int_M ((1+(\delta_0+|\nabla u_k|^2)^{\frac{n}{2}})^{\frac{p_k}{n}}-(1+(\delta_0+|\nabla u^\ell|^2)^{\frac{n}{2}})^{\frac{p_k}{n}} \\
&\ge \int_M ((1+(\delta_0+|\nabla u_k|^2)^{\frac{n}{2}})^{\frac{p}{n}}-(1+(\delta_0+|\nabla u^\ell|^2)^{\frac{n}{2}})^{\frac{p}{n}}-c \norm{\nabla u^\ell}_{\infty} (p_k-n).
\end{aligned}
\end{equation*}
We remark that the reason for the smooth approximation is exactly given by the last term. We can now let first $k \rightarrow \infty$ and use the weak lower-semicontinuity of $E_{p,\delta}$ granted by Morrey's theorem
\begin{equation*}
\begin{aligned}
    &\mathfrak{o}_{\ell}(1)=\int_M ((1+(\delta_0+|\nabla u_{p,\delta_0}|^2)^{\frac{n}{2}})^{\frac{p}{n}}-(1+(\delta_0+|\nabla u^\ell|^2)^{\frac{n}{2}})^{\frac{p}{n}}\\
    &\le \liminf_k \int_M ((1+(\delta_0+|\nabla u_k|^2)^{\frac{n}{2}})^{\frac{p}{n}}-(1+(\delta_0+|\nabla u^\ell|^2)^{\frac{n}{2}})^{\frac{p}{n}}\le 0,
    \end{aligned}
\end{equation*}
and then we let $\ell \rightarrow \infty$ to get $E_{p,\delta_0}(u_k) \rightarrow E_{p,\delta_0}(u_{p,\delta_0})$ uing once more the strong convergence of $u^\ell$ to $u_{p,\delta_0}$, which means that $\nabla u_k$ converges strongly to $\nabla u_{p,\delta_0}$ in $W^{1,p}(\M;\mathcal{\N})$ by Lemma \ref{lemma.riesz} (recall that we have a.e. convergence of the gradients).
Passing to the limit \eqref{eq.minmax1}, we have also obtain $E_{p,\delta_0}(u_{p,\delta_0})=\beta_{p,\delta_0}$, whereas with the help of \eqref{eq.minmax2}, Step 4, and the strong convergence just proven, we deduce that $u_{p,\delta_0}$ is a critical point of $E_{p,\delta_0}$: for any $\phi \in W^{1,p}(u_{p,\delta_0}^*T\mathcal{\N})$
\begin{align*}
  &|\scal{d E_{p,\delta_0}(u_{p,\delta_0})}{\phi}|\le |\scal{d E_{p,\delta_0}(u_{p_k,\delta_0})}{\phi}-\scal{d E_{p,\delta_0}(u_{p,\delta_0})}{\phi}|\\
  &+|\scal{d E_{p_k,\delta_0}(u_{p_k,\delta_0})}{\phi}-\scal{d E_{p,\delta_0}(u_{p_k,\delta_0})}{\phi}|+|\scal{d E_{p_k,\delta_0}(u_{p_k,\delta_0})}{\phi}| \longrightarrow 0.
\end{align*}
Concluding, we use once again Morrey's theorem to ensure that $\partial_p E_p$ is (weakly) lower semi-continuous since its integrand $(1+(\delta_0+|s|^2)^{\frac{n}{2}})^{\frac{p}{n}}\log(1+(\delta_0+|s|^2)^{\frac{n}{2}})$ is convex, so using Step 1 together with equation \eqref{eq.minmax0} we get for every $\eta>0$
\begin{align*}
&\tfrac{p}{n}\int_M (1+(\delta_0+|\nabla u_{p,\delta_0}|^2)^{\frac{n}{2}})^{\frac{p}{n}}\log(1+(\delta_0+|\nabla u_{p,\delta_0}|^2)^{\frac{n}{2}})=\partial_p E_{p,\delta_0} (u_{p,\delta_0}) \le \liminf_{k \rightarrow \infty} \partial_p E_{p,\delta_0} (u_k)\\
&\le \tfrac{d \beta_{p,\delta_0}}{dp} +3 \le \tfrac{\eta}{(p-n)\log(\tfrac{1}{p-n})}+3.  
\end{align*}
Multiplying both sides by $p-n$ and letting $p \rightarrow n$ we deduce the entropy condition \eqref{eq.entropyminmax}, concluding the proof. \qedhere
\end{itemize}
\end{proof}
\begin{proof}[Proof of Theorem \ref{th.minmax}]
By monotonicity of the energies $E_{p,\delta}$ in the parameters, for every $p>n$ and $\delta \in [0,1]$ we have $\beta<\beta_{p,\delta}$. For every $\eta>0$, by definition of $\beta$ and by approximation, there exists $h \in H\cap C^0(A;C^{\infty}(\M;\mathcal{N}))$ such that
\begin{equation*}
\sup_{t \in A} E_n(h(\cdot,t)) \le \beta+\eta.
\end{equation*}
For $p$ close enough to $n$ and $\delta$ close enough to $0$, we get through \eqref{eq.elementary3}
\begin{equation*}
\sup_{t \in A} E_{p,\delta}(h(\cdot,t)) \le \beta+\eta+C(n,P_0,\norm{\nabla h}_{\infty})(p-n+\delta) \le \beta+2\eta,
\end{equation*}
from which we easily deduce
\begin{equation*}
\lim_{p \rightarrow n, \delta \rightarrow 0} \beta_{p,\delta}= \beta.
\end{equation*}
In order to conclude, we combine classical min-max principles with our Theorem \ref{th.quantization} and Lemma \ref{lemma.minmax}.
\end{proof}

\section{Applications}\label{sec.applications}
\subsection{Main existence results}
We will now show our existence Theorem \ref{th.existence} for $n$-harmonic spheres, generalising Theorem $5.7$ in \cite{sac},
appealing to our quantization Theorem \ref{th.quantization}.
\begin{proof}[Proof of Theorem \ref{th.existence}, point (ii)]
Using Theorem \ref{th.SU2.8} we get for all $p>3$ and $\delta>0$ the existence of a non-trivial map $u_{p,\delta}:S^3 \rightarrow \N$, critical point of $E_{p,\delta}$. Notice that we must have $E_{p,\delta}(u_{p,\delta}) \ge \e_0>0$ by Corollary \ref{cor.trivial}. For any choice of sequences $p_k \searrow 3$ and $\delta_k \searrow 0$, arguing as in Section \ref{sec.quantization} through the use of Theorem \ref{th.SUregularity}, point $(c)$, we deduce the convergence (up to subsequences) of $u_{p_k,\delta_k}$ to a bubble tree with base $u_3:\M=S^3 \rightarrow \N$ and bubbles $\omega_{i,j}:S^3 \rightarrow \N$. We distinguish two cases. In the first and easier case, we suppose that no bubbles are forming along the limiting procedure. Thus the convergence of $u_{p_k,\delta_k}$ to $u_3$ is in $C^{1,\alpha_0}$, and the energy lower bound passes to the limit giving $\e_0 \le \lim_{p \searrow 3} E_{p_k,\delta_k}(u_{p_k,\delta_k})=D_3(u_3)$; this shows that $u_3$ is a non-trivial map as claimed.
In the other case, there exists at least a bubble forming $\omega_1$, which is non-trivial by definition, having energy $D_3(\omega_1) \ge \e_0 >0$, concluding the proof.
\end{proof}
Adapting the same proof to the case in which the target is homogeneous, relying on Theorem \ref{th.SUregularity} point $(b)$, we get Theorem \ref{th.existence} point $(i)$.
We conclude by remarking that we can recover the existence result of minimizing $n$-harmonic maps already obtained by \cite{duz1} and \cite{wei0}, this time relying on Theorem \ref{th.SUregularity} point $(a)$. The idea of the proofs are completely analogous to the $2$-dimensional case treated in \cite{sac} so we just sketch them.
\begin{theorem}[No bubbles case]\label{th.nobubbles}
If the target satisfies $\pi_n(N)=0$ for some $n \ge 2$, then for any domain manifold $(M^n,g)$, any metric $h$ on $N$, there exists a minimizing $n$-harmonic map in any homotopy class $\gamma \in C^0(M;N)$.
\end{theorem}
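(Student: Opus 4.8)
The plan is to carry out the classical Sacks--Uhlenbeck minimization scheme with our approximating functionals $E_{p,0}$, working at $\delta=0$ throughout (Theorem \ref{th.SUregularity} point $(a)$ and Theorem \ref{th.quantization} are both available in this degenerate case). Fix a homotopy class $\gamma\in[\M,\N]$ and, for $p\in(n,P_0)$, set $\beta_p:=\inf\{E_{p,0}(v):v\in W^{1,p}(\M;\N)\cap\gamma\}$. Since $p>n$ gives a compact embedding $W^{1,p}(\M;\N)\hookrightarrow C^0(\M;\N)$, weak $W^{1,p}$-limits of a minimizing sequence remain in $\gamma$, so the direct method (coercivity from \eqref{eq.elementary1}, weak lower semicontinuity from Morrey's theorem) produces a minimizer $u_p\in W^{1,p}(\M;\N)\cap\gamma$. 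Because $\pi_n(\N)=0$, any competitor $v$ agreeing with $u_p$ outside a ball differs from $u_p$ by an element of $\pi_n(\N)$, hence still lies in $\gamma$; therefore $u_p$ is a \emph{local} minimizer of $E_{p,0}$ in the unrestricted sense, and Theorem \ref{th.SUregularity} point $(a)$ applies. Comparing with a fixed smooth $v_0\in\gamma$ and using \eqref{eq.elementary3} yields $\sup_p\beta_p\le\Lambda_0<\infty$, and a density argument together with \eqref{eq.elementary1} gives $\lim_{p\searrow n}\beta_p=\beta_\gamma:=\inf\{D_n(v):v\in C^\infty(\M;\N)\cap\gamma\}$.

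Next I would verify that the minimizing family automatically satisfies the entropy condition \eqref{eq.entropy}. This is Struwe's monotonicity trick of Lemma \ref{lemma.minmax} specialised to plain minimization: $p\mapsto\beta_p$ is non-decreasing by the $p$-monotonicity of $E_{p,0}$, hence differentiable a.e.\ with $\beta'\in L^1$, so $\liminf_{p\searrow n}(p-n)\log\tfrac1{p-n}\,\beta'(p)=0$; along a sequence $p_k\searrow n$ realising this liminf, the minimality of $u_{p_k}$, the mean value theorem and the monotonicity of $p\mapsto\partial_pE_{p,0}$ (exactly as in Step $1$ of the proof of Lemma \ref{lemma.minmax}) give $\partial_pE_{p,0}(u_{p_k})\le\beta'(p_k)+3$, and multiplying by $(p_k-n)$ produces $(p_k-n)\,\partial_pE_{p,0}(u_{p_k})\to0$, which is \eqref{eq.entropy}. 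Alternatively one may simply quote the unconditional energy identity for homotopy-class minimizers from \cite{wei0,duz1,nak}.

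Finally I would apply Theorem \ref{th.quantization} to $(u_{p_k})$: passing to a subsequence we obtain an $n$-harmonic $u_\infty\in C^{1,\alpha_1}(\M;\N)$, finitely many concentration points, and bubbles $\omega^{i,j}\in C^{1,\alpha_1}(S^n;\N)$ with
\[
\beta_\gamma=\lim_k E_{p_k,0}(u_{p_k})=D_n(u_\infty)+\sum_{i,j}D_n(\omega^{i,j}).
\]
By the homotopy-class refinement of this limit (the remark following Theorem \ref{th.quantization}, via Duzaar--Kuwert \cite{duz1}), $\gamma$ is obtained from $[u_\infty]$ by attaching the bubble classes $[\omega^{i,j}]\in\pi_n(\N)$; since $\pi_n(\N)=0$ this attaching is trivial, so $u_\infty\in\gamma$ and hence $D_n(u_\infty)\ge\beta_\gamma$. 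Combined with the identity above this forces $\sum_{i,j}D_n(\omega^{i,j})=0$. Each bubble carries $D_n(\omega^{i,j})\ge\tfrac12\e_0^n>0$ by its construction in Section \ref{sec.quantization}, so no bubbles occur: the concentration set is empty, $u_{p_k}\to u_\infty$ in $C^{1,\alpha_1}(\M;\N)$, $D_n(u_\infty)=\beta_\gamma$, and $u_\infty$ is the desired minimizing $n$-harmonic map in $\gamma$. Running the same argument with parts $(b)$ or $(c)$ of Theorem \ref{th.SUregularity} covers the homogeneous-target and three-dimensional cases, thereby recovering the existence results of \cite{duz1,wei0}.

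The existence and local minimality of $u_p$ and the elementary energy comparisons are immediate from the machinery already in place; the step requiring genuine care is the homotopy bookkeeping in the bubble-tree limit, i.e.\ making precise that $u_\infty$ represents $\gamma$ once all bubble classes vanish in $\pi_n(\N)$, which we reduce entirely to Duzaar--Kuwert's theorem. The entropy verification, though essential, is a direct transcription of the monotonicity trick already used in the proof of Lemma \ref{lemma.minmax}.
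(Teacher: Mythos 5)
Your proposal is correct in substance but excludes bubbling by a genuinely different mechanism than the paper. The paper's proof (following Sacks--Uhlenbeck's Theorem 5.5) is a \emph{replacement} argument: near each concentration point one glues $u_k$ to the limit $u_\infty$, observes that $\pi_n(\N)=0$ keeps the modified map in $\gamma$ with arbitrarily small energy error, and then the minimality of $u_k$ in $\gamma$ forces the energy of $u_k$ itself to be small near that point, so no bubble can form. This only uses the bubble-tree convergence structure from Section \ref{sec.quantization}, not the energy identity. Your route instead runs through the full strength of Theorem \ref{th.quantization}: you use the energy identity \eqref{eq.quantization1} together with the Duzaar--Kuwert homotopy decomposition to place $u_\infty$ in $\gamma$, and then the variational inequality $D_n(u_\infty)\ge\beta_\gamma$ forces $\sum D_n(\omega^{i,j})=0$, contradicting the lower energy bound on bubbles. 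This is clean and global, but it is more expensive: it requires verifying the entropy condition \eqref{eq.entropy}, which the gluing argument bypasses entirely. Your other preliminary observations (local minimality from $\pi_n(\N)=0$ so that Theorem \ref{th.SUregularity}(a) applies, and $\beta_p\to\beta_\gamma$) are correct and are implicitly needed by the paper as well.

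One caveat on your entropy verification: Step 1 of Lemma \ref{lemma.minmax} bounds $\partial_pE_{p,\delta_0}(v)$ for maps $v$ satisfying the two-sided condition \eqref{eq.minmax1}, in particular the upper bound $E_{p_k,\delta_0}(v)\le\beta_{p_k,\delta_0}+(p_k-p)$ with $p_k>p$; a minimizer $u_p$ at level $p$ does not automatically satisfy this for larger exponents, so the argument does not transcribe verbatim. For minimizers one should approach $p$ from \emph{below}: with $q_k\nearrow p$ one has $E_{q_k,0}(u_p)\ge\beta_{q_k}$ and $E_{p,0}(u_p)=\beta_p$, whence $\tfrac{\beta_p-\beta_{q_k}}{p-q_k}\ge\tfrac{E_{p,0}(u_p)-E_{q_k,0}(u_p)}{p-q_k}\ge\partial_qE_{q,0}\big|_{q=q_k'}(u_p)$ for some $q_k'\in(q_k,p)$, and letting $q_k\nearrow p$ gives $\partial_pE_{p,0}(u_p)\le\beta'(p)$ at every differentiability point, after which your conclusion via \eqref{eq.minmax0} goes through. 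This is a repairable detail (and your fallback of quoting the unconditional energy identity for homotopy-class minimizers also closes it), but as written the direction of the comparison is the wrong one.
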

\begin{proof}
Consider for any $p>n$ and $\delta \in (0,1)$ minimizers $u_{p,\delta}$ of $E_{p,\delta}$ in a given homotopy class $\gamma=:[u_0] \in C^0(M;N)$. By equation \eqref{eq.boundmin}, they have uniformly bounded energies, hence by Theorem \ref{th.quantization}, we can extract a sequence $u_k:=u_{p_k,\delta_k}$ converging to a "bubble tree" $u_\infty \cup \set{\omega^{i,j}}$. In order to exclude the possibility of bubbling, we modify the maps $u_k$ near every possible bubbling point by gluing them to the map $u_\infty$. By the topological assumption $\pi_n(N)=0$, the newly constructed maps $\tilde{u_k}$ are still in the homotopy class $[u_0]$. Moreover, this procedure can be done with an energy error as small as we wish, so that by the minimizing property of $u_k$, we deduce that also the $u_k$ have small energy around the bubbling point, which means that the bubbles are not forming. In particular, the $u_k$ must be "regularly" converging to a minimizer as required.
\end{proof}
\begin{lemma}[Decomposition - Lemma 5.4 in \cite{sac}]\label{lemma.decomposition}
For an arbitrary homotopy class $\gamma \in \pi_n(N)$ the following duality holds: either $\gamma$ contains a minimizing $n$-harmonic map or for any $\eta>0$ there exist two non-trivial homotopy classes $0 \neq \gamma_1,\gamma_2 \in \pi_n(N)$ such that $\gamma \subset \gamma_1 + \gamma_2$ and $\inf_{\gamma_1} D_n+ \inf_{\gamma_2} D_n < \inf_{\gamma} D_n + \eta$.
\end{lemma}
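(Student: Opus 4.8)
The plan is to run the two-dimensional argument of Sacks--Uhlenbeck essentially verbatim, feeding the quantization Theorem \ref{th.quantization} into the bubbling dichotomy. For $p\in(n,P_0)$ and $\delta\in(0,1)$ I would first select minimizers $u_{p,\delta}$ of $E_{p,\delta}$ among the $W^{1,p}(S^n;N)$-maps in the class $\gamma$; these exist by the Palais--Smale theory recalled in Section \ref{sec.preliminary} (since $E_{p,\delta}$ attains its minimum on each connected component of $W^{1,p}(S^n;N)$), they are smooth by Corollary \ref{cor.uniqueness2}, they still represent $\gamma$ because $W^{1,p}\hookrightarrow C^0$ for $p>n$, and by \eqref{eq.boundmin} their energies are bounded uniformly as $p\searrow n$, $\delta\searrow 0$. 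Moreover, combining the lower bound in \eqref{eq.elementary1} with $D_n(u_{p,\delta})\ge\inf_\gamma D_n$, and the convergence $E_{p,\delta}(u_0)\to D_n(u_0)$ on a fixed smooth competitor (via \eqref{eq.elementary3}), one obtains $\lim_{p\searrow n,\,\delta\searrow 0}\min_\gamma E_{p,\delta}=\inf_\gamma D_n$.

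Next I would apply Theorem \ref{th.quantization} to a subsequence $u_k:=u_{p_k,\delta_k}$; the entropy bound \eqref{eq.entropy} needed there is automatic for minimizers, being a direct consequence of the monotonicity of $p\mapsto\min_\gamma E_{p,\delta}$ and the integrability of its derivative (the same application of Struwe's monotonicity trick as in the proof of Lemma \ref{lemma.minmax}, after a diagonal choice in $\delta$). The theorem produces a bubble tree: $u_k\to u_\infty$ in $C^{1,\alpha_1}_{loc}$ away from finitely many points, with $u_\infty\in C^{1,\alpha_1}(S^n;N)$ an $n$-harmonic map, finitely many non-constant $n$-harmonic bubbles $\omega^{i,j}\in C^{1,\alpha_1}(S^n;N)$, the energy identity $\inf_\gamma D_n=D_n(u_\infty)+\sum_{i,j}D_n(\omega^{i,j})$, and --- by Duzaar--Kuwert \cite{duz1} --- the homotopy decomposition $\gamma=[u_\infty]+\sum_{i,j}[\omega^{i,j}]$ in $\pi_n(N)$, which is abelian as $n\ge 2$.

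The dichotomy is then pure bookkeeping. If no bubble forms, then $u_k\to u_\infty$ in $C^{1,\alpha_1}(S^n;N)$, so $[u_\infty]=\gamma$ and $D_n(u_\infty)=\inf_\gamma D_n$, i.e.\ $\gamma$ contains the minimizing $n$-harmonic map $u_\infty$. If a bubble forms, at least one bubble $\omega$ is \emph{not} null-homotopic: if all $[\omega^{i,j}]$ vanished we would get $[u_\infty]=\gamma$, hence $D_n(u_\infty)\ge\inf_\gamma D_n$, forcing every $D_n(\omega^{i,j})=0$, contradicting non-triviality. Put $\gamma_2:=[\omega]\neq 0$ and $\gamma_1:=\gamma-\gamma_2$. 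If $\gamma_1=0$ then $\omega$, regarded as a map $S^n\to N$, lies in $\gamma$ and satisfies $D_n(\omega)\le\inf_\gamma D_n$ by the energy identity and $D_n(\omega)\ge\inf_\gamma D_n$ by definition of the infimum, so $\omega$ is itself a minimizing $n$-harmonic map in $\gamma$. Otherwise $0\neq\gamma_1,\gamma_2$ with $\gamma\subset\gamma_1+\gamma_2$; writing $\gamma_1=[u_\infty]+\sum'[\omega^{i,j}]$ (sum over the remaining bubbles) and using the subadditivity $\inf_{\alpha+\beta}D_n\le\inf_\alpha D_n+\inf_\beta D_n$ --- a standard connected-sum construction whose transition cost is made arbitrarily small by conformal invariance of $D_n$ --- one gets $\inf_{\gamma_1}D_n\le D_n(u_\infty)+\sum'D_n(\omega^{i,j})=\inf_\gamma D_n-D_n(\omega)$ and $\inf_{\gamma_2}D_n\le D_n(\omega)$, hence $\inf_{\gamma_1}D_n+\inf_{\gamma_2}D_n\le\inf_\gamma D_n<\inf_\gamma D_n+\eta$.

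The analytic substance here is entirely carried by Theorems \ref{th.SUregularity} and \ref{th.quantization}, so the main point to be careful about is the combinatorial step of the last paragraph: isolating a non-null-homotopic bubble, checking that the degenerate branch $\gamma_1=0$ already yields a minimizer (rather than a genuine splitting), and the (routine but slightly delicate) sub-additivity estimate for $\inf_\bullet D_n$ under addition of homotopy classes.
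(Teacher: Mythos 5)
Your proposal is correct in substance and follows the same skeleton as the paper (minimize $E_{p,\delta}$ in $\gamma$, pass to the bubble-tree limit via Theorem \ref{th.quantization}, and dichotomize on whether a bubble forms), but the splitting step is organized differently. The paper never invokes the full energy identity nor the Duzaar--Kuwert homotopy decomposition here: at a single bubbling point $x_1$ it builds two explicit competitors $u^1_k$ (equal to a glued-in copy of $u_\infty$ on $B_\rho(x_1)$ and to $u_k$ outside) and $u^2_k$ (equal to $u_k$ on $B_\rho(x_1)$ and to $\tilde u_k\circ\iota$ outside, $\iota$ the conformal reflection fixing $\partial B_\rho(x_1)$), so that $[u_k]\in[u^1_k]+[u^2_k]$, and reads off both the energy inequality and the non-triviality of the two classes from direct energy estimates on these maps, with the degenerate branch ($u_\infty$ constant, $[u^1_k]=0$) resolved exactly as in your $\gamma_1=0$ case by observing that the rescalings then converge to the bubble, which is a minimizer in $\gamma$. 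You instead take the decomposition $\gamma=[u_\infty]+\sum_{i,j}[\omega^{i,j}]$ from \cite{duz1} together with the energy identity, isolate a homotopically non-trivial bubble, and conclude via the subadditivity $\inf_{\alpha+\beta}D_n\le\inf_\alpha D_n+\inf_\beta D_n$. This is a legitimate and arguably cleaner packaging (and your treatment of the entropy hypothesis for minimizers via the monotonicity trick is more explicit than the paper's), but be aware that the subadditivity you label as standard is precisely where the content of the paper's proof lives: proving it requires the same cut-and-paste with conformal concentration and an interpolation annulus of vanishing $D_n$-cost that the paper performs on $u^1_k,u^2_k$. So you have not avoided that construction, only relocated it; to make the argument self-contained you should either prove the gluing estimate (continuity of representatives plus conformal invariance makes the transition cost $\mathfrak{o}(\rho)$) or cite it from \cite{duz1}, where it appears as part of the homotopy-decomposition machinery.
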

\begin{proof}
Consider maps $u_{p,\delta}$ minimizing $E_{p,\delta}$ in the class $\gamma$, and apply Theorem \ref{th.quantization} after extracting a subsequence to get convergence to a bubble tree $u_{\infty} \cup \set{\omega^{i,j}}$. If no bubbles are forming, then $[u_\infty] \in \gamma$ is a $D_n$-minimizer. Otherwise there exists at least a bubbling point $x_1$. We consider around $x_1$ the same glued maps $\tilde{u}_k$ constructed above, and set
\begin{equation*}
u^1_k(x):=
\begin{cases}
\tilde{u}_k(x) \ \ \text{on } B_\rho(x_1)\\
u_k(x) \ \ \text{on } S^n \setminus B_\rho(x_1).
\end{cases}
\end{equation*}
Notice that this map may not belong to the homotopy class $\gamma$, however if we set
\begin{equation*}
u^2_k(x):=
\begin{cases}
u_k(x) \ \ \text{on } B_\rho(x_1)\\
\tilde{u}_k \circ \iota (x) \ \ \text{on } S^n \setminus B_\rho(x_1),
\end{cases}
\end{equation*}
where $\iota$ is the conformal reflection leaving the boundary of $B_\rho(x_1)$ fixed, then we have $[u_k] \in [u^1_k] + [u^2_k]$. The inequality on the $\inf D_n$ follows by possibly reducing the radius $\rho$ as done in Theorem \ref{th.nobubbles} above. The non-triviality of $[u^2_k]$ can be seen easily as $E_{p_k,\delta_k}(u^2_k) \ge E_{p_k,\delta_k}(u_k;B_\rho(x_1)) \ge \e_0>0$ (the maps are forming a bubble at $x_1$). On the other hand, we have
\begin{equation*}
E_{p_k,\delta_k}(u^1_k) \ge E_{p_k,\delta_k}(u_k;S^n\setminus B_\rho(x_1)) \ge D_n(u_\infty;S^n) - \mathfrak{o}(\rho).
\end{equation*}
We now have several cases. If $u_\infty$ is not trivial this can be bounded from below by $\tfrac{\e_0}{2}$ for $\rho$ small enough. If $u_\infty$ is a constant map, then either there is another bubbling point $x_2$ so $[u^1_k]$ is not trivial, or else the convergence of $u^1_k$ to this constant is $C^{1,\alpha}$-regular. In particular, $[u^1_k]$ is trivial and $[u_k^2]=\gamma$; in this case, the rescaled maps $u_k^2(r_k x)=u_k(r_k x)$ $C^{1,\alpha}$-converge around $x_1$ to the non-trivial bubble $\omega^1$ which is therefore a $D_n$-minimizer in $\gamma$.
\end{proof}
We explicitly remark that the third case treated above can appear when considering the classical example of bubbling sequence of $\alpha$-harmonic maps from $S^2$ into itself, formed via the action of the M\"obius group.
\begin{theorem}[Generating set]\label{th.generating}
There exists a subset of homotopy classes $\Lambda \subset \pi_n(N)$ generating the full group $\pi_n(N)$, and such that each homotopy class $\lambda \in \Lambda$ contains a minimizing $n$-harmonic map.
\end{theorem}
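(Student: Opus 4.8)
The plan is to deduce Theorem \ref{th.generating} from the dichotomy of Lemma \ref{lemma.decomposition} by a minimal-counterexample argument, exactly as in \cite{sac}. Set
\[
\Lambda:=\set{\gamma\in\pi_n(N)\mid \gamma\text{ contains a minimizing }n\text{-harmonic map}},
\]
and let $G:=\langle\Lambda\rangle\subseteq\pi_n(N)$ be the subgroup generated by $\Lambda$; the trivial class is represented by a constant map, so $0\in\Lambda\subseteq G$. The goal is to prove that $G=\pi_n(N)$.

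First I would record the energy gap: there is a constant $\e_*>0$ such that $\inf_\gamma D_n\ge\e_*$ for every non-trivial $\gamma\in\pi_n(N)$. For fixed $\gamma$ and parameters $p\in(n,P_0)$, $\delta\in(0,1)$ the functional $E_{p,\delta}$ attains its minimum on the component of $W^{1,p}(S^n;N)$ corresponding to $\gamma$; such a minimizer $u_{p,\delta}$ is in particular locally minimizing, so Theorem \ref{th.SUregularity}, point $(a)$, and hence Corollary \ref{cor.trivial} apply to it. Comparing $u_{p,\delta}$ with a smooth representative of $\gamma$ and using \eqref{eq.elementary3} gives $\limsup_{p\searrow n,\,\delta\searrow0}E_{p,\delta}(u_{p,\delta};S^n)\le\inf_\gamma D_n$. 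Hence if $\inf_\gamma D_n<\e_0$ (the constant of Theorem \ref{th.SUregularity}), then $E_{p,\delta}(u_{p,\delta};S^n)<\e_0$ for $p$ near $n$ and $\delta$ near $0$, and Corollary \ref{cor.trivial} forces $u_{p,\delta}$ to be constant, contradicting the non-triviality of $\gamma$; so $\e_*:=\e_0$ works.

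Now suppose, towards a contradiction, that $G\subsetneq\pi_n(N)$, and put
\[
\mu:=\inf\set{\inf_\gamma D_n\mid \gamma\in\pi_n(N)\setminus G},
\]
which is finite, since $\pi_n(N)\setminus G\neq\emptyset$ and every homotopy class admits a smooth representative of finite $n$-energy, and satisfies $\mu\ge\e_*>0$ by the gap. Choose $\gamma\in\pi_n(N)\setminus G$ with $\inf_\gamma D_n<\mu+\tfrac{\e_*}{2}$. As $\gamma\notin\Lambda$, Lemma \ref{lemma.decomposition} applied with $\eta=\tfrac{\e_*}{2}$ produces non-trivial $\gamma_1,\gamma_2\in\pi_n(N)$ with $\gamma=\gamma_1+\gamma_2$ and $\inf_{\gamma_1}D_n+\inf_{\gamma_2}D_n<\inf_\gamma D_n+\tfrac{\e_*}{2}<\mu+\e_*$. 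Combined with $\inf_{\gamma_i}D_n\ge\e_*$ for $i=1,2$, this yields $\inf_{\gamma_1}D_n<\mu$ and $\inf_{\gamma_2}D_n<\mu$, so by the definition of $\mu$ both $\gamma_1$ and $\gamma_2$ belong to $G$. Since $G$ is a subgroup, $\gamma=\gamma_1+\gamma_2\in G$, a contradiction. Therefore $G=\pi_n(N)$, i.e.\ $\Lambda$ generates $\pi_n(N)$, which proves the theorem.

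Once Lemma \ref{lemma.decomposition} is available, the combinatorial core of the argument is essentially formal, so the main point requiring care is the energy gap. There the delicate step is the upper bound $\limsup_{p\searrow n,\,\delta\searrow0}E_{p,\delta}(u_{p,\delta};S^n)\le\inf_\gamma D_n$, which one should obtain by reducing to smooth comparison maps in $\gamma$, applying \eqref{eq.elementary3} to those, and invoking the minimality of $u_{p,\delta}$ in its connected component; this is exactly what drives the energies of the minimizers in a low-energy class below the threshold of Corollary \ref{cor.trivial}.
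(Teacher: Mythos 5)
Your proof is correct and rests on the same two ingredients as the paper's: the dichotomy of Lemma \ref{lemma.decomposition} and the energy gap $\inf_\gamma D_n\ge\e_0$ for non-trivial classes coming from Corollary \ref{cor.trivial}. The only (cosmetic) difference is that you close the argument in one step by taking the infimum of $\inf_\gamma D_n$ over classes outside the generated subgroup, whereas the paper iterates the decomposition to drive the energy below $\e_0$ after finitely many steps; both descents are equivalent, and your explicit justification of the gap via comparison maps and \eqref{eq.elementary3} matches how the paper uses it elsewhere.
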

\begin{proof}
Let $\Lambda\subset \pi_n(N)$ be the subset of homotopy classes containing minimizers of $D_n$. Denote the subgroup it generates by $P:=<\Lambda>$ and suppose that $P$ is a proper subgroup of $\pi_n(N)$. Then there must exist a class $\gamma \in \pi_n(N) \setminus P$. This cannot contain minimizing $n$-harmonic maps, so there must exist non-trivial homotopy classes $\gamma_1$ and $\gamma_2$ such that $\gamma \subset \gamma_1 + \gamma_2$ and such that $\inf_{\gamma_1} D_n+ \inf_{\gamma_2} D_n < \inf_{\gamma} D_n + \tfrac{\e_0}{2}$ by the Lemma above (choosing $\eta=\tfrac{\e_0}{2}$). It cannot be that both the homotopy classes $\gamma_1$ and $\gamma_2$ belong to $P$, otherwise also $\gamma$ would by the subgroup property of $P$. Similarly, at least one of them cannot contain minimizing $n$-harmonic maps so we can iterate this procedure through Lemma \ref{lemma.decomposition} above. At any step, the homotopy classes involved are not trivial, so they must satisfy (for example at the first step) $\inf_{\gamma_i} D_n>\e_0$ by Corollary \ref{cor.trivial} and hence $\inf_{\gamma_i} D_n < \inf_{\gamma} D_n - \tfrac{\e_0}{2}$. After finitely many steps we obtain a class $\gamma_K$ which must be non-trivial by Lemma \ref{lemma.decomposition} but also satisfy $\inf_{\gamma_K} D_n < \e_0$, a contradiction to Corollary \ref{cor.trivial}.
\end{proof}
Finally, we remark that the statement of Theorem \ref{th.existence} is naturally graded, leading to further existence results.
\begin{corollary}\label{cor.graded}
For a fixed dimension $n\ge 2$, suppose $(\N,h)$ is a closed Riemannian manifold such that Theorem \ref{th.existence} applies. Then for any $2 \le i <n$ there exists also a non-trivial, $C^{1,\alpha}$-regular, $i$-harmonic map $u:(S^i,g_{round}) \rightarrow (\N,h)$.
\end{corollary}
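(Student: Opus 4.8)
The corollary is a bookkeeping consequence of the fact that every hypothesis feeding into Theorem \ref{th.existence} is inherited by all smaller domain dimensions. The plan is: (1) transfer the topological assumption from $n$ to $i$; (2) re-run the existence scheme with $n$ replaced by $i$ in the homogeneous case; (3) in the remaining case $n=3$, note that $i=2$ forces the whole machinery to collapse onto the classical Sacks--Uhlenbeck theory, where no analytic input beyond \cite{sac} is required.

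\textbf{Step 1: inheritance of the topological condition.} Since $\pi_{n+k}(\N)\neq 0$ for some $k$, set $k':=n+k-i$; then $k'\ge n-i\ge 1$, so $\pi_{i+k'}(\N)=\pi_{n+k}(\N)\neq 0$. Consequently, building the functionals $E_{p,\delta}$ over the domain $S^i$ (all the constructions of Section \ref{sec.regularity} and the variational theory of Section \ref{sec.preliminary} go through verbatim with $n\rightsquigarrow i$, as $i\ge 2$), the evaluation-fibration splitting \eqref{eq.homotopysplitting} and Theorem \ref{th.SU2.8} give, for all $p\in(i,P_0)$ and $\delta\in(0,1)$, a non-trivial critical point $u_{p,\delta}\colon S^i\to\N$ of $E_{p,\delta}$, which by Corollary \ref{cor.trivial} satisfies $E_{p,\delta}(u_{p,\delta})\ge\e_0>0$.

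\textbf{Step 2: the homogeneous case.} If $h$ is a homogeneous left-invariant metric, Theorem \ref{th.SUregularity}(b) applies with domain $S^i$ (the Toro--Wang/Hardy--BMO argument is insensitive to the domain dimension), so for sequences $p_k\searrow i$, $\delta_k\searrow 0$ we may invoke Theorem \ref{th.quantization}. As in the proof of Theorem \ref{th.existence}(ii) reproduced above, the uniform lower bound $E_{p_k,\delta_k}(u_k)\ge\e_0$ passes to the bubble tree, so either the base map $u_i\in C^{1,\alpha_1}(S^i;\N)$ is non-trivial, or some bubble $\omega\in C^{1,\alpha_1}(S^i;\N)$ with $D_i(\omega,S^i)\ge\e_0$ appears; in both situations we obtain the asserted non-trivial $C^{1,\alpha}$ $i$-harmonic sphere.

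\textbf{Step 3: the case $n=3$.} Here $2\le i<3$ forces $i=2$, and we must produce a non-trivial harmonic $S^2\to(\N,h)$ for an arbitrary metric $h$, knowing $\pi_{2+(k+1)}(\N)\neq 0$. This is exactly Theorem $5.7$ of \cite{sac}, which we may cite directly. Alternatively, one checks that for domain dimension $2$ the operator $\mathcal{L}_u=\mathcal{A}_{2,u}$ reduces to the Laplacian (the coefficient $n-2$ vanishes), equation \eqref{eq.ELEpdeltaSU} becomes \eqref{eq.ELSU}, Cordes' condition is trivially met, and the degeneracy issues disappear; hence the analogue of Theorem \ref{th.SUregularity}(c) for $n=2$ is precisely Proposition $3.1$ of \cite{sac}, and the argument of Theorem \ref{th.existence}(ii) applies unchanged to yield a non-trivial, smooth (in particular $C^{1,\alpha}$) harmonic map $S^2\to(\N,h)$.

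\textbf{Main obstacle.} There is no genuine analytic difficulty; the only point demanding care is verifying in Step 3 that re-running Sections \ref{sec.regularity}--\ref{sec.applications} with domain dimension $2$ in place of the standing $n\ge 2$ is legitimate. This is the same remark already made in the Introduction (part (ii) of Theorem \ref{th.existence} restricted to $n=2$ recovers \cite{sac}), and requires no new estimates.
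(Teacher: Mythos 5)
Your proposal is correct and follows the same route as the paper: the key observation in both is the reindexing $\pi_{i+(n-i+k)}(\N)=\pi_{n+k}(\N)\neq 0$, after which Theorem \ref{th.existence} (equivalently, the whole existence machinery) is applied with $i$ in place of $n$. Your Step 3 merely makes explicit a point the paper leaves implicit, namely that the case $n=3$, $i=2$ with arbitrary metric falls back on the classical Sacks--Uhlenbeck theorem rather than on part (ii) of Theorem \ref{th.existence} as literally stated.
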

\begin{proof}
It suffices to notice that for any $2\le i<n$ we have $\pi_{i+(n-i+k)}(\N)=\pi_{n+k}(\N)$ which is assumed to be a non-trivial group for some $k\in \mathbb{N}$ in the hypotheses of Theorem \ref{th.existence}. Therefore we can apply Theorem \ref{th.existence} with respect to the dimension $i$.
\end{proof}
Even though its proof is quite easy, this statement is not trivial, since we cannot compose the $n$-harmonic map of Theorem \ref{th.existence} with the totally geodesic embedding of $S^i \hookrightarrow S^n$ to get a $i$-harmonic map since the latter is not a $n$-harmonic morphism.

\subsection{Null-homotopic examples - three dimensional case}
In this subsection, we give a list of explicit targets verifying $\pi_3(\N)=0$ but $\pi_{3+k}(\N)\neq 0$ for some $k\in \mathbb{N}$, and a list of topological operations under which this class is stable, proving Theorem \ref{th.infinite} in the three-dimensional case. We will always assume all the manifolds to be connected. Let us define the class of valid targets (in arbitrary dimension $n \ge 2$)
\begin{equation*}
\mathcal{C}_n:= \set{\N \mid \pi_n(\N)=0,\  \pi_{n+k}(\N)\neq 0 \text{ for some } k\in \mathbb{N}}.
\end{equation*}

For $n=3$, the targets may be endowed with any arbitrary metrics $h$, so we just focus on the topological properties of them.

The following targets belong certainly to the class $\mathcal{C}_n$: $S^{\ell}$ with $\ell > n$; $\mathbb{P}^\ell(\R)$ for $\ell > n$; $\mathbb{P}^\ell(\mathbb{C})$ for $2\ell+1 >n$.
In the following we are going to prove the stability of the class $\mathcal{C}_n$ under some topological operations; we are going to prove them with respect to a generic dimension $n \ge 2$ in the hope to remove the homogeneity of the Riemannian metrics $h$ assumed in point $(i)$ of Theorem \ref{th.existence} in the future. 
\begin{lemma}[Stability under aspherical products]\label{lemma.aspherical}
If $\N \in \mathcal{C}_n$ and $\N'$ is an aspherical closed manifold, that is $\pi_i(\N')=0$ for all $i \ge 2$, the closed manifold $\N \times \N'$ belongs to $\mathcal{C}_n$ as well.
\end{lemma}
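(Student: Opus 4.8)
The plan is to compute the homotopy groups of the product $\N \times \N'$ directly using the classical fact that homotopy groups commute with finite products: for every $i \ge 1$ one has $\pi_i(\N \times \N') \cong \pi_i(\N) \oplus \pi_i(\N')$, with the isomorphism induced by the two projections. Since $\N'$ is aspherical, $\pi_i(\N') = 0$ for all $i \ge 2$, and in particular for all $i \ge n$ (recall $n \ge 2$). Hence $\pi_n(\N \times \N') \cong \pi_n(\N) \oplus \pi_n(\N') = 0 \oplus 0 = 0$, using $\pi_n(\N) = 0$ from the hypothesis $\N \in \mathcal{C}_n$. On the other hand, picking $k \in \mathbb{N}$ with $\pi_{n+k}(\N) \neq 0$, we get $\pi_{n+k}(\N \times \N') \cong \pi_{n+k}(\N) \oplus \pi_{n+k}(\N') = \pi_{n+k}(\N) \oplus 0 \neq 0$. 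Finally $\N \times \N'$ is a closed manifold as a product of closed manifolds, so it lies in $\mathcal{C}_n$.

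First I would recall the product formula for homotopy groups and note it applies here since both factors are (path-connected) manifolds and the base point can be chosen as a pair of base points. Then I would dispose of the two conditions defining $\mathcal{C}_n$ in turn: vanishing of $\pi_n$ follows from the two separate vanishing statements (one being the asphericity of $\N'$, valid because $n \ge 2$), and non-vanishing of some $\pi_{n+k}$ is inherited from the $\N$-factor, again using asphericity to kill the $\N'$-summand in degree $n+k \ge n \ge 2$. The closedness of the product is immediate.

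There is essentially no obstacle here: the only point requiring a word of care is that one must use $n \ge 2$ so that asphericity of $\N'$ actually forces $\pi_n(\N') = 0$ and $\pi_{n+k}(\N') = 0$; for $n = 1$ the statement would be false in general (e.g. $\N' = S^1$ has $\pi_1 \neq 0$). Since the standing convention throughout this subsection is $n \ge 2$, this is automatic.

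\begin{proof}
Since $\N$ and $\N'$ are path-connected, for any choice of base points the projections induce a natural isomorphism
\begin{equation*}
\pi_i(\N \times \N') \cong \pi_i(\N) \oplus \pi_i(\N'), \qquad \forall i \ge 1.
\end{equation*}
As $n \ge 2$ and $\N'$ is aspherical, we have $\pi_i(\N') = 0$ for all $i \ge 2$, in particular for all $i \ge n$. Combining this with $\pi_n(\N) = 0$, which holds because $\N \in \mathcal{C}_n$, we obtain
\begin{equation*}
\pi_n(\N \times \N') \cong \pi_n(\N) \oplus \pi_n(\N') = 0.
\end{equation*}
Again from $\N \in \mathcal{C}_n$, there exists $k \in \mathbb{N}$ with $\pi_{n+k}(\N) \neq 0$; since $n+k \ge n \ge 2$, asphericity gives $\pi_{n+k}(\N') = 0$, hence
\begin{equation*}
\pi_{n+k}(\N \times \N') \cong \pi_{n+k}(\N) \oplus \pi_{n+k}(\N') = \pi_{n+k}(\N) \neq 0.
\end{equation*}
Finally, $\N \times \N'$ is a closed manifold, being the product of two closed manifolds. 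Therefore $\N \times \N' \in \mathcal{C}_n$.
\end{proof}
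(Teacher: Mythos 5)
Your proof is correct and follows exactly the same route as the paper's: apply the product formula $\pi_i(\N\times\N')\cong\pi_i(\N)\oplus\pi_i(\N')$ and use asphericity of $\N'$ to reduce to the homotopy groups of $\N$ in all degrees $i\ge 2$. The extra remarks on why $n\ge 2$ matters and on closedness of the product are harmless and only make the argument more explicit.
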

\begin{proof}
Indeed from Proposition 4.2 in \cite{hat} we have for all $i$
\begin{equation*}
\pi_i(\N \times \N')=\pi_i(\N) \times \pi_i(\N').
\end{equation*}
By our assumption on $\N'$, we deduce that for all $i \ge 2$
\begin{equation*}
\pi_i(\N \times \N') =\pi_i(\N),
\end{equation*}
hence we conclude.
\end{proof}
The same proof yields the following result.
\begin{lemma}[Stability under product]\label{lemma.product}
For any $\N_1,\N_2 \in \mathcal{C}_n$, then also the closed manifold $\N_1 \times \N_2 \in \mathcal{C}_n$.
\end{lemma}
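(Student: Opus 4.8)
The plan is to repeat the argument of Lemma~\ref{lemma.aspherical} almost verbatim, the only difference being that now \emph{both} factors have trivial $n$-th homotopy group. First I would invoke the standard fact that homotopy groups commute with finite products (Proposition~4.2 in \cite{hat}): for every $i$,
\begin{equation*}
\pi_i(\N_1 \times \N_2) = \pi_i(\N_1) \times \pi_i(\N_2).
\end{equation*}
Since a product of closed manifolds is again a closed manifold, $\N_1 \times \N_2$ is an admissible target.

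Next I would specialise this isomorphism. Taking $i = n$ and using $\pi_n(\N_1) = \pi_n(\N_2) = 0$, which holds because $\N_1, \N_2 \in \mathcal{C}_n$, yields $\pi_n(\N_1 \times \N_2) = 0$. To produce a non-vanishing higher homotopy group, I would fix $k \in \mathbb{N}$ with $\pi_{n+k}(\N_1) \neq 0$ (such a $k$ exists by $\N_1 \in \mathcal{C}_n$) and note that
\begin{equation*}
\pi_{n+k}(\N_1 \times \N_2) = \pi_{n+k}(\N_1) \times \pi_{n+k}(\N_2) \neq 0,
\end{equation*}
since already the first factor is non-trivial. Together with the previous display this shows $\N_1 \times \N_2 \in \mathcal{C}_n$.

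I do not expect any genuine obstacle: the statement is an immediate consequence of the product formula for homotopy groups, exactly as in Lemma~\ref{lemma.aspherical}. The only subtlety, and it is a trivial one, is that one should derive the non-vanishing from a single factor, since a priori the dimension $n+k$ realising $\pi_{n+k}(\N_1) \neq 0$ need not coincide with the one realising $\pi_{n+k'}(\N_2) \neq 0$; there is no need to combine the two contributions.
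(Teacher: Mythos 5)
Your proposal is correct and coincides with the paper's argument, which simply reuses the product formula $\pi_i(\N_1\times\N_2)=\pi_i(\N_1)\times\pi_i(\N_2)$ from the proof of Lemma \ref{lemma.aspherical}. Your remark that the non-vanishing degree need only be witnessed by one factor is the right (and only) point to note.
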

Under some strong hypothesis, we can also connect two such targets.
\begin{lemma}[Stability under connected-sum]\label{lemma.connectedsum}
Suppose two elements $\N_1^\ell,\N_2^\ell \in \mathcal{C}_n$ are both $n$-connected, that is $\pi_i(\N_1)=\pi_i(\N_2)=0$ for all $i=0,...,n$, and have the same dimension $\ell \ge n+2$. Then also $\N_1\# \N_2 \in \mathcal{C}_n$.
\end{lemma}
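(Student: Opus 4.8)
The plan is to verify the two defining conditions of $\mathcal{C}_n$ for $\N_1 \# \N_2$ separately: that it is $n$-connected (which in particular yields $\pi_n = 0$), and that some higher homotopy group is non-trivial.

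First I would set up the connected sum as $\N_1 \# \N_2 = A \cup B$, where $A := \N_1 \setminus \mathrm{int}(D_1)$ and $B := \N_2 \setminus \mathrm{int}(D_2)$ for small closed balls $D_j \subset \N_j$, with $A \cap B$ deformation retracting onto $S^{\ell-1}$, and establish a puncturing computation: from the long exact sequence of the pair $(\N_j, \N_j \setminus \{pt\})$ together with excision, $H_i(\N_j, \N_j \setminus \{pt\}) \cong H_i(\R^\ell, \R^\ell \setminus \{0\}) \cong \tilde{H}_{i-1}(S^{\ell-1})$, which vanishes for $i \neq \ell$; since $A$ is a deformation retract of $\N_j \setminus \{pt\}$, this gives $H_i(A) \cong H_i(\N_j)$ for all $i \le \ell - 2$, and likewise for $B$. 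Because $\N_j$ is $n$-connected and $\ell \ge n+2$, I then get $\tilde{H}_i(A) = \tilde{H}_i(B) = 0$ for $1 \le i \le n$.

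Next I would run reduced Mayer--Vietoris for the cover $\{A, B\}$: for $1 \le i \le n$ all of $\tilde{H}_i(S^{\ell-1})$, $\tilde{H}_i(A) \oplus \tilde{H}_i(B)$ and $\tilde{H}_{i-1}(S^{\ell-1})$ vanish (using $i \le n \le \ell - 2 < \ell - 1$), so $\tilde{H}_i(\N_1 \# \N_2) = 0$ for $1 \le i \le n$. Van Kampen --- with $\pi_1(S^{\ell-1}) = 0$ since $\ell - 1 \ge n+1 \ge 3$, and $\pi_1(\N_j) = 0$ since $n \ge 2$ --- shows $\N_1 \# \N_2$ is simply connected, so by the Hurewicz theorem the vanishing of $\tilde{H}_i$ for $i \le n$ upgrades to $n$-connectedness; in particular $\pi_n(\N_1 \# \N_2) = 0$.

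Finally I would produce a non-trivial higher homotopy group. Since $\N_1 \# \N_2$ is a closed manifold of positive dimension $\ell$ it has $H_\ell(\N_1 \# \N_2; \mathbb{Z}/2) \neq 0$, hence by universal coefficients $\tilde{H}_*(\N_1 \# \N_2; \mathbb{Z}) \neq 0$; letting $m \ge 1$ be the least degree with $\tilde{H}_m(\N_1 \# \N_2) \neq 0$, the previous step forces $m \ge n+1$, and simple connectivity plus Hurewicz give $\pi_m(\N_1 \# \N_2) \cong H_m(\N_1 \# \N_2) \neq 0$, that is $\pi_{n+k'}(\N_1 \# \N_2) \neq 0$ with $k' := m - n \ge 1$, so $\N_1 \# \N_2 \in \mathcal{C}_n$. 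I do not anticipate a genuine obstacle; the one point needing care is the puncturing identification $H_i(A) \cong H_i(\N_j)$, which is precisely where the hypotheses $\ell \ge n+2$ and the $n$-connectedness of \emph{both} summands are used --- if $\ell$ were only $n+1$, the sphere $S^{\ell-1} = S^n$ along which we glue would already contribute in degree $n$ and the argument would fail.
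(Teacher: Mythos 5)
Your proof is correct, and it reaches the conclusion by a genuinely different route than the paper. The paper works directly in homotopy: it uses the cofibration $S^{\ell-1}\to(\N_1\setminus\{y_1\})\vee(\N_2\setminus\{y_2\})\to\N_1\#\N_2$, the Blakers--Massey excision theorem to obtain a long exact sequence of homotopy groups in the range $i\le \ell-2+n$, and the connectivity of the wedge inclusion to identify $\pi_i$ of the wedge with $\pi_i(\N_1)\oplus\pi_i(\N_2)$ for $i\le 2n$; it then quotes the general fact that a closed manifold cannot have all homotopy groups trivial. You instead establish $n$-connectedness homologically (local-homology puncturing, Mayer--Vietoris, van Kampen, Hurewicz) and then make the final step explicit: $H_\ell(\cdot;\mathbb{Z}/2)\neq 0$ forces a least degree $m\ge n+1$ with $\tilde H_m\neq 0$, and Hurewicz converts that into $\pi_m\neq 0$. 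Your version is more elementary --- it avoids homotopy excision entirely --- and pins down concretely which homotopy group is non-trivial; the paper's version, in exchange for the heavier machinery, actually computes $\pi_i(\N_1\#\N_2)\cong\pi_i(\N_1)\oplus\pi_i(\N_2)$ in the whole range $i\le\min\{2n,\ell+n-2\}$, which is more information than the lemma needs. The one step in your argument worth flagging is the van Kampen input: you need $\pi_1(A)\cong\pi_1(\N_j)$, which holds because removing a point from a manifold of dimension $\ell\ge 3$ does not change $\pi_1$ --- guaranteed here by $\ell\ge n+2\ge 4$ --- and your identification $H_i(A)\cong H_i(\N_j)$ for $i\le\ell-2$ correctly isolates where both hypotheses ($\ell\ge n+2$ and $n$-connectedness of both summands) enter.
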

\begin{proof}
We are going to show that $\N_1\# \N_2$ is also $n$-connected, after that we can conclude since no closed manifolds (CW-complexes) can be have all the homotopy groups trivial (combine Theorems $3.26$, $4.5$ and $4.32$ in \cite{hat}). There is cofibration
\begin{equation*}
S^{\ell-1} \rightarrow (\N_1 \setminus \set{y_1}) \vee (\N_2 \setminus \set{y_2}) \rightarrow \N_1\# \N_2,
\end{equation*}
for points $y_1 \in \N_1$ and $y_2 \in \N_2$. Blakers–Massey excision theorem (Theorem $4.23$ in \cite{hat}) implies the following exact sequence to hold
\begin{equation*}
\pi_i(S^{\ell-1}) \rightarrow \pi_i \Big( (\N_1 \setminus \set{y_1}) \vee (\N_2 \setminus \set{y_2}) \Big) \rightarrow \pi_i (\N_1\# \N_2) \rightarrow \pi_{i-1}(S^{\ell-1}) \rightarrow ...
\end{equation*}
as long as $i \le \ell -2 + n$. Moreover, the map $(\N_1 \setminus \set{y_1}) \vee (\N_2 \setminus \set{y_2}) \rightarrow \N_1\# \N_2$ is $(2 n +1)$-connected, so if $i \le 2 n$ we get
\begin{equation*}
\pi_i \Big( (\N_1 \setminus \set{y_1}) \vee (\N_2 \setminus \set{y_2}) \Big) = \pi_i(\N_1) \oplus \pi_i(\N_2).
\end{equation*}
Therefore, for any $i \le 2n$ and $n \le \ell -2$ we deduce
\begin{equation*}
\pi_i(S^{\ell-1}) \rightarrow \pi_i(\N_1) \oplus \pi_i(\N_2) \rightarrow \pi_i (\N_1\# \N_2) \rightarrow \pi_{i-1}(S^{\ell-1}) \rightarrow ...
\end{equation*}
Finally, for any $i \le n$ we know $\pi_i(S^{\ell-1})=0$, so we get the isomorphisms
\begin{equation*}
\pi_i (\N_1\# \N_2) = \pi_i(\N_1) \oplus \pi_i(\N_2)=0,
\end{equation*}
as required.
\end{proof}
\begin{remark}
Notice that the stronger assumption is necessary, as for example the manifold $\N:=(S^1)^3 \times S^4$ belongs to $\mathcal{C}_2$ thanks to Lemma \ref{lemma.aspherical}, but $\pi_2(\N \# \N)=\mathbb{Z}^3 \neq 0$. However, in order to get new examples, we can apply Lemma \ref{lemma.connectedsum} for example to glue products of the form $S^{\ell_1} \times S^{\ell_2}$ for $\ell_1,\ell_2>n$.
\end{remark}
Finally, we consider the case of $S^r$-bundles.
\begin{lemma}[Sphere bundles] \label{lemma.spherebundle}
Suppose $\N \in \mathcal{C}_n$ is the basis of a sphere bundle $p:\mathcal{E} \rightarrow \N$, where the total space is the closed manifold $\mathcal{E}$ and the fibers are spheres $S^r$ for some $r>n$. If $\pi_{r+1}(\N) \neq \mathbb{Z}$, then $\mathcal{E}$ belongs to $\mathcal{C}_n$ as well.
\end{lemma}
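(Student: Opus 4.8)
The plan is to run the long exact sequence of homotopy groups of the Serre fibration $S^r \hookrightarrow \mathcal{E} \xrightarrow{p} \N$ and read off both conditions defining $\mathcal{C}_n$. (Note first that $\mathcal{E}$ is connected, being the total space of a fibration over a connected base with connected fibre, so the homotopy groups are unambiguous.) The relevant portion of the sequence is
\[
\cdots \to \pi_i(S^r) \to \pi_i(\mathcal{E}) \to \pi_i(\N) \xrightarrow{\ \partial_i\ } \pi_{i-1}(S^r) \to \cdots .
\]
For the first condition I would show $\pi_n(\mathcal{E})=0$: since $r>n$ the sphere $S^r$ is $(r-1)$-connected, so $\pi_n(S^r)=0$, while $\pi_n(\N)=0$ by the assumption $\N\in\mathcal{C}_n$; exactness at $\pi_n(\mathcal{E})$ then squeezes it between two trivial groups.

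For the second condition I would analyse the boundary map $\partial_{r+1}\colon \pi_{r+1}(\N)\to \pi_r(S^r)\cong\mathbb{Z}$ and distinguish two cases. If $\partial_{r+1}$ is not surjective, its image is a proper subgroup of $\mathbb{Z}$, so by exactness at $\pi_r(S^r)$ the image of $\pi_r(S^r)\to\pi_r(\mathcal{E})$ is isomorphic to $\mathbb{Z}/\mathrm{im}(\partial_{r+1})\neq 0$; hence $\pi_{n+k}(\mathcal{E})=\pi_r(\mathcal{E})\neq 0$ with $k=r-n\geq 1$. If instead $\partial_{r+1}$ is surjective, then the short exact sequence $0\to\ker\partial_{r+1}\to\pi_{r+1}(\N)\to\mathbb{Z}\to 0$ splits because $\mathbb{Z}$ is free, giving $\pi_{r+1}(\N)\cong\ker\partial_{r+1}\oplus\mathbb{Z}$; the hypothesis $\pi_{r+1}(\N)\neq\mathbb{Z}$ now forces $\ker\partial_{r+1}\neq 0$. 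By exactness at $\pi_{r+1}(\N)$ the map $\pi_{r+1}(\mathcal{E})\to\pi_{r+1}(\N)$ has image exactly $\ker\partial_{r+1}$, so $\pi_{r+1}(\mathcal{E})$ surjects onto a nontrivial group and $\pi_{n+k}(\mathcal{E})=\pi_{r+1}(\mathcal{E})\neq 0$ with $k=r+1-n\geq 2$. In either case $\pi_n(\mathcal{E})=0$ while some higher homotopy group is nontrivial, that is $\mathcal{E}\in\mathcal{C}_n$.

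The argument is essentially bookkeeping with the long exact sequence and presents no analytic difficulty; the only step requiring care is the surjective case, where one has to recognise that the hypothesis $\pi_{r+1}(\N)\neq\mathbb{Z}$ is exactly what prevents $\partial_{r+1}$ from being an isomorphism $\mathbb{Z}\xrightarrow{\sim}\mathbb{Z}$ (which would collapse this step and force one to climb further up the sequence to locate a nontrivial homotopy group of $\mathcal{E}$). It is also worth noting that one never needs the precise value of $\pi_{r+1}(S^r)$: only the cokernel and kernel of $\partial_{r+1}$ enter, together with the freeness of $\mathbb{Z}$.
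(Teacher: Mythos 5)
Your proof is correct and uses the same essential tool as the paper, namely the long exact homotopy sequence of the Serre fibration $S^r\hookrightarrow\mathcal{E}\to\N$, with the vanishing of $\pi_n(\mathcal{E})$ obtained identically. The only difference is in the second half: the paper argues by contradiction (assuming all $\pi_{n+k}(\mathcal{E})=0$ forces $\pi_{r+1}(\N)\cong\pi_r(S^r)=\mathbb{Z}$), whereas your case analysis on the boundary map $\partial_{r+1}$ is a direct, slightly more informative version of the same computation, since it pinpoints a specific nontrivial group ($\pi_r(\mathcal{E})$ or $\pi_{r+1}(\mathcal{E})$).
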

\begin{proof}
The sphere-bundle $p$ is a Serre fibration, and therefore induces a long exact sequence of homotopy groups for all $i$ (Theorem $4.41$ in \cite{hat}):
\begin{equation*}
... \rightarrow \pi_i(S^r) \rightarrow \pi_i(\mathcal{E}) \rightarrow \pi_i (\N) \rightarrow \pi_{i-1}(S^r) \rightarrow \pi_{i-1}(\mathcal{E}) \rightarrow \pi_{i-1}(\N) \rightarrow \pi_{i-2}(S^r) \rightarrow... \rightarrow \pi_0(S^r)=0.
\end{equation*}
If we restrict our attention to the level $i=n$ we get, since $\pi_n(S^r)=0$, that
\begin{equation*}
...\rightarrow 0=\pi_n(S^r) \rightarrow \pi_n(\mathcal{E}) \rightarrow \pi_n(\N)=0 \rightarrow ...,
\end{equation*}
hence $\pi_n(\mathcal{E})=0$. Assume now by contradiction that $\pi_{n+k}(\mathcal{E})=0$ for all $k \in \mathbb{N}$. Then the exact sequence ensures that
\begin{equation}\label{eq.homotopysphere}
\pi_{n+k+1} (\N) = \pi_{n+k}(S^r), \quad \forall k \in \mathbb{N}.
\end{equation}
Choosing $k=r-n$ we get the desired contradiction.
\end{proof}
\begin{remark}
The hypothesis in Lemma \ref{lemma.spherebundle} can be weaken a lot of course, since we only need a condition violating equation \eqref{eq.homotopysphere}. However, Lemma \ref{lemma.spherebundle} has quite a wide range of application anyway. For example, we can consider the unitary tangent bundle $\mathcal{E}=UT \N$ of a product of spheres $\N=S^{\ell} \times S^{\ell} \in \mathcal{C}_n$, for $\ell>n$. Indeed, since the fiber is $S^{2\ell-1}$, we need $\pi_{2 \ell}(S^{\ell} \times S^{\ell}) \neq \mathbb{Z}$. However, $\pi_{2 \ell}(S^{\ell} \times S^{\ell})=\pi_{2\ell}(S^{\ell}) \oplus \pi_{2\ell}(S^{\ell})$ which is always different from $\mathbb{Z}$.
\end{remark}
The following result is to be combined  with Lemma \ref{lemma.connectedsum} to obtain even further examples.
\begin{lemma}[Sphere bundles on $n$-connected]\label{lemma.spherebundleconnected}
If $\N \in \mathcal{C}_n$ is $n$-connected, then the closed total space $\mathcal{E}$ of any $S^r$-bundle $p:\mathcal{E} \rightarrow$, with $r >n$, belongs to $\mathcal{C}_n$ and is $n$-connected as well.
\end{lemma}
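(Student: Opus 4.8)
The plan is to follow the proof of Lemma \ref{lemma.spherebundle} almost verbatim, the additional $n$-connectedness of the base making the low-degree portion of the long exact sequence collapse. Throughout I write the bundle as $p:\mathcal{E}\to\N$ (with base $\N$), and I recall that $\mathcal{E}$ is by hypothesis a connected closed manifold, of dimension $\dim\mathcal{E}=\dim\N+r\ge r>n\ge 2$.

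First I would invoke that $p:\mathcal{E}\to\N$ is a Serre fibration with fibre $S^r$, giving the long exact homotopy sequence (Theorem $4.41$ in \cite{hat})
\begin{equation*}
\cdots \to \pi_i(S^r) \to \pi_i(\mathcal{E}) \to \pi_i(\N) \to \pi_{i-1}(S^r) \to \cdots .
\end{equation*}
For every $i\le n$ we have $i<r$, so $\pi_i(S^r)=0$, and $\pi_i(\N)=0$ since $\N$ is $n$-connected; exactness of $0\to\pi_i(\mathcal{E})\to 0$ then forces $\pi_i(\mathcal{E})=0$. Hence $\mathcal{E}$ is $n$-connected, and in particular $\pi_n(\mathcal{E})=0$, which is the first defining condition of $\mathcal{E}\in\mathcal{C}_n$.

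It remains to exhibit a single non-vanishing higher homotopy group of $\mathcal{E}$. Suppose, for contradiction, that $\pi_{n+k}(\mathcal{E})=0$ for all $k\in\mathbb{N}$. Together with the previous step this gives $\pi_j(\mathcal{E})=0$ for all $j\ge 0$, i.e. $\mathcal{E}$ is weakly contractible. But $\mathcal{E}$ is a non-empty connected closed manifold of positive dimension, hence a finite CW-complex which cannot have all its homotopy groups trivial — this is precisely the fact already used in the proof of Lemma \ref{lemma.connectedsum}, obtained by combining Theorems $3.26$, $4.5$ and $4.32$ in \cite{hat} (a weakly contractible CW-complex is contractible, whereas a closed manifold of dimension $\ge 1$ has non-trivial top homology). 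This contradiction yields $\pi_{n+k}(\mathcal{E})\neq 0$ for some $k\in\mathbb{N}$, so $\mathcal{E}\in\mathcal{C}_n$, which completes the proof.

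I expect no genuine obstacle here: the argument is a routine diagram chase in the homotopy exact sequence of the bundle, identical in spirit to Lemma \ref{lemma.spherebundle}, and the $\mathcal{C}_n$-membership of the base is in fact only used through its $n$-connectedness together with $\mathcal{E}$ being a closed manifold. The only minor points to check are that $\dim\mathcal{E}\ge 3$, so that the "closed manifold is not weakly contractible" argument of Lemma \ref{lemma.connectedsum} genuinely applies, and that the range $i\le n<r$ is what makes $\pi_i(S^r)$ vanish.
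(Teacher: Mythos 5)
Your proof is correct and follows essentially the same route as the paper: the long exact homotopy sequence of the Serre fibration gives $\pi_i(\mathcal{E})=0$ for $i\le n$ since both $\pi_i(S^r)$ and $\pi_i(\N)$ vanish in that range, and then the fact that a closed manifold (as a finite CW-complex) cannot be weakly contractible forces some $\pi_{n+k}(\mathcal{E})\neq 0$. Your added remarks (that only the $n$-connectedness of the base is used, and that one should check the non-contractibility argument applies) are accurate but the paper treats these points the same way, so there is nothing to add.
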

\begin{proof}
Arguing as done above, from Serre fibration's long exact homotopy sequence we obtain for any $i \le n$
\begin{equation*}
...\rightarrow 0=\pi_i(S^r) \rightarrow \pi_i(\mathcal{E}) \rightarrow \pi_i(\N)=0 \rightarrow ...,
\end{equation*}
proving the $n$-connectedness. Moreover, as we already said in the proof of Lemma \ref{lemma.connectedsum} above, the closed manifold $\mathcal{E}$ must have a non-trivial homotopy group.
\end{proof}

\subsection{Null-homotopic examples - higher dimensional case}

In the following we are going to list some Riemannian homogeneous targets, to which we can apply Theorem \ref{th.existence}, concluding the proof of Theorem \ref{th.infinite}. In order to keep the list short, we restrict the attention to some stable homotopy groups, hence the list is far from being complete; for the lists of stable homotopy groups, we refer the reader to \cite{rav} and references therein.
\begin{itemize}
    \item $\N=S^\ell$ for $\ell> n$; $\mathbb{P}^\ell(\R)$ for $\ell >n$; $\mathbb{P}^\ell(\mathbb{C})$ for $2 \ell +1 > n$;
    \item stable homotopy examples for $\N=S^\ell$: for $n=4+\ell$ and $\ell \ge 6$ or $n=5+\ell$ and $\ell \ge 7$, since we have $\pi_{4+\ell}(S^\ell)=0$ for all $\ell \ge 6$, $\pi_{5+\ell}(S^\ell)=0$ for all $\ell \ge 7$, but also $\pi_{12}(S^6)=\pi_{13}(S^7)=\mathbb{Z}/(2\mathbb{Z}) \neq 0$ and then (in stable range) $\pi_{6+\ell}(S^\ell)=\mathbb{Z}/(2\mathbb{Z}) \neq 0$ for $\ell \ge 8$; for $n=12+\ell$ and $\ell \ge 14$, since we have $\pi_{12+\ell}(S^\ell)= 0$ for all $\ell \ge 14$ but also $\pi_{27}(S^{14})=\mathbb{Z} \times \mathbb{Z}/(3\mathbb{Z})$ and then (in stable range) $\pi_{13+\ell}(S^\ell)=\mathbb{Z}/(3\mathbb{Z}) \neq 0$ for $\ell \ge 15$;
    \item Classical Lie groups in the stable range: $\N=SO(\ell)$ for $\ell \ge n+2$ and $n \equiv 2,4,5,6$ modulo $8$, since for these parameters $\pi_n(SO(\ell))=0$ but $\pi_{7+8\mathbb{N}}(SO(\ell))=\mathbb{Z}\neq 0$; $\N=Sp(\ell)$ for $n \le 4 \ell+1$ and $n \equiv 0,1,2,6$ modulo $8$, since for these parameters $\pi_n(Sp(\ell))=0$ but $\pi_{7+8\mathbb{N}}(Sp(\ell))=\mathbb{Z}\neq 0$; $\N=SU(\ell)$ for $n < 2 \ell+1$ and $n$ even, since for these parameters $\pi_n(SU(\ell))=0$ but $\pi_{1+2\mathbb{N}}(SU(\ell))=\mathbb{Z}\neq 0$.
\end{itemize}
Notice that we can apply Lemmas \ref{lemma.product} to take products of these examples.

\appendix
\section{}

In this section we give proofs of some elementary inequalities listed in Section \ref{sec.preliminary}.

\begin{proof}[Proof of \eqref{eq.elementary2}]
Without loss of generality, we may assume that $|X|=1$. As in \cite{far}, we will restrict our attention to the $2$-dimensional space spanned by $X$ and $Y$. After a rotation, we can ensure $X=(1,0)$ and $Y=(r \cos(\theta),r \sin{\theta})$, where $r=|Y|$ and $\theta \in [0,2\pi]$ is the angle between $X$ and $Y$. It is convenient to set $g(r):=(s+(\delta+r^2)^{\frac{n}{2}})^{\frac{p-n}{2 n}}(\delta+r^2)^{\frac{n-2}{4}}$, so that we get
\begin{align*}
   &[(s+(\delta+|X|^2)^{\frac{n}{2}})^{\frac{p-n}{n}} (\delta+|X|^2)^{\frac{n-2}{2}} X - (s+(\delta+|Y|^2)^{\frac{n}{2}})^{\frac{p-n}{n}}(\delta+|Y|^2)^{\frac{n-2}{2}} Y]\cdot (X-Y) \\
   &=[(s+(\delta+1)^{\frac{n}{2}})^{\frac{p-n}{n}} (\delta+1)^{\frac{n-2}{2}} (1,0) - (s+(\delta+r^2)^{\frac{n}{2}})^{\frac{p-n}{n}}(\delta+r^2)^{\frac{n-2}{2}} (r\cos(\theta),r \sin(\theta))]\cdot \\
   &\cdot (1-r\cos(\theta),-r \sin(\theta))=g^2(1)(1-r \cos(\theta))-g^2(r) r \cos(\theta)+g^2(r) r^2.
\end{align*}
On the other hand, the right hand side is given by
\begin{align*}
    &c_0|V(X)-V(Y)|^2=c_0 |(s+(\delta+|X|^2)^{\frac{n}{2}})^{\frac{p-n}{2 n}}(\delta+|X|^2)^{\frac{n-2}{4}} X-(s+(\delta+|Y|^2)^{\frac{n}{2}})^{\frac{p-n}{2 n}}(\delta+|Y|^2)^{\frac{n-2}{4}} Y|^2\\
    &=c_0|g(1) (1,0))-g(r) (r\cos(\theta),r \sin(\theta))|^2=c_0[g^2(1)-2g(1)g(r)r \cos(\theta)+g^2(r) r^2].
\end{align*}
Plugging in \eqref{eq.elementary2}, we see that this inequality is equivalent after rearranging to
\begin{equation*}
(1-c_0) g^2(r) r^2 +(1-c_0) g^2(1) \ge [g^2(r)+g^2(1)-2g(1)g(r)c_0] r \cos(\theta).
\end{equation*}
Since $|\cos(\theta)|\le 1$, the inequality follows after we prove the stronger
\begin{equation*}
    (1-c_0) g^2(r) r^2 +(1-c_0) g^2(1) \ge [g^2(r)+g^2(1)-2g(1)g(r)c_0] r .
\end{equation*}
We have used the positivity of the term in square brackets valid through Young's inequality, assuming $c_0 \le 1$. Rewrite this inequality as
\begin{equation*}
 (g^2(r) r-g^2(1))(r-1) \ge c_0 (g(r) r-g(1))^2.
\end{equation*}
We now want to show that the constant $c_0$ can be chosen independently of $p$ and $\delta$. At $r=0$ and $r=1$ this is true with constant $c_0=1$, with equality sign.
At the limit $r=+\infty$ we can choose $c_0=1$. We are going to argue in several steps. First, we compute the derivative in $\delta$ of the fraction in consideration and claim that it is greater or equal than zero. In order to shorten the formula we set $b(r):=(\delta+r^2)$ and  $a(r):=(s+(\delta+r^2)^{\frac{n}{2}})=(s+b(r)^{\frac{n}{2}})$
\begin{align*}
    &\partial_{\delta} \tfrac{(g^2(r)r-g^2(1))(r-1)}{(g(r)r-g(1))^2 }=\partial_{\delta} \tfrac{(a(r)^{\frac{p- n}{ n}} b(r)^{\frac{n-2}{2}}r-a(1)^{\frac{p- n}{n}} b(1)^{\frac{n-2}{2}})(r-1)}{(a(r)^{\frac{p- n}{2 n}} b(r)^{\frac{n-2}{4}}r-a(1)^{\frac{p- n}{2 n}} b(1)^{\frac{n-2}{4}})^2 }\\
    &=\tfrac{r-1}{(g(r)r-g(1))^3} \Big[ \Big( a(r)^{\frac{p- n}{2 n}} b(r)^{\frac{n-2}{4}}r-a(1)^{\frac{p- n}{2 n}} b(1)^{\frac{n-2}{4}} \Big)\cdot \Big( \tfrac{p-n}{n} a(r)^{\frac{p- 2 n}{ n}} \tfrac{n}{2} b(r)^{\frac{n-2}{2}} \cdot 1 \cdot b(r)^{\frac{n-2}{2}} r \\
    &-\tfrac{p-n}{n} a(1)^{\frac{p- 2 n}{ n}} \tfrac{n}{2} b(1)^{\frac{n-2}{2}} \cdot 1 \cdot b(1)^{\frac{n-2}{2}}+ a(r)^{\frac{p- n}{ n}} \tfrac{n-2}{2} b(r)^{\frac{n-4}{2}} \cdot 1 \cdot r-a(1)^{\frac{p- n}{ n}} \tfrac{n-2}{2} b(1)^{\frac{n-4}{2}}\Big) \\
    &-2 \Big( a(r)^{\frac{p- n}{n}} b(r)^{\frac{n-2}{2}}r-a(1)^{\frac{p- n}{n}} b(1)^{\frac{n-2}{2}} \Big) \cdot \Big( \tfrac{p-n}{2 n} a(r)^{\frac{p- 3 n}{2 n}} \tfrac{n}{2} b(r)^{\frac{n-2}{2}} \cdot 1 \cdot b(r)^{\frac{n-2}{4}} r\\
    &-\tfrac{p-n}{2 n} a(1)^{\frac{p- 3 n}{2 n}} \tfrac{n}{2} b(1)^{\frac{n-2}{2}} \cdot 1 \cdot b(1)^{\frac{n-2}{4}}+ a(r)^{\frac{p- n}{2 n}} \tfrac{n-2}{4} b(r)^{\frac{n-6}{4}} \cdot 1 \cdot r-a(1)^{\frac{p- n}{2 n}} \tfrac{n-2}{4} b(1)^{\frac{n-6}{4}} \Big) \Big]\\
    &= \tfrac{r-1}{2(g(r)r-g(1))^3} \Big[ (p-n) a(r)^{\frac{3 p- 5 n}{2 n}} b(r)^{\frac{5(n-2)}{4}} r^2 -(p-n) a(r)^{\frac{p- 2 n}{n}} b(r)^{n-2} r a(1)^{\frac{p-n}{2 n}} b(1)^{\frac{n-2}{4}} \\
    &-(p-n) a(r)^{\frac{p- n}{2 n}} b(r)^{\frac{n-2}{4}} r a(1)^{\frac{p-2 n}{n}} b(1)^{n-2}+(p-n) a(1)^{\frac{3 p- 5 n}{2 n}} b(1)^{\frac{5(n-2)}{4}}+(n-2)a(r)^{\frac{3(p- n)}{2 n}} b(r)^{\frac{3 n-10}{4}}r^2\\
    &-(n-2) a(r)^{\frac{p- n}{n}} b(r)^{\frac{n-4}{2}} r a(1)^{\frac{p-n}{2 n}} b(1)^{\frac{n-2}{4}}-(n-2) a(r)^{\frac{p- n}{2 n}} b(r)^{\frac{n-2}{4}} r a(1)^{\frac{p-n}{n}} b(1)^{\frac{n-4}{2}}\\
    &+(n-2)a(1)^{\frac{3(p- n)}{2 n}} b(1)^{\frac{3 n-10}{4}}-(p-n)a(r)^{\frac{3 p- 5 n}{2 n}} b(r)^{\frac{5(n-2)}{4}} r^2+(p-n)a(r)^{\frac{p- 3 n}{2 n}} b(r)^{\frac{3(n-2)}{4}} r a(1)^{\frac{p-n}{n}} b(1)^{\frac{n-2}{2}}\\
    &+(p-n)a(r)^{\frac{p-n}{n}} b(r)^{\frac{n-2}{2}} r a(1)^{\frac{p-3 n}{2 n}} b(1)^{\frac{3(n-2)}{4}}-(p-n)a(1)^{\frac{3 p-5 n}{2 n}} b(1)^{\frac{5(n-2)}{4}}-(n-2)a(r)^{\frac{3(p-n)}{2 n}} b(r)^{\frac{3 n-10}{4}} r^2\\
    &\text{\footnotesize$+(n-2)a(r)^{\frac{p-n}{2 n}} b(r)^{\frac{n-6}{4}} r a(1)^{\frac{p-n}{n}} b(1)^{\frac{n-2}{2}}+(n-2)a(r)^{\frac{p-n}{n}} b(r)^{\frac{n-2}{2}} r a(1)^{\frac{p-n}{2 n}} b(1)^{\frac{n-6}{4}}-(n-2)a(1)^{\frac{3(p-n)}{2 n}} b(1)^{\frac{3 n-10}{4}}\Big]$}\\
    &=\tfrac{r(r-1)}{2(g(r)r-g(1))^3} \Big(a(r)^{\frac{p- n}{2 n}} b(r)^{\frac{n-2}{4}}-a(1)^{\frac{p- n}{2 n}} b(1)^{\frac{n-2}{4}} \Big) \cdot \Big[(p-n)a(r)^{\frac{p-n}{2 n}} b(r)^{\frac{n-2}{4}} a(1)^{\frac{p-3 n}{2 n}} b(1)^{\frac{3(n-2)}{4}}\\
    &+(n-2)a(r)^{\frac{p-n}{2 n}} b(r)^{\frac{n-2}{4}} a(1)^{\frac{p-n}{2 n}} b(1)^{\frac{n-6}{4}}-(p-n)a(r)^{\frac{p-3 n}{2 n}} b(r)^{\frac{3(n-2)}{4}} a(1)^{\frac{p-n}{2 n}} b(1)^{\frac{n-2}{4}}\\
    &-(n-2)a(r)^{\frac{p-n}{2 n}} b(r)^{\frac{n-6}{4}} a(1)^{\frac{p-n}{2 n}} b(1)^{\frac{n-2}{4}}\Big]=\tfrac{r(r-1)}{2(g(r)r-g(1))^3} \Big(a(r)^{\frac{p- n}{2 n}} b(r)^{\frac{n-2}{4}}-a(1)^{\frac{p- n}{2 n}} b(1)^{\frac{n-2}{4}} \Big) \cdot \\
    &\text{\footnotesize$a(r)^{\frac{p-3 n}{2 n}} b(r)^{\frac{n-6}{4}} a(1)^{\frac{p-3 n}{2 n}} b(1)^{\frac{n-6}{4}}\cdot \Big[ (p-n) \Big( a(r)b(r) b(1)^{\frac{n}{2}}-a(1)b(1) b(r)^{\frac{n}{2}}\Big) +(n-2) \Big( a(r)b(r) a(1)-a(r)a(1) b(1)\Big) \Big]$}.
\end{align*}
In order to study the sign of the term in square brackets we plug in the definition of $a$:
\begin{align*}
&\Big[ ...\Big]=(p-n) \Big( (s+b(r)^{\frac{n}{2}}) b(r) b(1)^{\frac{n}{2}}-(s+b(1)^{\frac{n}{2}}) b(1) b(r)^{\frac{n}{2}}\Big) +(n-2) \Big( (s+b(r)^{\frac{n}{2}}) b(r) (s+b(1)^{\frac{n}{2}})\\
&-(s+b(r)^{\frac{n}{2}}) (s+b(1)^{\frac{n}{2}}) b(1)\Big)= (p-n) s b(r) b(1)^{\frac{n}{2}}+(p-n) b(r)^{\frac{n+2}{2}} b(1)^{\frac{n}{2}}-(p-n) s b(r)^{\frac{n}{2}} b(1)\\
&-(p-n) b(r)^{\frac{n}{2}} b(1)^{\frac{n+2}{2}}+(n-2) s^2 b(r)+(n-2) s b(r)^{\frac{n+2}{2}} +(n-2) s b(r) b(1)^{\frac{n}{2}}+(n-2) b(r)^{\frac{n+2}{2}} b(1)^{\frac{n}{2}}\\
&-(n-2) s^2 b(1)-(n-2) s b(1) b(r)^{\frac{n}{2}}-(n-2) s b(1)^{\frac{n+2}{2}}-(n-2)b(r)^{\frac{n}{2}}b(1)^{\frac{n+2}{2}}= (p-2) s b(r) b(1)^{\frac{n}{2}}\\
&+(p-2) b(r)^{\frac{n+2}{2}} b(1)^{\frac{n}{2}}-(p-2) s b(r)^{\frac{n}{2}} b(1)-(p-2) b(r)^{\frac{n}{2}} b(1)^{\frac{n+2}{2}}+(n-2) s^2 b(r)-(n-2) s^2 b(1)\\
&+(n-2) s b(r)^{\frac{n+2}{2}} -(n-2) s b(1)^{\frac{n+2}{2}} \\
&=\big( b(r)^{\frac{n+2}{2}}+s b(r)\big)\big( (p-2)b(1)^{\frac{n}{2}} +(n-2)s \big)-\big( b(1)^{\frac{n+2}{2}}+s b(1)\big)\big( (p-2)b(r)^{\frac{n}{2}} +(n-2)s \big).
\end{align*}
We now distinguish two cases. If $r \ge 1$, we can verify it to be greater or equal than zero, as
\begin{equation*}
\tfrac{b(r)^{\frac{n+2}{2}}+s b(r)}{b(1)^{\frac{n+2}{2}}+s b(1)} \ge \tfrac{(p-2)b(r)^{\frac{n}{2}} +(n-2)s}{(p-2)b(1)^{\frac{n}{2}} +(n-2)s},
\end{equation*}
is implied by the stronger (here we use that $P_0 < n+1$):
\begin{align*}
&\tfrac{(p-2)b(r)^{\frac{n}{2}} +(n-2)s}{(p-2)b(1)^{\frac{n}{2}} +(n-2)s}=\tfrac{(p-2)(b(r)^{\frac{n}{2}} -b(1)^{\frac{n}{2}})}{(p-2)b(1)^{\frac{n}{2}} +(n-2)s}+1=1+\tfrac{b(r)^{\frac{n}{2}} -b(1)^{\frac{n}{2}}}{b(1)^{\frac{n}{2}} +\frac{n-2}{p-2}s}\le 1+\tfrac{b(r)^{\frac{n}{2}} -b(1)^{\frac{n}{2}}}{b(1)^{\frac{n}{2}}+\frac{n-2}{n-1}s}=1+\tfrac{(n-1)(b(r)^{\frac{n}{2}} -b(1)^{\frac{n}{2}})}{(n-1)b(1)^{\frac{n}{2}} +(n-2)s}\\
&=\tfrac{(n-1) b(r)^{\frac{n}{2}} +(n-2)s}{(n-1)b(1)^{\frac{n}{2}} +(n-2)s} \boldsymbol{\le} \tfrac{b(r)^{\frac{n+2}{2}}+s b(r)}{b(1)^{\frac{n+2}{2}}+s b(1)},
\end{align*}
which is equivalent to
\begin{equation*}
\tfrac{b(r)^{\frac{n+2}{2}}+s b(r)}{(n-1) b(r)^{\frac{n}{2}} +(n-2)s} \boldsymbol{\ge} \tfrac{b(1)^{\frac{n+2}{2}}+s b(1)}{(n-1) b(1)^{\frac{n}{2}} +(n-2)s};
\end{equation*}
the latter, is clearly verified at $r=1$, and can therefore be deduce from
\begin{align*}
&\partial_r \Big( \tfrac{b(r)^{\frac{n+2}{2}}+s b(r)}{(n-1) b(r)^{\frac{n}{2}} +(n-2)s} \Big)= \tfrac{1}{((n-1) b(r)^{\frac{n}{2}} +(n-2)s)^2} \Big[ (\tfrac{n+2}{2} b(r)^{\frac{n}{2}} 2r+2rs)((n-1) b(r)^{\frac{n}{2}} +(n-2)s)\\
&-(b(r)^{\frac{n+2}{2}}+sb(r))\cdot (n-1)\frac{n}{2}b(r)^{\frac{n-2}{2}}2r \Big]=\tfrac{r}{((n-1) b(r)^{\frac{n}{2}} +(n-2)s)^2} \Big[ (n+2)(n-1)b(r)^n+(n^2-4)sb(r)^{\frac{n}{2}}\\
&+2(n-1)sb(r)^{\frac{n}{2}}+2(n-2)s^2-n(n-1)b(r)^n-n(n-1)sb(r)^{\frac{n}{2}} \Big]=\tfrac{r}{((n-1) b(r)^{\frac{n}{2}} +(n-2)s)^2} \Big[2b(r)^n\\
&+(n-2) \big(2b(r)^n+3sb(r)^{\frac{n}{2}}+2s^2 \big) \Big]\ge 0,
\end{align*}
where we have used that $2 y^2 +3 s y +2 s^2 \ge 0$ for any choice of $s \ge 0$ and any $y \ge 0$.

Regarding the second case $r \le 1$, we see that the opposite inequality
\begin{equation*}
\tfrac{b(r)^{\frac{n+2}{2}}+s b(r)}{b(1)^{\frac{n+2}{2}}+s b(1)} \le \tfrac{(p-2)b(r)^{\frac{n}{2}} +(n-2)s}{(p-2)b(1)^{\frac{n}{2}} +(n-2)s},
\end{equation*}
holds true, because of the validity of the stronger (recall that in this case $b(r) \le b(1)$)
\begin{align*}
&\tfrac{(p-2)b(r)^{\frac{n}{2}} +(n-2)s}{(p-2)b(1)^{\frac{n}{2}} +(n-2)s}=\tfrac{(p-2)(b(r)^{\frac{n}{2}} -b(1)^{\frac{n}{2}})}{(p-2)b(1)^{\frac{n}{2}} +(n-2)s}+1=1+\tfrac{b(r)^{\frac{n}{2}} -b(1)^{\frac{n}{2}}}{b(1)^{\frac{n}{2}} +\frac{n-2}{p-2}s}\ge 1+\tfrac{b(r)^{\frac{n}{2}} -b(1)^{\frac{n}{2}}}{b(1)^{\frac{n}{2}}+\frac{n-2}{n-1}s}=1+\tfrac{(n-1)(b(r)^{\frac{n}{2}} -b(1)^{\frac{n}{2}})}{(n-1)b(1)^{\frac{n}{2}} +(n-2)s}\\
&=\tfrac{(n-1) b(r)^{\frac{n}{2}} +(n-2)s}{(n-1)b(1)^{\frac{n}{2}} +(n-2)s} \boldsymbol{\ge} \tfrac{b(r)^{\frac{n+2}{2}}+s b(r)}{b(1)^{\frac{n+2}{2}}+s b(1)},
\end{align*}
which is proven exactly by the above inequality.

Resuming, we have shown that
\begin{equation*}
\Big[... \Big] \ge 0 \quad \text{if } r \ge 1, \ \text{and } \Big[... \Big] \le 0 \quad \text{if } r \le 1.
\end{equation*}
From this and the monotonicity properties of $a(r)$ and $b(r)$, we deduce that
\begin{equation*}
\partial_{\delta} \tfrac{(g^2(r)r-g^2(1))(r-1)}{(g(r)r-g(1))^2 } \ge 0.
\end{equation*}
Notice that for $\delta=0$, $b(r)=r^2$ and $a(r):=s+r^n$, hence
\begin{equation*}
\tfrac{(g^2(r)r-g^2(1))(r-1)}{(g(r)r-g(1))^2 } \ge \tfrac{(g^2(r)r-g^2(1))(r-1)}{(g(r)r-g(1))^2 } \mid_{\delta=0}= \tfrac{((s+r^n)^{\frac{p- n}{ n}} r^{n-1}-(s+1)^{\frac{p- n}{n}})(r-1)}{((s+r^n)^{\frac{p- n}{2 n}} r^{\frac{n}{2}}-(s+1)^{\frac{p- n}{2 n}})^2 }.
\end{equation*}
We claim that also right hand side is bounded away from zero uniformly as $p \searrow n$ and for any $s \ge 0$. In order to show that, we claim that its $s$-derivative is non-negative:
\begin{equation*}
\footnotesize
\begin{aligned}
&\partial_s \tfrac{((s+r^n)^{\frac{p- n}{ n}} r^{n-1}-(s+1)^{\frac{p- n}{n}})(r-1)}{((s+r^n)^{\frac{p- n}{2 n}} r^{\frac{n}{2}}-(s+1)^{\frac{p- n}{2 n}})^2 }=\tfrac{(r-1)}{((s+r^n)^{\frac{p- n}{2 n}} r^{\frac{n}{2}}-(s+1)^{\frac{p- n}{2 n}})^3 } \Big[ \big( \tfrac{p-n}{n} (s+r^n)^{\frac{p- 2n}{n}} r^{n-1}-\tfrac{p-n}{n} (s+1)^{\frac{p- 2n}{n}}\big) \cdot \\
&\cdot \big( (s+r^n)^{\frac{p- n}{2 n}} r^{\frac{n}{2}}-(s+1)^{\frac{p- n}{2 n}}\big)-2 \big( (s+r^n)^{\frac{p- n}{ n}} r^{n-1}-(s+1)^{\frac{p- n}{n}}\big) \big( \tfrac{p-n}{2 n} (s+r^n)^{\frac{p- 3n}{2 n}} r^{\frac{n}{2}}-\tfrac{p-n}{2 n} (s+1)^{\frac{p- 3n}{2 n}}\big)  \Big]\\
&=\tfrac{(r-1)}{((s+r^n)^{\frac{p- n}{2 n}} r^{\frac{n}{2}}-(s+1)^{\frac{p- n}{2 n}})^3 } \tfrac{p-n}{n} \Big[ (s+r^n)^{\frac{3p- 5n}{2 n}}r^{\frac{3n-2}{2}}-(s+r^n)^{\frac{p-n}{2 n}}r^{\frac{n}{2}} (s+1)^{\frac{p- 2n}{n}}-(s+r^n)^{\frac{p- 2n}{n}}r^{n-1}(s+1)^{\frac{p-n}{2 n}}\\
&+(s+1)^{\frac{3p-5n}{2 n}}-(s+r^n)^{\frac{3p-5n}{2 n}}r^{\frac{3n-2}{2}}+(s+r^n)^{\frac{p- 3n}{2 n}}r^{\frac{n}{2}}(s+1)^{\frac{p-n}{n}}+(s+r^n)^{\frac{p-n}{n}}r^{n-1}(s+1)^{\frac{p-3n}{2 n}} -(s+1)^{\frac{3p-5n}{2 n}}     \Big]\\
&=\tfrac{(r-1)}{((s+r^n)^{\frac{p- n}{2 n}} r^{\frac{n}{2}}-(s+1)^{\frac{p- n}{2 n}})^3 } \tfrac{p-n}{n} (s+r^n)^{\frac{p- 3n}{2 n}} (s+1)^{\frac{p- 3n}{2 n}} r^{\frac{n}{2}} \Big[ (s+1)^{\frac{p+n}{2 n}}+(s+r^n)^{\frac{p+n}{2 n}} r^{\frac{n-2}{2}}-(s+r^n)(s+1)^{\frac{p-n}{2 n}}\\
&-(s+r^n)^{\frac{p-n}{2 n}}(s+1)r^{\frac{n-2}{2}} \Big]=\tfrac{(r-1)}{((s+r^n)^{\frac{p- n}{2 n}} r^{\frac{n}{2}}-(s+1)^{\frac{p- n}{2 n}})^3 } \tfrac{p-n}{n} (s+r^n)^{\frac{p- 3n}{2 n}} (s+1)^{\frac{p- 3n}{2 n}} r^{\frac{n}{2}} [(s+r^n)^{\frac{p-n}{2 n}}r^{\frac{n-2}{2}}\\
&-(s+1)^{\frac{p-n}{2 n}}] [r^n-1] \ge 0,
\end{aligned}
\end{equation*}
as we claimed.

Arguing as above, we deduce that
\begin{equation*}
\tfrac{(g^2(r)r-g^2(1))(r-1)}{(g(r)r-g(1))^2 } \ge \tfrac{((s+r^n)^{\frac{p- n}{ n}} r^{n-1}-(s+1)^{\frac{p- n}{n}})(r-1)}{((s+r^n)^{\frac{p- n}{2 n}} r^{\frac{n}{2}}-(s+1)^{\frac{p- n}{2 n}})^2 } \mid_{s=0} \ge \tfrac{(r^{p-1}-1)(r-1)}{(r^{\frac{p}{2}}-1)^2}.
\end{equation*}
In order to conclude, we need the right-hand-side to be bounded away from zero uniformly as $p \searrow n$. Following the classical argument in \cite{gia1}, Lemma $2.1$ there, we get a lower bound for the numerator
\begin{align*}
&(r^{p-1}-1)(r-1)=\int_0^1 \partial_t [(t r+1-t)^{p-1}(r-1)] dt=(p-1) (r-1)^2 \int_0^1 (t r+1-t)^{p-2} dt \\
&\ge (p-1) (r-1)^2 \int_0^1 t^{p-2} r^{p-2}+(1-t)^{p-2} dt= (r-1)^2 (r^{p-2}+1)
\end{align*}
and an upper bound for the denominator
\begin{align*}
&(r^{\frac{p}{2}}-1)^2=\Big(\int_0^1 \partial_t [(t r+1-t)^{\frac{p}{2}}] dt \Big)^2=\tfrac{p^2}{4}(r-1)^2 \Big(\int_0^1 (t r+1-t)^{\frac{p-2}{2}} dt \Big)^2\\
&\le \tfrac{p^2}{4}(r-1)^2 2^{p-4} \Big(\int_0^1 t^{\frac{p-2}{2}} r^{\frac{p-2}{2}}+(1-t)^{\frac{p-2}{2}} dt \Big)^2 \le 2^{p-3} (r-1)^2 (r^{p-2}+1).
\end{align*}
This concludes the proof of the first inequality. The second inequality can be proven analogously.
\end{proof}

\end{document}